\documentclass[12pt,twoside,american,english,british]{article}
\usepackage[T1]{fontenc}
\usepackage[utf8]{inputenc}
\usepackage[a4paper]{geometry}
\usepackage{color}
\usepackage{enumitem}
\usepackage{amsmath}
\usepackage{amsthm}
\usepackage{amssymb}
\usepackage{setspace}
\usepackage{esint}

\makeatletter
\numberwithin{equation}{section}
\numberwithin{figure}{section}
\theoremstyle{plain}
\newtheorem{thm}{\protect\theoremname}[section]
\theoremstyle{definition}
\newtheorem{defn}[thm]{\protect\definitionname}
\theoremstyle{plain}
\newtheorem{lem}[thm]{\protect\lemmaname}

\usepackage[T1]{fontenc}
\usepackage[utf8]{inputenc}
\usepackage[a4paper]{geometry}
\usepackage{color}
\usepackage{dsfont}
\usepackage{amsmath}
\usepackage{amsthm}
\usepackage{amssymb}
\usepackage{setspace}
\usepackage{esint}

\makeatletter
\numberwithin{equation}{section}
\numberwithin{figure}{section}
\theoremstyle{plain}

\@ifundefined{date}{}{\date{}}
\usepackage{babel}
\usepackage{latexsym}
\usepackage{amsthm}
\usepackage{esint}
\usepackage{eucal}
\usepackage{epstopdf}
\usepackage{graphicx}
\usepackage{bigints}
\theoremstyle{plain}

\newtheoremstyle{boldremark}
    {\dimexpr\topsep/2\relax} % space above
    {\dimexpr\topsep/2\relax} % space below
    {}          % body font
    {}          % indent amount
    {\bfseries} % theorem head font
    {.}         % punctuation after theorem head
    {.5em}      % space after theorem head
    {}          % theorem hed spec. (empty = "normal")

\theoremstyle{boldremark}
\newtheorem{brem} [thm] {Remark} % remarks are numbered within sections

\usepackage{fancyhdr}
\usepackage{blindtext}
\usepackage{titlesec}

\titleformat{name=\chapter}[display]
{\normalfont\Huge\itshape}
{\titlerule[1pt]\vspace{-33pt}\filleft%
  \parbox[t]{6em}{%
    \raggedleft%
    \rule{\linewidth}{0.5ex}\newline%
    \chaptertitlename\ \thechapter%
  }%
}
{4pc}
{\normalfont\upshape\bfseries\Huge}
\allowdisplaybreaks

\makeatother

\usepackage{babel}
\addto\captionsbritish{\renewcommand{\definitionname}{Definition}}
\addto\captionsbritish{\renewcommand{\lemmaname}{Lemma}}
\addto\captionsbritish{\renewcommand{\theoremname}{Theorem}}
\addto\captionsenglish{\renewcommand{\definitionname}{Definition}}
\addto\captionsenglish{\renewcommand{\lemmaname}{Lemma}}
\addto\captionsenglish{\renewcommand{\theoremname}{Theorem}}
\providecommand{\definitionname}{Definition}
\providecommand{\lemmaname}{Lemma}
\providecommand{\theoremname}{Theorem}

\usepackage[colorlinks,pdfpagelabels,pdfstartview = FitH,bookmarksopen
= true,bookmarksnumbered = true,linkcolor = blue,plainpages =
false,hypertexnames = false,citecolor = red,pagebackref=false,urlcolor=blue]{hyperref}

\usepackage{amsmath}  % for \iint macro
\usepackage{graphicx} % for \rotatebox macro

\def\YYint#1#2#3{{\setbox0=\hbox{$#1{#2#3}{\iint}$}
    \vcenter{\hbox{$#2#3$}}\kern-.51\wd0}}
 
\usepackage{etoolbox} % va messo nel preambolo
\makeatletter
\pretocmd{\@makefntext}{\setlength\parindent{0pt}\noindent}{}{%
  \PackageWarning{MyDoc}{Patch of \string\@makefntext\ failed}%
}
\makeatother

\makeatother

\usepackage{babel}
\addto\captionsamerican{\renewcommand{\definitionname}{Definition}}
\addto\captionsamerican{\renewcommand{\lemmaname}{Lemma}}
\addto\captionsamerican{\renewcommand{\theoremname}{Theorem}}
\addto\captionsbritish{\renewcommand{\definitionname}{Definition}}
\addto\captionsbritish{\renewcommand{\lemmaname}{Lemma}}
\addto\captionsbritish{\renewcommand{\theoremname}{Theorem}}
\addto\captionsenglish{\renewcommand{\definitionname}{Definition}}
\addto\captionsenglish{\renewcommand{\lemmaname}{Lemma}}
\addto\captionsenglish{\renewcommand{\theoremname}{Theorem}}
\providecommand{\definitionname}{Definition}
\providecommand{\lemmaname}{Lemma}
\providecommand{\theoremname}{Theorem}

\begin{document}
\title{\textbf{Boundedness and contractive estimates}\\
\textbf{for orthotropic, widely degenerate,}\\
\textbf{doubly nonlinear diffusion equations}}
\author{Pasquale Ambrosio\thanks{\textbf{Corresponding author.} Department of Mathematics, Uppsala
University, P.O. Box 524, 751 20 Uppsala, Sweden.\textit{ E-mail address}:
pasquale.ambrosio@math.uu.se} , Matias Vestberg\thanks{Department of Mathematics, Uppsala University, P.O. Box 524, 751 20
Uppsala, Sweden.\textit{ E-mail address}: matias.vestberg@math.uu.se}}
\date{\noindent September 14, 2026}
\maketitle
\begin{abstract}
\begin{singlespace}
\noindent We study the regularity of weak solutions to doubly nonlinear
orthotropic evolution equations of the form 
\[
\partial_{t}(\vert u\vert^{\alpha-1}u)-\sum_{i=1}^{N}\partial_{i}\left[a_{i}(x,t)\,(|\partial_{i}u|-\delta_{i})_{+}^{p-1}\frac{\partial_{i}u}{\vert\partial_{i}u\vert}\right]=f\,\,\,\,\,\,\,\,\,\,\mathrm{in}\,\,\,\Omega_{T}=\Omega\times(0,T),
\]
where $\Omega$ is a bounded open subset of $\mathbb{R}^{N}$ for
$N\geq2$, the coefficients $a_{i}$ are measurable and bounded, $\alpha>0$,
$p\in(1,\infty)$ and $\delta_{1},\ldots,\delta_{N}$ are non-negative
numbers. We show that weak solutions are locally bounded due to their
membership in a suitable De Giorgi-type energy class. We also obtain
contractive estimates and global boundedness in space for solutions
to a Cauchy problem associated with the above PDE. Our analysis extends
analogous results available in the literature for diffusion equations
that either do not exhibit double nonlinearity or are less degenerate
than those considered here. Another main novelty of this paper is
the presence of a source term $f$ on the right-hand side of the equation,
for which we impose suitable integrability assumptions in the space-time
variables.\vspace{0.2cm}
\end{singlespace}
\end{abstract}
\noindent \textbf{Mathematics Subject Classification:} 35B45, 35B65,
35D30, 35K10, 35K65.

\noindent \textbf{Keywords:} Doubly nonlinear parabolic equations;
degenerate parabolic equations; anisotropic equations; boundedness;
contractive bounds. 
\selectlanguage{american}%

\section{Introduction\label{sec: intro}}

\selectlanguage{british}%
$\hspace*{1em}$Let \foreignlanguage{american}{$\Omega$ be a bounded
open subset} of $\mathbb{R}^{N}$, $N\geq2$, and let $T\in(0,\infty)$.
We are concerned \foreignlanguage{american}{with local and global
regularity properties of weak solutions to doubly nonlinear evolution
equations of the form 
\begin{align}
\partial_{t}(\vert u\vert^{\alpha-1}u)-\sum_{i=1}^{N}\partial_{i}\left[a_{i}(x,t)\,(|\partial_{i}u|-\delta_{i})_{+}^{p-1}\frac{\partial_{i}u}{\vert\partial_{i}u\vert}\right]=f\,\,\,\,\,\,\,\,\,\,\mathrm{in}\,\,\,\Omega_{T}:=\Omega\times(0,T),\label{eq:diffusion}
\end{align}
where $\alpha>0$, $p\in(1,\infty)$, $\delta_{1},\ldots,\delta_{N}$
are non-negative real numbers, }and $\left(\,\cdot\,\right)_{+}$
stands for the positive part\foreignlanguage{american}{. Here $a_{i}$,
$i\in\{1,\ldots,N\}$, are measurable coefficients satisfying
\begin{align}
\Lambda^{-1}\,\leq\,a_{i}(x,t)\,\leq\,\Lambda\,\,\,\,\,\,\,\,\,\,\mathrm{a.e.}\,\,\,\mathrm{in}\,\,\,\Omega_{T},\label{eq:coeff_limit}
\end{align}
}for some constant $\Lambda\geq1$.\foreignlanguage{american}{ The
precise assumptions on the source term $f=f(x,t)$ will vary according
to the specific result under consideration, and will concern only
the integrability of $f$ in the space-time variables.}\\
$\hspace*{1em}$\foreignlanguage{american}{In the special case $\alpha=1$
and $f=0$, evolution equations of a form similar to (\ref{eq:diffusion})
have very recently been studied from different perspectives in \cite{Amb-LIP,AmbCia,AmbCiaCup},
as well as in \cite{BBLV-par} and \cite{CiaVeVe}, where, however,
$\delta_{i}=0$ for all $i\in\{1,\ldots,N\}$. In particular, in \cite{AmbCia,AmbCiaCup}
and \cite{CiaVeVe} the authors established, among other results,
the local boundedness of local weak solutions, while the works \cite{Amb-LIP}
and \cite{BBLV-par} are concerned with spatial Lipschitz continuity
of local weak solutions in the case $p\geq2$. In addition, equation
(\ref{eq:diffusion}) with $\alpha=1$, $f=0$, all $a_{i}$ set to
1 and all $\delta_{i}$ equal to zero }is explicitly presented in
the monographs \cite{Lions}, \cite[Example 4.A, Chapter III]{Show}
and \cite[Example 30.8]{Zeid}, among others.\foreignlanguage{american}{}\\
$\hspace*{1em}$\foreignlanguage{american}{To the best of our knowledge,
the PDEs considered in this paper are the first in the literature
to combine, within a unified framework, three of the most studied
classes of nonlinear equations in recent years: the \textit{doubly
nonlinear equations}, the \textit{widely degenerate equations} and
the\textit{ orthotropic} ones.}\\
$\hspace*{1em}$\foreignlanguage{american}{Doubly nonlinear equations
were introduced in the 1960s by Lions \cite{Lions} and subsequently
investigated by Kalashnikov \cite{Kal}. The expression \textit{doubly
nonlinear} refers to the presence of nonlinearities both in the evolution
term and in the diffusion part of the equation. Equations of this
type arise in a broad range of physical contexts, including, for instance,
the flow of nonhomogeneous non-Newtonian fluids and the simultaneous
motion of water in surface channels and underground aquifers, just
to name a few. We refer to \cite[Chapter 4]{AnDiSh} and the references
therein for an account of the applications. Although the theory is
quite well developed, especially in the degenerate and singular supercritical
cases (for the exact definitions of these technical terms, see for
example \cite{FoHeVe}), some important questions still remain open
and continue to be the subject of active research.}\\
$\hspace*{1em}$Widely degenerate equations have emerged as an important
class of nonlinear problems and have received increasing attention
in the literature; see, for instance, \cite{Amb00,Amb2,AmGrPa,BDGPell,Brasco,BraCarSan,CGHP,CoFi,CGGP,Grim,GriRus,Mons,Picc,Russo,Strunk-ell}
in the elliptic setting and \cite{Amb1,Amb3,AmbBau,AmbCuDe,AmbPass,BoDuGiPa,GenPas,Strunk}
in the parabolic context. In these latter works, the model equation
under consideration is the following:
\begin{equation}
\partial_{t}u-\mathrm{div}\left((\vert\nabla u\vert-\lambda)_{+}^{p-1}\frac{\nabla u}{\vert\nabla u\vert}\right)=\,\phi\,,\,\,\,\,\,\,\,\,\mathrm{with}\,\,\lambda>0\,.\label{eq:AmbPass}
\end{equation}
According to the now-standard terminology, this equation is widely
degenerate, in the sense that the diffusion part is uniformly elliptic
only outside the region $\{\vert\nabla u\vert\leq\lambda\}$, while
it behaves asymptotically, that is, for large values of $\vert\nabla u\vert$,
like the parabolic $p$-Laplacian. Therefore, equations of the form
(\ref{eq:AmbPass}) fall within the class of \textit{asymptotically
regular parabolic problems} (for a comprehensive overview of this
topic, see \cite{Amb1,AmbPass,BoDuGiPa} and the references therein).
As already pointed out in \cite{Amb3,BoDuGiPa}, one cannot expect
more than Lipschitz regularity for solutions to equation (\ref{eq:AmbPass}).
In fact, when $\phi=0$, any time-independent $\lambda$-Lipschitz
function solves (\ref{eq:AmbPass}), and even more, it is a solution
of the associated stationary equation.\\
Returning to our setting, equation (\ref{eq:diffusion}) is also widely
degenerate, but with an ``orthotropic'', coordinate-wise degeneracy,
since the ellipticity of the corresponding operator in divergence
form breaks down on the set
\[
\bigcup_{i=1}^{N}\,\{\vert\partial_{i}u\vert\leq\delta_{i}\}\,.
\]
In this paper, the term \textit{orthotropic} refers precisely to the
fact that the diffusion coefficients in (\ref{eq:diffusion}) depend
separately on the absolute values of the components $\partial_{i}u$
of the spatial gradient $\nabla u$, rather than on the Euclidean
norm $\vert\nabla u\vert$, and may vary across the coordinate directions,
with possibly different degeneracy thresholds $\delta_{i}$. The combination
of this orthotropic structure with the type of degeneracy described
just above causes equation (\ref{eq:diffusion}) to exhibit a threshold-type
behavior, with diffusion coefficients that may vanish identically
on regions of nonempty interior whenever the corresponding $\delta_{i}$
are positive, thereby resulting in a loss of the parabolic structure
on sets of possibly positive measure.\foreignlanguage{american}{}\\
$\hspace*{1em}$\foreignlanguage{american}{The orthotropic character
of equation (\ref{eq:diffusion}) moreover places it within the broader
class of anisotropic problems, which were introduced in the 1980s
by Giaquinta \cite{Giaq} and Marcellini \cite{Marc} and have since
been extensively studied under many different aspects; see, for instance,
}\cite{AmbCupMas,BiaCupMas2,BB1,BouBra,BouBraLeo,Cianchi,CiHeSk,CSV,CuMaMa1,CuMaMa2,CuMaMa3,CuMaMa4,FeoPasPos,FusSbo1,FusSbo2,GriRuss2,Russo2,Strof}
in the elliptic setting and \cite{BDM,CiaGuaVes,CHSS,CiaHenSkr,CiaMosVes,CiaVeVe,DeFil,DegTed1,DegTed2,TerTer}
in the parabolic framework.\foreignlanguage{american}{ For a brief
overview of the state of the art, we refer the interested reader in
particular to Section 1 of \cite{CiaVeVe}. In that paper, the authors
establish several qualitative and quantitative properties of weak
solutions to doubly nonlinear anisotropic evolution equations, whose
prototype is} 
\begin{equation}
\partial_{t}(\vert u\vert^{\alpha-1}u)-\sum_{i=1}^{N}\,\partial_{i}\,(\vert\partial_{i}u\vert^{p_{i}-2}\,\partial_{i}u)\,=\,0\,,\label{eq:doublyNL}
\end{equation}
with $\alpha>0$ and $1<p_{1}\leq\cdots\leq p_{N}$. Among other things,
they show that certain regularity properties, such as local boundedness
and the existence of semicontinuous representatives for weak solutions,
are actually embodied in suitable energy estimates rather than in
the mere class of solutions to the equation. Furthermore, they obtain
uniform-in-time contractive bounds for the spatial $L^{1}$ and $L^{\alpha+1}$
norms of weak solutions to a Cauchy problem associated with (\ref{eq:doublyNL}).
Subsequently, they use these contractive bounds to derive a global
boundedness result for such solutions. However, their framework does
not cover our widely degenerate operator, not even in the case $p_{1}=\cdots=p_{N}$,
as already observed in \cite{AmbCiaCup}. There, the authors have
recently extended, in the case $\alpha=1$, the results on local boundedness
and the existence of semicontinuous representatives established in
\cite{CiaVeVe} to weak solutions of anisotropic evolution equations
that are markedly more degenerate than (\ref{eq:doublyNL})\foreignlanguage{american}{,
although they are not doubly nonlinear.}\\
$\hspace*{1em}$In this paper, we continue the research conducted
in the previous works\foreignlanguage{american}{ \cite{AmbCia,AmbCiaCup,CiaVeVe}
}by establishing local boundedness results for weak solutions to (\ref{eq:diffusion}),
together with quantitative bounds for their essential supremum and
infimum (see Section \ref{sec:Local-boundedness}). Moreover, in the
spirit of \cite{CiaVeVe}, we prove contractive estimates for weak
solutions to a Cauchy problem associated with equation (\ref{eq:diffusion}),
and then use these estimates to obtain a global boundedness result
for such solutions (see Section \ref{sec:Cauchy} below). In order
to emphasize the novelty of our results, we stress once again that
the equation (\ref{eq:diffusion}) considered here is substantially
more degenerate than those studied in \cite{CiaVeVe} and, unlike
the equations considered in \cite{AmbCia} and \cite{AmbCiaCup},
is doubly nonlinear whenever $\alpha\neq1$.\\
$\hspace*{1em}$\foreignlanguage{american}{Another main novelty of
the present paper, compared with the above-mentioned works, is that
the equations considered here also allow for a nontrivial source term
$f$ on the right-hand side, whose effect on the estimates derived
in this paper is explicitly quantified. In particular, when $f\equiv0$
and $\delta_{i}=0$ for all $i\in\{1,\ldots,N\}$, we essentially
recover the same local boundedness estimates and contractive bounds
obtained in \cite{CiaVeVe} in the special case $p_{1}=\cdots=p_{N}$.
Thus, our results are fully consistent with those established in \cite{CiaVeVe},
while extending their conclusions to the present, more degenerate
setting.}
\selectlanguage{american}%

\subsection{Plan of the paper}

\selectlanguage{british}%
$\hspace*{1em}$In this subsection, we outline the main issues addressed
throughout the paper, together with the principal steps of our investigation.
Concerning weak solutions to (\ref{eq:diffusion}), we carry out a
detailed analysis of the following aspects:\vspace{-4mm}
\begin{itemize}
\begin{singlespace}
\item mollified weak formulation of the notion of solution via exponential
time mollification, under the assumption that \foreignlanguage{american}{$f\in L^{\beta}(0,T;L_{\mathrm{loc}}^{\beta}(\Omega))$
for some $\beta>1$};\vspace{-2mm}
\item energy estimates for solutions under appropriate integrability assumptions
on $f$;\vspace{-2mm}
\item local boundedness for elements of the whole energy class defined at
the end of Section \ref{sec:energy}.
\end{singlespace}
\end{itemize}
\begin{singlespace}
\noindent Regarding the weak solutions to the Cauchy problem associated
with equation (\ref{eq:diffusion}), we study instead the following
properties:\vspace{-4mm}
\end{singlespace}
\begin{itemize}
\begin{singlespace}
\item uniform-in-time boundedness of the $L^{1}(\mathbb{R}^{N})$ and $L^{\alpha+1}(\mathbb{R}^{N})$
norms, under suitable integrability assumptions on the source term
$f$;\vspace{-2mm}
\item global boundedness of solutions in the case $p>\frac{N(\alpha\,+\,1)}{N\,+\,\alpha\,+\,1}\,$.
\end{singlespace}
\end{itemize}
\begin{singlespace}
\noindent In what follows, we introduce each one of these aspects.
After presenting the definition of a weak solution to (\ref{eq:diffusion}),
in Subsection \ref{subsec:auxilia} we define a time mollification
that dispenses with the usual Steklov averaging technique. We subsequently
employ this type of mollification to derive a mollified weak formulation,
which is in turn used to show that weak solutions to (\ref{eq:diffusion})
satisfy a general energy estimate under suitable integrability assumptions
on $f$.\vspace{-2mm}
\end{singlespace}
\begin{singlespace}

\subsection*{Energy estimates and local boundedness}
\end{singlespace}

\begin{singlespace}
\noindent $\hspace*{1em}$In this paper, we follow an approach dating
back to De Giorgi (see \cite{DeGio}) and subsequently widely developed
in the literature: \foreignlanguage{american}{assuming that 
\begin{align}
{\normalcolor f\in L^{\sigma}(\Omega_{T})\,\,\,\,\,\,\mathrm{for\,\,\,some}\,\,\,\sigma\,>\,\frac{N+p}{p}\,,}\label{assumpt:f}
\end{align}
}
\end{singlespace}

\selectlanguage{american}%
\noindent we show that the local boundedness of weak solutions to
(\ref{eq:diffusion}) is actually encoded in suitable energy estimates\foreignlanguage{british}{,
rather than in the mere class of solutions to the equation. More precisely,
at the end of Section \ref{sec:energy} we define the class of functions
$\mathcal{A}(\alpha,p,\{\delta_{i}\},\Omega_{T},f,\mathcal{C})$,
which we believe to be of fundamental importance in the study of the
local behavior of weak solutions. The fact that weak solutions }to
(\ref{eq:diffusion})\foreignlanguage{british}{ belong to the aforementioned
class under assumption (\ref{assumpt:f}), as explicitly observed
in Remark \ref{def:oss-impor}, allows us to formulate results of
broader scope and applicability within the regularity theory for widely
degenerate and anisotropic operators. More generally, we prove that,
under certain conditions involving $p$, $\alpha$ and $N$, the functions
in the energy class $\mathcal{A}(\alpha,p,\{\delta_{i}\},\Omega_{T},f,\mathcal{C})$
are locally essentially bounded in $\Omega_{T}$. Since weak solutions
to} (\ref{eq:diffusion}) \foreignlanguage{british}{fall within the
above class under assumption (\ref{assumpt:f}), this result applies
directly to them, yielding their local boundedness.}\\
\foreignlanguage{british}{$\hspace*{1em}$Regarding the conditions
on $p$, $\alpha$ and $N$ mentioned above, }in proving the local
boundedness results we need to distinguish between the cases $p>\frac{N(\alpha+1)}{N+\alpha+1}$
and $p\leq\frac{N(\alpha+1)}{N+\alpha+1}\,$. In the latter case,
we require the extra integrability condition 
\begin{align}
u\in L_{\mathrm{loc}}^{m}(\Omega_{T})\,\,\,\,\,\,\mathrm{for\,\,\,some}\,\,\,m\,>\,\frac{N}{p}(\alpha+1-p)\,,\label{extra_integrability}
\end{align}
similarly to subcritical $p$-Laplacian equations (see, for instance,
\cite{DiBene}).\foreignlanguage{british}{ We note that in the case
$\alpha=1$, which corresponds to the usual anisotropic equations,
the two ranges for $p$ are identical to those appearing in \cite{AmbCiaCup}
and \cite{Yu-Lian} when }$p_{1}=\cdots=p_{N}$\foreignlanguage{british}{,
and that the extra integrability condition (\ref{extra_integrability})
coincides with the corresponding integrability assumption in \cite{AmbCiaCup}
and \cite{Yu-Lian} under the same specialization of the exponents
$p_{i}$.}\\
\foreignlanguage{british}{$\hspace*{1em}$Finally, concerning the
integrability assumption on $f$, we point out that condition (\ref{assumpt:f})
is weaker than the corresponding assumption in \cite{SiVe}, where
the integrability exponent chosen for the right-hand side $f$ is
$\sigma p'$, with $\sigma$ as in (\ref{assumpt:f}) and $p'$ denoting
the conjugate exponent of $p$. Similar assumptions are also considered
in \cite{BoeDiVe} and \cite{NaPeVe}, where local boundedness results
are obtained as an intermediate step in the proofs of the existence
of solutions. In the present work, we exploit the higher local integrability
of the solution, ensured by a parabolic Sobolev embedding, to perform
a modified De Giorgi-type iteration for which assumption (\ref{assumpt:f})
turns out to be sufficient.}\\
\foreignlanguage{british}{\smallskip{}
}
\selectlanguage{british}%
\begin{singlespace}

\subsection*{Contractive estimates and global boundedness}
\end{singlespace}

\begin{singlespace}
\noindent $\hspace*{1em}$In Section \ref{sec:Cauchy}, we consider
the following Cauchy problem\foreignlanguage{american}{
\begin{align}
\left\{ \begin{array}{ll}
\partial_{t}(\vert u\vert^{\alpha-1}u)-\sum_{i=1}^{N}\partial_{i}\left[a_{i}(x,t)\,(|\partial_{i}u|-\delta_{i})_{+}^{p-1}\frac{\partial_{i}u}{\vert\partial_{i}u\vert}\right]=f, & \quad\text{in }S_{T}:=\mathbb{R}^{N}\times(0,T),\\[5pt]
u(x,0)=u_{0}(x), & \quad x\in\mathbb{R}^{N}.
\end{array}\right.\label{prob:CauPro}
\end{align}
}For a local weak solution $u\in L^{p}(0,T;W_{\mathrm{loc}}^{1,p}(\mathbb{R}^{N}))\cap C^{0}([0,T];L_{\mathrm{loc}}^{\alpha+1}(\mathbb{R}^{N}))$
of (\ref{prob:CauPro}), we establish contractive-type estimates that
control, uniformly in time, the $L^{1}(\mathbb{R}^{N})$ and $L^{\alpha+1}(\mathbb{R}^{N})$
norms of $u(\cdot,t)$, under suitable integrability assumptions on
the source term $f$ and the initial datum $u_{0}$. When $f\not\equiv0$,
these estimates account for the contribution of the source term to
the control of the aforementioned norms. In the case $f\equiv0$,
the contribution of the source term vanishes and the resulting estimates
reduce to genuine contractive bounds, showing that the mass is non-increasing
when $\alpha\in(0,1)$, i.e.,
\[
\Vert u(\cdot,t)\Vert_{L^{1}(\mathbb{R}^{N})}\,\leq\,\Vert u_{0}\Vert_{L^{1}(\mathbb{R}^{N})}\,\,\,\,\,\,\,\,\mathrm{for\,\,all}\,\,t\in[0,T).
\]
This result is complemented by a corresponding estimate for the $L^{\alpha+1}$-norm
in the full range $\alpha>0$. As anticipated earlier, when $f\equiv0$
our contractive estimates essentially match those obtained in \cite{CiaVeVe}
in the special case $p_{1}=\cdots=p_{N}$.

\noindent $\hspace*{1em}$Finally, for $p>\frac{N(\alpha+1)}{N+\alpha+1}\,$,
we establish a global boundedness result for local weak solutions
of (\ref{prob:CauPro}), together with quantitative bounds for their
essential supremum and infimum (see Theorem \ref{thm:limitatezza-globale}
below). In the case $f\equiv0$, these latter bounds agree with those
obtained from \cite[Theorem 8.2 (1)]{CiaVeVe} by setting all the
exponents $p_{i}$ appearing there equal to $p$.\\
\smallskip{}

\end{singlespace}
\selectlanguage{american}%

\subsection{Structure of the paper}

\selectlanguage{british}%
$\hspace*{1em}$\foreignlanguage{american}{The paper is organized
as follows. In Section \ref{sec:setting}, we collect the preliminary
material, including standard notation, the definition of weak solutions
to (\ref{eq:diffusion}), the exponential mollification and some useful
lemmas. We also show that weak solutions satisfy a mollified weak
formulation, which is more convenient for our purposes. In Section
\ref{sec:energy}, we derive the main energy estimates and, accordingly,
define the functional class }$\mathcal{A}(\alpha,p,\{\delta_{i}\},\Omega_{T},f,\mathcal{C})$.\foreignlanguage{american}{
Next, in Section \ref{sec:Local-boundedness}, we study the local
boundedness of functions belonging to the class }$\mathcal{A}(\alpha,p,\{\delta_{i}\},\Omega_{T},f,\mathcal{C})$\foreignlanguage{american}{
under assumption (\ref{assumpt:f}). Subsequently, in Section \ref{sec:Cauchy},
we establish contractive bounds for the spatial $L^{1}$ and $L^{\alpha+1}$
norms of local weak solutions to the Cauchy problem (\ref{prob:CauPro}),
and use these bounds to derive precise $L^{\infty}$ estimates for
such solutions.\textcolor{red}{}}\\
$\hspace*{1em}$\foreignlanguage{american}{In Sections \ref{sec:setting}--\ref{sec:Local-boundedness},
we work with weak solutions that are continuous in time as maps from
$[0,T]$ into $L^{\alpha+1}(\Omega)$ (see Definition \ref{def:weaksol}).
To establish local regularity results, it would be sufficient to consider
solutions that are continuous in time as maps into $L_{\mathrm{loc}}^{\alpha+1}(\Omega)$
instead. At the end of the paper, we have included an appendix where
we show that, in some relevant cases, this weaker time-continuity
property can in fact be obtained if the solution and the source term
$f$ belong to Lebesgue spaces with Hölder-conjugate exponents.}
\selectlanguage{american}%

\section{Notation and preliminaries\label{sec:setting}}

\selectlanguage{british}%
$\hspace*{1em}$In this paper we shall denote by $C$ or $c$ a general
positive constant that may vary on different occasions, even within
the same line of estimates. Relevant dependencies on parameters and
special constants will be suitably emphasized using parentheses or
subscripts. The norm we use on $\mathbb{R}^{k}$, $k\in\mathbb{N}$,
will be the standard Euclidean one and it will be denoted by $\left|\,\cdot\,\right|$.
In particular, for the vectors $\xi,\eta\in\mathbb{R}^{k}$, we write
$\langle\xi,\eta\rangle$ for the usual inner product and $\left|\xi\right|:=\langle\xi,\xi\rangle^{\frac{1}{2}}$
for the corresponding Euclidean norm.\foreignlanguage{american}{}\\
$\hspace*{1em}$\foreignlanguage{american}{For points in space-time,
we will frequently use abbreviations like $z=(x,t)$ or $z_{o}=(x_{o},t_{o})$,
for spatial variables $x$, $x_{o}\in\mathbb{R}^{N}$ and times $t$,
$t_{o}\in\mathbb{R}$. We also denote by $B_{r}(x_{o})=\left\{ x\in\mathbb{R}^{N}:\left|x-x_{o}\right|<r\right\} $
the $N$-dimensional open ball with radius $r>0$ and center $x_{o}\in\mathbb{R}^{N}$.
Furthermore, we use the notation 
\[
Q_{r}(z_{o}):=B_{r}(x_{o})\times(t_{o}-r^{p},t_{o}),\,\,\,\,\,\,\,\,\,z_{o}=(x_{o},t_{o})\in\mathbb{R}^{N}\times\mathbb{R},\,\,r>0,
\]
for the backward parabolic cylinder with vertex $(x_{o},t_{o})$ and
width $r$.}\\
\foreignlanguage{english}{$\hspace*{1em}$}For a real-valued function
$v=v(x,t)$, with $x=(x_{1},\ldots,x_{N})\in\mathbb{R}^{N}$ and \foreignlanguage{american}{$t\in\mathbb{R}$},
we write $\partial_{i}v$, $i\in\{1,\ldots,N\}$, for the partial
derivative of $v$ with respect to the spatial variable $x_{i}$,
and $\partial_{t}v$ for its partial derivative with respect to time.
Moreover, we use the notation $\nabla v:=(\partial_{1}v,\ldots,\partial_{N}v)$
for the spatial gradient of $v$.\\
\foreignlanguage{english}{$\hspace*{1em}$If $E\subseteq\mathbb{R}^{k}$
is a Lebesgue-measurable set, we will denote by $\chi_{E}$ its characteristic
function and by $\vert E\vert$ its $k$-dimensional Lebesgue measure.}\\
$\hspace*{1em}$\foreignlanguage{american}{Throughout the paper, for
$\gamma>0$ and $a\in\mathbb{R}$ we adopt the convention that $\vert a\vert^{\gamma-1}a=0$
if $a=0,$ even though in this case technically the first factor is
ill-defined if $\gamma<1$. Furthermore, for notational convenience,
we will frequently write $a^{\gamma}=\vert a\vert^{\gamma-1}a$, even
when $a<0$, following a convention that is by now standard in the
literature.}\\
\\
$\hspace*{1em}$Let $\mathcal{F}:\Omega_{T}\times\mathbb{R}^{N}\to\mathbb{R}$
be the function defined by
\begin{equation}
\mathcal{F}(x,t,\xi):=\sum_{i=1}^{N}\,\frac{a_{i}(x,t)}{p}\,(\vert\xi_{i}\vert-\delta_{i})_{+}^{p}\,.\label{eq:def:F}
\end{equation}
For notational convenien\foreignlanguage{english}{ce, we set 
\begin{equation}
A(x,t,\eta):=\,\nabla_{\xi}\,\mathcal{F}(x,t,\eta)\,,\,\,\,\,\,\,\,\,\eta\in\mathbb{R}^{N},\label{eq:vector_field}
\end{equation}
so that equation (\ref{eq:diffusion}) can be rewritten as 
\[
\partial_{t}(\vert u\vert^{\alpha-1}u)-\mathrm{div}\,[A(x,t,\nabla u)]=f\,,
\]
where $\mathrm{div}$ denotes the spatial divergence operator. Assuming,
as a basic integrability condition, that $f\in L_{\mathrm{loc}}^{1}(\Omega_{T})$,
we define a weak solution of (\ref{eq:diffusion}) as follows.}
\selectlanguage{american}%
\begin{defn}
\label{def:weaksol} A function $u\in L^{p}(0,T;W^{1,p}(\Omega))\cap C^{0}([0,T];L^{\alpha+1}(\Omega))$
is a weak solution to (\ref{eq:diffusion}) if 
\begin{align}
 & \iint_{\Omega_{T}}\left(\langle A(x,t,\nabla u),\nabla\varphi\rangle-\vert u\vert^{\alpha-1}u\,\partial_{t}\varphi\right)dx\,dt\,=\,\iint_{\Omega_{T}}f\varphi\,dx\,dt\label{eq:weak_form}
\end{align}
for all $\varphi\in C_{0}^{\infty}(\Omega_{T})$.
\end{defn}

\selectlanguage{british}%
\noindent \begin{brem}\foreignlanguage{american}{We observe that,
in order to prove local regularity results, it would be sufficient
in Definition \ref{def:weaksol} to require only corresponding \textit{local}
integrability properties for the weak solutions. However, since this
is a mere technicality, we directly assume global integrability. Likewise,
in Definition \ref{def:weaksol} one could dispense with explicitly
requiring the time-continuity of a weak solution $u$ as a map from
$[0,T]$ into $L^{\alpha+1}(\Omega)$, provided that $u$ and the
source term $f$ satisfy integrability assumptions of sufficiently
high order, which in turn imply precisely that $u\in C^{0}([0,T];L_{\mathrm{loc}}^{\alpha+1}(\Omega))$.
In the present setting, the precise integrability assumptions to be
imposed on $u$ and $f$ depend on the relation between the parameters
$N$, $\alpha$ and $p$, and we refer the interested reader to the
\hyperref[sec:app:time-cont]{Appendix} for the details.\end{brem}}
\selectlanguage{american}%

\subsection{Auxiliary tools\label{subsec:auxilia}}

\selectlanguage{british}%
$\hspace*{1em}$\foreignlanguage{american}{We now recall some elementary
lemmas that will be used later, and start by defining a mollification
in time as in \cite{KiLi} (see also \cite{BoeDuMa}). For $T>0$,
$t\in[0,T]$, $h\in(0,T)$ and $v\in L^{1}(\Omega_{T})$, we set 
\begin{align}
v_{h}(x,t):=\,\frac{1}{h}\int_{0}^{t}e^{\frac{s-t}{h}}\,v(x,s)\,ds\,.\label{def:moll}
\end{align}
Moreover, we define the reversed analogue by 
\begin{align*}
v_{\bar{h}}(x,t):=\,\frac{1}{h}\int_{t}^{T}e^{\frac{t-s}{h}}\,v(x,s)\,ds\,.
\end{align*}
For details regarding the properties of the exponential mollification,
we refer to \cite[Lemma 2.2]{BoeDuMa}, \cite[Lemma 2.2]{KiLi} and
\cite[Lemma 2.9]{St}. The properties of the mollification that we
will use have been gathered for convenience into the following lemma.}
\selectlanguage{american}%
\begin{lem}
\label{lem:expmolproperties}Assume that $v\in L^{1}(\Omega_{T})$
and let $p\in[1,\infty)$. Then the mollification $v_{h}$ defined
in $\eqref{def:moll}$ has the following properties: 
\begin{enumerate}
\item[\foreignlanguage{american}{$\mathrm{(i)}$}] if $v\in L^{p}(\Omega_{T})$, then $v_{h}\in L^{p}(\Omega_{T})$,
\[
\Vert v_{h}\Vert_{L^{p}(\Omega_{T})}\,\leq\,\Vert v\Vert_{L^{p}(\Omega_{T})}
\]
and $v_{h}\to v$ in $L^{p}(\Omega_{T})$. A similar estimate also
holds with $v_{\bar{h}}$ on the left-hand side. 
\item[\foreignlanguage{american}{$\mathrm{(ii)}$}] In the above situation, $v_{h}$ has a weak time derivative $\partial_{t}v_{h}$
on $\Omega_{T}$ given by 
\begin{align*}
\partial_{t}v_{h}\,=\,\frac{v-v_{h}}{h}\,,
\end{align*}
whereas for $v_{\bar{h}}$ we have 
\begin{align*}
\partial_{t}v_{\bar{h}}\,=\,\frac{v_{\bar{h}}-v}{h}\,.
\end{align*}
\item[\foreignlanguage{american}{$\mathrm{(iii)}$}] If $v$ has a weak partial derivative in space, then so do $v_{h}$
and $v_{\bar{h}}$ and 
\begin{align*}
\partial_{j}(v_{h})=(\partial_{j}v)_{h}\,,\hspace{5mm}\,\,\,\partial_{j}(v_{\bar{h}})=(\partial_{j}v)_{\bar{h}}\,.
\end{align*}
\item[\foreignlanguage{american}{$\mathrm{(iv)}$}] If $v\in L^{p}(0,T;L^{p}(\Omega))$, then $v_{h},v_{\bar{h}}\in C^{0}([0,T];L^{p}(\Omega))$.
\end{enumerate}
\end{lem}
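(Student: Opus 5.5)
The plan is to exploit the explicit convolution structure of the mollification. Extend $v$ by zero outside $(0,T)$ and write
\[
v_{h}(x,t)=\int_{\mathbb{R}}k_{h}(t-s)\,v(x,s)\,ds,\qquad k_{h}(\tau):=\tfrac{1}{h}e^{-\tau/h}\chi_{(0,\infty)}(\tau),
\]
where $k_{h}\geq0$ and $\int_{\mathbb{R}}k_{h}\,d\tau=1$. The reversed mollification $v_{\bar{h}}$ is the same construction with the reflected kernel $\tau\mapsto k_{h}(-\tau)$, so each property for $v_{\bar h}$ follows from the one for $v_h$ by the change of variable $t\mapsto T-t$.

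For (i), I would apply Jensen's (or Minkowski's) inequality in time, pointwise in $x$. This gives $|v_{h}(x,t)|^{p}\le\int k_{h}(t-s)|v(x,s)|^{p}\,ds$. Integrating over $\Omega_T$ and using Fubini together with $\|k_{h}\|_{L^1}=1$ yields the contraction estimate.

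For the convergence $v_h\to v$ in $L^p(\Omega_T)$, I would use a density argument.
\begin{enumerate}
\item[(a)] For $w\in C_0^\infty(\Omega_T)$, write $w_h(x,t)-w(x,t)=\int_0^{t/h}e^{-\sigma}\bigl(w(x,t-h\sigma)-w(x,t)\bigr)\,d\sigma-e^{-t/h}w(x,t)$. Since $w$ vanishes near $t=0$, the last term is identically zero for small $h$. The integral tends to zero uniformly by the uniform continuity of $w$, so $w_h\to w$ in $L^p(\Omega_T)$.
\item[(b)] For general $v\in L^p(\Omega_T)$, choose such a $w$ close to $v$ and combine the linearity of $v\mapsto v_h$ with the contraction from (i):
\[
\|v_h-v\|_{L^p}\le\|(v-w)_h\|_{L^p}+\|w_h-w\|_{L^p}+\|w-v\|_{L^p}\le 2\|v-w\|_{L^p}+\|w_h-w\|_{L^p}.
\]
\end{enumerate}

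For (ii), rewrite $v_h(x,t)=\frac1h e^{-t/h}\int_0^t e^{s/h}v(x,s)\,ds$. For a.e. $x$, the integral is absolutely continuous in $t$ with derivative $e^{t/h}v(x,t)$. The product rule therefore gives $\partial_t v_h=\frac{v}{h}-\frac{v_h}{h}$. To pass from this pointwise statement to a weak derivative on $\Omega_T$, I would test against $\varphi\in C_0^\infty(\Omega_T)$ and use Fubini. The formula for $v_{\bar h}$ follows in the same way, with the sign change coming from the orientation of the integral.

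For (iii), the claim follows from Fubini. For $\varphi\in C_0^\infty(\Omega_T)$,
\[
\iint v_h\,\partial_j\varphi=\int_0^T\!\!\int_0^t\tfrac1h e^{(s-t)/h}\int_\Omega v(x,s)\,\partial_j\varphi(x,t)\,dx\,ds\,dt.
\]
Here I integrate by parts in $x$ at fixed $s,t$, which is legitimate because $\varphi(\cdot,t)$ has compact support. This yields $-\iint(\partial_j v)_h\varphi$.

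For (iv), I would view $v$ as an element of $L^p(0,T;X)$ with $X=L^p(\Omega)$. Then $t\mapsto\int_0^t e^{s/h}v(s)\,ds$ is a Bochner integral of an $L^1(0,T;X)$ function, so it is continuous into $X$. Multiplying by the continuous scalar factor $\frac1h e^{-t/h}$ preserves continuity.

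The only delicate point is the convergence in (i). The kernel is one-sided, and near $t=0$ the mass $\int_0^t k_h=1-e^{-t/h}$ is not one. This is why I would approximate by functions vanishing near $t=0$, so that the uniform-continuity argument is clean and the boundary term disappears. All other steps are routine applications of Fubini and of the fundamental theorem of calculus.
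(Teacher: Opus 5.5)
The paper does not prove this lemma. It only collects the properties and refers to \cite[Lemma 2.2]{KiLi}, \cite[Lemma 2.2]{BoeDuMa} and \cite[Lemma 2.9]{St}. Your proposal is a correct, self-contained version of the standard arguments behind those references:
\begin{enumerate}
\item[(i)] writing $v_h$ as a convolution with the one-sided probability kernel $k_h$ gives the contraction via Jensen and Fubini, and the contraction plus approximation by $C_0^\infty(\Omega_T)$ gives $v_h\to v$;
\item[(ii)] the representation $\frac1h e^{-t/h}\int_0^t e^{s/h}v\,ds$ gives the time derivative;
\item[(iii)] Fubini gives the commutation with $\partial_j$;
\item[(iv)] continuity of the Bochner integral gives time continuity.
\end{enumerate}
The reflection $\tilde v(\cdot,s)=v(\cdot,T-s)$ does satisfy $v_{\bar h}(\cdot,t)=\tilde v_h(\cdot,T-t)$, so everything transfers to $v_{\bar h}$, including the sign in (ii). The citation buys the paper brevity. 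Your version makes explicit that only $k_h\ge0$, $\int k_h=1$ and the one-sided support of the kernel are used.

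Two small points need tidying.

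In step (a), the term $e^{-t/h}w(x,t)$ is not identically zero for small $h$. If $w=0$ for $t<t_0$, it is bounded by $e^{-t_0/h}\Vert w\Vert_{L^\infty}$, which suffices. Alternatively, extend $w$ by zero to negative times and write $w_h-w=\int_0^\infty e^{-\sigma}\bigl(w(x,t-h\sigma)-w(x,t)\bigr)\,d\sigma$, which has no boundary term at all. Relatedly, the ``delicate point'' is milder than you suggest: $e^{-t/h}v\to0$ in $L^p(\Omega_T)$ for every $v\in L^p(\Omega_T)$ by dominated convergence.

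In (iii), integrating by parts at fixed $s$ is not justified by the compact support of $\varphi(\cdot,t)$ alone. What you need is that, for a.e. $s$, the slice $v(\cdot,s)$ has weak derivative $(\partial_j v)(\cdot,s)$. This follows from the space-time hypothesis by testing with $\psi(x)\chi(t)$ and using a countable dense family of $\psi\in C_0^\infty(\Omega)$. You also need $\partial_j v$ to be integrable down to $t=0$ for $(\partial_j v)_h$ to be defined; this holds in the paper's setting $v\in L^p(0,T;W^{1,p}(\Omega))$.
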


\noindent \begin{brem}\label{enu:expmol_local_integ} The exponential
time mollification \eqref{def:moll} is well defined also if the function
$v$ is only in $L^{1}(0,T;L_{\mathrm{loc}}^{1}(\Omega))$ and the
properties $\mathrm{(i)}$ and $\mathrm{(iv)}$ in the previous lemma
evidently have corresponding versions for functions that are only
locally integrable in space.\end{brem}

\selectlanguage{british}%
\noindent $\hspace*{1em}$Let $\alpha\in(0,\infty)$. For $v,w\in\mathbb{R}$
we define the quantity

\selectlanguage{american}%
\begin{align*}
\mathfrak{b}_{\alpha}[v,w]:=\,\frac{\alpha}{\alpha+1}\, & (|v|^{\alpha+1}-|w|^{\alpha+1})-w\,(|v|^{\alpha-1}v-|w|^{\alpha-1}w)\,.
\end{align*}

\noindent The next three lemmas provide us with some useful estimates
for the quantity $\mathfrak{b}_{\alpha}[v,w]$. Note that some estimates
hold for all $\alpha>0$, whereas others are valid only in one of
the ranges $\alpha\in(0,1)$ and $\alpha\geq1$.
\begin{lem}
\label{lem:bdry_term_estimates_all_alpha} Let $\alpha>0$. Then there
exists a positive constant $c$, depending only on $\alpha$, such
that for all $v,w\in\mathbb{R}$,
\begin{align}
\frac{1}{c}\,\big||w|^{\frac{\alpha-1}{2}}w-|v|^{\frac{\alpha-1}{2}}v\big|^{2}\,\leq\,\mathfrak{b}_{\alpha}[v,w]\,\leq\,c\,\big||w|^{\frac{\alpha-1}{2}}w-|v|^{\frac{\alpha-1}{2}}v\big|^{2}.\label{est:b-all-alpha}
\end{align}
\end{lem}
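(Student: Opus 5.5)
The plan is to reduce \eqref{est:b-all-alpha} to a one-variable inequality by homogeneity, and then settle that inequality by continuity and compactness, treating separately the points where either side vanishes.

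\textbf{Integral representation and homogeneity.} First I rewrite $\mathfrak{b}_{\alpha}[v,w]$ as an integral. Set $\phi(s)=|s|^{\alpha-1}s$, so that $\frac{\alpha}{\alpha+1}|s|^{\alpha+1}=\int_0^s \tau\,\phi'(\tau)\,d\tau$. Then
\[
\mathfrak{b}_{\alpha}[v,w]=\int_w^v (\tau-w)\,\phi'(\tau)\,d\tau=\alpha\int_w^v(\tau-w)|\tau|^{\alpha-1}\,d\tau .
\]
This shows $\mathfrak{b}_{\alpha}[v,w]\geq 0$, with equality if and only if $v=w$. The integral converges at $\tau=0$ even when $\alpha<1$.

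Both sides of \eqref{est:b-all-alpha} are positively homogeneous of degree $\alpha+1$ under $(v,w)\mapsto(\lambda v,\lambda w)$, $\lambda>0$. Both are also invariant under $(v,w)\mapsto(-v,-w)$. If $v=w$ both sides vanish, so I may assume $v\neq w$. If $w=0$, both sides equal a constant multiple of $|v|^{\alpha+1}$: the middle term is $\frac{\alpha}{\alpha+1}|v|^{\alpha+1}$ and the right-hand quantity is $|v|^{\alpha+1}$. So I may also assume $w\neq0$ and normalize $w=1$. It then remains to show that
\[
g(s):=\frac{\frac{\alpha}{\alpha+1}(|s|^{\alpha+1}-1)-(\phi(s)-1)}{\big|1-|s|^{\frac{\alpha-1}{2}}s\big|^{2}}
\]
is bounded above and below by positive constants on $\mathbb{R}\setminus\{1\}$.

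\textbf{Compactness argument.} The function $g$ is continuous and positive on $\mathbb{R}\setminus\{1\}$. The numerator is positive there by the integral representation. The denominator vanishes only at $s=1$, since $s\mapsto|s|^{\frac{\alpha-1}{2}}s$ is strictly increasing.

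So only the limits at $s\to1$ and $s\to\pm\infty$ need checking.

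1. As $s\to\pm\infty$, the numerator behaves like $\frac{\alpha}{\alpha+1}|s|^{\alpha+1}$ and the denominator like $|s|^{\alpha+1}$. Hence $g\to\frac{\alpha}{\alpha+1}$.

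2. As $s\to1$, the function $\phi$ is smooth near $1$, and a Taylor expansion gives numerator $\approx\frac{\alpha}{2}(s-1)^2$. The denominator is $\approx\big(\frac{\alpha+1}{2}\big)^2(s-1)^2$. Hence $g\to\frac{2\alpha}{(\alpha+1)^2}>0$.

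Consequently $g$ extends continuously and positively to the compactified line $[-\infty,\infty]$. It therefore attains a positive minimum and a finite maximum, which depend only on $\alpha$. This gives $c$.

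\textbf{Main obstacle.} The step I expect to need the most care is the behaviour near $s=0$ (and for negative $s$) when $\alpha<1$. There $\phi$ is not differentiable, so the argument must avoid Taylor expansion at $0$.

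Continuity of $g$ at $s=0$ is immediate, because both numerator and denominator are continuous functions of $s$. At $s=0$ the numerator equals $1-\frac{\alpha}{\alpha+1}>0$ and the denominator equals $1$.

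So the only genuinely delicate point is $s=1$, which was handled in item 2. The argument is therefore uniform in $\alpha>0$.
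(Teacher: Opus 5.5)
Your proof is correct, and it takes a genuinely different route from the paper. The paper gives no self-contained argument. It invokes \cite[Lemma 2.3 (i)]{BoeDuKoSc}, a two-sided estimate for the porous-medium boundary term, with $m=\alpha$ when $\alpha\geq1$ and with $m=1/\alpha$ when $\alpha\in(0,1)$. In the second range this presumably uses the substitution $u=|v|^{\alpha-1}v$, $a=|w|^{\alpha-1}w$, which is made explicit in the proof of Lemma \ref{lem:bdry_term_estimates_alpha_small}.

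You instead work directly with $\mathfrak{b}_{\alpha}$:
\begin{itemize}
\item The identity $\mathfrak{b}_{\alpha}[v,w]=\alpha\int_{w}^{v}(\tau-w)|\tau|^{\alpha-1}\,d\tau$ gives strict positivity off the diagonal.
\item Homogeneity of degree $\alpha+1$ and the symmetry $(v,w)\mapsto(-v,-w)$ reduce everything to the quotient $g(s)$ with $s=v/w$.
\item The limits $g(s)\to\frac{2\alpha}{(\alpha+1)^{2}}$ as $s\to1$ and $g(s)\to\frac{\alpha}{\alpha+1}$ as $s\to\pm\infty$ let you conclude by continuity on the compact set $[-\infty,\infty]$. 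The value $\frac{\alpha}{\alpha+1}$ is also the ratio in the excluded case $w=0$.
\end{itemize}
These computations check out. You are also right that only continuity, not differentiability, is needed at $s=0$.

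Your approach buys a short, self-contained proof that treats all $\alpha>0$ uniformly, with no case split and no change of variables. What it gives up is an explicit constant: $c$ comes from the extreme values of $g$ on a compact set rather than from a quantitative inequality. The paper's approach is a one-line appeal to an existing result, at the cost of a case distinction in $\alpha$ and of matching the cited normalization. Since the paper only uses the existence of some $c=c(\alpha)$, your argument is a complete substitute for the citation.
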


\noindent \begin{proof}[\bfseries{Proof}]If $\alpha\in(0,1)$ the
claim follows from \cite[Lemma 2.3 (i)]{BoeDuKoSc} with the choice
$m=1/\alpha$. If $\alpha\geq1$ the claim follows again from \cite[Lemma 2.3 (i)]{BoeDuKoSc}
with the choice $m=\alpha$.\end{proof}
\begin{lem}
\label{lem:bdry_term_estimates_alpha_small} Let $\alpha\in(0,1)$.
Then there exists a positive constant $c$, depending only on $\alpha$,
such that for all $v,w\in\mathbb{R}$, 
\begin{enumerate}
\item[\foreignlanguage{american}{$\mathrm{(i)}$}] $\frac{1}{c}\,|w-v|^{2}\leq(|w|+|v|)^{1-\alpha}\,\mathfrak{b}_{\alpha}[v,w]\leq c\,|w-v|^{2}$
;
\item[\foreignlanguage{american}{$\mathrm{(ii)}$}]  $\mathfrak{b}_{\alpha}[v,w]\leq c\,|v-w|^{1+\alpha}$.
\end{enumerate}
\end{lem}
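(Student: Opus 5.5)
Both estimates reduce to one-variable inequalities. I will use the integral representation of $\mathfrak{b}_\alpha$: since $\frac{d}{ds}\bigl(\frac{\alpha}{\alpha+1}|s|^{\alpha+1}\bigr)=\alpha|s|^{\alpha-1}s$ and $\frac{d}{ds}(|s|^{\alpha-1}s)=\alpha|s|^{\alpha-1}$, integration by parts gives
\[
\mathfrak{b}_{\alpha}[v,w]=\alpha\int_{w}^{v}|s|^{\alpha-1}(s-w)\,ds .
\]
This integrand is nonnegative regardless of the order of $v,w$, and the formula is symmetric under $(v,w)\mapsto(-v,-w)$.

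For (ii), I bound $|s-w|\le|v-w|$ on the segment, so $\mathfrak{b}_\alpha[v,w]\le \alpha|v-w|\,\bigl|\int_w^v|s|^{\alpha-1}ds\bigr|=|v-w|\,\bigl||v|^{\alpha-1}v-|w|^{\alpha-1}w\bigr|$. For $\alpha\in(0,1)$ the map $s\mapsto|s|^{\alpha-1}s$ is $\alpha$-Hölder continuous with constant $2^{1-\alpha}$. This gives $\mathfrak{b}_\alpha\le c|v-w|^{1+\alpha}$ directly.

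For (i), I plan to combine Lemma \ref{lem:bdry_term_estimates_all_alpha} with the elementary two-sided estimate
\[
\bigl||w|^{\frac{\alpha-1}{2}}w-|v|^{\frac{\alpha-1}{2}}v\bigr|\simeq (|v|+|w|)^{\frac{\alpha-1}{2}}|w-v|,
\]
valid for $\gamma=\frac{\alpha+1}{2}\in(\tfrac12,1)$, with constants depending on $\alpha$. Squaring and multiplying by $(|v|+|w|)^{1-\alpha}$ then yields (i), with the case $v=w=0$ trivial. To prove the displayed equivalence, write
\[
|w|^{\gamma-1}w-|v|^{\gamma-1}v=\gamma\int_0^1|v+\tau(w-v)|^{\gamma-1}\,d\tau\,(w-v).
\]
The lower bound follows since $|v+\tau(w-v)|\le|v|+|w|$ and $\gamma-1<0$. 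The upper bound needs $\int_0^1|v+\tau(w-v)|^{\gamma-1}d\tau\le c(|v|+|w|)^{\gamma-1}$. This is the standard lemma on $\int_0^1|\xi+\tau\eta|^{-\beta}d\tau$ with $\beta=1-\gamma<1$. I would verify it by a scaling argument, normalizing $|v|+|w|=1$, after which the integral of $|\text{affine}|^{-\beta}$ is uniformly bounded by integrability of $|t|^{-\beta}$.

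The main obstacle is this upper bound on the integral near the degenerate point where $v$ and $w$ have opposite signs, since the line then passes through $0$. After normalization the affine function has slope $|w-v|\ge$ something comparable to $1$ in that case, so the bound is just $\int|t|^{-\beta}dt$ over a bounded interval. In the same-sign case the function stays comparable to $\max(|v|,|w|)$ on most of the segment, giving the bound.
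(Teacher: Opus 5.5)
Your proof is correct, but it is not the paper's route. The paper gives no argument of its own: it reads both estimates off items (ii) and (iii) of \cite[Lemma 2.3]{BoeDuKoSc}, applied with $m=1/\alpha$, $u=|v|^{\alpha-1}v$ and $a=|w|^{\alpha-1}w$, i.e.\ after rewriting $\mathfrak{b}_\alpha$ as the boundary term of a porous-medium-type problem with exponent $m>1$.

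You instead work directly in the variables $v,w$. The representation $\mathfrak{b}_\alpha[v,w]=\alpha\int_w^v|s|^{\alpha-1}(s-w)\,ds$ gives (ii) through $\mathfrak{b}_\alpha[v,w]\le|v-w|\,\bigl||v|^{\alpha-1}v-|w|^{\alpha-1}w\bigr|$ and the $\alpha$-H\"older continuity of $s\mapsto|s|^{\alpha-1}s$, with the explicit constant $2^{1-\alpha}$. You obtain (i) from Lemma \ref{lem:bdry_term_estimates_all_alpha} plus the two-sided estimate for $s\mapsto|s|^{\gamma-1}s$ with $\gamma=\frac{\alpha+1}{2}$. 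The citation buys brevity; your argument is elementary and keeps the constants traceable.

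Your own tools actually let you bypass Lemma \ref{lem:bdry_term_estimates_all_alpha}, which is itself only cited. For $v>w$ (the other case is symmetric), $|s|\le|v|+|w|$ on $[w,v]$, so the representation gives directly $\mathfrak{b}_\alpha[v,w]\ge\frac{\alpha}{2}(|v|+|w|)^{\alpha-1}(v-w)^2$. For the matching upper bound, combine your inequality from (ii) with the integral lemma for exponent $\beta=1-\alpha$; this gives $\mathfrak{b}_\alpha[v,w]\le c\,(|v|+|w|)^{\alpha-1}(v-w)^2$. Then (ii) is a corollary of (i), since $|w-v|\le|w|+|v|$ and $\alpha-1<0$.

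One small precision: in the same-sign case of the integral lemma, use $|v+\tau(w-v)|=(1-\tau)|v|+\tau|w|\ge\tau|w|$ when $|w|\ge|v|$. This makes the integrability explicit, instead of saying the function is ``comparable to the maximum on most of the segment''.
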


\noindent \begin{proof}[\bfseries{Proof}]The estimates follow from
properties (ii) and (iii) in \cite[Lemma 2.3]{BoeDuKoSc} with the
choices $m=1/\alpha$, $u=|v|^{\alpha-1}\,v$ and $a=|w|^{\alpha-1}\,w$.\end{proof}

\noindent The following estimates have been proved in \cite[Lemma 3.6]{Vestb}.
\begin{lem}
\label{lem:bdry_term_estimates_alpha_large} Let $\alpha\geq1$. Then
there exists a positive constant $c$, depending only on $\alpha$,
such that for all $v,w\in\mathbb{R}$,
\begin{enumerate}
\item[\foreignlanguage{american}{$\mathrm{(i)}$}]  $\mathfrak{b}_{\alpha}[v,w]\leq c\,\big||v|^{\alpha-1}\,v-|w|^{\alpha-1}\,w\big|^{\frac{\alpha+1}{\alpha}}$;
\item[\foreignlanguage{american}{$\mathrm{(ii)}$}]  $|v-w|^{\alpha+1}\leq\,c\,\mathfrak{b}_{\alpha}[v,w]$.
\end{enumerate}
\end{lem}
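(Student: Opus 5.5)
The plan is to rewrite $\mathfrak{b}_{\alpha}[v,w]$ as a one-dimensional integral of a monotone difference, in two different ways. Each form then reduces one of the claims to an elementary pointwise inequality for the power map $s\mapsto s^{\alpha}=|s|^{\alpha-1}s$. Throughout, $\alpha\geq1$ is fixed and constants depend only on $\alpha$.

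\emph{Two integral representations.} Set $a:=w^{\alpha}$, $b:=v^{\alpha}$ and $\phi(\sigma):=\sigma^{1/\alpha}=|\sigma|^{\frac{1}{\alpha}-1}\sigma$, which is the inverse of $s\mapsto s^{\alpha}$. Using $\int_{a}^{b}\phi(\sigma)\,d\sigma=\frac{\alpha}{\alpha+1}(|v|^{\alpha+1}-|w|^{\alpha+1})$, one checks directly that
\[
\mathfrak{b}_{\alpha}[v,w]=\int_{a}^{b}\bigl(\phi(\sigma)-\phi(a)\bigr)\,d\sigma .
\]
A second direct computation (expand and compare with the definition of $\mathfrak{b}_{\alpha}$) gives
\[
\mathfrak{b}_{\alpha}[v,w]=\int_{w}^{v}\bigl(v^{\alpha}-s^{\alpha}\bigr)\,ds .
\]
In both integrals the integrand has the same sign as the orientation of the integration interval, because $\phi$ and $s\mapsto s^{\alpha}$ are increasing. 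Hence each integral equals the integral of the absolute value of its integrand over the unoriented interval, and $\mathfrak{b}_{\alpha}\geq0$.

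\emph{Proof of (i).} Since $1/\alpha\in(0,1]$, the odd power map $\phi$ is globally Hölder continuous with exponent $1/\alpha$: $|\phi(\sigma)-\phi(a)|\leq c\,|\sigma-a|^{1/\alpha}$. This is the standard subadditivity estimate for concave powers, applied separately when $\sigma$ and $a$ have equal or opposite signs. Inserting it into the first representation yields
\[
\mathfrak{b}_{\alpha}[v,w]\leq c\int_{[a,b]}|\sigma-a|^{1/\alpha}\,d\sigma=c\,|b-a|^{\frac{\alpha+1}{\alpha}},
\]
which is (i).

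\emph{Proof of (ii).} The key pointwise inequality is, for $\alpha\geq1$ and all $x,y\in\mathbb{R}$,
\[
|x^{\alpha}-y^{\alpha}|\geq c^{-1}\,|x-y|^{\alpha}.
\]
I expect this to be the only real work. I would prove it by homogeneity and symmetry, normalizing $|x|\geq|y|$ and $x=1$, so $y\in[-1,1]$.
\begin{itemize}
\item If $y\leq0$: then $1+|y|^{\alpha}\geq 2^{1-\alpha}(1+|y|)^{\alpha}$ by convexity of $t\mapsto t^{\alpha}$.
\item If $y\in[0,1]$: then $1-y^{\alpha}\geq 1-y\geq(1-y)^{\alpha}$, since $y^{\alpha}\leq y$ and $1-y\in[0,1]$.
\end{itemize}
Applying the inequality with $x=v$, $y=s$ in the second representation gives
\[
\mathfrak{b}_{\alpha}[v,w]\geq c^{-1}\int_{[w,v]}|v-s|^{\alpha}\,ds=\frac{|v-w|^{\alpha+1}}{c\,(\alpha+1)},
\]
which is (ii).
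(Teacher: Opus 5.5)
Your proof is correct, and it takes a genuinely different route from the paper, which gives no argument and simply quotes \cite[Lemma 3.6]{Vestb}. Your two representations check out. For the first, use $\int_a^b\phi\,d\sigma=\frac{\alpha}{\alpha+1}(|v|^{\alpha+1}-|w|^{\alpha+1})$ together with $\phi(a)(b-a)=w\,(|v|^{\alpha-1}v-|w|^{\alpha-1}w)$. The second expands directly to $\frac{\alpha}{\alpha+1}|v|^{\alpha+1}+\frac{1}{\alpha+1}|w|^{\alpha+1}-w\,|v|^{\alpha-1}v$, which is $\mathfrak{b}_{\alpha}[v,w]$. Your sign discussion is right. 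The case analysis for the pointwise bound is complete once the reduction to $x=1$ is justified by noting that both sides are homogeneous of degree $\alpha$ and invariant under swapping $x,y$ and under $(x,y)\mapsto(-x,-y)$.

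One simplification: the two elementary facts you use are the same inequality. Set $X=\phi(\sigma)$ and $Y=\phi(a)$. Then the H\"older bound $|\phi(\sigma)-\phi(a)|\le c\,|\sigma-a|^{1/\alpha}$ used for (i) reads $|X-Y|^{\alpha}\le c^{\alpha}\,\big||X|^{\alpha-1}X-|Y|^{\alpha-1}Y\big|$. This is exactly the bound you prove by hand for (ii), and it is the left-hand inequality of Lemma~\ref{lem:elementary_real} with $\gamma=\alpha$, already available in the paper. So the whole lemma follows from that single inequality plus your two integral formulas. This also shows that $\alpha\ge1$ enters only through it: for $\alpha<1$ the map $\phi$ is not globally H\"older with exponent $1/\alpha>1$.

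Your route gives a short, self-contained proof and yields $\mathfrak{b}_{\alpha}\ge0$ for free. The paper's citation only buys brevity.
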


\selectlanguage{british}%
\noindent $\hspace*{1em}$\foreignlanguage{american}{The following
inequalities will be used frequently in our calculations; we refer
to \cite[Lemma 3.7]{CiaVeVe} for a proof.}
\selectlanguage{american}%
\begin{lem}
\label{lem:elementary_real} Let $\gamma\geq1$. For all $a,b\in\mathbb{R}$
we have 
\begin{align}
\frac{1}{c}\,|a-b|^{\gamma}\,\leq\,\big||a|^{\gamma-1}a-|b|^{\gamma-1}b\big|\,\leq\,c\,(|a|^{\gamma-1}+|b|^{\gamma-1})|a-b|\label{est:exponent_inside}
\end{align}
for a positive constant $c=c(\gamma)$.
\end{lem}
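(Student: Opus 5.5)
The two inequalities in \eqref{est:exponent_inside} are homogeneous, so we reduce to one variable. If $a=b$ both sides vanish, so assume $a\neq b$; by symmetry assume $|a|\geq|b|$, hence $a\neq0$. Put $\phi(s):=|s|^{\gamma-1}s$. Since $\phi$ is odd and positively homogeneous of degree $\gamma$, i.e.\ $\phi(\lambda s)=\lambda^{\gamma}\phi(s)$ for $\lambda>0$, we may replace $(a,b)$ by $(1,\tau)$ with $\tau=b/a\in[-1,1]$. Both inequalities scale by the same factor $|a|^{\gamma}$, so it suffices to show that
\[
\frac{1}{c}\,|1-\tau|^{\gamma}\,\leq\,|1-\phi(\tau)|\,\leq\,c\,(1+|\tau|^{\gamma-1})\,|1-\tau|\qquad\text{for all }\tau\in[-1,1].
\]

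\emph{Upper bound.} We use the mean value theorem. The function $\phi$ is $C^{1}$ with $\phi'(s)=\gamma|s|^{\gamma-1}$; at $s=0$ this is the constant value $1$ when $\gamma=1$, and fine for $\gamma>1$. Hence
\[
|\phi(1)-\phi(\tau)|\leq\gamma\,\max_{s\in[\tau,1]}|s|^{\gamma-1}\,|1-\tau|\leq\gamma\,|1-\tau|,
\]
because $|s|\leq1$ on this interval and $\gamma-1\geq0$. Since $1+|\tau|^{\gamma-1}\geq1$, the upper bound follows with $c=\gamma$. Undoing the normalization, the factor $\max(|a|,|b|)^{\gamma-1}$ is bounded by $|a|^{\gamma-1}+|b|^{\gamma-1}$.

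\emph{Lower bound.} This is the only step needing care, and we split into two cases.

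If $\tau\in[0,1]$, then $\phi(\tau)=\tau^{\gamma}\leq\tau$, so $1-\tau^{\gamma}\geq1-\tau\geq(1-\tau)^{\gamma}$, where the last inequality uses $1-\tau\in[0,1]$ and $\gamma\geq1$.

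If $\tau\in[-1,0)$, then $1-\phi(\tau)=1+|\tau|^{\gamma}\geq1$, while $|1-\tau|^{\gamma}\leq2^{\gamma}$. So the lower bound holds with $c=2^{\gamma}$.

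Collecting both cases, we can take $c=\max\{\gamma,2^{\gamma}\}$, which depends only on $\gamma$.
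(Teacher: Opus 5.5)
Your proof is correct. The reduction to the pair $(1,\tau)$ with $\tau=b/a\in[-1,1]$ is legitimate: oddness and $\gamma$-homogeneity of $\phi$ give $|\phi(a)-\phi(b)|=|a|^{\gamma}|1-\phi(\tau)|$, and the two outer terms also pick up exactly the factor $|a|^{\gamma}$. The mean value theorem then gives the upper bound in the slightly sharper form $|\phi(a)-\phi(b)|\leq\gamma\max\{|a|,|b|\}^{\gamma-1}|a-b|$. Both cases of the lower bound are airtight. Incidentally, $\max\{\gamma,2^{\gamma}\}=2^{\gamma}$, because $2^{\gamma}\geq1+\gamma$.

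The paper does not prove this lemma; it only refers to \cite[Lemma 3.7]{CiaVeVe}. So there is no in-paper argument to compare with, and your proposal works as a self-contained replacement.

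A commonly used alternative starts from the identity $|a|^{\gamma-1}a-|b|^{\gamma-1}b=\gamma(a-b)\int_{0}^{1}|b+s(a-b)|^{\gamma-1}\,ds$. This gives the upper bound immediately. The lower bound follows once you observe that the segment from $b$ to $a$ stays at distance at least $|a-b|/4$ from the origin on a set of parameters $s$ of measure at least $\frac{1}{2}$. Your normalization plus case split avoids this geometric estimate and gives explicit constants.
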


\selectlanguage{british}%
\noindent $\hspace*{1em}$\foreignlanguage{american}{The next result
is a direct consequence of Lemma 3.9 in \cite{Vestb}.}
\selectlanguage{american}%
\begin{lem}
\label{lem:equivalent_time_cont} Let $\alpha>0$ and $u\in L_{\mathrm{loc}}^{1}(\Omega_{T})$.
Then $u\in C^{0}([0,T];L_{\mathrm{loc}}^{\alpha+1}(\Omega))$ if and
only if $|u|^{\alpha-1}u\in C^{0}([0,T];L_{\mathrm{loc}}^{\frac{\alpha+1}{\alpha}}(\Omega))$.
\end{lem}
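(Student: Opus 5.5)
The plan is to prove both implications by a direct pointwise comparison of the two maps. The comparison rests on two elementary inequalities for the power map $a\mapsto|a|^{\gamma-1}a$: the mean value estimate of Lemma \ref{lem:elementary_real} when $\gamma\geq1$, and Hölder continuity when $\gamma\in(0,1]$. Set $w:=|u|^{\alpha-1}u$, so that $u=|w|^{\frac{1}{\alpha}-1}w$. Fix a compact set $K\subset\Omega$ and times $t,s\in[0,T]$. The task is to control $\|w(\cdot,t)-w(\cdot,s)\|_{L^{\frac{\alpha+1}{\alpha}}(K)}$ by $\|u(\cdot,t)-u(\cdot,s)\|_{L^{\alpha+1}(K)}$, and conversely. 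Note that $|w|^{\frac{\alpha+1}{\alpha}}=|u|^{\alpha+1}$. Hence $u(\cdot,t)\in L^{\alpha+1}(K)$ if and only if $w(\cdot,t)\in L^{\frac{\alpha+1}{\alpha}}(K)$, and the two norms agree up to a power. Moreover, continuity of either map gives uniform boundedness on $[0,T]$ of $\|u(\cdot,t)\|_{L^{\alpha+1}(K)}$.

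\emph{Case $\alpha\geq1$, direction $u\Rightarrow w$.} By Lemma \ref{lem:elementary_real} with $\gamma=\alpha$,
\[
|w(t)-w(s)|\leq c\,(|u(t)|^{\alpha-1}+|u(s)|^{\alpha-1})\,|u(t)-u(s)|.
\]
Raise this to the power $\frac{\alpha+1}{\alpha}$ and apply Hölder's inequality with exponents $\alpha$ and $\frac{\alpha}{\alpha-1}$. This gives
\[
\|w(t)-w(s)\|_{L^{\frac{\alpha+1}{\alpha}}(K)}\leq c\,\big(\|u(t)\|_{L^{\alpha+1}(K)}+\|u(s)\|_{L^{\alpha+1}(K)}\big)^{\alpha-1}\,\|u(t)-u(s)\|_{L^{\alpha+1}(K)},
\]
which tends to $0$ as $s\to t$.

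\emph{Case $\alpha\geq1$, direction $w\Rightarrow u$.} The map $a\mapsto|a|^{\frac{1}{\alpha}-1}a$ is $\frac1\alpha$-Hölder, i.e. $\big||a|^{\frac1\alpha-1}a-|b|^{\frac1\alpha-1}b\big|\leq c\,|a-b|^{\frac1\alpha}$. This is equivalent to the left inequality of Lemma \ref{lem:elementary_real} after substituting $a=|x|^{\alpha-1}x$. Consequently
\[
|u(t)-u(s)|^{\alpha+1}\leq c\,|w(t)-w(s)|^{\frac{\alpha+1}{\alpha}}.
\]
Integrating over $K$ gives continuity of $u$ in $L^{\alpha+1}(K)$.

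\emph{Case $\alpha\in(0,1)$.} The roles are exchanged by symmetry: set $\beta:=\frac1\alpha>1$ and note that $u=|w|^{\beta-1}w$. The conjugate exponents also swap, since $\frac{\alpha+1}{\alpha}=\beta+1$ and $\alpha+1=\frac{\beta+1}{\beta}$. The two computations above therefore apply verbatim with $(u,w,\alpha)$ replaced by $(w,u,\beta)$.

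No step is a genuine obstacle. The only points needing care are the Hölder exponent bookkeeping in the mean-value direction, and the uniform boundedness of the norms over $[0,T]$. That boundedness follows from continuity on the compact interval, or directly from the identity $|w|^{\frac{\alpha+1}{\alpha}}=|u|^{\alpha+1}$. Since $K$ was arbitrary, both local statements follow.
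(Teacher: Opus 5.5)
Your proof is correct. The paper does not argue this lemma itself; it only states that it is a direct consequence of Lemma~3.9 in \cite{Vestb}. The ingredient needed from there is essentially that the power map $v\mapsto|v|^{\alpha-1}v$ transports convergence in $L^{\alpha+1}(K)$ to convergence in $L^{\frac{\alpha+1}{\alpha}}(K)$ and back, applied at each time.

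Your argument replaces that citation with a self-contained, quantitative proof of exactly this ingredient, using only Lemma~\ref{lem:elementary_real} and H\"older's inequality. For $\alpha\geq1$ you obtain two bounds:
\begin{itemize}
\item the local Lipschitz bound $\|w(t)-w(s)\|_{L^{\frac{\alpha+1}{\alpha}}(K)}\leq c\,(\|u(t)\|_{L^{\alpha+1}(K)}+\|u(s)\|_{L^{\alpha+1}(K)})^{\alpha-1}\|u(t)-u(s)\|_{L^{\alpha+1}(K)}$;
\item the global H\"older bound $\|u(t)-u(s)\|_{L^{\alpha+1}(K)}\leq c\,\|w(t)-w(s)\|_{L^{\frac{\alpha+1}{\alpha}}(K)}^{1/\alpha}$.
\end{itemize}
The substitution $\beta=1/\alpha$ then reduces the case $\alpha\in(0,1)$ to this one, because it merely swaps the exponent pair $(\alpha+1,\frac{\alpha+1}{\alpha})$.

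The citation buys brevity. Your route buys independence from \cite{Vestb} and explicit moduli of continuity.

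One standard point you leave implicit is worth a sentence. Membership in $C^{0}([0,T];L^{\alpha+1}_{\mathrm{loc}}(\Omega))$ refers to a representative defined for every $t$. Applying the pointwise power map to that representative yields a representative of the other function, because the two agree a.e.\ in $\Omega_T$.
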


\selectlanguage{british}%
\noindent $\hspace*{1em}$For further needs, we now recall the following
standard Sobolev embedding, which can be obtained from Proposition
3.1 in Chapter I of \cite{DiBene} by taking $m=\alpha+1$. For an
alternative proof of this type of embeddings, see also \cite[Lemma 1]{Vestthesis}.
\selectlanguage{american}%
\begin{lem}
\label{lem:parabolic-sobolev}\foreignlanguage{british}{ Let }$\Omega\subset\mathbb{R}^{N}$
be a bounded open set, $p\in(1,\infty)$ and $\alpha\in(0,\infty)$.
Then the space $L^{p}(0,T;W_{0}^{1,p}(\Omega))\cap L^{\infty}(0,T;L^{\alpha+1}(\Omega))$
is contained in $L^{p_{\alpha+1}}(\Omega_{T})$, where $p_{\alpha+1}:=p(1+\tfrac{\alpha+1}{N})$.\\
Moreover, there exists a constant $c$ depending only on $N$, $\alpha$,
$p$ such that, for every $v\in L^{p}(0,T;W_{0}^{1,p}(\Omega))\cap L^{\infty}(0,T;L^{\alpha+1}(\Omega))$,
\[
\iint_{\Omega_{T}}\vert v\vert^{p_{\alpha+1}}\,dx\,dt\,\leq\,c\left[\underset{\tau\,\in\,(0,T)}{\mathrm{ess}\,\sup}\int_{\Omega}|v(x,\tau)|^{\alpha+1}\,dx\right]^{\frac{p}{N}}\iint_{\Omega_{T}}|\nabla v|^{p}\,dx\,dt\,.
\]
\end{lem}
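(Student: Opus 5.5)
The plan is to prove the parabolic Sobolev embedding by working one time slice at a time and then integrating in time. Fix a.e.\ $t\in(0,T)$, so that $v(\cdot,t)\in W_{0}^{1,p}(\Omega)$ and $\|v(\cdot,t)\|_{L^{\alpha+1}(\Omega)}^{\alpha+1}\le M:=\mathrm{ess\,sup}_{\tau}\int_{\Omega}|v(x,\tau)|^{\alpha+1}dx$. The target exponent satisfies $p_{\alpha+1}=p+\frac{p(\alpha+1)}{N}$. For each slice I would use the Gagliardo--Nirenberg interpolation inequality on $W_{0}^{1,p}(\Omega)$. Extending $v(\cdot,t)$ by zero to $\mathbb{R}^{N}$ makes the constant independent of $\Omega$. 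The inequality reads
\[
\|w\|_{L^{q}}\,\le\,c\,\|\nabla w\|_{L^{p}}^{\theta}\,\|w\|_{L^{\alpha+1}}^{1-\theta},\qquad q=p_{\alpha+1},
\]
with $\theta\in(0,1]$ fixed by scaling:
\[
\frac{1}{q}=\theta\Big(\frac{1}{p}-\frac{1}{N}\Big)+\frac{1-\theta}{\alpha+1}.
\]
A direct computation gives $\theta q=p$ and $(1-\theta)q=\frac{p(\alpha+1)}{N}$. Indeed, these values satisfy the scaling relation: multiplying it by $q$ gives $\frac{p}{p}-\frac{p}{N}+\frac{p}{N}=1$. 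Consequently
\[
\int_{\Omega}|v(x,t)|^{q}dx\,\le\,c\int_{\Omega}|\nabla v(x,t)|^{p}dx\;\Big(\int_{\Omega}|v(x,t)|^{\alpha+1}dx\Big)^{\frac{p}{N}}\le c\,M^{\frac{p}{N}}\int_{\Omega}|\nabla v(x,t)|^{p}dx .
\]
Integrating over $t\in(0,T)$ then yields the claim, with $c=c(N,\alpha,p)$. The membership $v\in L^{p_{\alpha+1}}(\Omega_T)$ follows at once, since the right-hand side is finite. Measurability in $(x,t)$ is automatic because $v\in L^{p}(\Omega_T)$.

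The main obstacle is justifying the Gagliardo--Nirenberg step over the whole range $p\in(1,\infty)$, $\alpha>0$. When $p<N$, I would prove it directly from Sobolev plus Hölder. Write $\frac{1}{q}=\frac{\theta}{p^{*}}+\frac{1-\theta}{\alpha+1}$ and apply Hölder's interpolation to get $\|w\|_{q}\le\|w\|_{p^{*}}^{\theta}\|w\|_{\alpha+1}^{1-\theta}$. Then bound $\|w\|_{p^{*}}\le c\|\nabla w\|_{p}$. This works provided $q\le p^{*}$, i.e.\ $\theta\le1$, which holds. If instead $q<\alpha+1$, the interpolation runs with $\theta$ still in $[0,1]$, so it is also fine.

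When $p\ge N$, $p^{*}$ is not available, and I would instead use the standard trick of applying the $W^{1,1}$-Sobolev (Gagliardo) inequality to $|w|^{s}$ for a suitable power $s>1$. Concretely, take $\|\,|w|^{s}\|_{L^{N/(N-1)}}\le c\int|w|^{s-1}|\nabla w|$. Follow with Hölder to get $\le c\|w\|_{(s-1)p'}^{s-1}\|\nabla w\|_{p}$, and close by interpolating $L^{(s-1)p'}$ between $L^{\alpha+1}$ and $L^{q}$. Choosing $s=\frac{(N-1)q}{N}$ balances the exponents. Alternatively, I would simply invoke DiBenedetto's Proposition I.3.1, which covers all cases, as the paper indicates.
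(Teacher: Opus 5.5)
Your argument is correct and is essentially the paper's route: the paper does not prove the lemma itself but cites DiBenedetto's Proposition I.3.1 with $m=\alpha+1$, whose proof is exactly the slice-wise Gagliardo--Nirenberg inequality with $\theta=N/(N+\alpha+1)$ (so that $\theta q=p$ and $(1-\theta)q=p(\alpha+1)/N$), followed by integration in time. Your extra sketch of the Gagliardo--Nirenberg step is also sound; the one point to state explicitly is that in the case $p\ge N$ the closing interpolation absorbs a power of $\|w\|_{L^{q}}$ into the left-hand side, which is legitimate only because $\|w\|_{L^{q}}<\infty$ is known beforehand (true here, since $w\in W^{1,p}_{0}(\Omega)$ with $p\ge N$ and $\Omega$ bounded).
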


\selectlanguage{british}%
\noindent $\hspace*{1em}$The next lemma plays a key role in the De
Giorgi-type iterations; we refer to \foreignlanguage{american}{\cite[Lemma 7.1]{Gi}
for a proof.}
\selectlanguage{american}%
\begin{lem}
\label{lem:fastconvg} \foreignlanguage{british}{Let $\delta>0$ and
let $\{Y_{j}\}_{j\,\in\,\mathbb{N}_{0}}$ be a sequence of non-negative
real numbers, satisfying the recursive inequalities 
\[
Y_{j+1}\,\leq\,C\,b^{j}\,Y_{j}^{1+\delta}
\]
where $C>0$ and $b>1$. If $Y_{0}\leq C^{-\,\frac{1}{\delta}}\,b^{-\,\frac{1}{\delta^{2}}}$,
then 
\[
\lim_{j\to\infty}Y_{j}=0\,.
\]
}
\end{lem}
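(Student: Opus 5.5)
The proof is by induction on $j$. I would show that $Y_j\le C^{-1/\delta}b^{-1/\delta^2}b^{-j/\delta}$ for every $j\in\mathbb{N}_0$. Since $b>1$, the right-hand side tends to $0$ as $j\to\infty$, and the $Y_j$ are non-negative, so this bound gives $\lim_{j\to\infty}Y_j=0$.

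For the base case $j=0$, the bound reads $Y_0\le C^{-1/\delta}b^{-1/\delta^2}$, which is exactly the hypothesis.

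For the inductive step, I assume the bound at step $j$ and insert it into the recursive inequality:
\[
Y_{j+1}\le C\,b^{j}\,\bigl(C^{-1/\delta}b^{-1/\delta^2}b^{-j/\delta}\bigr)^{1+\delta}
= C^{1-\frac{1+\delta}{\delta}}\, b^{\,j-\frac{1+\delta}{\delta^2}-\frac{j(1+\delta)}{\delta}}.
\]
The exponent of $C$ simplifies to $1-\frac{1+\delta}{\delta}=-\frac1\delta$. The exponent of $b$ equals
\[
j-\frac{j}{\delta}-j-\frac{1}{\delta^2}-\frac1\delta=-\frac{1}{\delta^2}-\frac{j+1}{\delta}.
\]
Therefore $Y_{j+1}\le C^{-1/\delta}b^{-1/\delta^2}b^{-(j+1)/\delta}$, which is the claimed bound at step $j+1$.

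The only genuine step is choosing the ansatz $Y_j\le Y_0\,b^{-j/\delta}$ with the geometric rate $b^{-1/\delta}$; after that, the argument is just the exponent bookkeeping above, so I expect no real obstacle.
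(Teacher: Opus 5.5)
Your induction is correct, and it is essentially the standard argument behind the reference the paper cites for this lemma (Lemma 7.1 in Giusti's monograph), which proves $Y_j\le b^{-j/\delta}Y_0$ by the same exponent bookkeeping; the paper itself does not reprove it. One cosmetic point: your closing remark names the ansatz $Y_j\le Y_0\,b^{-j/\delta}$, but the induction you actually carry out uses the threshold $C^{-1/\delta}b^{-1/\delta^{2}}$ in place of $Y_0$. Either version works.
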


\selectlanguage{british}%
\noindent $\hspace*{1em}$\foreignlanguage{american}{We conclude this
subsection with the following result, which will be useful in establishing
the local boundedness of weak solutions to (\ref{eq:diffusion}).}
\selectlanguage{american}%
\begin{lem}
\label{lem:basic_delta} Let $N\geq2$, $p\in[1,\infty)$ and $\delta_{i}\geq0$
for all $i\in\{1,\ldots,N\}$. Define
\[
\delta_{\max}:=\,\max\,\{\delta_{1},...,\delta_{N}\}.
\]
Then there exists a constant $c=c(N,p)>0$ such that, for every $\xi\in\mathbb{R}^{N}$,
\begin{align}
|\xi|^{p}\,\leq\,c\sum_{i=1}^{N}(|\xi_{i}|-\delta_{i})_{+}^{p}\,+\,c\,\delta_{\max}^{p}\,.\label{est:basic_delta}
\end{align}
\end{lem}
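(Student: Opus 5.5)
The plan is to argue componentwise and then sum. Fix $\xi\in\mathbb{R}^{N}$ and $i\in\{1,\ldots,N\}$. The key pointwise observation is
\[
|\xi_{i}|\,\leq\,(|\xi_{i}|-\delta_{i})_{+}+\delta_{i}\,\leq\,(|\xi_{i}|-\delta_{i})_{+}+\delta_{\max}.
\]
To verify it, split into two cases. If $|\xi_{i}|\geq\delta_{i}$, the first inequality is an equality. If $|\xi_{i}|<\delta_{i}$, the positive part vanishes and $|\xi_{i}|<\delta_{i}$ holds trivially. The second inequality is immediate from $\delta_{i}\leq\delta_{\max}$.

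Next, raise this inequality to the power $p\geq1$. By convexity of $s\mapsto s^{p}$ on $[0,\infty)$, we have $(a+b)^{p}\leq2^{p-1}(a^{p}+b^{p})$ for $a,b\geq0$. This gives
\[
|\xi_{i}|^{p}\,\leq\,2^{p-1}(|\xi_{i}|-\delta_{i})_{+}^{p}+2^{p-1}\delta_{\max}^{p}.
\]

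Finally, compare the Euclidean norm with the $\ell^{p}$-type sum of components. Since $|\xi|\leq\sum_{i}|\xi_{i}|$, applying Jensen's (or Hölder's) inequality yields
\[
|\xi|^{p}\leq\Big(\sum_{i=1}^{N}|\xi_{i}|\Big)^{p}\leq N^{p-1}\sum_{i=1}^{N}|\xi_{i}|^{p}.
\]
Inserting the componentwise bound and summing over $i$, the term $2^{p-1}\delta_{\max}^{p}$ appears $N$ times. Altogether,
\[
|\xi|^{p}\leq(2N)^{p-1}\sum_{i=1}^{N}(|\xi_{i}|-\delta_{i})_{+}^{p}+2^{p-1}N^{p}\,\delta_{\max}^{p}.
\]
Taking $c:=2^{p-1}N^{p}$ then gives \eqref{est:basic_delta}, with a constant depending only on $N$ and $p$.

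There is no genuine obstacle here. The only point requiring care is the case distinction in the first step, namely checking that the positive part correctly handles components below the threshold. Everything after that is the standard elementary power-mean bookkeeping.
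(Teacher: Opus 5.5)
Your proof is correct and follows the same route as the paper: the componentwise bound $|\xi_i|\leq(|\xi_i|-\delta_i)_+ +\delta_i$, the convexity inequality $(a+b)^p\leq 2^{p-1}(a^p+b^p)$, and a comparison between $|\xi|^p$ and $\sum_i|\xi_i|^p$. The only differences are cosmetic. You make the norm-equivalence constant explicit as $N^{p-1}$, where the paper leaves it as $c_1(N,p)$, and you replace $\delta_i$ by $\delta_{\max}$ before raising to the power $p$ rather than after.
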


\noindent \begin{proof}[\bfseries{Proof}]Note that for all $i\in\{1,\ldots,N\}$,
\begin{align*}
|\xi_{i}|^{p}\,\leq\,[(|\xi_{i}|-\delta_{i})_{+}+\delta_{i}]^{p}.
\end{align*}
Summing over $i=1,\ldots,N$ and using the equivalence of norms in
$\mathbb{R}^{N}$, we have for some constant $c_{1}=c_{1}(N,p)>0$
that\begin{align*}
c_{1}\,|\xi|^{p}\,&\leq\,\sum_{i=1}^{N}|\xi_{i}|^{p}\,\leq\,\sum_{i=1}^{N}\,[(|\xi_{i}|-\delta_{i})_{+}+\delta_{i}]^{p}\,\leq\,2^{p-1}\sum_{i=1}^{N}\,[(|\xi_{i}|-\delta_{i})_{+}^{p}+\delta_{i}^{p}]\\
&\leq\,2^{p-1}\sum_{i=1}^{N}(|\xi_{i}|-\delta_{i})_{+}^{p}\,+\,2^{p-1}N\delta_{\max}^{p}\,,
\end{align*}from which the assertion follows.\end{proof}

\subsection{Mollified weak formulation}

\selectlanguage{british}%
$\hspace*{1em}$In this subsection, we show that weak solutions to
(\ref{eq:diffusion}) satisfy a mollified weak formulation. The first
part of the proof follows that of Lemma $4.3$ in \cite{CiaVeVe},
where, however, $f=0$. We nevertheless include it here for the reader's
convenience.
\selectlanguage{american}%
\begin{lem}
\label{lem:mollified} Assume that \eqref{eq:coeff_limit} holds,
let $f\in L^{\beta}(0,T;L_{\mathrm{loc}}^{\beta}(\Omega))$ for some
$\beta>1$, and let $u$ be a weak solution to $(\ref{eq:diffusion})$
in the sense of Definition \ref{def:weaksol}. Then we have 
\begin{align}
\iint_{\Omega_{T}}\left(\langle[A(x,t,\nabla u)]_{h},\nabla\phi\rangle+\,\partial_{t}[|u|^{\alpha-1}u]_{h}\,\phi\right)dx\,dt\,-\int_{\Omega}\tilde{u}_{\alpha}\,\phi_{\bar{h}}(x,0)\,dx\,=\iint_{\Omega_{T}}f_{h}\,\phi\,dx\,dt\label{weak:expmollified}
\end{align}
for all $h\in(0,T)$ and all $\phi\in C^{\infty}(\Omega\times[0,T])$
with support contained in $K\times[0,\tau]$, where $K\subset\Omega$
is compact and $\tau\in(0,T)$. Here $A:\Omega_{T}\times\mathbb{R}^{N}\to\mathbb{R}^{N}$
is the vector field defined in $(\ref{eq:vector_field})$, while $\tilde{u}_{\alpha}(x,0)$
denotes the value at time zero of the continuous representative of
$|u|^{\alpha-1}u$ as a map $[0,T]\to L^{\frac{\alpha+1}{\alpha}}(K)$.
\end{lem}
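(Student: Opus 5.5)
The approach is to test the weak formulation \eqref{eq:weak_form} with the reversed mollification $\varphi=\phi_{\bar h}$ and then move the mollification from the test function onto $A(x,t,\nabla u)$, $|u|^{\alpha-1}u$ and $f$ via Fubini. Since $\phi_{\bar h}$ does not vanish at $t=0$ and is only Lipschitz in time, it is not an admissible test function in Definition \ref{def:weaksol}. We therefore first extend \eqref{eq:weak_form} to test functions $\varphi$ which are Lipschitz in time, have spatial support in $K$, vanish for $t\ge\tau$, but need not vanish at $t=0$.

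To justify this, we multiply such a $\varphi$ by a cutoff $\zeta_\varepsilon(t)$ that vanishes on $[0,\varepsilon]$ and rises linearly to $1$ on $[\varepsilon,2\varepsilon]$. After a standard density argument in space and time, the product $\zeta_\varepsilon\varphi$ is admissible. This density argument uses $A(x,t,\nabla u)\in L^{p'}$, which follows from \eqref{eq:coeff_limit} and $(|\xi_i|-\delta_i)_+^{p-1}\le|\xi_i|^{p-1}$. It also uses $|u|^{\alpha-1}u\in C^0([0,T];L^{\frac{\alpha+1}{\alpha}}(K))$, which holds by Lemma \ref{lem:equivalent_time_cont}, and $f\in L^{\beta}(0,T;L^{\beta}(K))$. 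Letting $\varepsilon\to0$, the term $\frac1\varepsilon\int_\varepsilon^{2\varepsilon}\int_\Omega |u|^{\alpha-1}u\,\varphi\,dx\,dt$ converges to $\int_\Omega\tilde u_\alpha\varphi(x,0)\,dx$ by time continuity. This produces
\begin{equation*}
\iint_{\Omega_T}\left(\langle A(x,t,\nabla u),\nabla\varphi\rangle-|u|^{\alpha-1}u\,\partial_t\varphi\right)dx\,dt-\int_\Omega\tilde u_\alpha\,\varphi(x,0)\,dx=\iint_{\Omega_T}f\varphi\,dx\,dt .
\end{equation*}
The remaining terms pass to the limit by dominated convergence.

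We then insert $\varphi=\phi_{\bar h}$. Its spatial support lies in $K$, and it is smooth in space with $\nabla\phi_{\bar h}=(\nabla\phi)_{\bar h}$ by Lemma \ref{lem:expmolproperties}(iii). It is, however, not zero for $t\ge\tau$. We handle this by noting that the extension above only needs vanishing near $t=T$ up to a boundary term at $T$. Alternatively, one may use that $\phi_{\bar h}(\cdot,t)=\frac1h\int_t^{\tau}e^{(t-s)/h}\phi\,ds$ vanishes for $t\ge\tau$ anyway, since $\phi=0$ there; this is the cleaner route, so it really is admissible.

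By Fubini, $\iint g\,\psi_{\bar h}=\iint g_h\,\psi$ for all $g$ in the relevant dual spaces. Applying this gives
\begin{equation*}
\iint\langle A,(\nabla\phi)_{\bar h}\rangle=\iint\langle[A]_h,\nabla\phi\rangle \quad\text{and}\quad \iint f\phi_{\bar h}=\iint f_h\phi .
\end{equation*}
For the time term we use Lemma \ref{lem:expmolproperties}(ii), $\partial_t\phi_{\bar h}=(\phi_{\bar h}-\phi)/h$. With $w:=|u|^{\alpha-1}u$ this yields
\begin{equation*}
-\iint w\,\partial_t\phi_{\bar h}=\iint\frac{w-w_h}{h}\,\phi=\iint\partial_t w_h\,\phi .
\end{equation*}
Here we used Fubini once more in the form $\iint w\,\phi_{\bar h}=\iint w_h\,\phi$. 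Collecting terms gives \eqref{weak:expmollified}.

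The main obstacle is the first step. One must justify the extended weak formulation, including the initial term, for merely Lipschitz-in-time test functions, which rests on the continuity of $w$ into $L^{\frac{\alpha+1}{\alpha}}(K)$ supplied by Lemma \ref{lem:equivalent_time_cont}. One must also check that all integrability requirements of the Fubini manipulations hold with $f\in L^\beta$ and $\phi_{\bar h}\in L^{\beta'}$; here $\beta>1$ is needed, together with the boundedness of $\phi$.
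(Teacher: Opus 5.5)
Your proof is correct and follows essentially the paper's argument. The paper tests directly with $\eta_\varepsilon\phi_{\bar h}$, where $\eta_\varepsilon(t)=\min\{t/\varepsilon,1\}$, and lets $\varepsilon\to0$ using the continuity of $|u|^{\alpha-1}u$ into $L^{\frac{\alpha+1}{\alpha}}(K)$ together with Fubini; you instead first extend the weak formulation to test functions not vanishing at $t=0$ and then insert $\phi_{\bar h}$, which is the same computation in a different order.

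Two points of exposition to tidy up. First, drop the remark that $\phi_{\bar h}$ is nonzero for $t\ge\tau$: it vanishes there, as you then observe. Second, $f\in L^{1}(K\times(0,T))$ together with the boundedness of $\phi_{\bar h}$ already suffices, so $\beta>1$ is not actually used in this step.
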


\noindent \begin{proof}[\bfseries{Proof}]Let $\phi$ be as in the
statement of the lemma. For $\varepsilon\in(0,T)$, consider the piecewise
smooth function 
\begin{align*}
\eta_{\varepsilon}(t):=\begin{cases}
\,\frac{t}{\varepsilon} & \mathrm{if}\,\,t\in[0,\varepsilon],\\
\,1 & \mathrm{if}\,\,t\in(\varepsilon,T],
\end{cases}
\end{align*}
and use \eqref{eq:weak_form} with the test function $\varphi=\eta_{\varepsilon}\,\phi_{\bar{h}}$,
$h\in(0,T)$. Taking the limit as $\varepsilon\to0$ and using Fubini's
theorem together with \eqref{eq:coeff_limit}, we see that the integral
of the elliptic term converges to the integral of $\langle[A(x,\cdot,\nabla u)]_{h},\nabla\phi\rangle$.
Now, note that
\begin{align*}
\iint_{\Omega_{T}}\vert u\vert^{\alpha-1}u\,\partial_{t}(\eta_{\varepsilon}\,\phi_{\bar{h}})\,dx\,dt\,=\,\iint_{\Omega_{T}}\vert u\vert^{\alpha-1}u\,\eta_{\varepsilon}\,\frac{\phi_{\bar{h}}-\phi}{h}\,dx\,dt\,+\,\varepsilon^{-1}\int_{0}^{\varepsilon}\int_{\Omega}\vert u\vert^{\alpha-1}u\,\phi_{\bar{h}}\,dx\,dt\,.
\end{align*}
In the first term on the right-hand side, we can pass to the limit
as $\varepsilon\to0$, use Fubini's theorem to move the mollification
over to $u$ and apply Lemma \ref{lem:expmolproperties} (ii) to obtain
the integral of $\partial_{t}[|u|^{\alpha-1}u]_{h}\,\phi$. It remains
to analyze the behavior of the last term as $\varepsilon\to0$. We
can rewrite this term as\begin{align}\label{eq:2nd_term}
\varepsilon^{-1}\int_{0}^{\varepsilon}\int_{K}\vert u\vert^{\alpha-1}u\,\phi_{\bar{h}}\,dx\,dt\,= &\,\,\,\varepsilon^{-1}\int_{0}^{\varepsilon}\int_{K}(\vert u\vert^{\alpha-1}u)(x,t)\,\phi_{\bar{h}}(x,0)\,dx\,dt\nonumber\\
&+\,\varepsilon^{-1}\int_{0}^{\varepsilon}\int_{K}(\vert u\vert^{\alpha-1}u)(x,t)\,[\phi_{\bar{h}}(x,t)-\phi_{\bar{h}}(x,0)]\,dx\,dt\,.
\end{align}The second term on the right-hand side of \foreignlanguage{british}{\eqref{eq:2nd_term}
}converges to zero as $\varepsilon\rightarrow0$, since $\phi_{\bar{h}}$
is uniformly continuous in $t$ and $\Vert\vert u\vert^{\alpha-1}u(\cdot,t)\Vert_{L^{\frac{1}{\alpha}+1}(K)}$
is bounded independently of $t$. The first term on the right-hand
side \foreignlanguage{british}{of \eqref{eq:2nd_term} }converges
to the second integral on the left-hand side of \eqref{weak:expmollified},
since $\vert u\vert^{\alpha-1}u\in C^{0}([0,T];L^{\frac{1}{\alpha}+1}(K))$
by Lemma \ref{lem:equivalent_time_cont} and $\phi_{\bar{h}}(\cdot,0)\in L^{\alpha+1}(\Omega)$.

\selectlanguage{british}%
\noindent $\hspace*{1em}$\foreignlanguage{american}{As for the integral
involving $f$, observe that
\begin{equation}
\iint_{\Omega_{T}}f\,\phi_{\bar{h}}\,(\eta_{\varepsilon}-1)\,dx\,dt\,=\int_{0}^{\varepsilon}\int_{K}f\,\phi_{\bar{h}}\,(\eta_{\varepsilon}-1)\,dx\,dt\,,\label{eq:trivial}
\end{equation}
since $\eta_{\varepsilon}=1$ on $(\varepsilon,T]$. By the integrability
assumption on $f$, the fact that $\phi_{\bar{h}}\in L^{\infty}(\Omega\times[0,T])$,
the estimate $\vert\eta_{\varepsilon}-1|\leq1$ and the pointwise
convergence $\eta_{\varepsilon}\to1$ as $\varepsilon\to0$ on $(0,T],$
we may apply the Dominated Convergence Theorem to the last integral.
We thus obtain
\[
\int_{0}^{\varepsilon}\int_{K}f\,\phi_{\bar{h}}\,(\eta_{\varepsilon}-1)\,dx\,dt\,\longrightarrow0\,\,\,\,\,\,\,\,\mathrm{as}\,\,\varepsilon\rightarrow0\,,
\]
which, together with (\ref{eq:trivial}), yields
\[
\iint_{\Omega_{T}}f\,\eta_{\varepsilon}\,\phi_{\bar{h}}\,dx\,dt\,\longrightarrow\iint_{\Omega_{T}}f\,\phi_{\bar{h}}\,dx\,dt\,=\iint_{\Omega_{T}}f_{h}\,\phi\,dx\,dt\,\,\,\,\,\,\,\,\,\mathrm{as}\,\,\varepsilon\rightarrow0\,,
\]
where the last equality is obtained by a further application of Fubini's
theorem. This completes the proof.\end{proof}}
\selectlanguage{american}%

\section{Energy estimates\label{sec:energy}}

\selectlanguage{british}%
$\hspace*{1em}$\foreignlanguage{american}{In this section, we establish
a general energy estimate that will serve to derive the classical
energy estimates in Lemma \ref{lem:energy-classical} below and will
play a key role in the subsequent sections. This estimate, in the
form needed for our purposes, is provided by the following lemma.}
\selectlanguage{american}%
\begin{lem}
\label{lem:energy-general} Let $\Omega$ be an open, possibly unbounded,
subset\foreignlanguage{british}{ of $\mathbb{R}^{N}$, with $N\geq2$,
and let $T>0$}. Let $F:\mathbb{R}\rightarrow\mathbb{R}$ be a non-decreasing,
Lipschitz and piecewise $C^{1}$ function satisfying
\begin{align}
F(s)=0\,\,\,\,\mathit{whenever}\,\,\,\,F'(s)=0\,.\label{Assunzione}
\end{align}
Let $g:\mathbb{R}\to\mathbb{R}$ be the unique function such that
\begin{align*}
F(s)=g(|s|^{\alpha-1}s)\,,
\end{align*}
and let $G:\mathbb{R}\to\mathbb{R}$ be any integral function of $g$.
Moreover, assume that \eqref{eq:coeff_limit} holds, let $u$ be a
weak solution of \eqref{eq:diffusion} in $\Omega_{T}$, and suppose
that, for some $\lambda>1$, 
\begin{equation}
F\circ u\,\in\,L^{\lambda}(0,T;L_{\mathrm{loc}}^{\lambda}(\Omega))\,\,\,\,\,\,\,\,\,\,\,\mathit{and}\,\,\,\,\,\,\,\,\,\,\,f\,\in\,L^{\lambda'}(0,T;L_{\mathrm{loc}}^{\lambda'}(\Omega))\,,\label{eq:esponenti_coniugati}
\end{equation}
where $\lambda':=\lambda/(\lambda-1)$ is the conjugate exponent of
$\lambda$. Then, for every $\eta\in C_{0}^{\infty}(\Omega;[0,\infty))$
and every $\varphi\in C^{\infty}([0,T];[0,\infty))$, we have for
all $0\leq\tau_{1}<\tau_{2}\leq T$ the estimate\begin{align}\label{eq:energy-general}
&\int_{\Omega}\eta^{p}\varphi\,G(|u|^{\alpha-1}u)(x,\tau_{2})\,dx\,+\,\frac{1}{\gamma}\,\sum_{i=1}^{N}\iint_{\Omega\times[\tau_{1},\tau_{2}]}(|\partial_{i}u|-\delta_{i})_{+}^{p}\,\eta^{p}\,\varphi\,F'(u)\,dx\,dt\nonumber\\
&\,\,\,\leq\int_{\Omega}\eta^{p}\varphi\,G(|u|^{\alpha-1}u)(x,\tau_{1})\,dx\,+\iint_{\Omega\times[\tau_{1},\tau_{2}]}\eta^{p}\,\partial_{t}\varphi\,G(|u|^{\alpha-1}u)\,dx\,dt\\
&\,\,\,\,\,\,\,\,\,\,+\iint_{\Omega\times[\tau_{1},\tau_{2}]}f\eta^{p}\varphi F(u)\,dx\,dt\,+\,\gamma\sum_{i=1}^{N}\iint_{\Omega\times[\tau_{1},\tau_{2}]}|F(u)|^{p}\,(F'(u))^{1-p}\,|\partial_{i}\eta|^{p}\,\varphi\,\chi_{\{|\partial_{i}u|\,>\,\delta_{i}\}}\,dx\,dt\,,\nonumber
\end{align}where $\gamma$ is a positive constant depending only on $p$ and
$\Lambda$.
\end{lem}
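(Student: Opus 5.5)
We test the mollified weak formulation of Lemma \ref{lem:mollified} with $\phi=\eta^{p}\varphi\,\chi_{[\tau_{1},\tau_{2}]}\,F(u_{h}^{*})$, where $u_{h}^{*}$ is chosen so that the time term has a sign. Concretely, write $w:=|u|^{\alpha-1}u$ and use $\phi=\eta^{p}\varphi\,\zeta_{\varepsilon}(t)\,g(w_{h})$, where $\zeta_{\varepsilon}$ is a piecewise linear cutoff approximating $\chi_{[\tau_{1},\tau_{2}]}$. Since $g$ is non-decreasing and $F(s)=g(w)$, we have $g(w_{h})\to g(w)=F(u)$ as $h\to0$. Strictly, Lemma \ref{lem:mollified} requires smooth test functions, but a density argument extends it to Lipschitz-in-time test functions in $L^{p}(W^{1,p}_0)$ with compact support. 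The integrability hypothesis \eqref{eq:esponenti_coniugati} is exactly what makes $\iint f_{h}\phi$ well defined and convergent, because $F\circ u\in L^{\lambda}$ and $f\in L^{\lambda'}$. We will need $g$ to be Lipschitz on bounded sets; where it is not (for instance $g(w)=F(w^{1/\alpha})$ with $\alpha>1$ near $0$), we first truncate or regularize $F$ and remove this at the end by monotone convergence.

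\textbf{Time term.} Using $\partial_{t}w_{h}=(w-w_{h})/h$ and the monotonicity of $g$, we get
\[
\partial_{t}w_{h}\,g(w_{h})=\partial_{t}G(w_{h})+\tfrac{1}{h}(w-w_{h})\,g(w_{h})\le \partial_{t}G(w_{h})+\tfrac1h(w-w_{h})g(w).
\]
The standard trick is to observe that $(w-w_h)(g(w_h)-g(w))\le0$, so $\partial_t w_h\, g(w_h)\ge \partial_t G(w_h) - \tfrac1h (w - w_h)(g(w)-g(w_h))$; more precisely, $\partial_t w_h\, g(w_h) \ge \partial_t G(w_h)$ is what we want, and it fails only by a nonnegative correction that has the right sign. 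Indeed $G(w)-G(w_h)\ge g(w_h)(w-w_h)$ by convexity of $G$ (since $g$ is non-decreasing), which gives $\partial_{t}G(w_{h})=g(w_{h})\partial_{t}w_{h}$ exactly, and no correction is needed. We then integrate by parts in time against $\eta^{p}\varphi\zeta_{\varepsilon}$ and let $h\to0$, using $w_{h}\to w$ in $C^{0}([0,T];L^{\frac{\alpha+1}{\alpha}}_{\mathrm{loc}})$ (Lemmas \ref{lem:expmolproperties} and \ref{lem:equivalent_time_cont}) and the growth $|G(w)|\lesssim|w|\,|g(w)|$. The initial term $\int\tilde u_{\alpha}\phi_{\bar h}(x,0)$ vanishes when $\tau_{1}>0$; when $\tau_{1}=0$ it combines with the boundary term to produce the value of $G$ at time $0$. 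Letting $\varepsilon\to0$ then yields the terms $\int\eta^{p}\varphi G(w)(\tau_{2})-\int\eta^{p}\varphi G(w)(\tau_{1})-\iint\eta^{p}\partial_{t}\varphi\,G(w)$ on the correct sides.

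\textbf{Elliptic term.} After $h\to0$ (using $[A]_{h}\to A$ in $L^{p'}$) we must treat
\[
\sum_{i}\iint a_{i}(|\partial_{i}u|-\delta_{i})_{+}^{p-1}\tfrac{\partial_{i}u}{|\partial_{i}u|}\bigl(\eta^{p}\varphi F'(u)\partial_{i}u+p\eta^{p-1}\partial_{i}\eta\,\varphi F(u)\bigr).
\]
For the first part, on $\{|\partial_{i}u|>\delta_{i}\}$ we have $(|\partial_{i}u|-\delta_{i})_{+}^{p-1}|\partial_{i}u|\ge(|\partial_{i}u|-\delta_{i})_{+}^{p}$, so it dominates $\Lambda^{-1}\sum_{i}\iint(|\partial_{i}u|-\delta_{i})_{+}^{p}\eta^{p}\varphi F'(u)$. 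The second part lives only on $\{|\partial_{i}u|>\delta_{i}\}$. Where $F'(u)>0$, we apply Young's inequality, writing
\[
(\cdot)^{p-1}\eta^{p-1}|F|=\bigl[(\cdot)^{p-1}\eta^{p-1}F'^{\frac{p-1}{p}}\bigr]\bigl[|F|F'^{\frac{1-p}{p}}\bigr],
\]
and absorb the result into the left-hand side. This leaves the term $\gamma|F|^{p}F'^{1-p}|\partial_{i}\eta|^{p}\varphi\chi_{\{|\partial_iu|>\delta_i\}}$. Where $F'(u)=0$, hypothesis \eqref{Assunzione} gives $F(u)=0$, so the term vanishes; this is precisely where \eqref{Assunzione} is used. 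The source term converges directly to $\iint f\eta^{p}\varphi F(u)$.

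\textbf{Main obstacle.} The main difficulty is the rigorous passage $h\to0$ in the time term and the admissibility of $g(w_{h})$ as a test function when $g$ is merely continuous, non-decreasing and possibly non-Lipschitz. On top of this, $F$ is only piecewise $C^{1}$, so the chain rule $\nabla F(u)=F'(u)\nabla u$ must be justified: it holds a.e., because $\nabla u=0$ a.e. on the level sets where $F$ has a kink. I would handle the regularity issue by first proving the estimate for $F$ replaced by truncated and mollified versions $F_{k}$, and then passing to the limit.
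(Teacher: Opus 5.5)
The step that fails is your choice of the mollified test function $\phi=\eta^{p}\varphi\,\zeta_{\varepsilon}\,g(w_{h})$, with $w=|u|^{\alpha-1}u$. It turns the time term into an exact derivative, but it moves the difficulty into the elliptic term, where it cannot be resolved.

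There you need $\nabla[g(w_h)]=g'(w_h)\,(\nabla w)_h\in L^{p}$. For the limit $h\to0$ you also need this gradient to converge to $F'(u)\nabla u$ in $L^p$. Neither is available, because the hypotheses give spatial Sobolev regularity of $u$, not of $|u|^{\alpha-1}u$.
\begin{itemize}
\item For $\alpha<1$, the factor $|u|^{\alpha-1}$ is singular on $\{u=0\}$. Since $w_h$ averages $w$ over past times, $g(w_h)$ can contain terms like $(c_1|x_1|^{\alpha}+c_2)^{1/\alpha}$. This happens, for instance, with $u=|x_1|$ on an initial time interval and $u$ constant later; with $\delta_i\ge1$ this is even a genuine solution, with a bounded $f$, since then $A(x,t,\nabla u)=0$. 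The gradient of such a term behaves like $|x_1|^{\alpha-1}$, which is not in $L^{p}_{\mathrm{loc}}$ once $p(1-\alpha)\ge1$.
\item For $\alpha>1$, the factor $(|u|^{\alpha-1}\nabla u)_h$ is controlled in $L^p$ only if $u$ is locally bounded. That is exactly what these energy estimates are later used to prove.
\item Even where everything is defined, $g'(w_h)(\nabla w)_h$ does not reduce to $F'(u)\nabla u$, because the two factors are averaged in time separately.
\end{itemize}
So $\iint\langle[A]_h,\nabla\phi\rangle$ need not be finite. Your sentence ``after $h\to0$ (using $[A]_h\to A$ in $L^{p'}$) we must treat $\eta^p\varphi F'(u)\partial_iu+\dots$'' silently assumes exactly what is missing. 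Truncating or mollifying $F$ does not help, since the obstruction is the spatial regularity of $w_h$, not the regularity of $g$.

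The missing idea is the monotonicity trick you wrote down and then discarded. Keep the test function unmollified: $\phi=F(u)\,\eta^{p}\,\xi$, with $\xi=\varphi\psi$ and $\psi$ a trapezoidal cut-off supported in $[\tau_1,\tau_2]$, first for $0<\tau_1<\tau_2<T$.
\begin{itemize}
\item This $\phi$ is admissible in \eqref{weak:expmollified}, because no time derivative falls on it. Its gradient $\nabla\phi=F'(u)\nabla u\,\eta^p\xi+pF(u)\eta^{p-1}\nabla\eta\,\xi$ lies in $L^p$ by the chain rule, using that $\nabla u=0$ a.e.\ on the countably many level sets where $F'$ is undefined.
\item Since $\partial_tw_h=(w-w_h)/h$ and $g$ is non-decreasing, $(w-w_h)(g(w)-g(w_h))\ge0$. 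Hence $\partial_tw_h\,F(u)=\partial_tw_h\,g(w)\ge\partial_tw_h\,g(w_h)=\partial_tG(w_h)$. This one-sided inequality is all you need, because the lemma is itself an inequality.
\item After integrating by parts in time, the limit $h\to0$ is harmless. The initial-datum term tends to zero because $\xi=0$ for $t<\tau_1$. The elliptic term converges because its test gradient is now independent of $h$. Finally, $f_h\phi\to fF(u)\eta^p\xi$ by the $L^{\lambda}$--$L^{\lambda'}$ pairing.
\end{itemize}
From there your Young-inequality treatment of the elliptic term goes through as you describe, including the use of \eqref{Assunzione} where $F'(u)=0$. You then let the trapezoid tend to $\chi_{(\tau_1,\tau_2)}$, using monotone and dominated convergence together with the continuity of $t\mapsto\eta^pG(w)(\cdot,t)$ in $L^1$. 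Finally let $\tau_1\to0$ and $\tau_2\to T$.
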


\noindent \begin{proof}[\bfseries{Proof}]Consider first $0<\tau_{1}<\tau_{2}<T$.
Let $\delta\in\left(0,\frac{\tau_{2}\,-\,\tau_{1}}{2}\right)$ and
test the mollified weak formulation \eqref{weak:expmollified} with
\[
\phi=F(u(x,t))\,\eta^{p}(x)\,\xi(t)\,,
\]
where $\eta$ is as in the statement of the lemma and $\xi(t):=\varphi(t)\,\psi(t)$,
being $\varphi$ as above while $\psi$ is the trapezoidal function
\begin{align*}
\psi(t)=\begin{cases}
\,0 & \mathrm{if}\,\,t<\tau_{1}\,,\\
\,\delta^{-1}(t-\tau_{1}) & \mathrm{if}\,\,t\in[\tau_{1},\tau_{1}+\delta]\,,\\
\,1 & \mathrm{if}\,\,t\in(\tau_{1}+\delta,\tau_{2}-\delta)\,,\\
\,1-\delta^{-1}(t-\tau_{2}+\delta) & \mathrm{if}\,\,t\in[\tau_{2}-\delta,\tau_{2}]\,,\\
\,0 & \mathrm{if}\,\,t\geq\tau_{2}\,.
\end{cases}
\end{align*}
Due to \eqref{Assunzione}, the chain rule for weak derivatives ensures
that $\phi$ is an admissible test function in the mollified weak
formulation, which takes the form\begin{align}\label{eq:form_debole}
&\iint_{\Omega_{T}}\left(\langle[A(x,t,\nabla u)]_{h},\nabla\phi\rangle+\,\partial_{t}[|u|^{\alpha-1}u]_{h}\,\phi\right)dx\,dt\nonumber\\
&\,\,\,\,\,\,\,-\int_{0}^{T}\int_{\Omega}\tilde{u}_{\alpha}(x,0)\,F(u)\,\eta^{p}(x)\,\xi(t)\,\frac{1}{h}\,e^{-\,\frac{t}{h}}\,dx\,dt\,=\iint_{\Omega_{T}}f_{h}\,\phi\,dx\,dt\,,
\end{align}where $h\in(0,T)$ and $\tilde{u}_{\alpha}(x,0)$ denotes the value
at time zero of the continuous representative of $|u|^{\alpha-1}u$
as a map $[0,T]\to L^{\frac{\alpha+1}{\alpha}}(\mathrm{supp}\,\eta)$.
Note that the last integral in \foreignlanguage{british}{\eqref{eq:form_debole}}
is finite by virtue of (\ref{eq:esponenti_coniugati}), Lemma \ref{lem:expmolproperties}
and Remark \ref{enu:expmol_local_integ}. Now, arguing as in \cite[estimate (5.5)]{CiaVeVe},
we have
\begin{equation}
\partial_{t}[|u|^{\alpha-1}u]_{h}\,\phi\,\geq\,\eta^{p}\,\xi\,\partial_{t}\left(G([|u|^{\alpha-1}u]_{h})\right).\label{eq:CiaVeVe1}
\end{equation}
Combining \foreignlanguage{british}{\eqref{eq:form_debole} }and \eqref{eq:CiaVeVe1},
and moving the time derivative to the test function, we end up with\begin{align}\label{eq:disug_molli}
&\iint_{\Omega_{T}}\left(\langle[A(x,t,\nabla u)]_{h},\nabla\phi\rangle-\,\eta^{p}\,\partial_{t}\xi\,G([|u|^{\alpha-1}u]_{h})\right)dx\,dt\nonumber\\
&\,\,\,\,\,\,\,-\iint_{\Omega_{T}}\tilde{u}_{\alpha}(x,0)\,F(u)\,\eta^{p}(x)\,\xi(t)\,\frac{1}{h}\,e^{-\,\frac{t}{h}}\,dx\,dt\,\leq\iint_{\Omega_{T}}f_{h}\,\phi\,dx\,dt\,.
\end{align}We now pass to the limit as $h\rightarrow0$. The first integral on
the last line of \foreignlanguage{british}{\eqref{eq:disug_molli}
}vanishes in this limit by the Dominated Convergence Theorem. This
can be seen by noting that $\tilde{u}_{\alpha}(\cdot,0)\,F(u)$ is
locally integrable and the term $\xi(t)\,\frac{1}{h}\,e^{-\,\frac{t}{h}}$
remains bounded independently of $h$, since $\xi=0$ for times less
than $\tau_{1}$. In the remaining terms of \foreignlanguage{british}{\eqref{eq:disug_molli}},
passing to the limit as $h\rightarrow0$ poses no difficulty and we
recover the corresponding terms without time mollification. Thus we
obtain
\begin{equation}
\iint_{\Omega_{T}}\left(\langle A(x,t,\nabla u),\nabla\phi\rangle-\,\eta^{p}\,\partial_{t}\xi\,G(|u|^{\alpha-1}u)\right)dx\,dt\,\leq\iint_{\Omega_{T}}f\,F(u)\,\eta^{p}\,\xi\,dx\,dt\,.\label{eq:post-limit}
\end{equation}
In order to estimate the integral of the elliptic term, we first note
that we can exclude the set of points where $F'\circ u$ is ill-defined.
Indeed, there is an at most countable set $S\subset\mathbb{R}$ where
$F'$ is not defined. Since $\nabla u$ vanishes almost everywhere
on each level set of $u$, it follows that $\nabla u=0$ almost everywhere
on $u^{-1}(S)$. Now observe that for all $i\in\{1,\ldots,N\},$
\begin{equation}
\vert A_{i}(x,t,\nabla u)\vert\,=\,a_{i}(x,t)\,(|\partial_{i}u|-\delta_{i})_{+}^{p-1}\,\leq\,\Lambda\,\vert\nabla u\vert^{p-1},\label{eq:structure_conditions}
\end{equation}
and hence $A(x,t,\nabla u)$ also vanishes almost everywhere on $u^{-1}(S)$.
Setting $E:=\Omega_{T}\,\backslash\,u^{-1}(S)$, we thus have
\begin{equation}
\iint_{\Omega_{T}}\langle A(x,t,\nabla u),\nabla\phi\rangle\,dx\,dt\,=\,\iint_{E}\langle A(x,t,\nabla u),\nabla\phi\rangle\,dx\,dt\,.\label{eq:integrale_ellittico}
\end{equation}
For any point of $E$ where $F'\circ u\neq0$ we can use the definition
of $A(x,t,\nabla u)$, \eqref{eq:coeff_limit} and Young's inequality
to obtain the following estimate:\begin{align*}
&\langle A(x,t,\nabla u),\nabla\phi\rangle \,=\,F'(u)\,\eta^{p}\,\xi\,\langle A(x,t,\nabla u),\nabla u\rangle\,+\,p\,F(u)\,\eta^{p-1}\,\xi\,\langle A(x,t,\nabla u),\nabla\eta\rangle\\
&\,\,\,\,\,=\,\sum_{i=1}^{N}a_{i}\,(|\partial_{i}u|-\delta_{i})_{+}^{p-1}\,\vert\partial_{i}u\vert\,F'(u)\,\eta^{p}\,\xi\,+\,p\,F(u)\,\eta^{p-1}\,\xi\,\sum_{i=1}^{N}a_{i}\,(|\partial_{i}u|-\delta_{i})_{+}^{p-1}\frac{\partial_{i}u}{\vert\partial_{i}u\vert}\,\partial_{i}\eta\\
&\,\,\,\,\,\geq\,\xi\,\Lambda^{-1}\sum_{i=1}^{N}(|\partial_{i}u|-\delta_{i})_{+}^{p}\,F'(u)\,\eta^{p}\,-\,p\Lambda\,\xi\,\sum_{i=1}^{N}(|\partial_{i}u|-\delta_{i})_{+}^{p-1}\,\vert F(u)\vert\,\eta^{p-1}\,\vert\partial_{i}\eta\vert\\
&\,\,\,\,\,\geq\,(2\Lambda)^{-1}\,\xi\,\sum_{i=1}^{N}(|\partial_{i}u|-\delta_{i})_{+}^{p}\,F'(u)\,\eta^{p}\,-\,c(p,\Lambda)\,\xi\,\sum_{i=1}^{N}\vert F(u)\vert^{p}\,(F'(u))^{1-p}\,\vert\partial_{i}\eta\vert^{p}\,\chi_{\{\vert\partial_{i}u\vert\,>\,\delta_{i}\}}\,.
\end{align*}At the points of $E$ where $F'\circ u=0$ we obtain the same estimate
as above, but without the terms involving $F(u)$, due to \eqref{Assunzione}.
Thus, choosing $\gamma=\max\,\{2\Lambda,c(p,\Lambda)\}$ we have at
every point of $E$ that 
\[
\frac{1}{\gamma}\,\xi\,\sum_{i=1}^{N}(|\partial_{i}u|-\delta_{i})_{+}^{p}\,F'(u)\,\eta^{p}\,\leq\,\langle A(x,t,\nabla u),\nabla\phi\rangle\,+\,\gamma\,\xi\,\sum_{i=1}^{N}\vert F(u)\vert^{p}\,(F'(u))^{1-p}\,\vert\partial_{i}\eta\vert^{p}\,\chi_{\{\vert\partial_{i}u\vert\,>\,\delta_{i}\}}\,,
\]
where the last term is interpreted as zero whenever $F\circ u=0$.
Note that at the points of $E$ where $F\circ u\neq0$, the last term
is well-defined due to \eqref{Assunzione}. The first two terms of
the previous estimate are integrable, and the last term is non-negative,
so we can integrate over $E$ to obtain\begin{align}\label{eq:integrals_E}
&\frac{1}{\gamma}\iint_{E}\xi\,\sum_{i=1}^{N}(|\partial_{i}u|-\delta_{i})_{+}^{p}\,F'(u)\,\eta^{p}\,dx\,dt\nonumber\\
&\,\,\,\,\,\,\,\leq \iint_{E}\langle A(x,t,\nabla u),\nabla\phi\rangle\,dx\,dt\nonumber\\
&\,\,\,\,\,\,\,\,\,\,\,\,\,\,+\,\gamma\iint_{E\,\cap\,\{F\,\circ\,u\,\neq\,0\}}\xi\,\sum_{i=1}^{N}\vert F(u)\vert^{p}\,(F'(u))^{1-p}\,\vert\partial_{i}\eta\vert^{p}\,\chi_{\{\vert\partial_{i}u\vert\,>\,\delta_{i}\}}\,dx\,dt\,,
\end{align}where the last integral could potentially be infinite. In the first
integral, we may replace $E$ by $\Omega_{T}$ with the understanding
that $\nabla u=0$ almost everywhere on the set where $F'(u)$ is
ill-defined, so that the integrand can be interpreted as zero on this
set. Similarly, in the last integral we may replace $E\cap\{F\circ u\neq0\}$
by $\Omega_{T}$, since except on a set of measure zero, the last
integrand is well-defined when $\nabla u\neq0$ and $F\circ u\neq0$.
With these modifications, we combine \foreignlanguage{british}{\eqref{eq:integrals_E}}
with \eqref{eq:post-limit} and \eqref{eq:integrale_ellittico} to
obtain\begin{align}\label{eq:pre-limite}
&\frac{1}{\gamma}\iint_{\Omega_{T}}\xi\,\sum_{i=1}^{N}(|\partial_{i}u|-\delta_{i})_{+}^{p}\,F'(u)\,\eta^{p}\,dx\,dt\nonumber\\
&\,\,\,\,\,\,\,\leq\,\gamma\iint_{\Omega_{T}}\xi\,\sum_{i=1}^{N}\vert F(u)\vert^{p}\,(F'(u))^{1-p}\,\vert\partial_{i}\eta\vert^{p}\,\chi_{\{\vert\partial_{i}u\vert\,>\,\delta_{i}\}}\,dx\,dt\nonumber\\
&\,\,\,\,\,\,\,\,\,\,\,\,\,\,+\iint_{\mathrm{supp}\,\eta\,\times\,(0,T)}f\,F(u)\,\eta^{p}\,\xi\,dx\,dt\,+\iint_{\Omega_{T}}\eta^{p}\,\partial_{t}\xi\,G(|u|^{\alpha-1}u)\,dx\,dt\,.
\end{align}Note that the function $\xi=\varphi\psi$ converges pointwise from
below to $\varphi\chi_{(\tau_{1},\tau_{2})}$ as $\delta\to0$. Therefore,
by the Monotone Convergence Theorem, the first two integrals in the
above estimate tend to the corresponding terms in \foreignlanguage{british}{\eqref{eq:energy-general}}
as $\delta\to0$.\\
Now we turn our attention to the third integral in \foreignlanguage{british}{\eqref{eq:pre-limite}}.
Using the properties of $\eta$, $\varphi$ and $\psi$, we obtain
\begin{equation}
\vert f\,F(u)\,\eta^{p}\,\xi\vert\,\leq\,\Vert\eta^{p}\Vert_{L^{\infty}(\mathrm{supp}\,\eta)}\,\Vert\varphi\Vert_{L^{\infty}([0,T])}\,\vert f\,F(u)\vert\,.\label{eq:dominating}
\end{equation}
By assumption (\ref{eq:esponenti_coniugati}), the right-hand side
of (\ref{eq:dominating}) belongs to $L^{1}(\mathrm{supp}\,\eta\times(0,T))$.
Therefore, we may apply the Dominated Convergence Theorem to conclude
that the third integral in \foreignlanguage{british}{\eqref{eq:pre-limite}}
converges to the corresponding term in \foreignlanguage{british}{\eqref{eq:energy-general}}
as $\delta\to0$.\\
To handle the last integral in \foreignlanguage{british}{\eqref{eq:pre-limite}},
we note that, by the definition of $\xi$, we have\begin{align}\label{eq:pre-limite02}
&\iint_{\Omega_{T}}\eta^{p}\,\partial_{t}\xi\,G(|u|^{\alpha-1}u)\,dx\,dt\nonumber\\
&\,\,\,\,\,\,\,=\iint_{\Omega_{T}}\eta^{p}\,\partial_{t}\varphi\,\psi\,G(|u|^{\alpha-1}u)\,dx\,dt\,+\,\frac{1}{\delta}\int_{\tau_{1}}^{\tau_{1}+\delta}\int_{\Omega}\eta^{p}\varphi\,G(|u|^{\alpha-1}u)\,dx\,dt\nonumber\\
&\,\,\,\,\,\,\,\,\,\,\,\,\,\,-\,\frac{1}{\delta}\int_{\tau_{2}-\delta}^{\tau_{2}}\int_{\Omega}\eta^{p}\varphi\,G(|u|^{\alpha-1}u)\,dx\,dt\,.
\end{align} In the first term on the right-hand side, we may pass to the limit
as $\delta\to0$ inside the integral by the Dominated Convergence
Theorem, replacing $\psi$ with $\chi_{(\tau_{1},\tau_{2})}$ and
recovering the corresponding term in \foreignlanguage{british}{\eqref{eq:energy-general}}.
The fact that $u\in C^{0}([0,T];L^{\alpha+1}(\Omega))$, together
with Lemma \ref{lem:equivalent_time_cont}, can be used to prove that
the map $t\mapsto\eta^{p}\,G(|u|^{\alpha-1}u)(\cdot,t)$ is continuous
into $L^{1}(\Omega)$. From this it follows that
\[
\frac{1}{\delta}\int_{\tau_{1}}^{\tau_{1}+\delta}\int_{\Omega}\eta^{p}\varphi\,G(|u|^{\alpha-1}u)\,dx\,dt\,\longrightarrow\int_{\Omega}\eta^{p}\varphi\,G(|u|^{\alpha-1}u)(x,\tau_{1})\,dx\,\,\,\,\,\,\,\,\mathrm{as}\,\,\delta\rightarrow0\,.
\]
The last integral in \foreignlanguage{british}{\eqref{eq:pre-limite02}}
can be treated in a similar way. Thus, letting $\delta\to0$ in \foreignlanguage{british}{\eqref{eq:pre-limite}},
we obtain estimate \foreignlanguage{british}{\eqref{eq:energy-general}}.
We have proved the assertion in the case $0<\tau_{1}<\tau_{2}<T$.
The result in the full range $0\leq\tau_{1}<\tau_{2}\le T$ now follows
from the continuity properties of $|u|^{\alpha-1}u$, assumption (\ref{eq:esponenti_coniugati}),
and the appropriate convergence theorems, by considering the limits
as $\tau_{1}\to0$ and $\tau_{2}\to T$.\end{proof}

\selectlanguage{british}%
\noindent $\hspace*{1em}$\foreignlanguage{american}{When the function
$F$ appearing in the previous lemma is chosen appropriately, we obtain
the following classical energy estimates.}
\selectlanguage{american}%
\begin{lem}
\label{lem:energy-classical} Let $\Omega$ be an open, possibly unbounded,
subset\foreignlanguage{british}{ of $\mathbb{R}^{N}$, with $N\geq2$,
and let $T>0$}. Moreover, assume that \eqref{eq:coeff_limit} holds,
let $u$ be a weak solution of \eqref{eq:diffusion} in $\Omega_{T}$,
and suppose that
\begin{equation}
f\,\in\,L^{p'_{\alpha+1}}(0,T;L_{\mathrm{loc}}^{p'_{\alpha+1}}(\Omega))\,,\label{eq:f_new_assump}
\end{equation}
where $p'_{\alpha+1}$ denotes the conjugate exponent of $p_{\alpha+1}:=p(1+\tfrac{\alpha+1}{N})$.
Then, for every $k\in\mathbb{R},$ for every $\eta\in C_{0}^{\infty}(\Omega;[0,\infty))$,
and every $\varphi\in C^{\infty}([0,T];[0,\infty))$ vanishing in
a neighborhood of the origin, we have\begin{align}\label{eq:energy-classical}
&\sup_{\tau\,\in\,[0,T]}\int_{\Omega\times\{\tau\}}(u^{\frac{\alpha+1}{2}}-k^{\frac{\alpha+1}{2}})_{\pm}^{2}\,\eta^{p}\varphi\,dx\,+\sum_{i=1}^{N}\iint_{\Omega_{T}}(|\partial_{i}u|-\delta_{i})_{+}^{p}\,\eta^{p}\,\varphi\,\chi_{\{(u\,-\,k)_{\pm}\,>\,0\}}\,dx\,dt\nonumber\\
&\,\,\,\,\,\,\,\leq\,\mathcal{C}\iint_{\Omega_{T}}(u-k)_{\pm}^{p}\,|\nabla\eta|^{p}\,\varphi\,dx\,dt\,+\,\mathcal{C}\iint_{\Omega_{T}}(u^{\frac{\alpha+1}{2}}-k^{\frac{\alpha+1}{2}})_{\pm}^{2}\,\eta^{p}\,(\partial_{t}\varphi)_{+}\,dx\,dt\nonumber\\
&\,\,\,\,\,\,\,\,\,\,\,\,\,\,+\,\mathcal{C}\iint_{\Omega_{T}}(u-k)_{\pm}\,|f|\,\eta^{p}\,\varphi\,dx\,dt\,,
\end{align}for a positive constant $\mathcal{C}$ depending only on $N$, $\alpha$,
$p$ and $\Lambda$.
\end{lem}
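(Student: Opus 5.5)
We apply Lemma \ref{lem:energy-general} with the truncation $F(s)=\pm(s-k)_{\pm}$, i.e. $F(s)=(s-k)_{+}$ for the "$+$" case and $F(s)=-(s-k)_{-}$ for the "$-$" case. This $F$ is non-decreasing, Lipschitz, piecewise $C^{1}$, and vanishes exactly where $F'=0$, so \eqref{Assunzione} holds. We take $\lambda=p_{\alpha+1}$, so that $\lambda'=p'_{\alpha+1}$ matches \eqref{eq:f_new_assump}.

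The integrability requirement $F\circ u\in L^{p_{\alpha+1}}(0,T;L^{p_{\alpha+1}}_{\mathrm{loc}}(\Omega))$ follows from Lemma \ref{lem:parabolic-sobolev}. Indeed, $(u-k)_{\pm}\zeta$, with $\zeta$ a spatial cutoff, lies in $L^{p}(0,T;W_{0}^{1,p})\cap L^{\infty}(0,T;L^{\alpha+1})$, because $u\in C^{0}([0,T];L^{\alpha+1}(\Omega))$.

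\textbf{Identifying the terms in \eqref{eq:energy-general}.} Since $F'=\chi_{\{(u-k)_{\pm}>0\}}$, the gradient term becomes the second term on the left of \eqref{eq:energy-classical}. Also $|F|^{p}(F')^{1-p}=(u-k)_{\pm}^{p}$, and $|\partial_{i}\eta|\le|\nabla\eta|$, so the cutoff term is at most $N\gamma\iint(u-k)_{\pm}^{p}|\nabla\eta|^{p}\varphi$. The source term is bounded by $\iint|f|(u-k)_{\pm}\eta^{p}\varphi$.

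For the time terms, write $w=|u|^{\alpha-1}u$ and $g(w)=F(|w|^{1/\alpha-1}w)$. We choose the primitive $G(w_{0})=\int_{k^{\alpha}}^{w_{0}}g(\sigma)\,d\sigma$, with the convention $k^{\alpha}=|k|^{\alpha-1}k$. Then $G\ge0$. Substituting $\sigma=s^{\alpha}$ gives
\[
G(u^{\alpha})=\int_{k}^{u}\alpha|s|^{\alpha-1}(s-k)_{\pm}\,ds\quad(\text{with sign}),
\]
which equals $\mathfrak{b}_{\alpha}[u,k]$ restricted to the region $\{(u-k)_{\pm}>0\}$. A direct computation confirms this identity. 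By Lemma \ref{lem:bdry_term_estimates_all_alpha}, $G(u^{\alpha})$ is then comparable, with constants depending on $\alpha$, to $(u^{\frac{\alpha+1}{2}}-k^{\frac{\alpha+1}{2}})_{\pm}^{2}$. On the complementary set both quantities vanish.

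\textbf{Conclusion.} We set $\tau_{1}=0$, and use that $\varphi$ vanishes near $0$ to kill the initial term. We estimate $\partial_{t}\varphi\,G\le(\partial_{t}\varphi)_{+}G$ and take $\tau_{2}=\tau$ arbitrary. Taking the supremum over $\tau$ in the first term, and separately $\tau_{2}=T$ for the gradient term, then adding the two inequalities yields \eqref{eq:energy-classical} with $\mathcal{C}=\mathcal{C}(N,\alpha,p,\Lambda)$.

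The main obstacle is the identification of $G(u^{\alpha})$ with a truncated $\mathfrak{b}_{\alpha}$ and the resulting two-sided comparison with $(u^{\frac{\alpha+1}{2}}-k^{\frac{\alpha+1}{2}})_{\pm}^{2}$. Care is needed when $u$ and $k$ have different signs. We will handle this by checking that the integral identity holds for all real $u,k$ and that the bound of Lemma \ref{lem:bdry_term_estimates_all_alpha} is valid for all $v,w\in\mathbb{R}$. The integrability verification via Lemma \ref{lem:parabolic-sobolev} is routine.
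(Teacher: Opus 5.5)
Your proposal is correct and follows the paper's proof essentially step by step. Both apply Lemma \ref{lem:energy-general} with $F(s)=(s-k)_{+}$ (resp.\ $F(s)=-(s-k)_{-}$) and $\lambda=p_{\alpha+1}$, justified via Lemma \ref{lem:parabolic-sobolev}; take the primitive $G$ based at $|k|^{\alpha-1}k$ so that $G(|u|^{\alpha-1}u)=\mathfrak{b}_{\alpha}[u,k]\,\chi_{\{(u-k)_{\pm}>0\}}$, compared to $(u^{\frac{\alpha+1}{2}}-k^{\frac{\alpha+1}{2}})_{\pm}^{2}$ through Lemma \ref{lem:bdry_term_estimates_all_alpha}; and conclude by setting $\tau_{1}=0$ with $\varphi(0)=0$, extending the right-hand side to $[0,T]$, and taking the supremum and adding.
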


\noindent \begin{proof}[\bfseries{Proof}]We prove that estimate \foreignlanguage{british}{\eqref{eq:energy-classical}}
holds with the positive part and briefly indicate at the end how the
proof can be modified to treat the negative part. We apply Lemma \ref{lem:energy-general}
with the function
\[
F(s)\,=\,(s-k)_{+}\,,
\]
which clearly satisfies the assumptions of the lemma. In particular,
assumption (\ref{eq:esponenti_coniugati}) holds with $\lambda=p_{\alpha+1}$,
since Lemma \ref{lem:parabolic-sobolev} readily implies that
\[
u\,\in\,L^{p_{\alpha+1}}(0,T;L_{\mathrm{loc}}^{p_{\alpha+1}}(\Omega))\,,
\]
and hence $F(u)=(u-k)_{+}$ also belongs to $L^{p_{\alpha+1}}(0,T;L_{\mathrm{loc}}^{p_{\alpha+1}}(\Omega))$.
With the above choice of $F$ we have
\[
F'(s)\,=\,\chi_{\{s\,>\,k\}}\,,\,\,\,\,\,\,\,\,\,\,\,\,g(s)\,=\,F(\vert s\vert^{\frac{1}{\alpha}\,-\,1}s)\,=\,(\vert s\vert^{\frac{1}{\alpha}\,-\,1}s-k)_{+}\,,
\]
and moreover we choose $G$ as the integral function
\[
G(\tau):=\int_{\vert k\vert^{\alpha-1}k}^{\tau}g(s)\,ds\,.
\]
An easy calculation reveals that
\[
G(\vert u\vert^{\alpha-1}u)\,=\,\mathfrak{b}_{\alpha}[u,k]\,\chi_{\{u\,>\,k\}}\,.
\]
Therefore, in this case estimate \foreignlanguage{british}{\eqref{eq:energy-general}}
takes the form\begin{align*}
&\int_{\Omega\times\{\tau_{2}\}}\mathfrak{b}_{\alpha}[u,k]\,\chi_{\{u\,>\,k\}}\,\eta^{p}\,\varphi\,dx\,+\,\frac{1}{\gamma}\,\sum_{i=1}^{N}\int_{\tau_{1}}^{\tau_{2}}\int_{\Omega}(\vert\partial_{i}u\vert-\delta_{i})_{+}^{p}\,\eta^{p}\,\varphi\,\chi_{\{u\,>\,k\}}\,dx\,dt\\
&\,\,\,\,\,\,\,\leq\,\int_{\Omega\times\{\tau_{1}\}}\mathfrak{b}_{\alpha}[u,k]\,\chi_{\{u\,>\,k\}}\,\eta^{p}\,\varphi\,dx\,+\,\int_{\tau_{1}}^{\tau_{2}}\int_{\Omega}\mathfrak{b}_{\alpha}[u,k]\,\chi_{\{u\,>\,k\}}\,\eta^{p}\,\partial_{t}\varphi\,dx\,dt\\
&\,\,\,\,\,\,\,\,\,\,\,\,\,\,+\,\gamma N\int_{\tau_{1}}^{\tau_{2}}\int_{\Omega}(u-k)_{+}^{p}\,\vert\nabla\eta\vert^{p}\,\varphi\,dx\,dt\,+\,\int_{\tau_{1}}^{\tau_{2}}\int_{\Omega}(u-k)_{+}\,f\,\eta^{p}\,\varphi\,dx\,dt\,.
\end{align*}Choosing $\tau_{1}=0$ and recalling that $\varphi(0)=0$, we infer
from the previous estimate that\begin{align}\label{eq:ultima-est-energy}
&\int_{\Omega\times\{\tau_{2}\}}\mathfrak{b}_{\alpha}[u,k]\,\chi_{\{u\,>\,k\}}\,\eta^{p}\,\varphi\,dx\,+\,\sum_{i=1}^{N}\int_{0}^{\tau_{2}}\int_{\Omega}(\vert\partial_{i}u\vert-\delta_{i})_{+}^{p}\,\eta^{p}\,\varphi\,\chi_{\{u\,>\,k\}}\,dx\,dt\nonumber\\
&\,\,\,\,\,\,\,\leq\,c\int_{0}^{\tau_{2}}\int_{\Omega}\mathfrak{b}_{\alpha}[u,k]\,\chi_{\{u\,>\,k\}}\,\eta^{p}\,(\partial_{t}\varphi)_{+}\,dx\,dt\,+\,c\int_{0}^{\tau_{2}}\int_{\Omega}(u-k)_{+}^{p}\,\vert\nabla\eta\vert^{p}\,\varphi\,dx\,dt\nonumber\\
&\,\,\,\,\,\,\,\,\,\,\,\,\,\,+\,c\int_{0}^{\tau_{2}}\int_{\Omega}(u-k)_{+}\,\vert f\vert\,\eta^{p}\,\varphi\,dx\,dt\,,
\end{align}where $c$ is a positive constant depending only on $N$, $p$ and
$\Lambda$. Note that $\mathfrak{b}_{\alpha}[u,k]\,\chi_{\{u\,>\,k\}}$
may be replaced by $(\vert u\vert^{\frac{\alpha-1}{2}}u-\vert k\vert^{\frac{\alpha-1}{2}}k)_{+}^{2}$
on both sides by (\ref{est:b-all-alpha}). We can also estimate the
right-hand side upwards by replacing $\tau_{2}$ with $T$. Since
both terms on the left-hand side of the resulting estimate are non-negative,
each of them can be bounded by the right-hand side. Taking the supremum
over $\tau_{2}\in[0,T]$ and adding the resulting inequalities, we
end up with \foreignlanguage{british}{\eqref{eq:energy-classical}}.

\selectlanguage{british}%
\noindent $\hspace*{1em}$Finally, in the case of the negative part,
one chooses $F(s)=-\,(s-k)_{-}$ and the remainder of the proof is
analogous.\foreignlanguage{american}{\end{proof}}

\selectlanguage{american}%
\noindent \begin{brem}\label{Osserva_su_f}The conclusion of Lemma
\ref{lem:energy-classical} holds, in particular, if $f$ satisfies
condition (\ref{assumpt:f}). Indeed, if (\ref{assumpt:f}) holds,
then
\[
\sigma'\,<\,\frac{N+p}{N}\,<\,\frac{pN+p(\alpha+1)}{N}\,=\,p_{\alpha+1}\,,
\]
and therefore
\[
L^{\sigma}(\Omega_{T})\,\subset\,L^{p'_{\alpha+1}}(0,T;L_{\mathrm{loc}}^{p'_{\alpha+1}}(\Omega))\,.
\]
Thus, the assumption on $f$ in Lemma \ref{lem:energy-classical}
is satisfied.\end{brem}
\selectlanguage{british}%
\begin{defn}[\textbf{De Giorgi classes $\mathcal{A}(\alpha,p,\{\delta_{i}\},\Omega_{T},f,\mathcal{C})$}]
\noindent Let $f:\Omega_{T}\to\mathbb{R}$ be a measurable function.
We say that a function $u:\Omega_{T}\to\mathbb{R}$ belongs to the
class $\mathcal{A}(\alpha,p,\{\delta_{i}\},\Omega_{T},f,\mathcal{C})$
if 
\[
u\,\in\,L^{p}(0,T;W^{1,p}(\Omega))\cap C^{0}([0,T];L^{\alpha+1}(\Omega))
\]
and for all $k\in\mathbb{R}$ and all $\eta,\varphi$ as in Lemma
\ref{lem:energy-classical}, the function $u$ satisfies the inequalities
\eqref{eq:energy-classical}.\smallskip{}

\noindent \begin{brem}\label{def:oss-impor}By Lemma \ref{lem:energy-classical},
under assumptions (\ref{eq:coeff_limit}) and (\ref{eq:f_new_assump}),
weak solutions to (\ref{eq:diffusion}) in the sense of Definition
\ref{def:weaksol} belong to $\mathcal{A}(\alpha,p,\{\delta_{i}\},\Omega_{T},f,\mathcal{C})$.
In particular, in view of Remark \ref{Osserva_su_f}, the same conclusion
holds if $f$ satisfies the stronger condition (\ref{assumpt:f}).\end{brem}\smallskip{}

\noindent $\hspace*{1em}$Starting from the above definition and Lemmas
\ref{lem:energy-general} and \ref{lem:energy-classical}, in the
next sections we establish local boundedness for functions in $\mathcal{A}(\alpha,p,\{\delta_{i}\},\Omega_{T},f,\mathcal{C})$
under assumption (\ref{assumpt:f}), as well as contractive estimates
and global boundedness for integrable weak solutions to the Cauchy
problem associated with equation (\ref{eq:diffusion}).
\end{defn}

\selectlanguage{american}%

\section{Local boundedness\label{sec:Local-boundedness}}

\selectlanguage{british}%
\noindent $\hspace*{1em}$In this section we prove the local boundedness
of weak solutions to equation (\ref{eq:diffusion}), assuming that
the datum $f$ satisfies condition (\ref{assumpt:f}). Our proofs
are based on De Giorgi-type iterations combining the energy estimates
established in Lemma \ref{lem:energy-classical} with the Sobolev
embedding of Lemma \ref{lem:parabolic-sobolev}. Throughout the section,
we will use the backward parabolic cylinders defined in Section \ref{sec:setting},
which turn out to be convenient in our setting.\\
$\hspace*{1em}$As mentioned in the introduction, there are two ranges
for $p$ that require somewhat different arguments, and in the range
corresponding to small values of $p$ we also require some extra integrability
of the weak solutions. For clarity, the two cases have been treated
in separate subsections.

\subsection{The case $p>\frac{N(\alpha\,+\,1)}{N\,+\,\alpha\,+\,1}$ }

\noindent $\hspace*{1em}$In this section we focus on the case in
which $p$ satisfies the following lower bound:

\selectlanguage{american}%
\begin{align}
p\,>\,\frac{N(\alpha+1)}{N+\alpha+1}\,,\label{p_lower_bnd}
\end{align}
which can also be expressed as $p_{\alpha+1}>\alpha+1$. We recall
that 
\[
p_{\alpha+1}\,=\,p\left(1+\,\frac{\alpha+1}{N}\right)
\]
and note, for later use, that $p_{\alpha+1}>p$ trivially. The argument
consists of two parts. First, we obtain local boundedness without
an explicit bound (Theorem \ref{theo:local_bdd_large_bar_p} below).
Once local boundedness has been established, we derive quantitative
estimates for both the essential supremum and the essential infimum
(see Theorems \ref{thm:loc_bdd_p_large_LP} and \ref{thm:better_est}).
\begin{thm}
\label{theo:local_bdd_large_bar_p} Let $u\in\mathcal{A}(\alpha,p,\{\delta_{i}\},\Omega_{T},f,\mathcal{C})$,
where $\alpha\in(0,\infty)$, $p\in(1,\infty)$ and $f$ satisfies
$(\ref{assumpt:f})$. Suppose that \eqref{p_lower_bnd} holds. Then
$u$ is locally essentially bounded. In particular, if $u$ is a weak
solution of \eqref{eq:diffusion} under assumptions $\eqref{eq:coeff_limit}$
and $(\ref{assumpt:f})$, then $u$ belongs to $L_{\mathrm{loc}}^{\infty}(\Omega_{T})$.
\end{thm}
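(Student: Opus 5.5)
We run a De Giorgi iteration on shrinking backward cylinders, using only the energy inequalities \eqref{eq:energy-classical} that define the class $\mathcal{A}(\alpha,p,\{\delta_{i}\},\Omega_{T},f,\mathcal{C})$. We treat the positive part; the negative part follows by applying the same argument to $-u$, since the class is symmetric under $u\mapsto-u$, $f\mapsto-f$.

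\textbf{Setup.} Fix $Q_{\rho}(z_o)$ with $\overline{Q_{\rho}(z_o)}\subset\Omega\times(0,T]$ and put $\rho_j=\tfrac{\rho}{2}(1+2^{-j})$, $Q_j=Q_{\rho_j}(z_o)$ and $k_j=k(1-2^{-j-1})$ with $k>0$ large. Choose cutoffs $\eta_j$ and $\varphi_j$ equal to $1$ on $Q_{j+1}$, supported in $Q_j$, with
\[
|\nabla\eta_j|\lesssim 2^j/\rho,\qquad (\partial_t\varphi_j)_+\lesssim 2^{jp}/\rho^{p}.
\]
Here $\varphi_j$ vanishes near $t=0$, which is allowed because $t_o-\rho^p>0$.

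\textbf{Energy-to-Sobolev step.} Apply \eqref{eq:energy-classical} at level $k_{j+1}$. The gradient part is converted via Lemma \ref{lem:basic_delta}:
\[
|\nabla (u-k_{j+1})_+|^p\le c\sum_i(|\partial_iu|-\delta_i)_+^p+c\,\delta_{\max}^p\,\chi_{\{u>k_{j+1}\}}.
\]
The $\delta$-term contributes $c\,\delta_{\max}^p|A_{j+1}|$, where $A_j=Q_j\cap\{u>k_j\}$.

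For the sup-term, take $k\ge 0$ and $u>k_{j+1}$. For $\alpha\ge1$, Lemma \ref{lem:bdry_term_estimates_alpha_large} gives $(u^{\frac{\alpha+1}{2}}-k^{\frac{\alpha+1}{2}})^2\gtrsim (u-k)^{\alpha+1}$. For $\alpha<1$, the same bound follows on the smaller set $\{u>k_{j+1}\}\subset\{u>k_j\}$ by comparing levels, after absorbing a factor of order $2^{j}$.

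Apply Lemma \ref{lem:parabolic-sobolev} to $v=(u-k_{j+1})_+\eta_j\varphi_j^{1/p}$. This bounds $\iint v^{p_{\alpha+1}}$ by the right-hand side of the energy estimate raised to the power $1+\tfrac pN$.

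\textbf{Estimating the right-hand side by $Y_j:=\iint_{Q_j}(u-k_j)_+^{p_{\alpha+1}}$.} On $A_{j+1}$ we have $u-k_j\ge k2^{-j-2}$. This gives Chebyshev-type bounds
\[
|A_{j+1}|\le (2^{j}/k)^{p_{\alpha+1}}Y_j,\qquad \iint (u-k_{j+1})_+^{q}\le (2^j/k)^{p_{\alpha+1}-q}\,Y_j \quad\text{for } q\le p_{\alpha+1}.
\]
These apply to $q=p$ and $q=\alpha+1$; the latter is exactly where \eqref{p_lower_bnd}, i.e. $p_{\alpha+1}>\alpha+1$, is needed.

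The source term is the main obstacle. By Hölder with exponents $\sigma$, $p_{\alpha+1}$ and the remaining one $r$ defined by $\frac1\sigma+\frac1{p_{\alpha+1}}+\frac1r=1$,
\[
\iint (u-k_{j+1})_+|f|\le \|f\|_{L^\sigma}\,Y_j^{1/p_{\alpha+1}}\,|A_{j+1}|^{1/r}.
\]
For the argument to close, the total power of $Y_j$, after multiplying by the exponent $1+\frac pN$ from the Sobolev step, must exceed $1$. That power is
\[
\Big(1+\tfrac pN\Big)\Big(1-\tfrac1\sigma\Big)>1,
\]
which is precisely $\sigma>\frac{N+p}{p}$, i.e. \eqref{assumpt:f}.

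For the other terms the structure is the same. Chebyshev on $|A_{j+1}|$ produces powers $Y_j^{(1+p/N)\theta}$ with $\theta=1$ for the $\nabla\eta$ and $\delta$ terms. For the time term, $\theta=1$ as well after the $q=\alpha+1$ Chebyshev bound. Every exponent is therefore at least $1+\epsilon$ for some $\epsilon>0$. Since $\iint v^{p_{\alpha+1}}\ge Y_{j+1}$, we obtain
\[
Y_{j+1}\le C\,b^j\sum_\ell k^{-a_\ell}\,Y_j^{1+\epsilon_\ell},
\]
with $C$ depending on $\rho$, $\delta_{\max}$ and $\|f\|_\sigma$.

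\textbf{Conclusion.} Use $Y_j\le Y_0$ to reduce the finitely many terms to the smallest exponent, absorbing the larger powers into the constant; this is fine because $Y_j$ is nonincreasing. Now $Y_0\le\iint_{Q_\rho}|u|^{p_{\alpha+1}}<\infty$ by Lemma \ref{lem:parabolic-sobolev} applied to a cut-off $u$, while the coefficient decays in $k$ because all $a_\ell>0$. Choosing $k$ large enough meets the smallness condition of Lemma \ref{lem:fastconvg}, so $Y_j\to0$ and hence $u\le k$ a.e. on $Q_{\rho/2}$. No explicit form of the bound is needed here.

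The final claim about weak solutions follows from Remark \ref{def:oss-impor}.
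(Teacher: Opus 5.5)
Your strategy is the paper's: a De Giorgi iteration for $Y_j=\iint_{Q_j}(u-k_j)_+^{p_{\alpha+1}}$ built from \eqref{eq:energy-classical}, Lemma~\ref{lem:basic_delta}, Lemma~\ref{lem:parabolic-sobolev} and Chebyshev, with \eqref{p_lower_bnd} entering through the time term and \eqref{assumpt:f} through the exponent $(1+\frac pN)(1-\frac1\sigma)>1$. Your closing differs only cosmetically, and both versions work. You keep the negative powers of $k$ and let the coefficient decay while $Y_0\le\iint_{Q_\rho}u_+^{p_{\alpha+1}}$ stays bounded. The paper instead bounds those powers by $1$ and gets smallness from $Y_0\to0$ as $k\to\infty$. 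Your three-exponent H\"older for the $f$-term is equivalent to the paper's Young-plus-interpolation $Y_j\le Y_0^{1/\sigma}Y_j^{1-1/\sigma}$.

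\textbf{The main gap: the level bookkeeping does not close as written.} You apply \eqref{eq:energy-classical} at $k_{j+1}$ and run every Chebyshev bound on $\{u>k_{j+1}\}$. For $\alpha<1$, however, the sup term cannot be handled at that level. For $u$ slightly above $k$ one has $(u^{\frac{\alpha+1}{2}}-k^{\frac{\alpha+1}{2}})^2\approx(\frac{\alpha+1}{2})^2k^{\alpha-1}(u-k)^2$, which is much smaller than $(u-k)^{\alpha+1}$, so the pointwise comparison you need fails. Suppose, as ``comparing levels'' suggests, you take the sup term from the energy inequality at $k_j$. Then the right-hand side of that inequality lives on $Q_j\cap\{u>k_j\}$, whose measure is not controlled by $Y_j$ because there is no gap between the levels. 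The superlinear power of $Y_j$ is then lost.

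The fix is the paper's: evaluate \eqref{eq:energy-classical} at the intermediate level $\widehat k_j=\frac12(k_j+k_{j+1})$. Since all levels are comparable to $k$, the mean value theorem gives $(u-k_{j+1})_+^{\alpha+1}\le c\,4^j(u^{\frac{\alpha+1}{2}}-\widehat k_j^{\frac{\alpha+1}{2}})_+^2$ for every $\alpha>0$, so no case split on $\alpha$ is needed. On $\{u>\widehat k_j\}$ you still have $u-k_j\ge\widehat k_j-k_j\gtrsim 2^{-j}k$, so all your Chebyshev estimates survive.

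\textbf{Two smaller repairs.}

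First, with $v=(u-k_{j+1})_+\eta_j\varphi_j^{1/p}$, the sup term from Lemma~\ref{lem:parabolic-sobolev} carries the weight $\eta_j^{\alpha+1}\varphi_j^{(\alpha+1)/p}$. This is dominated by the weight $\eta_j^p\varphi_j$ of \eqref{eq:energy-classical} only if $\alpha+1\ge p$, and \eqref{p_lower_bnd} allows $p>\alpha+1$. Either use $v=(u-k_{j+1})_+\eta_j^p$ on $B_j\times T_{j+1}$, where $\varphi_j\equiv1$, as the paper does, or raise the cut-offs to a power at least $\max\{1,p/(\alpha+1)\}$.

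Second, for $\alpha>1$ the time integrand is not bounded by $c\,(u-k)_+^{\alpha+1}$ near the level. Write instead $(u^{\frac{\alpha+1}{2}}-k^{\frac{\alpha+1}{2}})_+^2\le c\,(u-k)_+^{\alpha+1}+c\,k^{\alpha+1}\chi_{\{u>k\}}$. After Chebyshev, the second piece yields the factor $k^{\alpha+1-p_{\alpha+1}}$, which still decays by \eqref{p_lower_bnd}.
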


\noindent \begin{proof}[\bfseries{Proof}]Let $u\in\mathcal{A}(\alpha,p,\{\delta_{i}\},\Omega_{T},f,\mathcal{C})$,
with $\alpha$, $p$ and $f$ as in the statement of the theorem.
We begin by proving that $u$ is locally bounded from above. Let $z_{o}=(x_{o},t_{o})\in\Omega_{T}$
and $r>0$ be such that $Q_{r}(z_{o})\Subset\Omega_{T}$. For a fixed
$\nu\in(0,1)$, we define a sequence of space-time cylinders as follows:
\begin{align}
r_{j}:=\,r(\nu+(1-\nu)2^{-j}),\,\,\,\,\,\,\,\,Q_{j}:=\,Q_{r_{j}}(z_{o})=B_{j}\times T_{j}\,,\,\,\,\,\,\,\,\,\,\,\,\,\mathrm{for}\,\,j\in\mathbb{N}_{0}\,.\label{r_j_Q_j-def}
\end{align}
Here, $B_{j}:=B_{r_{j}}(x_{o}),$ while $T_{j}:=(t_{o}-r_{j}^{p},t_{o})$.
Choose cut-off functions $\eta_{j}\in C_{0}^{\infty}(B_{j};[0,1])$
such that $\eta_{j}=1$ on $B_{j+1}$ and 
\begin{align}
|\nabla\eta_{j}|\,\leq\,\frac{c\,2^{j+1}}{r(1-\nu)}\,.\label{bound:grad-eta}
\end{align}
Similarly, we take $\varphi_{j}\in C^{\infty}(T_{j};[0,1])$ vanishing
in a neighborhood of the lower endpoint of $T_{j}$, such that $\varphi_{j}=1$
on $T_{j+1}$ and 
\begin{align}
|\varphi'_{j}|\,\leq\,\frac{c\,2^{jp}}{r^{p}(1-\nu)^{p}}\,.\label{bound:der-varphi}
\end{align}
Moreover, we define the sequences
\begin{align}
Y_{j}:=\iint_{Q_{j}}(u-k_{j})_{+}^{p_{\alpha+1}}\,dx\,dt\,,\,\,\,\,\,\,\,\,k_{j}:=\,k(2-2^{-j}),\quad\widehat{k}_{j}:=\,\frac{1}{2}(k_{j}+k_{j+1})\,,\,\,\,\,\,\,\,\,\,\,\mathrm{for}\,\,j\in\mathbb{N}_{0}\,,\label{def:k_j_alpha_small}
\end{align}
where $k\geq1$ is a number to be chosen later. Note that $Y_{j}$
is finite for every $j\in\mathbb{N}_{0}$, since $Q_{j}\subseteq Q_{0}=Q_{r}(z_{o})\Subset\Omega_{T}$
and, by Lemma \ref{lem:parabolic-sobolev}, $u\in L^{p_{\alpha+1}}(Q_{r}(z_{o}))$.\\
In the following calculation, we estimate $Y_{j+1}$ by first introducing
the cut-off function in space $\eta_{j}^{p}$. Then, we use Lemma
\ref{lem:parabolic-sobolev} with $v=(u-k_{j+1})_{+}\,\eta_{j}^{p}$
and introduce the cut-off function in time $\varphi_{j}$. All in
all, the calculation takes the form\begin{align}\label{eq:notsolongcalc}   
Y_{j+1}&=\iint_{Q_{j+1}}(u-k_{j+1})_{+}^{p_{\alpha+1}}\,dx\,dt\,\leq\int_{T_{j+1}}\int_{B_{j}}[(u-k_{j+1})_{+}\,\eta_{j}^{p}]^{p_{\alpha+1}}\,dx\,dt\nonumber\\
&\leq c\Big[\sup_{T_{j+1}}\int_{B_{j}}(u-k_{j+1})_{+}^{\alpha+1}\,\eta_{j}^{p(\alpha+1)}\,dx\Big]^{\frac{p}{N}}\int_{T_{j+1}}\int_{B_{j}}|\nabla[(u-k_{j+1})_{+}\,\eta_{j}^{p}]|^{p}\,dx\,dt\nonumber\\
&\leq c\Big[\sup_{T_{j}}\int_{B_{j}}(u-k_{j+1})_{+}^{\alpha+1}\,\eta_{j}^{p}\,\varphi_{j}\,dx\Big]^{\frac{p}{N}}\nonumber\\
&\,\,\,\,\,\,\,\,\,\,\times\iint_{Q_{j}}[|\nabla(u-k_{j+1})_{+}|^{p}\,\eta_{j}^{p}\,\varphi_{j}\,+\,(u-k_{j+1})_{+}^{p}\,|\nabla\eta_{j}|^{p}\,\varphi_{j}]\,dx\,dt\,,
\end{align}where $c$ is a positive constant depending only on $N$, $\alpha$
and $p$. Note that in the previous computations we have also used
that $0\leq\eta_{j}\leq1$. To estimate the supremum over $T_{j}$,
we observe that\begin{align}\label{eq:est:timesupterm}
(u-k_{j+1})_{+}^{\alpha+1}\,&\leq\,u^{\alpha+1}\,\chi_{\{u\,>\,k_{j+1}\}}\,=\,(u^{\frac{\alpha+1}{2}}-\widehat{k}_{j}^{\frac{\alpha+1}{2}}+\widehat{k}_{j}^{\frac{\alpha+1}{2}})^{2}\,\chi_{\{u\,>\,k_{j+1}\}}\nonumber\\
&\leq\,2\,(u^{\frac{\alpha+1}{2}}-\widehat{k}_{j}^{\frac{\alpha+1}{2}})_{+}^{2}\,+\,2\,\widehat{k}_{j}^{\alpha+1}\,\chi_{\{u\,>\,k_{j+1}\}}\nonumber\\
&\leq\,2\,(u^{\frac{\alpha+1}{2}}-\widehat{k}_{j}^{\frac{\alpha+1}{2}})_{+}^{2}\,+\,2\,\widehat{k}_{j}^{\alpha+1}\,\frac{(u^{\frac{\alpha+1}{2}}-\widehat{k}_{j}^{\frac{\alpha+1}{2}})^{2}}{(k_{j+1}^{\frac{\alpha+1}{2}}-\widehat{k}_{j}^{\frac{\alpha+1}{2}})^{2}}\,\chi_{\{u\,>\,k_{j+1}\}}\nonumber\\
&\leq\,c\,4^{j}\,(u^{\frac{\alpha+1}{2}}-\widehat{k}_{j}^{\frac{\alpha+1}{2}})_{+}^{2}\,,
\end{align}where the last inequality follows from the Mean Value Theorem applied
to the denominator in the penultimate line. Indeed, there exists $\zeta_{j}\in(\widehat{k}_{j},k_{j+1})\subset[k,2k]$
such that
\begin{equation}
k_{j+1}^{\frac{\alpha+1}{2}}-\widehat{k}_{j}^{\frac{\alpha+1}{2}}=\,\frac{\alpha+1}{2}\,\zeta_{j}^{\frac{\alpha-1}{2}}\,(k_{j+1}-\widehat{k}_{j})\,\geq\,c(\alpha)\,k^{\frac{\alpha+1}{2}}\,2^{-j}\,.\label{eq:Lagrange}
\end{equation}
Combining \foreignlanguage{british}{\eqref{eq:notsolongcalc}} with\foreignlanguage{british}{
\eqref{eq:est:timesupterm}, we have\begin{align}\label{eq:est:haest}
Y_{j+1}\,&\leq\, c\,b^{j}\Big[\sup_{T_{j}}\int_{B_{j}}(u^{\frac{\alpha+1}{2}}-\widehat{k}_{j}^{\frac{\alpha+1}{2}})_{+}^{2}\,\eta_{j}^{p}\,\varphi_{j}\,dx\Big]^{\frac{p}{N}}\nonumber\\
&\,\,\,\,\,\,\,\,\,\,\,\,\,\,\,\,\,\times\iint_{Q_{j}}[|\nabla(u-\widehat{k}_{j})_{+}|^{p}\,\eta_{j}^{p}\,\varphi_{j}\,+\,(u-\widehat{k}_{j})_{+}^{p}\,|\nabla\eta_{j}|^{p}\,\varphi_{j}]\,dx\,dt\,,
\end{align}where $b>1$ is a constant depending only on $N$ and $p$. Note that,
in the last integral, we were able to replace $k_{j+1}$ with $\widehat{k}_{j},$
since $\widehat{k}_{j}<k_{j+1}$. We now} estimate the first part
of this integral using (\ref{est:basic_delta}) with $\xi=\nabla u$,
the energy estimate \foreignlanguage{british}{\eqref{eq:energy-classical}},
and the upper bound for $\eta_{j}$ and $\varphi_{j}$ as follows:\begin{align*}  
&\iint_{Q_{j}}|\nabla(u-\widehat{k}_{j})_{+}|^{p}\,\eta_{j}^{p}\,\varphi_{j}\,dx\,dt\,=\iint_{Q_{j}}|\nabla u|^{p}\,\chi_{\{u\,>\,\widehat{k}_{j}\}}\,\eta_{j}^{p}\,\varphi_{j}\,dx\,dt\\
&\,\,\,\,\,\,\,\leq c\iint_{Q_{j}}\left[\sum_{i=1}^{N}(|\partial_{i}u|-\delta_{i})_{+}^{p}\,\eta_{j}^{p}\,\varphi_{j}\,\chi_{\{u\,>\,\widehat{k}_{j}\}}+\,\delta_{\max}^{p}\,\chi_{\{u\,>\,\widehat{k}_{j}\}}\,\eta_{j}^{p}\,\varphi_{j}\right]dx\,dt\\
&\,\,\,\,\,\,\,\leq c\iint_{Q_{j}}[(u-\widehat{k}_{j})_{+}^{p}\,|\nabla\eta_{j}|^{p}\,\varphi_{j}\,+\,(u^{\frac{\alpha+1}{2}}-\widehat{k}_{j}^{\frac{\alpha+1}{2}})_{+}^{2}\,\eta_{j}^{p}\,(\partial_{t}\varphi_{j})_{+}]\,dx\,dt\\
&\,\,\,\,\,\,\,\,\,\,\,\,\,\,+\,c\iint_{Q_{j}}|f|\,(u-\widehat{k}_{j})_{+}\,\eta_{j}^{p}\,\varphi_{j}\,dx\,dt\,+\,c\,\delta_{\max}^{p}\,|E_{j}|\,,
\end{align*}where $c=c(N,\alpha,p,\Lambda)>0$, $\delta_{\mathrm{max}}:=\max\,\{\delta_{1},...,\delta_{N}\}$,
and we have set
\begin{equation}
E_{j}:=\,\{(x,t)\in Q_{j}:u(x,t)>\widehat{k}_{j}\}\,.\label{eq:sopralivello}
\end{equation}
Since the \foreignlanguage{british}{supremum over $T_{j}$ in \eqref{eq:est:haest}}
can likewise be controlled using the energy estimate \foreignlanguage{british}{\eqref{eq:energy-classical}},
we obtain\begin{align}\label{eq:integ_after_energy}
Y_{j+1}\,&\leq\,c\,b^{j}\,\Big[\iint_{Q_{j}}(u-\widehat{k}_{j})_{+}^{p}\,|\nabla\eta_{j}|^{p}\,dx\,dt\,+\iint_{Q_{j}}(u^{\frac{\alpha+1}{2}}-\widehat{k}_{j}^{\frac{\alpha+1}{2}})_{+}^{2}\,(\partial_{t}\varphi_{j})_{+}\,dx\,dt\nonumber\\
&\qquad\,\,\,\,\,\,+\iint_{Q_{j}}|f|\,(u-\widehat{k}_{j})_{+}\,\eta_{j}^{p}\,\varphi_{j}\,dx\,dt\,+\,\delta_{\max}^{p}\,|E_{j}|\Big]^{\frac{N+p}{N}}.
\end{align}Applying Young's inequality to the first factor in the first integrand
on the right-hand side of \foreignlanguage{british}{\eqref{eq:integ_after_energy}},
we have
\begin{equation}
(u-\widehat{k}_{j})_{+}^{p}\,\leq\,(u-\widehat{k}_{j})_{+}^{p_{\alpha+1}}\,+\,\chi_{\{u\,>\,\widehat{k}_{j}\}}\,.\label{eq:est:term1}
\end{equation}
Recalling that (\ref{p_lower_bnd}) is equivalent to $p_{\alpha+1}>\alpha+1$,
we estimate the first factor in the second integrand on the right-hand
side of \foreignlanguage{british}{\eqref{eq:integ_after_energy}}
as follows:\begin{align}\label{eq:est:term2}
(u^{\frac{\alpha+1}{2}}-\widehat{k}_{j}^{\frac{\alpha+1}{2}})_{+}^{2}\,&\leq\,u^{\alpha+1}\,\chi_{\{u\,>\,\widehat{k}_{j}\}}\nonumber\\
&\leq\,c\,(u-\widehat{k}_{j})_{+}^{\alpha+1}\,+\,c\,\widehat{k}_{j}^{\alpha+1}\,\chi_{\{u\,>\,\widehat{k}_{j}\}}\nonumber\\
&\leq\,c\,(u-\widehat{k}_{j})_{+}^{p_{\alpha+1}}\,+\,c\,\widehat{k}_{j}^{\alpha+1}\,\chi_{\{u\,>\,\widehat{k}_{j}\}}\,,
\end{align}where, in the last line, we have used Young's inequality and the fact
that $\widehat{k}_{j}>k\geq1$. Now we apply Hölder's inequality and
the fact that $\eta_{j},\varphi_{j}\leq1$ to find that
\begin{align}
\iint_{Q_{j}}|f|\,(u-\widehat{k}_{j})_{+}\,\eta_{j}^{p}\,\varphi_{j}\,dx\,dt\,\leq\,\Vert f\Vert_{L^{\sigma}(Q_{0})}\left[\iint_{Q_{j}}(u-\widehat{k}_{j})_{+}^{\sigma'}\,dx\,dt\right]^{1-\frac{1}{\sigma}},\label{est:term3}
\end{align}
where $\sigma':=\frac{\sigma}{\sigma-1}$ is the conjugate exponent
of $\sigma$. At this stage, note that the range of $\sigma$ in \eqref{assumpt:f}
ensures that
\begin{align}
\sigma'\,<\,\frac{N+p}{N}\,<\,\frac{pN+p(\alpha+1)}{N}\,=\,p_{\alpha+1}\,.\label{sigma-prime_ineq}
\end{align}
Therefore, we may apply Young's inequality again to obtain
\begin{align}
(u-\widehat{k}_{j})_{+}^{\sigma'}\,\leq\,(u-\widehat{k}_{j})_{+}^{p_{\alpha+1}}\,+\,\chi_{\{u\,>\,\widehat{k}_{j}\}}\,.\label{sigma-prime-and-NeilYoung}
\end{align}
Combining (\ref{est:term3}) and (\ref{sigma-prime-and-NeilYoung}),
and recalling the definition of $E_{j}$ in (\ref{eq:sopralivello}),
we have
\begin{equation}
\iint_{Q_{j}}|f|\,(u-\widehat{k}_{j})_{+}\,\eta_{j}^{p}\,\varphi_{j}\,dx\,dt\,\leq\,\Vert f\Vert_{L^{\sigma}(Q_{0})}\,(Y_{j}\,+\,|E_{j}|)^{1-\frac{1}{\sigma}}.\label{eq:term3nuovo}
\end{equation}
Joining estimates \eqref{eq:integ_after_energy}$-$\eqref{eq:est:term2}
and \eqref{eq:term3nuovo}, using \eqref{bound:grad-eta} and \eqref{bound:der-varphi},
and recalling the definition of $Y_{j}$ and the fact that $\widehat{k}_{j}>k\geq1$,
we obtain
\[
Y_{j+1}\,\leq\,c\,b^{j}\left[Y_{j}\,+\,(k^{\alpha+1}+\delta_{\mathrm{max}}^{p})\,\vert E_{j}\vert\,+\,\Vert f\Vert_{L^{\sigma}(Q_{0})}\,Y_{j}^{1-\frac{1}{\sigma}}\,+\,\Vert f\Vert_{L^{\sigma}(Q_{0})}\,\vert E_{j}\vert^{1-\frac{1}{\sigma}}\right]^{\frac{N+p}{N}},
\]
where the constant $c$ now depends on $N,\alpha,p,\Lambda,\sigma,r$
and $\nu$. At this point, we observe that
\begin{equation}
\vert E_{j}\vert\,=\iint_{E_{j}}\frac{(\widehat{k}_{j}-k_{j})^{p_{\alpha+1}}}{(\widehat{k}_{j}-k_{j})^{p_{\alpha+1}}}\,dx\,dt\,\leq\iint_{E_{j}}\frac{(u-k_{j})_{+}^{p_{\alpha+1}}}{(\widehat{k}_{j}-k_{j})^{p_{\alpha+1}}}\,dx\,dt\,\leq\,c_{1}\,\frac{2^{j\,p_{\alpha+1}}}{k^{p_{\alpha+1}}}\,Y_{j}\,,\label{eq:superlevel}
\end{equation}
where $c_{1}=c_{1}(N,\alpha,p)>0.$ Combining the two previous estimates
and using that $p_{\alpha+1}>\alpha+1$ and $k\geq1$, we get
\begin{align}
Y_{j+1} & \,\leq\,c\,b^{j}\left[Y_{j}\,+\,\Vert f\Vert_{L^{\sigma}(Q_{0})}\,Y_{j}^{1-\frac{1}{\sigma}}\right]^{\frac{N+p}{N}},\label{shtevensh}
\end{align}
where $b>1$ now also depends on $\alpha$, while $c$ additionally
depends on $\delta_{\mathrm{max}}$. In order to have the same exponent
for $Y_{j}$ in both terms in the square brackets, we observe that
\begin{align*}
Y_{j}\,=\,Y_{j}^{\frac{1}{\sigma}}\,Y_{j}^{1-\frac{1}{\sigma}}\,\leq\left(\iint_{Q_{0}}u_{+}^{p_{\alpha+1}}\,dx\,dt\right)^{\frac{1}{\sigma}}Y_{j}^{1-\frac{1}{\sigma}}.
\end{align*}
Combining this estimate with \eqref{shtevensh}, we obtain
\begin{equation}
Y_{j+1}\,\leq\,c\,b^{j}\left[\iint_{Q_{0}}(u_{+}^{p_{\alpha+1}}+\vert f\vert^{\sigma})\,dx\,dt\right]^{\frac{N+p}{N\sigma}}Y_{j}^{1+\beta},\label{eq:appoggio}
\end{equation}
where 
\begin{equation}
\beta\,=\,\frac{p}{N}\left(1-\,\frac{N+p}{\sigma p}\right)>0\,,\label{eq:def:beta}
\end{equation}
since $\sigma>(N+p)/p$ by assumption. Noting that the integral in
(\ref{eq:appoggio}) is finite due to the integrability properties
of $u$ and $f$, we can write
\begin{align}
Y_{j+1}\,\leq\,C_{u,f}\,b^{j}\,Y_{j}^{1+\beta}\,,\label{eq:rec_ineq}
\end{align}
where
\[
C_{u,f}\,=\,c\left[\iint_{Q_{0}}(u_{+}^{p_{\alpha+1}}+\vert f\vert^{\sigma})\,dx\,dt\right]^{\frac{N+p}{N\sigma}}<+\infty\,.
\]
If $C_{u,f}=0$, we immediately deduce that $u\leq0$ almost everywhere
in $Q_{r}(z_{o})$. If instead $C_{u,f}>0$, then, by Lemma \ref{lem:fastconvg},
the sequence $\{Y_{j}\}$ converges to zero provided that
\begin{align}
\iint_{Q_{r}(z_{o})}(u-k)_{+}^{p_{\alpha+1}}\,dx\,dt\,=\,Y_{0}\,\leq\,C_{u,f}^{-\,\frac{1}{\beta}}\,b^{-\,\frac{1}{\beta^{2}}}\,.\label{cond:Y_0}
\end{align}
The integral on the left-hand side tends to zero as $k\rightarrow\infty$
by the Dominated Convergence Theorem, since $u\in L^{p_{\alpha+1}}(Q_{r}(z_{o}))$,
whereas the right-hand side is independent of $k$. Therefore, condition
(\ref{cond:Y_0}) is satisfied for some sufficiently large $k\geq1$.
For such a choice of $k$, we have
\begin{align*}
\iint_{Q_{\nu r}(z_{o})}(u-2k)_{+}^{p_{\alpha+1}}\,dx\,dt\,\leq\,Y_{j}\to0\,\,\,\,\,\,\,\,\mathrm{as}\,\,j\to\infty\,,
\end{align*}
\foreignlanguage{british}{which implies that }$u\leq2k\,$ a.e. in
$Q_{\nu r}(z_{o})$\foreignlanguage{british}{.} We have thus proved
that $u$ is locally bounded from above.\\
\foreignlanguage{british}{$\hspace*{1em}$}Finally, with analogous
arguments, but using the energy estimate \foreignlanguage{british}{\eqref{eq:energy-classical}}
with the negative part, we also get a lower bound for $u$.\end{proof}\smallskip{}

\selectlanguage{british}%
\noindent $\hspace*{1em}$\foreignlanguage{american}{Having established
the local boundedness of functions in $\mathcal{A}(\alpha,p,\{\delta_{i}\},\Omega_{T},f,\mathcal{C})$
under condition (\ref{assumpt:f}), we now derive explicit bounds
for their essential supremum and infimum.}
\selectlanguage{american}%
\begin{thm}
\noindent \label{thm:loc_bdd_p_large_LP} Let $u\in\mathcal{A}(\alpha,p,\{\delta_{i}\},\Omega_{T},f,\mathcal{C})$,
where $\alpha\in(0,\infty)$, $p\in(1,\infty)$ and $f$ satisfies
$(\ref{assumpt:f})$. Suppose that \eqref{p_lower_bnd} holds. Then,
for every cylinder $Q_{r}(z_{o})\Subset\Omega_{T}$ and every $s\in(0,1)$,
we have the explicit upper bound\begin{align}\label{eq:sup-bound-explicit-slow-diff}
\underset{Q_{sr}(z_{o})}{\mathrm{ess}\,\sup}\,\,u\,&\leq\,c\left(\big[1+(r(1-s))^{-(N+p)}\big]\iint_{Q_{r}(z_{o})}u_{+}^{P}\,dx\,dt\right)^{\frac{p}{p(N+\alpha+1)\,-\,NP}}\nonumber\\
&\,\,\,\,\,\,\,+c\left(1+\iint_{Q_{r}(z_{o})}|f|^{\sigma}\,dx\,dt\right)^{\frac{p}{p(N+\alpha+1)\,-\,NP\,+\,pP\min\,\{0,\frac{\sigma}{P'}-1\}}},
\end{align}where $P:=\max\,\{\alpha+1,p\}$ and $c$ is a positive constant depending
only on $N$, $\alpha$, $p$, $\Lambda$, $\sigma$ and $\max\,\{\delta_{1},\ldots,\delta_{N}\}$.
An analogous lower bound holds for the essential infimum, with $u_{-}$
replacing $u_{+}$ on the right-hand side of \foreignlanguage{british}{\eqref{eq:sup-bound-explicit-slow-diff}}.
In particular, under assumptions $\eqref{eq:coeff_limit}$ and $(\ref{assumpt:f})$,
these estimates hold for all weak solutions to \eqref{eq:diffusion}
in the sense of Definition \ref{def:weaksol}.
\end{thm}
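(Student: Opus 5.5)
We propose to run the De Giorgi iteration of Theorem \ref{theo:local_bdd_large_bar_p} a second time. Since $u$ is now known to be locally bounded, the levels can be chosen explicitly. The quantity iterated will be $Y_j:=\iint_{Q_j}(u-k_j)_+^{P}\,dx\,dt$ with $P=\max\{\alpha+1,p\}$, instead of the $p_{\alpha+1}$-power, since $P$ is the natural power produced by the energy estimate. We use the same cylinders $Q_j$ with radii $r_j=r(s+(1-s)2^{-j})$, the same cut-offs $\eta_j,\varphi_j$ satisfying \eqref{bound:grad-eta}--\eqref{bound:der-varphi}, and the levels $k_j=k(2-2^{-j})$ and $\widehat k_j$ as before, with $k\ge1$ to be chosen. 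The plan has four steps.

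\emph{First}, to estimate $Y_{j+1}$ we apply Hölder's inequality on $Q_{j+1}$, bounding it by $\|(u-k_{j+1})_+\|_{L^{p_{\alpha+1}}}^{P}|E_{j}|^{1-P/p_{\alpha+1}}$; this uses $p_{\alpha+1}>P$, which holds because $p_{\alpha+1}>\alpha+1$ and $p_{\alpha+1}>p$. The $L^{p_{\alpha+1}}$ norm is then controlled by Lemma \ref{lem:parabolic-sobolev} together with the energy estimate \eqref{eq:energy-classical}, exactly as in \eqref{eq:notsolongcalc}--\eqref{eq:integ_after_energy}. This produces the bracket
\[
\bigl[(r(1-s))^{-p}b^j\textstyle\iint (u-\widehat k_j)_+^p+(r(1-s))^{-p}b^j\iint (u^{\frac{\alpha+1}{2}}-\widehat k_j^{\frac{\alpha+1}{2}})_+^2+\iint|f|(u-\widehat k_j)_++\delta_{\max}^p|E_j|\bigr]^{\frac{N+p}{N}} .
\]

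\emph{Second}, we bound each term in this bracket by $Y_j$. Every term carries the factor $\chi_{\{u>\widehat k_j\}}$, so we can use $(u-\widehat k_j)_+^p\le c\,k^{p-P}b^j(u-k_j)_+^P$ on $E_j$, together with the analogue of \eqref{eq:est:term2}, namely $(u^{\frac{\alpha+1}{2}}-\widehat k_j^{\frac{\alpha+1}{2}})_+^2\le c\,k^{\alpha+1-P}b^j(u-k_j)_+^P$. The measure $|E_j|$ is estimated by Chebyshev as in \eqref{eq:superlevel}, giving $|E_j|\le c\,b^jk^{-P}Y_j$. For the source term, Hölder's inequality gives $\|f\|_{\sigma}\bigl(\iint_{E_j}(u-\widehat k_j)_+^{\sigma'}\bigr)^{1/\sigma'}$. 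When $\sigma'\le P$ we apply Hölder once more on $E_j$; when $\sigma'>P$ we use $(u-\widehat k_j)_+^{\sigma'}\le \|u_+\|_\infty^{\sigma'-P}(u-\widehat k_j)_+^P$, or alternatively interpolate. This case distinction is the origin of the term $\min\{0,\sigma/P'-1\}$ in the exponent.

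\emph{Third}, collecting the estimates should give a recursion of the form
\[
Y_{j+1}\le C\,b^j k^{-\theta}Y_j^{1+\frac{p}{N}},
\]
with a companion recursion carrying the $f$-dependent factor, where $\theta$ is an explicit combination of $P$, $p$, $N$ and $\alpha$. Lemma \ref{lem:fastconvg} then yields $u\le2k$ on $Q_{sr}$ provided $Y_0\le C^{-N/p}k^{\theta N/p}b^{-N^2/p^2}$. Since $Y_0\le\iint_{Q_r}u_+^P$, we choose $k$ as the maximum of $1$, the value making this condition hold for the $u$-term, and the corresponding value for the $f$-term. Solving for $k$ should produce the two exponents in \eqref{eq:sup-bound-explicit-slow-diff}. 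For instance, the denominator $p(N+\alpha+1)-NP$ should come from $\theta N/p$, and it is positive because $P<p_{\alpha+1}$.

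\emph{Main obstacle.} The difficult part is the source term. We must balance its homogeneity in $k$ and in $Y_j$ so that a clean power $Y_j^{1+\beta}$ results, without reintroducing $\|u\|_\infty$ in a way that cannot be absorbed. If $\|u\|_\infty$ does appear in the $\sigma'>P$ case, we will first derive the bound on $Q_{sr}$ in terms of $\sup_{Q_r}u$, which is finite by Theorem \ref{theo:local_bdd_large_bar_p}. We then absorb this term by a standard interpolation argument over nested radii, via Young's inequality and the usual iteration lemma for radii $s\le\rho<\rho'\le1$. The infimum bound follows by repeating the argument with the negative-part energy estimate, and the statement for weak solutions follows from Remark \ref{def:oss-impor}.
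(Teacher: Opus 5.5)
Your outline follows the paper's proof. It uses H\"older's inequality against $|Q_{j+1}\cap\{u>k_{j+1}\}|^{1-P/p_{\alpha+1}}$ to reduce to the $p_{\alpha+1}$-quantity controlled by Lemma \ref{lem:parabolic-sobolev} and \eqref{eq:energy-classical}, and Chebyshev's inequality to produce a negative power of $k$. It also brings in the factor $\Vert u_+\Vert_{L^\infty}^{1-P/\sigma'}$ when $\sigma'>P$, and absorbs $\sup u$ by Young's inequality plus an iteration over nested radii. Running the levels from $k$ to $2k$ instead of from $0$ to $k$ is immaterial. So is a second H\"older on $E_j$ instead of Young when $\sigma'\le P$: it only lowers the $f$-exponent, which still implies \eqref{eq:sup-bound-explicit-slow-diff} because of the $1+$. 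Two points are, however, not right as stated.

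\emph{The recursion.} The exponent of $Y_j$ is not $1+\frac pN$. The $p_{\alpha+1}$-integral enters only to the power $1/\gamma$, with $\gamma:=p_{\alpha+1}/P>1$. Combined with $|E_j|\le c\,b^jk^{-P}Y_j$, the $u$-part of the recursion is
\[
Y_{j+1}\le c\,b^j\,\xi^{\frac{N+p}{N\gamma}}\,k^{-P(1-\frac1\gamma)}\,Y_j^{1+\frac{p}{N\gamma}},\qquad \xi=(r(1-s))^{-p}+1 .
\]
Lemma \ref{lem:fastconvg} then gives the threshold $k\ge c\,(\xi^{\frac{N+p}{p}}Y_0)^{\frac{p}{NP(\gamma-1)}}$, where $NP(\gamma-1)=p(N+\alpha+1)-NP$. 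With your $\beta=p/N$ and the true power of $k$, the final exponent would be off by the factor $\gamma$. For the $f$-part, homogenizing with $Y_j\le Y_0^{1/\sigma}Y_j^{1/\sigma'}$ gives the exponent $1+\frac{1}{N\gamma}\big(p-\frac{N+p}{\sigma}\big)$, which is positive by \eqref{assumpt:f}.

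\emph{The absorption when $\sigma'>P$.} This is the only delicate step, and you assert it rather than verify it. The threshold for $k$ contains $(\sup_{Q_r(z_o)}u_+)^{\theta}$ with
\[
\theta=\Big(1-\frac{P}{\sigma'}\Big)\frac{N+p}{p(N+\alpha+1)-NP}.
\]
Young's inequality can absorb this only if $\theta<1$. That follows from $\sigma'<\frac{N+p}{N}$ together with $p>1$, checked separately for $P=p$ and $P=\alpha+1$.

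Moreover, your $f$-threshold is not a function of $\Vert f\Vert_{L^\sigma}$ alone. It also carries a power of $\Vert u_+\Vert_{L^P}$, and after the first Young step that power is divided by $1-\theta$. To reach the separated form of \eqref{eq:sup-bound-explicit-slow-diff} you need a second Young's inequality. It is legitimate only if this power is strictly smaller than $\frac{p}{N(\gamma-1)}$, the power of $\Vert u_+\Vert_{L^P}$ in the pure $u$-term. The paper verifies this by showing that an explicit function of $a=\frac{N+p}{\sigma'}\in\big(N,\frac{N+p}{P}\big)$ is increasing; for $\sigma'\le P$ the comparison is immediate.

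Taking ``the maximum of the two thresholds'' skips exactly these two checks. With them and the corrected exponents, your outline is the paper's proof.
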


\noindent \begin{proof}[\bfseries{Proof}]Let $u\in\mathcal{A}(\alpha,p,\{\delta_{i}\},\Omega_{T},f,\mathcal{C})$,
with $\alpha$, $p$ and $f$ as in the statement of the theorem.
We limit ourselves to the derivation of the upper bound in \foreignlanguage{british}{\eqref{eq:sup-bound-explicit-slow-diff}},
since arguments analogous to those presented below yield a lower bound
for the essential infimum in terms of the $L^{P}$-norm of $u_{-}\,$.
We use the radii $r_{j}$, space-time cylinders $Q_{j}$, and cut-off
functions $\eta_{j}$ and $\varphi_{j}$ as defined in \eqref{r_j_Q_j-def}$-$\eqref{bound:der-varphi}.
Furthermore, we set 
\begin{align}
\gamma:=\,\frac{p_{\alpha+1}}{P}\,=\,\frac{p}{P}\Big(1+\frac{\alpha+1}{N}\Big)\,.\label{def:gamma}
\end{align}
In the case $P=p$ we directly see that $\gamma>1$. In the case $P=\alpha+1$
we can instead use \eqref{p_lower_bnd} to conclude that $\gamma>1$.
We now define the sequences
\begin{align*}
X_{j}:=\iint_{Q_{j}}(u-k_{j})_{+}^{P}\,dx\,dt\,,\,\,\,\,\,\,\,\,k_{j}:=\,k(1-2^{-j}),\quad\widehat{k}_{j}:=\,\frac{1}{2}(k_{j}+k_{j+1})\,,\,\,\,\,\,\,\,\,\,\,\mathrm{for}\,\,j\in\mathbb{N}_{0}\,,
\end{align*}
where $k\geq1$ is a number to be chosen later. Note that $X_{j}$
is finite for every $j\in\mathbb{N}_{0}$, since $u\in L^{p}(0,T;W^{1,p}(\Omega))\cap C^{0}([0,T];L^{\alpha+1}(\Omega))$.
By Hölder's inequality, we have 
\begin{align}
X_{j+1}\,\leq\left[\iint_{Q_{j+1}}(u-k_{j+1})_{+}^{p_{\alpha+1}}\,dx\,dt\right]^{\frac{1}{\gamma}}|Q_{j+1}\cap\{u>k_{j+1}\}|^{1-\,\frac{1}{\gamma}}\,.\label{X-jplusone_estim1}
\end{align}
Observing that the integral in square brackets has the same form as
the term $Y_{j+1}$ in the proof of Theorem \ref{theo:local_bdd_large_bar_p},
we may argue exactly as in the first part of that proof to recover
the estimate \foreignlanguage{british}{\eqref{eq:integ_after_energy}}
with the current definitions of $\widehat{k}_{j}$, $Y_{j+1}$ and
$E_{j}$. In particular, the last inequality in \eqref{eq:est:timesupterm}
still holds because, by the Mean Value Theorem, there exists $\zeta_{j}\in(\widehat{k}_{j},k_{j+1})\subset\left[\frac{k}{5},k\right]$
such that (\ref{eq:Lagrange}) holds with the present definitions
of $\widehat{k}_{j}$ and $k_{j+1}$. Combining \foreignlanguage{british}{\eqref{eq:integ_after_energy}}
with (\ref{X-jplusone_estim1}), we then obtain\begin{align}\label{eq:X_jplusone_after_energy}
X_{j+1}\,\leq\,c\,b^{j}\,&\Big[\iint_{Q_{j}}(u-\widehat{k}_{j})_{+}^{p}\,|\nabla\eta_{j}|^{p}\,dx\,dt\,+\iint_{Q_{j}}(u^{\frac{\alpha+1}{2}}-\widehat{k}_{j}^{\frac{\alpha+1}{2}})_{+}^{2}\,(\partial_{t}\varphi_{j})_{+}\,dx\,dt\nonumber\\
&+\iint_{Q_{j}}|f|\,(u-\widehat{k}_{j})_{+}\,dx\,dt\,+\,\delta_{\max}^{p}\,|E_{j}|\Big]^{\frac{N+p}{\gamma N}}\,|Q_{j+1}\cap\{u>k_{j+1}\}|^{1-\,\frac{1}{\gamma}}\,,
\end{align}where $c=c(N,\alpha,p,\Lambda)>0$, $b=b(N,\alpha,p)>1$, and $E_{j}$
is defined as in (\ref{eq:sopralivello}) with the current definition
of $\widehat{k}_{j}$. Note that, in the previous computation, we
have also used that $\eta_{j},\varphi_{j}\leq1$. Arguing as in \foreignlanguage{british}{\eqref{eq:est:term1}}
and \eqref{eq:est:term2}, we can estimate the terms appearing in
the first two integrals of \foreignlanguage{british}{\eqref{eq:X_jplusone_after_energy}}
as follows:
\begin{align}
(u-\widehat{k}_{j})_{+}^{p} & \,\leq\,(u-\widehat{k}_{j})_{+}^{P}\,+\,\chi_{\{u\,>\,\widehat{k}_{j}\}}\,,\label{est:term1new}\\
(u^{\frac{\alpha+1}{2}}-\widehat{k}_{j}^{\frac{\alpha+1}{2}})_{+}^{2} & \,\leq\,c\,(u-\widehat{k}_{j})_{+}^{P}\,+\,c\,k^{\alpha+1}\,\chi_{\{u\,>\,\widehat{k}_{j}\}}\,,\label{est:term2new}
\end{align}
where we have also used that $\widehat{k}_{j}<k$ and $k\geq1$. As
for the integral involving $f$, by Hölder's inequality we have
\begin{align*}
\iint_{Q_{j}}|f|\,(u-\widehat{k}_{j})_{+}\,dx\,dt\, & \leq\,\Vert f\Vert_{L^{\sigma}(Q_{0})}\,\Big[\iint_{Q_{j}}(u-\widehat{k}_{j})_{+}^{\sigma'}\,dx\,dt\Big]^{\frac{1}{\sigma'}}\\
 & \leq\,\Vert f\Vert_{L^{\sigma}(Q_{0})}\,\Big[\Vert u_{+}\Vert_{L^{\infty}(Q_{0})}^{\max\,\{0,\sigma'-P\}}\iint_{Q_{j}}(u-\widehat{k}_{j})_{+}^{\min\,\{\sigma',P\}}\,dx\,dt\Big]^{\frac{1}{\sigma'}}\\
 & =\,\Vert f\Vert_{L^{\sigma}(Q_{0})}\,\Vert u_{+}\Vert_{L^{\infty}(Q_{0})}^{\max\,\left\{ 0,1\,-\,\frac{P}{\sigma'}\right\} }\Big[\iint_{Q_{j}}(u-\widehat{k}_{j})_{+}^{\min\,\{\sigma',P\}}\,dx\,dt\Big]^{\frac{1}{\sigma'}}\\
 & \leq\,\Vert f\Vert_{L^{\sigma}(Q_{0})}\,\Vert u_{+}\Vert_{L^{\infty}(Q_{0})}^{\max\,\left\{ 0,1\,-\,\frac{P}{\sigma'}\right\} }\Big[\iint_{Q_{j}}\left((u-\widehat{k}_{j})_{+}^{P}\,+\,\chi_{\{u\,>\,\widehat{k}_{j}\}}\right)dx\,dt\Big]^{\frac{1}{\sigma'}}\\
 & \leq\,\Vert f\Vert_{L^{\sigma}(Q_{0})}\,\Vert u_{+}\Vert_{L^{\infty}(Q_{0})}^{\max\,\left\{ 0,1\,-\,\frac{P}{\sigma'}\right\} }\,(X_{j}\,+\,\vert E_{j}\vert)^{\frac{1}{\sigma'}}\,,
\end{align*}
with the standard convention that 
\[
\Vert u_{+}\Vert_{L^{\infty}(Q_{0})}^{\max\,\left\{ 0,1\,-\,\frac{P}{\sigma'}\right\} }=1
\]
when both $\Vert u_{+}\Vert_{L^{\infty}(Q_{0})}$ and $\max\,\{0,1-P/\sigma'\}$
are equal to zero\footnote{However, when $\Vert u_{+}\Vert_{L^{\infty}(Q_{0})}=0$ we have $u\leq0$
almost everywhere in $Q_{r}(z_{o})$, and therefore estimate \eqref{eq:sup-bound-explicit-slow-diff}
is trivially satisfied.}. We note that, in the penultimate line of the previous estimate,
one may need to apply Young's inequality to increase the exponent
of $(u-\widehat{k}_{j})_{+}$. Moreover, arguing as in \eqref{eq:superlevel},
we see that
\begin{align}
|E_{j}|\,\leq\,c\,2^{jP}\,k^{-P}X_{j}\,,\label{est:superlevel2}
\end{align}
and therefore
\begin{align}
\iint_{Q_{j}}|f|\,(u-\widehat{k}_{j})_{+}\,dx\,dt\, & \leq\,c\,b^{j}\,\Vert f\Vert_{L^{\sigma}(Q_{0})}\,\Vert u_{+}\Vert_{L^{\infty}(Q_{0})}^{\max\,\left\{ 0,1\,-\,\frac{P}{\sigma'}\right\} }\,X_{j}^{1\,-\,\frac{1}{\sigma}},\label{est:term3final}
\end{align}
where the constants $c>0$ and $b>1$ now also depend on $\sigma$.
We now use \eqref{bound:grad-eta}, \eqref{bound:der-varphi}, \eqref{est:term1new}
and \eqref{est:term2new} in the estimate \foreignlanguage{british}{\eqref{eq:X_jplusone_after_energy}}
for $X_{j+1}$, and handle the resulting measures of the superlevel
sets by \eqref{est:superlevel2}. Then, taking \eqref{est:term3final}
and the fact that $k\geq1$ into account as well, we obtain
\begin{align}
X_{j+1}\,\leq\,c\,b^{j}\Big[\xi(p,r,\nu)\,X_{j}\,+\,\Vert f\Vert_{L^{\sigma}(Q_{0})}\,\Vert u_{+}\Vert_{L^{\infty}(Q_{0})}^{\max\,\left\{ 0,1\,-\,\frac{P}{\sigma'}\right\} }X_{j}^{1\,-\,\frac{1}{\sigma}}\Big]^{\frac{N+p}{\gamma N}}k^{-P(1\,-\,\frac{1}{\gamma})}X_{j}^{1\,-\,\frac{1}{\gamma}},\label{est:X_jplusone}
\end{align}
where $c=c(N,\alpha,p,\Lambda,\sigma,\delta_{\max})>0$, and we have
set
\begin{equation}
\xi(p,r,\nu):=\,r^{-p}(1-\nu)^{-p}+1\,.\label{eq:def:xi}
\end{equation}

\noindent Observing that
\begin{align*}
X_{j}\,=\,X_{j}^{\frac{1}{\sigma}}\,X_{j}^{1\,-\,\frac{1}{\sigma}}\leq\,\Vert u_{+}\Vert_{L^{P}(Q_{0})}^{\frac{P}{\sigma}}\,X_{j}^{1\,-\,\frac{1}{\sigma}},
\end{align*}
from (\ref{est:X_jplusone}) we deduce
\begin{align*}
X_{j+1}\,\leq\,C_{u,f}\,b^{j}\,X_{j}^{1\,+\,\beta}\,,
\end{align*}
where
\begin{align*}
C_{u,f}\,=\,c\Big[\xi(p,r,\nu)\,\Vert u_{+}\Vert_{L^{P}(Q_{0})}^{\frac{P}{\sigma}}\,+\,\Vert f\Vert_{L^{\sigma}(Q_{0})}\,\Vert u_{+}\Vert_{L^{\infty}(Q_{0})}^{\max\,\left\{ 0,1\,-\,\frac{P}{\sigma'}\right\} }\Big]^{\frac{N+p}{\gamma N}}k^{-P(1\,-\,\frac{1}{\gamma})}
\end{align*}
and
\begin{align*}
\beta\,=\,\frac{1}{\gamma N}\Big(p\,-\,\frac{N+p}{\sigma}\Big)>0\,,
\end{align*}
due to the lower bound for $\sigma$ in (\ref{assumpt:f}). If $C_{u,f}=0$,
then $u_{+}=0$ almost everywhere in $Q_{r}(z_{o})$ and estimate
\eqref{eq:sup-bound-explicit-slow-diff} is trivially satisfied. If
instead $C_{u,f}>0$, then, by Lemma \ref{lem:fastconvg}, the sequence
$\{X_{j}\}$ converges to zero provided that 
\begin{align}
\iint_{Q_{0}}u_{+}^{P}\,dx\,dt\,=\,X_{0}\,\leq\,C_{u,f}^{-\,\frac{1}{\beta}}\,b^{-\,\frac{1}{\beta^{2}}},\label{eq:da_garantire}
\end{align}
which can equivalently be stated as 
\begin{align}
k\,\geq\,c\,\Vert u_{+}\Vert_{L^{P}(Q_{0})}^{\frac{\beta\gamma}{\gamma-1}}\Big[\xi(p,r,\nu)\,\Vert u_{+}\Vert_{L^{P}(Q_{0})}^{\frac{P}{\sigma}}\,+\,\Vert f\Vert_{L^{\sigma}(Q_{0})}\,\Vert u_{+}\Vert_{L^{\infty}(Q_{0})}^{\max\,\left\{ 0,1\,-\,\frac{P}{\sigma'}\right\} }\Big]^{\frac{N+p}{NP(\gamma-1)}}.\label{k:lowerbound1}
\end{align}
Estimating the right-hand side of \eqref{k:lowerbound1} from above
using the elementary inequality
\begin{equation}
(v+w)^{\tau}\,\leq\,2^{\tau}(v^{\tau}+w^{\tau})\,,\,\,\,\,\,\,\,\,\,\,\,\,v,w\,\geq\,0\,,\,\,\tau>0\,,\label{eq:elem_ineq}
\end{equation}
we see that \eqref{k:lowerbound1} holds, for instance, if 
\begin{align}
k\,\geq\,c\,\Vert u_{+}\Vert_{L^{P}(Q_{0})}^{\frac{\beta\gamma}{\gamma-1}}\,\Vert f\Vert_{L^{\sigma}(Q_{0})}^{\frac{N+p}{NP(\gamma-1)}}\,\Vert u_{+}\Vert_{L^{\infty}(Q_{0})}^{\theta}\,+\,c\,\,\xi(p,r,\nu)^{\frac{N+p}{NP(\gamma-1)}}\,\Vert u_{+}\Vert_{L^{P}(Q_{0})}^{\frac{p}{N(\gamma-1)}}\,,\label{k:lowerbound2}
\end{align}
where 
\begin{align*}
\theta\,=\,\max\,\left\{ 0,\left(1-\frac{P}{\sigma'}\right)\frac{N+p}{NP(\gamma-1)}\right\} .
\end{align*}
Thus, if $k\geq1$ and if \eqref{k:lowerbound2} holds, then 
\begin{align}
\iint_{Q_{\nu r}(z_{o})}(u-k)_{+}^{P}\,dx\,dt\,\leq\,X_{j}\to0\,\,\,\,\,\,\,\,\mathrm{as}\,\,j\to\infty\,,\label{eq:conv_bigp}
\end{align}
and hence $u\leq k\,$ a.e. in $Q_{\nu r}(z_{o})$.\\
\foreignlanguage{british}{$\hspace*{1em}$}Let us first consider the
case $\sigma'\leq P$. Then $\theta=0$ and the $L^{\infty}$-norm
of $u_{+}$ does not appear on the right-hand side of \eqref{k:lowerbound2}.
Therefore, observing that $\frac{\beta\gamma}{\gamma-1}<\frac{p}{N(\gamma-1)}$,
we can estimate the first term on the right-hand side of \eqref{k:lowerbound2}
upwards using Young's inequality. We thus conclude that a sufficient
condition for (\ref{eq:da_garantire}) to hold is that
\begin{align}
k\,\geq\,c\,\Vert f\Vert_{L^{\sigma}(Q_{0})}^{\frac{\sigma p}{NP(\gamma-1)}}\,+\,c\,\,\xi(p,r,\nu)^{\frac{N+p}{NP(\gamma-1)}}\,\Vert u_{+}\Vert_{L^{P}(Q_{0})}^{\frac{p}{N(\gamma-1)}}\,+1\,.\label{eq:KKK}
\end{align}
Using the definition of $\gamma$ in (\ref{def:gamma}) to rewrite
the right-hand side of (\ref{eq:KKK}), and noting that 
\begin{align*}
\min\,\left\{ 0,\frac{\sigma}{P'}-1\right\} =0\,\,\,\,\,\,\,\,\,\,\mathrm{when}\,\,\,\sigma'\leq P,
\end{align*}
we conclude that estimate \eqref{eq:sup-bound-explicit-slow-diff}
holds.

\selectlanguage{british}%
\noindent $\hspace*{1em}$\foreignlanguage{american}{Consider now
the case $\sigma'>P$. Then (\ref{k:lowerbound2}), (\ref{eq:conv_bigp})
and the fact that $k\ge1$ imply the estimate\begin{align}\label{eq:est:recursive1}
\underset{Q_{\nu r}(z_{o})}{\mathrm{ess}\,\sup}\,\,u_{+}\,&\leq\,c\,\Big(\underset{Q_{r}(z_{o})}{\mathrm{ess}\,\sup}\,\,u_{+}\Big)^{\theta}\,\Vert u_{+}\Vert_{L^{P}(Q_{r}(z_{o}))}^{\frac{\beta\gamma}{\gamma-1}}\,\Vert f\Vert_{L^{\sigma}(Q_{r}(z_{o}))}^{\frac{N+p}{NP(\gamma-1)}}\nonumber\\
&\,\,\,\,\,\,\,+\,c\,\,\xi(p,r,\nu)^{\frac{N+p}{NP(\gamma-1)}}\,\Vert u_{+}\Vert_{L^{P}(Q_{r}(z_{o}))}^{\frac{p}{N(\gamma-1)}}+1\,,
\end{align}where $\theta>0$. We now show that $\theta<1$ also holds. Using
the definition of $\gamma$ in \eqref{def:gamma}, we see that\begin{align*}
\theta\,&=\,\Big(1-\frac{P}{\sigma'}\Big)\frac{N+p}{NP\gamma-NP}\,<\,\Big(1-\frac{NP}{N+p}\Big)\frac{N+p}{NP\gamma-NP}\\
&=\,\Big(1-\frac{NP}{N+p}\Big)\frac{N+p}{p(N+\alpha+1)-NP}\,,
\end{align*}where we have used the fact that $\sigma'<(N+p)/N$. By definition,
we have $P\in\{p,\alpha+1\}$. If $P=p$, we obtain
\begin{align*}
\theta\,<\,\Big(1-\frac{Np}{N+p}\Big)\frac{N+p}{p(\alpha+1)}\,=\,\frac{N+p-Np}{p(\alpha+1)}\,<\,\frac{1}{\alpha+1}\,<\,1\,.
\end{align*}
If instead $P=\alpha+1$, we have\begin{align*}
\theta\, & <\Big(1-\frac{N(\alpha+1)}{N+p}\Big)\frac{N+p}{p(N+\alpha+1)-N(\alpha+1)}\,=\,\frac{N+p-N(\alpha+1)}{p(N+\alpha+1)-N(\alpha+1)}\\
&\leq\,\frac{N+\alpha+1-N(\alpha+1)}{p(N+\alpha+1)-N(\alpha+1)}\,<\,1\,,
\end{align*}where the last inequality follows from the assumption $p>1$. At this
point, we can apply Young's inequality with exponents $1/\theta$
and $1/(1-\theta)$ to the first term on the right-hand side of \eqref{eq:est:recursive1},
thus obtaining\begin{align}\label{eq:est:recursive2}
\underset{Q_{\nu r}(z_{o})}{\mathrm{ess}\,\sup}\,\,u_{+}\,&\leq\,\varepsilon\,\,\,\underset{Q_{r}(z_{o})}{\mathrm{ess}\,\sup}\,\,u_{+}\,+\,\tilde{c}(\varepsilon)\,\Vert u_{+}\Vert_{L^{P}(Q_{r}(z_{o}))}^{\frac{\beta\gamma}{(\gamma-1)\,(1-\theta)}}\,\Vert f\Vert_{L^{\sigma}(Q_{r}(z_{o}))}^{\frac{N+p}{NP\,(\gamma-1)\,(1-\theta)}}\nonumber\\
&\,\,\,\,\,\,\,+\,c\,\,\xi(p,r,\nu)^{\frac{N+p}{NP(\gamma-1)}}\,\Vert u_{+}\Vert_{L^{P}(Q_{r}(z_{o}))}^{\frac{p}{N(\gamma-1)}}+1\,,
\end{align}where $\varepsilon>0$ will be chosen later. Here we emphasize that
the positive constant $\tilde{c}(\varepsilon)$ depends on $\varepsilon$,
in addition to $N$, $\alpha$, $p$, $\Lambda$, $\sigma$ and $\max\,\{\delta_{1},\ldots,\delta_{N}\}$.
In order to get a more manageable estimate, we will use Young's inequality
again and the fact that the exponent of $\Vert u_{+}\Vert_{L^{P}(Q_{r}(z_{o}))}$
in the second term on the right-hand side of \eqref{eq:est:recursive2}
is smaller than the exponent in the third term. To verify this, note
that
\begin{align*}
\frac{\beta\gamma}{(\gamma-1)(1-\theta)}\,=\,\frac{a-N}{a+N(\gamma-1)-\frac{N\,+\,p}{P}}\,=:\,g(a),\qquad\mathrm{with}\,\,\,a\,=\,\frac{N+p}{\sigma'}\,\in\left(N,\frac{N+p}{P}\right).
\end{align*}
We can rewrite $g(a)$ as follows: 
\begin{align*}
g(a)\,=\,1-\,\frac{(N\gamma-\frac{N\,+\,p}{P})}{a+N(\gamma-1)-\frac{N\,+\,p}{P}}\,.
\end{align*}
The numerator in the last expression can be expressed as 
\begin{align*}
N\gamma-\,\frac{N+p}{P}\,=\,\frac{1}{P}\,(NP\gamma-N-p)\,=\,\frac{1}{P}\left[p(N+\alpha+1)-N-p\right]\,>\,0\,.
\end{align*}
Thus, $g$ is an increasing function and 
\begin{align*}
\frac{\beta\gamma}{(\gamma-1)(1-\theta)}\,=\,g(a)\,<\,g\left(\frac{N+p}{P}\right)\,=\,\frac{\frac{p}{P}+N(\frac{1}{P}-1)}{N(\gamma-1)}\,<\,\frac{1}{N(\gamma-1)}\,<\,\frac{p}{N(\gamma-1)}\,.
\end{align*}
Therefore, we can apply Young's inequality with exponents $Q=p(1-\theta)/(N\beta\gamma)$
and $Q'$ to the second term on the right-hand side of \eqref{eq:est:recursive2}
to obtain\begin{align}\label{eq:est:recursive3}
\underset{Q_{\nu r}(z_{o})}{\mathrm{ess}\,\sup}\,\,u_{+}\,&\leq\,\varepsilon\,\,\,\underset{Q_{r}(z_{o})}{\mathrm{ess}\,\sup}\,\,u_{+}\,+\,\tilde{c}(\varepsilon)\,\xi(p,r,\nu)^{\frac{N+p}{NP(\gamma-1)}}\,\Vert u_{+}\Vert_{L^{P}(Q_{r}(z_{o}))}^{\frac{p}{N(\gamma-1)}}\nonumber\\
&\,\,\,\,\,\,\,+\,\tilde{c}(\varepsilon)\,\Vert f\Vert_{L^{\sigma}(Q_{r}(z_{o}))}^{\frac{p(N+p)}{NP\,(\gamma-1)\,[p(1-\theta)-N\beta\gamma]}}\,+1\,,
\end{align}where we have also used that $\xi(p,r,\nu)>1$. Now, let $s\in(0,1)$
and define an increasing sequence of radii as follows:
\begin{align*}
\rho_{j}\,:=\,sr+r(1-s)(1-2^{-j})\,,\,\,\,\,\,\,\,\,\,\,\,j\in\mathbb{N}_{0}\,.
\end{align*}
Note that $\rho_{0}=sr$ and $\rho_{j}\to r$ as $j\rightarrow\infty$.
We will apply \eqref{eq:est:recursive3} with $\rho_{j+1}$ in place
of $r$ and $\nu=\nu_{j}:=\frac{\rho_{j}}{\rho_{j+1}}$. First, we
see that 
\begin{align*}
\xi(p,\rho_{j+1},\nu_{j})\,=\,[\rho_{j+1}(1-\nu_{j})]^{-p}+1\,=\,[r(1-s)\,2^{-(j+1)}]^{-p}+1\,\leq\,\xi(p,r,s)\,2^{p(j+1)}.
\end{align*}
Therefore, from \eqref{eq:est:recursive3} we deduce\begin{align*}
M_{j}:=\,\underset{Q_{\rho_{j}}(z_{o})}{\mathrm{ess}\,\sup}\,\,u_{+}\,&\leq\,\varepsilon\,M_{j+1}\,+\,\tilde{c}(\varepsilon)\,\mathsf{b}^{j}\,\xi(p,r,s)^{\frac{N+p}{NP\,(\gamma-1)}}\,\Vert u_{+}\Vert_{L^{P}(Q_{r}(z_{o}))}^{\frac{p}{N(\gamma-1)}}\\
&\,\,\,\,\,\,\,+\,\tilde{c}(\varepsilon)\,\Vert f\Vert_{L^{\sigma}(Q_{r}(z_{o}))}^{\frac{p(N+p)}{NP\,(\gamma-1)\,[p(1-\theta)-N\beta\gamma]}}\,+1\\
&=:\,\varepsilon\,M_{j+1}\,+\,\tilde{c}(\varepsilon)\,\mathsf{b}^{j}\,C_{u}\,+\,\tilde{c}(\varepsilon)\,C_{f}\,+\,1\,,
\end{align*}for some constant $\mathsf{b}>1$ depending only on the data. Iterating
the previous estimate, we end up with
\begin{align}
\underset{Q_{sr}(z_{o})}{\mathrm{ess}\,\sup}\,\,u_{+}\,=\,M_{0}\,\leq\,\varepsilon^{n}M_{n}\,+\,\tilde{c}(\varepsilon)\,C_{u}\,\sum_{j=0}^{n-1}(\varepsilon\mathsf{b})^{j}\,+\,[\tilde{c}(\varepsilon)\,C_{f}+1]\,\sum_{j=0}^{n-1}\varepsilon^{j}\,,\label{est:iterated}
\end{align}
for any $n\in\mathbb{N}$. Now choose $\varepsilon=1/(2\mathsf{b})\in(0,1)$.
Then the sums in the preceding inequality converge as $n\to\infty$.
Since the sequence $\{M_{n}\}$ is bounded due to the local boundedness
of $u$, the first term on the right-hand side of \eqref{est:iterated}
tends to zero in the limit as $n\to\infty$. Hence, letting $n\to\infty$
in \eqref{est:iterated}, we obtain
\begin{align*}
\underset{Q_{sr}(z_{o})}{\mathrm{ess}\,\sup}\,\,u_{+}\,\leq\,c\left(\xi(p,r,s)^{\frac{N+p}{NP\,(\gamma-1)}}\,\Vert u_{+}\Vert_{L^{P}(Q_{r}(z_{o}))}^{\frac{p}{N(\gamma-1)}}\,+\,\Vert f\Vert_{L^{\sigma}(Q_{r}(z_{o}))}^{\frac{p(N+p)}{NP\,(\gamma-1)\,[p(1-\theta)-N\beta\gamma]}}\,+1\right).
\end{align*}
Finally, by lengthy but elementary computations involving the definitions
of $\gamma$, $\beta$ and $\theta$, we can rewrite the exponents
appearing in the last expression to confirm the validity of \eqref{eq:sup-bound-explicit-slow-diff}.
Here we also exploit the fact that, for $\sigma'>P$,
\begin{align*}
\min\,\left\{ 0,\frac{\sigma}{P'}-1\right\} =\,\frac{\sigma}{P'}-1\,.
\end{align*}
This concludes the proof.\end{proof}}

\begin{singlespace}
\noindent $\hspace*{1em}$\foreignlanguage{american}{By iterating
the estimates of Theorem \ref{thm:loc_bdd_p_large_LP}, we can actually
derive sharper bounds for the essential supremum and essential infimum
of $u$. We first note that, by the definition of $P$ and inequality
\eqref{p_lower_bnd}, we have
\[
\lambda_{o}(N,\alpha,p):=\,P-\,\frac{N}{p}\left[p\left(1+\,\frac{\alpha+1}{N}\right)-P\right]\,<\,P\,.
\]
Moreover, from the definitions of $\lambda_{o}(N,\alpha,p)$ and $P$,
we see that
\[
\lambda_{o}(N,\alpha,p)\,=\,P-(\alpha+1)+N\left(\frac{P}{p}\,-1\right)\geq\,0\,.
\]
Now we are in a position to prove the following result.}
\end{singlespace}
\selectlanguage{american}%
\begin{thm}
\label{thm:better_est} Let $u\in\mathcal{A}(\alpha,p,\{\delta_{i}\},\Omega_{T},f,\mathcal{C})$,
where $\alpha\in(0,\infty)$, $p\in(1,\infty)$ and $f$ satisfies
$(\ref{assumpt:f})$. Suppose that \eqref{p_lower_bnd} holds and
let $\lambda\in(\lambda_{o}(N,\alpha,p),P]$. Then, for every cylinder
$Q_{r}(z_{o})\Subset\Omega_{T}$ and every $\nu\in(0,1)$, we have\begin{align}\label{eq:est:loc_bdd_best}
\underset{Q_{\nu r}(z_{o})}{\mathrm{ess}\,\sup}\,\,u\,&\leq\,c\left(\big[1+(r(1-\nu))^{-(N+p)}\big]\iint_{Q_{r}(z_{o})}u_{+}^{\lambda}\,dx\,dt\right)^{\frac{1}{\lambda\,-\,\lambda_{o}(N,\alpha,p)}}\nonumber\\
&\,\,\,\,\,\,\,+\,c\left(1+\iint_{Q_{r}(z_{o})}|f|^{\sigma}\,dx\,dt\right)^{\frac{p}{p(N+\alpha+1)\,-\,NP\,+\,pP\min\,\{0,\frac{\sigma}{P'}-1\}}},
\end{align}for some positive constant $c$ depending only on $N$, $\alpha$,
$p$, $\Lambda$, $\sigma$, $\max\,\{\delta_{1},\ldots,\delta_{N}\}$
and $\lambda$. An analogous lower bound holds for the essential infimum,
with $u_{-}$ replacing $u_{+}$ on the right-hand side of \foreignlanguage{british}{\eqref{eq:est:loc_bdd_best}}.
In particular, under assumptions $\eqref{eq:coeff_limit}$ and $(\ref{assumpt:f})$,
these estimates hold for all weak solutions to \eqref{eq:diffusion}
in the sense of Definition \ref{def:weaksol}.
\end{thm}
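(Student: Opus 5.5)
The plan is to derive the sharper bound from Theorem \ref{thm:loc_bdd_p_large_LP} by interpolating the $L^{P}$-norm between $L^{\lambda}$ and $L^{\infty}$ and then absorbing the sup term through a Giaquinta-type iteration over nested cylinders. We may assume $\lambda<P$, since for $\lambda=P$ we have $\frac{1}{P-\lambda_o}=\frac{p}{p(N+\alpha+1)-NP}$ and the claim is exactly \eqref{eq:sup-bound-explicit-slow-diff}. The $f$-term is carried along unchanged.

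Fix $s\in(0,1)$ and intermediate radii $\nu r\le\rho<\rho'\le r$. Theorem \ref{thm:loc_bdd_p_large_LP}, applied on $Q_{\rho'}(z_o)$ with $s=\rho/\rho'$, gives
\[
M_\rho:=\underset{Q_\rho(z_o)}{\mathrm{ess}\,\sup}\,u_+\le c\Big(\big[1+(\rho'-\rho)^{-(N+p)}\big]\iint_{Q_{\rho'}}u_+^{P}\Big)^{\kappa}+c\,\mathcal{F},
\]
where $\kappa:=\frac{p}{p(N+\alpha+1)-NP}$ and $\mathcal{F}$ denotes the $f$-term in \eqref{eq:est:loc_bdd_best}. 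Note that $\rho'(1-\rho/\rho')=\rho'-\rho$. Next estimate
\[
\iint_{Q_{\rho'}}u_+^{P}\le M_{\rho'}^{P-\lambda}\iint_{Q_r}u_+^{\lambda}.
\]
This yields
\[
M_\rho\le c\,M_{\rho'}^{\kappa(P-\lambda)}\Big(\big[1+(\rho'-\rho)^{-(N+p)}\big]\iint_{Q_r}u_+^{\lambda}\Big)^{\kappa}+c\,\mathcal{F}.
\]

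The key algebraic check is that $\kappa(P-\lambda)<1$. Since
\[
\frac{1}{\kappa}=N+\alpha+1-\frac{NP}{p}=P-\lambda_o,
\]
the condition $\kappa(P-\lambda)<1$ is equivalent to $\lambda>\lambda_o$, which is exactly the hypothesis. Apply Young's inequality with exponents $\frac{1}{\kappa(P-\lambda)}$ and its conjugate. The conjugate exponent applied to the power $\kappa$ gives
\[
\frac{\kappa}{1-\kappa(P-\lambda)}=\frac{1}{1/\kappa-P+\lambda}=\frac{1}{\lambda-\lambda_o}.
\]
Hence
\[
M_\rho\le\tfrac12 M_{\rho'}+c\,\Big(\big[1+(\rho'-\rho)^{-(N+p)}\big]\iint_{Q_r}u_+^{\lambda}\Big)^{\frac{1}{\lambda-\lambda_o}}+c\,\mathcal{F}.
\]
Bounding $1+(\rho'-\rho)^{-(N+p)}\le(r/(\rho'-\rho))^{N+p}\,[1+r^{-(N+p)}]$ exhibits the right-hand side as $A(\rho'-\rho)^{-\mu}+B$ with $\mu=\frac{N+p}{\lambda-\lambda_o}$.

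The last step is the standard iteration lemma, run explicitly as at the end of the proof of Theorem \ref{thm:loc_bdd_p_large_LP}. Use radii $\rho_j=\nu r+r(1-\nu)(1-2^{-j})$ and iterate. The terms $2^{j\mu}$ are beaten by choosing the Young parameter $\varepsilon$ with $\varepsilon 2^{\mu}<1$ in place of $\tfrac12$. The remainder $\varepsilon^n M_{\rho_n}$ vanishes because $u$ is locally bounded (Theorem \ref{theo:local_bdd_large_bar_p}) on $Q_r$; if needed, work on $Q_{r'}$ with $r'<r$ and let $r'\to r$. This produces the factor
\[
\big[(r(1-\nu))^{-(N+p)}(1+r^{N+p})\big]
\]
to the power $\frac{1}{\lambda-\lambda_o}$. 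I expect the main obstacle to be the bookkeeping of the geometric factor, so that it collapses to $1+(r(1-\nu))^{-(N+p)}$ rather than carrying an extra $r$-dependence. This should work since, for $r$ small, $(r(1-\nu))^{-(N+p)}$ dominates, and for $r$ large one keeps $1+(\rho'-\rho)^{-(N+p)}$ unsplit: use $\rho'-\rho=2^{-j-1}r(1-\nu)$, so that $1+(\rho'-\rho)^{-(N+p)}\le 2^{(j+1)(N+p)}[1+(r(1-\nu))^{-(N+p)}]$. The lower bound for the infimum follows by applying the same argument to $u_-$.
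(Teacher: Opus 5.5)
Your proposal is correct and follows essentially the same route as the paper. Both reduce to $\lambda<P$, apply Theorem \ref{thm:loc_bdd_p_large_LP} on the nested cylinders $Q_{r_{j+1}}$ with $s=r_j/r_{j+1}$, and bound $\iint u_+^P\le M_{j+1}^{P-\lambda}\iint u_+^\lambda$. Both then use Young's inequality with the exponent $\frac{P-\lambda}{P-\lambda_o}\in(0,1)$, iterate with $\varepsilon=1/(2\tilde b)$, and use local boundedness to make the remainder $\varepsilon^n M_n$ vanish.

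Keep only your final unsplit bound $1+(\rho'-\rho)^{-(N+p)}\le 2^{(j+1)(N+p)}[1+(r(1-\nu))^{-(N+p)}]$, which is exactly what the paper uses. Your earlier splitting with the factor $(1+r^{N+p})$ would leave a term $(1-\nu)^{-(N+p)}$ that the stated factor does not control.
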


\noindent \begin{proof}[\bfseries{Proof}]Let $\lambda$ be as in
the statement of the theorem. Note that the case $\lambda=P$ is already
contained in Theorem \ref{thm:loc_bdd_p_large_LP}, so henceforth
we assume that $\lambda<P$. We define
\[
r_{j}:=\,\nu r\,+(1-\nu)\,r\,(1-2^{-j})\,,\hspace{7mm}Q_{j}:=\,Q_{r_{j}}(z_{o})\,,\,\,\,\,\,\,\,\,\,\,\,\,\mathrm{for}\,\,j\in\mathbb{N}_{0}\,,
\]
and
\[
K_{f}:=\left(1+\iint_{Q_{r}(z_{o})}|f|^{\sigma}\,dx\,dt\right)^{\frac{p}{p(N+\alpha+1)\,-\,NP\,+\,pP\min\,\{0,\frac{\sigma}{P'}-1\}}}.
\]
Using \eqref{eq:sup-bound-explicit-slow-diff} with $r=r_{j+1}$,
$s=\frac{r_{j}}{r_{j+1}}$, and noting that $\frac{p}{p(N\,+\,\alpha\,+\,1)\,-\,PN}=\frac{1}{P\,-\,\lambda_{o}}$,
we obtain\begin{align*}
M_{j}&:=\,\underset{Q_{j}}{\mathrm{ess}\,\sup}\,\,u_{+}\\
&\leq\,c\left(\big[1+(r(1-\nu))^{-(N+p)}\,2^{(N+p)(j+1)}\big]\iint_{Q_{j+1}}u_{+}^{P}\,dx\,dt\right)^{\frac{1}{P\,-\,\lambda_{o}(N,\alpha,p)}}+c\,K_{f}\\
&\leq\,c\,b^{j}\,M_{j+1}^{\frac{P\,-\,\lambda}{P\,-\,\lambda_{o}(N,\alpha,p)}}\left(\big[1+(r(1-\nu))^{-(N+p)}\big]\iint_{Q_{j+1}}u_{+}^{\lambda}\,dx\,dt\right)^{\frac{1}{P\,-\,\lambda_{o}(N,\alpha,p)}}+c\,K_{f}\,,
\end{align*}where $c$ is a positive constant depending only on $N$, $\alpha$,
$p$, $\Lambda$, $\sigma$ and $\max\,\{\delta_{1},\ldots,\delta_{N}\}$,
while $b>1$ depends only on $N$, $\alpha$ and $p$. Since $\lambda_{o}(N,\alpha,p)<\lambda<P$,
we see that the exponent of $M_{j+1}$ in the last expression belongs
to the interval $(0,1)$. Thus we can apply Young's inequality with
$\varepsilon>0$ to obtain
\begin{equation}
M_{j}\,\le\,\varepsilon\,M_{j+1}\,+\,\tilde{c}(\varepsilon,\lambda)\,\tilde{b}^{j}\left(\big[1+(r(1-\nu))^{-(N+p)}\big]\iint_{Q_{r}(z_{o})}u_{+}^{\lambda}\,dx\,dt\right)^{\frac{1}{\lambda\,-\,\lambda_{o}(N,\alpha,p)}}+c\,K_{f}\,,\label{eq:iteranda_est}
\end{equation}
where we have also used the fact that $Q_{j+1}$ is contained in $Q_{r}(z_{o})$.
Here we emphasize that the positive constant $\tilde{c}(\varepsilon,\lambda)$
depends on $\varepsilon$ and $\lambda$, in addition to $N$, $\alpha$,
$p$, $\Lambda$, $\sigma$ and $\max\,\{\delta_{1},\ldots,\delta_{N}\}$.
The constant $\tilde{b}>1$ depends only on $N$, $\alpha$, $p$
and $\lambda$. Proceeding as in the proof of Theorem \ref{thm:loc_bdd_p_large_LP},
we now iterate (\ref{eq:iteranda_est}) and choose $\varepsilon=1/(2\tilde{b})$.
This eventually yields the following estimate:
\[
\underset{Q_{\nu r}(z_{o})}{\mathrm{ess}\,\sup}\,\,u_{+}\,\leq\,C\left(\big[1+(r(1-\nu))^{-(N+p)}\big]\iint_{Q_{r}(z_{o})}u_{+}^{\lambda}\,dx\,dt\right)^{\frac{1}{\lambda\,-\,\lambda_{o}(N,\alpha,p)}}+C\,K_{f}\,,
\]
where $C$ is a positive constant depending only on $N$, $\alpha$,
$p$, $\Lambda$, $\sigma$, $\max\,\{\delta_{1},\ldots,\delta_{N}\}$
and $\lambda$. This bound clearly implies \foreignlanguage{british}{\eqref{eq:est:loc_bdd_best}}.\\
\foreignlanguage{british}{$\hspace*{1em}$}Finally, with analogous
arguments, we also get a lower bound for the essential infimum of
$u$ in terms of the $L^{\lambda}$-norm of $u_{-}\,$.\end{proof}

\subsection{The case $p\protect\leq\frac{N(\alpha\,+\,1)}{N\,+\,\alpha\,+\,1}$}

\selectlanguage{british}%
$\hspace*{1em}$\foreignlanguage{american}{We now turn our attention
to the range
\begin{align}
p\,\leq\,\frac{N(\alpha+1)}{N+\alpha+1}\,,\label{range:small_p}
\end{align}
and recall that we also require (\ref{extra_integrability}) in this
case. As in the previous subsection, we will first obtain local boundedness
without an explicit bound (Theorem \ref{thm:p_small1} below). Once
this has been done, we can employ alternative methods to derive quantitative
estimates for both the essential supremum and the essential infimum
(see Theorem \ref{thm:p_small2}).}
\selectlanguage{american}%
\begin{thm}
\label{thm:p_small1} Let $u\in\mathcal{A}(\alpha,p,\{\delta_{i}\},\Omega_{T},f,\mathcal{C})$,
where $\alpha\in(0,\infty)$, $p\in(1,\infty)$ and $f$ satisfies
$(\ref{assumpt:f})$. Suppose that \eqref{range:small_p} holds and
that $u$ satisfies the extra integrability condition $(\ref{extra_integrability})$.
Then $u$ is locally essentially bounded. In particular, if $u$ is
a weak solution of \eqref{eq:diffusion} under assumptions $\eqref{eq:coeff_limit}$,
$(\ref{assumpt:f})$ and $(\ref{extra_integrability})$, then $u$
belongs to $L_{\mathrm{loc}}^{\infty}(\Omega_{T})$.
\end{thm}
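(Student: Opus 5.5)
The plan is to run a De Giorgi iteration as in the proof of Theorem \ref{theo:local_bdd_large_bar_p}, but measuring the superlevel sets in $L^{m}$ instead of $L^{p_{\alpha+1}}$, where $m$ is the exponent in (\ref{extra_integrability}). The reason is the following. In the range (\ref{range:small_p}) we have $p_{\alpha+1}\leq\alpha+1$. The terms $(u-\widehat{k}_{j})_{+}^{\alpha+1}$ produced on the right of the energy estimate (\ref{eq:energy-classical}) by $\partial_{t}\varphi_{j}$ (see (\ref{eq:est:term2})) can therefore no longer be absorbed into $(u-\widehat{k}_{j})_{+}^{p_{\alpha+1}}$ by Young's inequality. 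The iteration for $Y_{j}=\iint(u-k_{j})_{+}^{p_{\alpha+1}}$ thus fails to close. Assume without loss of generality $m\geq p_{\alpha+1}$; otherwise $m<p_{\alpha+1}$, and since $u\in L^{p_{\alpha+1}}_{\mathrm{loc}}$ by Lemma \ref{lem:parabolic-sobolev}, we may enlarge $m$ up to $p_{\alpha+1}$. I would keep the same cylinders $Q_{j}$, cut-offs $\eta_{j},\varphi_{j}$ and levels $k_{j}=k(2-2^{-j})$, $\widehat{k}_{j}$, and set
\[
Y_{j}:=\iint_{Q_{j}}(u-k_{j})_{+}^{m}\,dx\,dt ,
\]
which is finite by (\ref{extra_integrability}).

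Step 1 is the energy step. Lemma \ref{lem:energy-classical} at level $\widehat{k}_{j}$ controls
\[
\mathcal{E}_{j}:=\sup_{t}\int(u-\widehat{k}_{j})_{+}^{\alpha+1}\eta_{j}^{p}\varphi_{j}\,dx+\iint|\nabla(u-\widehat{k}_{j})_{+}|^{p}\eta_{j}^{p}\varphi_{j}\,dx\,dt .
\]
Here we use (\ref{eq:est:timesupterm}) with $k\geq1$ and Lemma \ref{lem:basic_delta}, exactly as before. The right-hand side of the energy estimate contains four kinds of terms:
\begin{itemize}
\item $\iint(u-\widehat{k}_{j})_{+}^{p}$ and $|E_{j}|$, handled via $|E_{j}|\leq c\,2^{jm}k^{-m}Y_{j}$;
\item $k^{\alpha+1}|E_{j}|$;
\item the bad term $\iint(u-\widehat{k}_{j})_{+}^{\alpha+1}$;
\item the $f$-term, handled by Hölder with $\sigma'<\frac{N+p}{N}<p_{\alpha+1}\leq m$, as in (\ref{est:term3})--(\ref{eq:term3nuovo}), giving powers $(Y_{j}+|E_{j}|)^{1-1/\sigma}$.
\end{itemize}
If $m\geq\alpha+1$, the bad term is bounded by $Y_{j}+|E_{j}|$ and everything is a power of $Y_{j}$. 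If $m<\alpha+1$, I would interpolate between $L^{m}$ and the Sobolev quantity. At a.e. time, the Gagliardo--Nirenberg inequality applied to $v=(u-\widehat{k}_{j})_{+}\eta_{j}$ gives
\[
\int v^{\alpha+1}dx\leq c\,\|\nabla v\|_{p}^{\vartheta(\alpha+1)}\|v\|_{m}^{(1-\vartheta)(\alpha+1)},\qquad \frac{1}{\alpha+1}=\vartheta\Big(\frac1p-\frac1N\Big)+\frac{1-\vartheta}{m}.
\]
Integrating in time with Hölder then bounds this term by $\mathcal{E}_{j}^{a}\,Y_{j}^{b}$ with $a<1$, and Young's inequality absorbs $\mathcal{E}_{j}^{a}$ into the left. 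The condition $m>\frac{N}{p}(\alpha+1-p)$ is exactly what makes $\vartheta(\alpha+1)<p$, so the exponent $a$ stays below one.

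Step 2 is the Sobolev step. I would bound $Y_{j+1}$ by Hölder between $L^{p_{\alpha+1}}$ and $L^{\infty}$-free quantities. More precisely, I would interpolate $L^{m}$ between $L^{p_{\alpha+1}}$ (controlled by Lemma \ref{lem:parabolic-sobolev} through $\mathcal{E}_{j}^{(N+p)/N}$) and the measure $|E_{j}|\lesssim 2^{jm}k^{-m}Y_{j}$. If $m>p_{\alpha+1}$ this direct route is unavailable. I would then instead apply the embedding to the power $(u-\widehat{k}_{j})_{+}^{m/p_{\alpha+1}}$, or use the space-time Gagliardo--Nirenberg form, to reach $\iint v^{m}\lesssim\mathcal{E}_{j}^{\kappa}\,(\cdot)$. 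The goal is a recursion
\[
Y_{j+1}\leq C\,b^{j}\,Y_{j}^{1+\beta}
\]
with $\beta>0$ and $C$ depending on $k$ only through nonnegative powers of $k^{-1}$ (after using $k\geq1$), on $\|f\|_{L^{\sigma}}$, and on $\iint_{Q_{0}}u_{+}^{m}$. This is as in (\ref{eq:appoggio}), where $\sigma>(N+p)/p$ supplies the gain in the $f$-term.

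Step 3 concludes as in Theorem \ref{theo:local_bdd_large_bar_p}. Since $Y_{0}=\iint_{Q_{r}}(u-k)_{+}^{m}\to0$ as $k\to\infty$ by dominated convergence, Lemma \ref{lem:fastconvg} applies for large $k$ and yields $u\leq2k$ on $Q_{\nu r}(z_{o})$. The lower bound follows symmetrically from the negative-part energy estimate.

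The main obstacle is the exponent bookkeeping in Steps 1--2. One has to verify that, after absorbing the $L^{\alpha+1}$ term through the interpolation, the total power of $Y_{j}$ exceeds one precisely under (\ref{extra_integrability}). A delicate point is that the interpolation uses $\|v(\cdot,t)\|_{L^{m}}$ pointwise in time, while only the space-time $L^{m}$ norm is iterated. I would first try to handle this with Hölder in time together with the $\sup_{t}L^{\alpha+1}$ control from $\mathcal{E}_{j}$. If this does not close, I would fall back on iterating the mixed quantity $\sup_{t}\int(u-k_{j})_{+}^{\alpha+1}+Y_{j}$.
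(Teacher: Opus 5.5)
There is a genuine gap in your Step~2. It is structural, not a matter of exponent bookkeeping.

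First note that \eqref{range:small_p} is equivalent both to $\frac{N}{p}(\alpha+1-p)\geq\alpha+1$ and to $p_{\alpha+1}\leq\alpha+1$. So \eqref{extra_integrability} already forces $m>\alpha+1\geq p_{\alpha+1}$. Your case $m<\alpha+1$, together with the Gagliardo--Nirenberg absorption in Step~1, is therefore vacuous. You are always iterating $Y_j=\iint_{Q_j}(u-k_j)_+^{m}$ with $m>p_{\alpha+1}$.

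The class $\mathcal{A}$ only provides \eqref{eq:energy-classical} for first-order truncations $(u-k)_\pm$. Through Lemma~\ref{lem:parabolic-sobolev}, your $\mathcal{E}_j$ controls nothing beyond the space-time $L^{p_{\alpha+1}}$ norm of $(u-\widehat{k}_j)_+\eta_j$, and that embedding exponent cannot be improved. Hölder against $|E_j|$ only moves you down in exponent. Hence no estimate of the form $Y_{j+1}\lesssim\mathcal{E}_j^{\kappa}(\cdots)$ with a gain is available.

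Applying the embedding to $(u-\widehat{k}_j)_+^{m/p_{\alpha+1}}$ would need energy inequalities for powers of truncations, which are not among the hypotheses. You would have to manufacture them, e.g.\ by integrating \eqref{eq:energy-classical} over levels, and then run a different, Moser-type scheme. Going from $L^{p_{\alpha+1}}$ up to $L^m$ otherwise needs an $L^\infty$ bound. That is exactly how the paper iterates in $L^m$ in Theorem~\ref{thm:p_small2}, via the factor $\Vert u_+\Vert_{L^\infty(Q_0)}^{m-p_{\alpha+1}}$, but only after qualitative boundedness is known. Your fallback has the same defect, since its $Y_{j+1}$ component is still an $L^m$ norm.

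The missing idea is to keep iterating $Y_j=\iint_{Q_j}(u-k_j)_+^{p_{\alpha+1}}$, where the Sobolev step works verbatim as in Theorem~\ref{theo:local_bdd_large_bar_p}. Use \eqref{extra_integrability} only as a fixed finite constant when treating the bad term. If $p_{\alpha+1}<\alpha+1$, let $q=\frac{m-p_{\alpha+1}}{\alpha+1-p_{\alpha+1}}$ and split $u^{\alpha+1}$ by Hölder between $u^m$ and $u^{p_{\alpha+1}}$:
\[
\iint_{Q_j}u^{\alpha+1}\chi_{\{u>\widehat{k}_j\}}\,dx\,dt\leq\Big[\iint_{Q_0}u_+^{m}\,dx\,dt\Big]^{\frac1q}\Big[\iint_{Q_j}u^{p_{\alpha+1}}\chi_{\{u>\widehat{k}_j\}}\,dx\,dt\Big]^{1-\frac1q}\leq c\,2^{jp_{\alpha+1}}\Big[\iint_{Q_0}u_+^{m}\,dx\,dt\Big]^{\frac1q}Y_j^{1-\frac1q}.
\]
The last step uses $|E_j|\leq c\,2^{jp_{\alpha+1}}k^{-p_{\alpha+1}}Y_j$ and $\widehat{k}_j<2k$, so no positive power of $k$ survives. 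For $p_{\alpha+1}=\alpha+1$ the bound is direct, with $\frac1q=0$.

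The condition $m>\frac{N}{p}(\alpha+1-p)$ is precisely $q>\frac{N+p}{p}$, i.e.\ $(1-\frac1q)\frac{N+p}{N}>1$. Combined with $\sigma>\frac{N+p}{p}$ for the $f$-term, this yields $Y_{j+1}\leq C\,b^{j}Y_j^{\omega(N+p)/N}$, where $\omega=\min\{1-\frac1q,1-\frac1\sigma\}$ and $C$ is independent of $k\geq1$. Your Step~3 then concludes, since $\iint_{Q_r(z_o)}(u-k)_+^{p_{\alpha+1}}\to0$ as $k\to\infty$.
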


\noindent \begin{proof}[\bfseries{Proof}]Let $u\in\mathcal{A}(\alpha,p,\{\delta_{i}\},\Omega_{T},f,\mathcal{C})$,
with $\alpha$, $p$ and $f$ as in the statement of the theorem.
We only prove that $u$ is locally bounded from above, since the argument
yielding a lower bound is analogous. We choose radii $r_{j}$, space-time
cylinders $Q_{j}$ and cut-off functions $\eta_{j}$ and $\varphi_{j}$
exactly as in the proof of Theorem \ref{theo:local_bdd_large_bar_p}.
Moreover, we define the sequences $Y_{j}$, $k_{j}$ and $\widehat{k}_{j}$
as in \eqref{def:k_j_alpha_small}. As before, we obtain estimate
\foreignlanguage{british}{\eqref{eq:integ_after_energy}}. The second
integral on the right-hand side of \foreignlanguage{british}{\eqref{eq:integ_after_energy}}
can no longer be handled as in the proof of Theorem \ref{theo:local_bdd_large_bar_p},
since we are now in the regime $p_{\alpha+1}\leq\alpha+1$ due to
(\ref{range:small_p}). However, in the special case $p_{\alpha+1}=\alpha+1$
we can directly write 
\begin{align}
\iint_{Q_{j}}(u^{\frac{\alpha+1}{2}}-\widehat{k}_{j}^{\frac{\alpha+1}{2}})_{+}^{2}\,dx\,dt\,\leq & \iint_{Q_{j}}u^{p_{\alpha+1}}\,\chi_{\{u\,>\,\widehat{k}_{j}\}}\,dx\,dt\nonumber \\
\leq & \,\,c\iint_{Q_{j}}\left[(u-\widehat{k}_{j})_{+}^{p_{\alpha+1}}\,+\,\widehat{k}_{j}^{p_{\alpha+1}}\,\chi_{\{u\,>\,\widehat{k}_{j}\}}\right]dx\,dt\nonumber \\
\leq & \,\,c\,2^{j\,p_{\alpha+1}}\,Y_{j}\,,\label{eq:est:problematic_term_special_case}
\end{align}
where $c=c(N,\alpha,p)>0$ and, in the last line, we have used (\ref{eq:superlevel})
along with $\widehat{k}_{j}<2k$. If instead $p_{\alpha+1}<\alpha+1$,
we introduce the parameters 
\begin{align*}
q:=\,\frac{m-p_{\alpha+1}}{\alpha+1-p_{\alpha+1}}>1\,,\quad\quad\vartheta:=\,\frac{m(\alpha+1)-m\,p_{\alpha+1}}{m(\alpha+1)-(\alpha+1)\,p_{\alpha+1}}\,\in(0,1)\,.
\end{align*}
The ranges for $q$ and $\vartheta$ can be easily verified by observing
that
\begin{align}
p_{\alpha+1}\,<\,\alpha+1\,<\,m\,,\label{m-ineq}
\end{align}
where the last inequality is obtained by suitably combining (\ref{range:small_p})
with the lower bound for $m$ in \eqref{extra_integrability}. Using
the same lower bound, we also see that 
\begin{align}
q\,=\,1\,+\,\frac{m-(\alpha+1)}{\alpha+1-p_{\alpha+1}}\,>\,1\,+\,\frac{\frac{N}{p}(\alpha+1-p)-(\alpha+1)}{\alpha+1-p_{\alpha+1}}\,=\,\frac{N+p}{p}\,.\label{range:q}
\end{align}

\noindent Thus, we can use Hölder's inequality to estimate\begin{align}\label{eq:est:problematic_term_general}
&\iint_{Q_{j}}(u^{\frac{\alpha+1}{2}}-\widehat{k}_{j}^{\frac{\alpha+1}{2}})_{+}^{2}\,dx\,dt\,\leq\iint_{Q_{j}}u^{\alpha+1}\,\chi_{\{u\,>\,\widehat{k}_{j}\}}\,dx\,dt\,=\iint_{Q_{j}}u^{(\alpha+1)\vartheta}\,u^{(\alpha+1)(1-\vartheta)}\,\chi_{\{u\,>\,\widehat{k}_{j}\}}\,dx\,dt\nonumber\\
&\,\,\,\,\,\,\,\leq\Big[\iint_{Q_{j}}u^{(\alpha+1)\vartheta q}\,\chi_{\{u\,>\,\widehat{k}_{j}\}}\,dx\,dt\Big]^{\frac{1}{q}}\,\Big[\iint_{Q_{j}}u^{(\alpha+1)(1-\vartheta)q'}\,\chi_{\{u\,>\,\widehat{k}_{j}\}}\,dx\,dt\Big]^{\frac{1}{q'}}\nonumber\\
&\,\,\,\,\,\,\,=\Big[\iint_{Q_{j}}u^{m}\,\chi_{\{u\,>\,\widehat{k}_{j}\}}\,dx\,dt\Big]^{\frac{1}{q}}\,\Big[\iint_{Q_{j}}u^{p_{\alpha+1}}\,\chi_{\{u\,>\,\widehat{k}_{j}\}}\,dx\,dt\Big]^{1\,-\,\frac{1}{q}}\nonumber\\
&\,\,\,\,\,\,\,\leq\,c\,2^{j\,p_{\alpha+1}}\Big[\iint_{Q_{0}}u_{+}^{m}\,dx\,dt\Big]^{\frac{1}{q}}\,Y_{j}^{1\,-\,\frac{1}{q}}\,,
\end{align}where, in the last line, we have used the last two inequalities in
(\ref{eq:est:problematic_term_special_case}). Note that the constant
$c$ now also depends on $m$. Taking into account (\ref{eq:est:problematic_term_special_case}),
\eqref{range:q} and \foreignlanguage{british}{\eqref{eq:est:problematic_term_general}},
we deduce that in the full range $p_{\alpha+1}\leq\alpha+1$ we have
\[
\iint_{Q_{j}}(u^{\frac{\alpha+1}{2}}-\widehat{k}_{j}^{\frac{\alpha+1}{2}})_{+}^{2}\,dx\,dt\,\leq\,C_{u}\,2^{j\,p_{\alpha+1}}\,Y_{j}^{1\,-\,\frac{1}{q}}\,,
\]
where $C_{u}$ is a positive constant that may depend on $N$, $\alpha$,
$p$, $m$ and $\Vert u_{+}\Vert_{L^{m}(Q_{r}(z_{o}))}$, and where
\[
\frac{N+p}{p}\,<\,q\,\leq\,\infty\,,
\]
with $q=\infty$ if $p_{\alpha+1}=\alpha+1$ (in which case $\frac{1}{q}$
is understood to be zero). For the other terms on the right-hand side
of \foreignlanguage{british}{\eqref{eq:integ_after_energy}}, we can
use the same estimates as in the proof of Theorem \ref{theo:local_bdd_large_bar_p},
which leads to 
\begin{align*}
Y_{j+1} & \,\leq\,C_{u}\,b^{j}\Big[Y_{j}\,+\,Y_{j}^{1\,-\,\frac{1}{q}}\,+\,\Vert f\Vert_{L^{\sigma}(Q_{0})}\,Y_{j}^{1\,-\,\frac{1}{\sigma}}\Big]^{\frac{N+p}{N}},
\end{align*}
where $b>1$ is a constant depending only on $N$, $\alpha$ and $p$,
while $C_{u}$ now also depends on $\Lambda$, $\sigma$, $\max\,\{\delta_{1},\ldots,\delta_{N}\}$,
$r$ and $\nu$. In order to have the same exponent for $Y_{j}$ in
all three terms inside the square brackets, we define the parameter
\begin{align*}
\omega:=\,\min\left\{ 1-\,\frac{1}{q}\,,1-\,\frac{1}{\sigma}\right\} \,\in\,\left(\frac{N}{N+p}\,,1\right),
\end{align*}
and observe that
\begin{align*}
Y_{j}\,=\,Y_{j}^{1\,-\,\omega}\,Y_{j}^{\omega}\,\leq\left(\iint_{Q_{0}}u_{+}^{p_{\alpha+1}}\,dx\,dt\right)^{1\,-\,\omega}Y_{j}^{\omega}
\end{align*}
and, similarly,
\[
Y_{j}^{\max\,\left\{ 1\,-\,\tfrac{1}{q}\,,1\,-\,\tfrac{1}{\sigma}\right\} }\,\leq\left(\iint_{Q_{0}}u_{+}^{p_{\alpha+1}}\,dx\,dt\right)^{\max\,\left\{ 1\,-\,\tfrac{1}{q}\,,1\,-\,\tfrac{1}{\sigma}\right\} \,-\,\omega}\,Y_{j}^{\omega}\,,
\]
whenever $Y_{j}\neq0$. Therefore, arguing as in \eqref{shtevensh}$-$\eqref{eq:rec_ineq},
we eventually obtain
\begin{align*}
Y_{j+1}\,\leq\,C_{u,f}\,b^{j}\,Y_{j}^{\frac{\omega(N+p)}{N}},
\end{align*}
where $C_{u,f}$ is a positive constant depending only on $N$, $\alpha$,
$p$, $\Lambda$, $\sigma$, $m$, $\max\,\{\delta_{1},\ldots,\delta_{N}\}$,
$r$, $\nu$, $\Vert u_{+}\Vert_{L^{p_{\alpha+1}}(Q_{r}(z_{o}))}$
and $\Vert f\Vert_{L^{\sigma}(Q_{r}(z_{o}))}$. Since 
\begin{align*}
\frac{\omega(N+p)}{N}>1\,,
\end{align*}
we can pursue the same reasoning as in the proof of Theorem \ref{theo:local_bdd_large_bar_p},
thereby concluding that $u\leq2k\,$ a.e. in $Q_{\nu r}(z_{o})$ for
some sufficiently large $k\geq1$. This implies that $u$ is essentially
bounded from above on compact subsets of $\Omega_{T}$ by a standard
covering argument.\end{proof}

\selectlanguage{british}%
\noindent $\hspace*{1em}$\foreignlanguage{american}{Having established
the local boundedness of functions $u\in\mathcal{A}(\alpha,p,\{\delta_{i}\},\Omega_{T},f,\mathcal{C})$
under condition (\ref{assumpt:f}) and the extra integrability assumption
(\ref{extra_integrability}), we now derive explicit estimates for
their essential supremum and infimum.}
\selectlanguage{american}%
\begin{thm}
\label{thm:p_small2}Let $u\in\mathcal{A}(\alpha,p,\{\delta_{i}\},\Omega_{T},f,\mathcal{C})$,
where $\alpha\in(0,\infty)$, $p\in(1,\infty)$ and $f$ satisfies
$(\ref{assumpt:f})$. Suppose that \eqref{range:small_p} holds and
that $u$ satisfies the extra integrability condition $(\ref{extra_integrability})$.
Then, for every cylinder $Q_{r}(z_{o})\Subset\Omega_{T}$ and every
$\nu\in(0,1)$, we have the explicit upper bound
\begin{align}
\underset{Q_{\nu r}(z_{o})}{\mathrm{ess}\,\sup}\,\,u\,\leq\,c\left([1+(r(1-\nu))^{-(N+p)}]\iint_{Q_{r}(z_{o})}u_{+}^{m}\,dx\,dt\,+\iint_{Q_{r}(z_{o})}|f|^{\sigma}\,dx\,dt\right)^{\frac{p}{\mu}}+c\,,\label{est:essup_small-p}
\end{align}
where
\[
\mu\,=\,mp\,-\,N(\alpha+1-p)\,+\,(N+p)\,\min\left\{ 0,\alpha-\frac{m}{\sigma}\right\} \,>\,0
\]
and $c$ is a positive constant depending only on $N$, $\alpha$,
$p$, $\Lambda$, $\sigma$, $m$ and $\max\,\{\delta_{1},\ldots,\delta_{N}\}$.
An analogous lower bound holds for the essential infimum, with $u_{-}$
replacing $u_{+}$ on the right-hand side of $\eqref{est:essup_small-p}$.
In particular, if $u$ is a weak solution of \eqref{eq:diffusion}
under assumptions $\eqref{eq:coeff_limit}$, $(\ref{assumpt:f})$,
$(\ref{extra_integrability})$ and \eqref{range:small_p}, then $u$
satisfies these bounds.
\end{thm}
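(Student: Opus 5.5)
Since Theorem \ref{thm:p_small1} already guarantees that $u$ is locally bounded, I would run a De Giorgi iteration in the $L^{m}$ scale, in the spirit of the proof of Theorem \ref{thm:loc_bdd_p_large_LP}. Its constants will involve $\Vert u_{+}\Vert_{L^{\infty}}$, and at the end I would remove that dependence with the $\varepsilon$-absorption iteration over the radii $\rho_{j}$. Concretely, I keep $r_{j},Q_{j},\eta_{j},\varphi_{j}$ as in \eqref{r_j_Q_j-def}$-$\eqref{bound:der-varphi}, take the levels $k_{j}=k(1-2^{-j})$ and $\widehat{k}_{j}=\frac12(k_{j}+k_{j+1})$ with $k\geq1$, and set
\[
X_{j}:=\iint_{Q_{j}}(u-k_{j})_{+}^{m}\,dx\,dt\,.
\]
By \eqref{m-ineq} we have $m>\alpha+1\geq p_{\alpha+1}$, so Hölder's inequality cannot be used to go down from $L^{m}$ to $L^{p_{\alpha+1}}$. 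I would instead write
\[
(u-k_{j+1})_{+}^{m}\leq M^{m-p_{\alpha+1}}(u-k_{j+1})_{+}^{p_{\alpha+1}},\qquad M:=\underset{Q_{0}}{\mathrm{ess}\,\sup}\,u_{+}\,.
\]
This lets me reuse \eqref{eq:notsolongcalc}$-$\eqref{eq:integ_after_energy} verbatim, giving
\[
X_{j+1}\leq cM^{m-p_{\alpha+1}}b^{j}[\cdots]^{\frac{N+p}{N}}\,.
\]

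Next I bound each term inside the brackets by powers of $X_{j}$, using the superlevel estimate $|E_{j}|\leq c\,2^{jm}k^{-m}X_{j}$, obtained as in \eqref{eq:superlevel}.
\begin{itemize}
\item \emph{Gradient/cut-off term.} Since $p<m$, Young's inequality gives $(u-\widehat{k}_{j})_{+}^{p}\leq(u-\widehat{k}_{j})_{+}^{m}+\chi_{E_{j}}$.
\item \emph{Time term.} Using $\widehat{k}_{j}<k$, I bound $(u^{\frac{\alpha+1}{2}}-\widehat{k}_{j}^{\frac{\alpha+1}{2}})_{+}^{2}\leq c(u-\widehat{k}_{j})_{+}^{\alpha+1}+ck^{\alpha+1}\chi_{E_{j}}$, and then $(u-\widehat{k}_{j})_{+}^{\alpha+1}\leq(u-\widehat{k}_{j})_{+}^{m}+\chi_{E_{j}}$. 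Together with the superlevel estimate, the factor $k^{\alpha+1}$ combines with $k^{-m}$.
\item \emph{The $\delta_{\max}^{p}|E_{j}|$ term.} This is handled by the same superlevel estimate.
\item \emph{The $f$-term.} By Hölder, $\Vert f\Vert_{L^{\sigma}}\big(\iint(u-\widehat{k}_{j})_{+}^{\sigma'}\big)^{1/\sigma'}$. If $\sigma'\leq m$ I lower the exponent; otherwise I extract $M^{1-m/\sigma'}$. This case split is what produces the term $\min\{0,\alpha-m/\sigma\}$ in $\mu$.
\end{itemize}
Collecting everything, I expect a recursion of the form
\[
X_{j+1}\leq C\,b^{j}\,M^{m-p_{\alpha+1}}\,k^{-\kappa}\,X_{j}^{1+\beta},\qquad \beta=\frac{p}{N}\Big(1-\frac{N+p}{\sigma p}\Big)>0\,,
\]
after equalising the exponents with $X_{j}\leq X_{0}^{1/\sigma}X_{j}^{1-1/\sigma}$ as before. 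Here $\kappa>0$ collects the negative powers of $k$.

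Lemma \ref{lem:fastconvg} then gives $u\leq k$ on $Q_{\nu r}$ once
\[
k\gtrsim M^{\theta}\big(\xi\,\Vert u_{+}\Vert_{L^{m}}^{m}+\Vert f\Vert_{L^{\sigma}}^{\sigma}\big)^{a}+1
\]
for suitable exponents $\theta$ and $a$. The \textbf{main obstacle} is the exponent bookkeeping: checking that $\theta<1$ exactly when $\mu>0$, i.e.\ when $mp>N(\alpha+1-p)$, which is \eqref{extra_integrability}, corrected by the $f$-term. Once $\theta<1$ holds, I apply Young's inequality with $\varepsilon$ and iterate over $\rho_{j}=sr+r(1-s)(1-2^{-j})$ exactly as in \eqref{est:iterated}. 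Since $M_{n}$ is bounded by Theorem \ref{thm:p_small1}, $\varepsilon^{n}M_{n}\to0$, and the resulting exponent should be
\[
\frac{a}{1-\theta}=\frac{p}{\mu}\,.
\]
If the clean scaling check of this exponent identity fails, I would fall back on a scaling argument: rescale $u$ by a factor $\lambda$ and time so that the class $\mathcal{A}$ is preserved (up to $\delta$ and $f$ changes), then read off the exponent.

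The lower bound for the essential infimum follows identically, using the negative-part energy estimate.
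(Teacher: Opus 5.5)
Your overall architecture is the paper's: the $L^{m}$-quantity with $k_j=k(1-2^{-j})$, the sup-bound $M^{m-p_{\alpha+1}}$ to re-enter the $p_{\alpha+1}$-estimate, equalisation of exponents, Lemma~\ref{lem:fastconvg}, absorption with exponent $1/\theta$, and the radii iteration closed via Theorem~\ref{thm:p_small1}. However, the step that is supposed to produce the factor $k^{-\kappa}$ with $\kappa>0$ fails as you describe it.

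\textbf{The gap.} You use $(u-\widehat k_j)_+^{s}\le(u-\widehat k_j)_+^{m}+\chi_{E_j}$ for $s=p,\alpha+1$, and similarly for $s=\sigma'$. The leading part then contributes $\iint_{Q_j}(u-\widehat k_j)_+^{m}\le X_j$ with no power of $k$; only the $\chi_{E_j}$ pieces gain $k^{-m}$. So for large $k$ the bracket is of size $\xi X_j+\Vert f\Vert_{L^\sigma(Q_0)}X_j^{1/\sigma'}$ with $k$-independent coefficients, i.e.\ $\kappa=0$. In Theorem~\ref{thm:loc_bdd_p_large_LP} this Young-type bound was harmless, because the decay in $k$ came from the separate H\"older factor $|Q_{j+1}\cap\{u>k_{j+1}\}|^{1-1/\gamma}$. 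You replaced H\"older by $M^{m-p_{\alpha+1}}$, which yields no measure factor. Since $k_0=0$, $X_0=\Vert u_+\Vert_{L^m(Q_0)}^m$ does not depend on $k$. Hence the smallness condition of Lemma~\ref{lem:fastconvg} cannot be reached by enlarging $k$, and no quantitative bound follows.

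\textbf{The repair.} The missing idea is to exploit the gap between consecutive levels. On $\{u>\widehat k_j\}$ one has $u-k_j\ge\widehat k_j-k_j=k\,2^{-(j+2)}$, so for every $s<m$,
\[
(u-\widehat k_j)_+^{s}\,\le\,(\widehat k_j-k_j)^{s-m}\,(u-k_j)_+^{m}\,\le\,c\,b^{j}\,k^{s-m}\,(u-k_j)_+^{m}\,.
\]
In the present range $\sigma'<\frac{N+p}{N}<p_{\alpha+1}\le\alpha+1<m$, so this applies to $s=p$, $s=\alpha+1$ and $s=\sigma'$. Together with $k^{\alpha+1}|E_j|+\delta_{\max}^p|E_j|\le c\,2^{jm}k^{\alpha+1-m}X_j$, it yields $\kappa=\frac{q(N+p)}{N}$ with $q=\min\{m-(\alpha+1),\frac{m}{\sigma'}-1\}>0$.

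\textbf{Where the minimum in $\mu$ comes from.} The repair also corrects your account of the term $\min\{0,\alpha-\frac{m}{\sigma}\}$. The case $\sigma'>m$ never occurs here. The minimum instead records which decay rate is slower: $k^{\alpha+1-m}$ from the time term or $k^{1-m/\sigma'}$ from the $f$-term. With $\theta=\frac{N(m-p_{\alpha+1})}{q(N+p)}$ one gets $\mu=q(N+p)(1-\theta)=q(N+p)-N(m-p_{\alpha+1})$, which equals the stated $\mu$ in both cases. The bound $\theta<1$ follows from \eqref{extra_integrability} when $q=m-(\alpha+1)$. When $q=\frac{m}{\sigma'}-1$, it follows from $\sigma'<p_{\alpha+1}$ and $\sigma'<\frac{N+p}{N}$.

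With this repair the rest of your plan goes through, and the scaling fallback is unnecessary.
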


\noindent \begin{proof}[\bfseries{Proof}]Let $u\in\mathcal{A}(\alpha,p,\{\delta_{i}\},\Omega_{T},f,\mathcal{C})$,
with $\alpha$, $p$ and $f$ as in the statement of the theorem.
We limit ourselves to the derivation of the upper bound in \eqref{est:essup_small-p},
since arguments analogous to those presented below yield a lower bound
for the essential infimum in terms of the $L^{m}$-norm of $u_{-}\,$.
We use the space-time cylinders $Q_{j}$ defined in \eqref{r_j_Q_j-def}
and introduce the sequences 
\begin{align*}
Z_{j}:=\iint_{Q_{j}}(u-k_{j})_{+}^{m}\,dx\,dt\,,\,\,\,\,\,\,\,\,k_{j}:=\,k(1-2^{-j}),\quad\widehat{k}_{j}:=\,\frac{1}{2}(k_{j}+k_{j+1})\,,\,\,\,\,\,\,\,\,\,\,\mathrm{for}\,\,j\in\mathbb{N}_{0}\,,
\end{align*}
where $k\geq1$ is a number to be chosen later. Note that $Z_{j}$
is finite for every $j\in\mathbb{N}_{0}$, due to the integrability
condition \eqref{extra_integrability}. In view of \eqref{m-ineq},
we may write
\begin{align*}
Z_{j+1} & =\iint_{Q_{j+1}}(u-k_{j+1})_{+}^{m\,-\,p_{\alpha+1}}\,(u-k_{j+1})_{+}^{p_{\alpha+1}}\,dx\,dt\\
 & \leq\iint_{Q_{j+1}}u_{+}^{m\,-\,p_{\alpha+1}}\,(u-k_{j+1})_{+}^{p_{\alpha+1}}\,dx\,dt\\
 & \leq\Vert u_{+}\Vert_{L^{\infty}(Q_{0})}^{m\,-\,p_{\alpha+1}}\iint_{Q_{j+1}}(u-k_{j+1})_{+}^{p_{\alpha+1}}\,dx\,dt\,.
\end{align*}
We now observe that the last integral coincides with $Y_{j+1}$ as
defined in \eqref{def:k_j_alpha_small}, except for the different
choice of $k_{j+1}$. Therefore, we may reuse \foreignlanguage{british}{\eqref{eq:integ_after_energy}}
and \eqref{est:term3} with the current definitions of $\widehat{k}_{j}$
and $E_{j}$, and with the cut-off functions $\eta_{j}$ and $\varphi_{j}$
chosen as in the proof of Theorem \ref{theo:local_bdd_large_bar_p}.
We thus obtain\begin{align}\label{eq:Z_jplusone_after_energy}
Z_{j+1}\,\leq\,c\,b^{j}\,\Vert u_{+}\Vert_{L^{\infty}(Q_{0})}^{m\,-\,p_{\alpha+1}}&\bigg[r^{-p}(1-\nu)^{-p}\iint_{Q_{j}}\left[(u-\widehat{k}_{j})_{+}^{p}\,+\,(u^{\frac{\alpha+1}{2}}-\widehat{k}_{j}^{\frac{\alpha+1}{2}})_{+}^{2}\right]dx\,dt\nonumber\\
&+\,\Vert f\Vert_{L^{\sigma}(Q_{0})}\Big(\iint_{Q_{j}}(u-\widehat{k}_{j})_{+}^{\sigma'}\,dx\,dt\Big)^{\frac{1}{\sigma'}}+\,\delta_{\max}^{p}\,|E_{j}|\bigg]^{\frac{N+p}{N}},
\end{align}where $c=c(N,\alpha,p,\Lambda)>0$, $b=b(N,p)>1$ and $\delta_{\max}:=\max\,\{\delta_{1},\ldots,\delta_{N}\}$.
At this point, we estimate separately the three terms involving $u$
in the integrals appearing on the right-hand of \foreignlanguage{british}{\eqref{eq:Z_jplusone_after_energy}}.
Using the definitions of $k_{j}$ and $\widehat{k}_{j},$ and recalling
that $m>\alpha+1>p$ in the current range of $p$, we have
\begin{align}
(u-\widehat{k}_{j})_{+}^{p}\, & =\,(\widehat{k}_{j}-k_{j})^{p-m}\,(\widehat{k}_{j}-k_{j})^{m-p}\,(u-\widehat{k}_{j})_{+}^{p}\nonumber \\
 & \le\,(\widehat{k}_{j}-k_{j})^{p-m}\,(u-k_{j})_{+}^{m}\,\leq\,c_{1}\,b^{j}\,k^{p-m}\,(u-k_{j})_{+}^{m}\,,\boldsymbol{}\label{eq:btut2}
\end{align}
where $c_{1}=c_{1}(p,m)>0$, while $b>1$ now also depends on $m$.
Similarly, for the other two terms we find that
\begin{equation}
(u-\widehat{k}_{j})_{+}^{\sigma'}\,\leq\,c\,b^{j}\,k^{\sigma'-m}\,(u-k_{j})_{+}^{m}\label{eq:btut3}
\end{equation}
and
\begin{align}
(u^{\frac{\alpha+1}{2}}-\widehat{k}_{j}^{\frac{\alpha+1}{2}})_{+}^{2} & \,\leq\,u^{\alpha+1}\,\chi_{\{u\,>\,\widehat{k}_{j}\}}\,\leq\,c\,(u-\widehat{k}_{j})_{+}^{\alpha+1}\,+\,c\,\widehat{k}_{j}^{\alpha+1}\,\chi_{\{u\,>\,\widehat{k}_{j}\}}\nonumber \\
 & \,\leq\,c\,b^{j}\,k^{\alpha+1-m}\,(u-k_{j})_{+}^{m}\,+\,c\,k^{\alpha+1}\,\chi_{\{u\,>\,\widehat{k}_{j}\}}\,,\label{eq:btut1}
\end{align}
where $b>1$ now also depends on $\alpha$ and $\sigma$, while $c$
additionally depends on $\sigma$ and $m$. We point out that the
derivation of (\ref{eq:btut3}) relies on the fact that $\sigma'<m$,
which follows from (\ref{sigma-prime_ineq}) combined with the estimate
$p_{\alpha+1}\leq\alpha+1<m$, valid in the present range of $p$.
Arguing as in \foreignlanguage{british}{\eqref{eq:superlevel}}, we
further obtain
\begin{align}
|E_{j}|\,\leq\,c_{1}\,2^{jm}\,k^{-m}\,Z_{j}\,.\label{est:superlevel3}
\end{align}
Combining estimates \eqref{eq:btut2}$-$\eqref{est:superlevel3}
with \foreignlanguage{british}{\eqref{eq:Z_jplusone_after_energy}},
we end up with\begin{align}\label{eq:est:Z_jplusone_almost_DG}
Z_{j+1}\,\leq\,c\,b^{j}\,\Vert u_{+}\Vert_{L^{\infty}(Q_{0})}^{m\,-\,p_{\alpha+1}}&\bigg[r^{-p}(1-\nu)^{-p}\,(k^{p-m}+k^{\alpha+1-m})\,Z_{j}\nonumber\\
&+\,\Vert f\Vert_{L^{\sigma}(Q_{0})}\,k^{1-\frac{m}{\sigma'}}\,Z_{j}^{\frac{1}{\sigma'}}\,+\,\delta_{\max}^{p}\,k^{-m}\,Z_{j}\bigg]^{\frac{N+p}{N}}.
\end{align}Since $k\geq1$ and $p<\alpha+1<m$, we may estimate
\begin{align}
k^{p-m}\,\leq\,k^{\alpha+1-m}\,,\quad\quad k^{-m}\,\leq\,k^{\alpha+1-m}\,.\label{est:k-exp1}
\end{align}
Furthermore, setting
\begin{align}
q:=\,\min\left\{ m-(\alpha+1),\,\frac{m}{\sigma'}\,-1\right\} ,\label{expr:q}
\end{align}
we have 
\begin{align}
k^{\alpha+1-m}\,\leq\,k^{-q}\,,\quad\quad k^{1-\frac{m}{\sigma'}}\,\leq\,k^{-q}\,.\label{est:k-exp2}
\end{align}
For future reference, we note that $q>0$ by virtue of \eqref{sigma-prime_ineq}
and \eqref{m-ineq}. Using \eqref{est:k-exp1} and \eqref{est:k-exp2}
in \foreignlanguage{british}{\eqref{eq:est:Z_jplusone_almost_DG}},
and recalling the definition of $\xi(p,r,\nu)$ in \eqref{eq:def:xi},
we obtain
\[
Z_{j+1}\,\leq\,c\,b^{j}\,\Vert u_{+}\Vert_{L^{\infty}(Q_{0})}^{m\,-\,p_{\alpha+1}}\,k^{-\,\frac{q(N+p)}{N}}\left[\xi(p,r,\nu)\,Z_{j}\,+\,\Vert f\Vert_{L^{\sigma}(Q_{0})}\,Z_{j}^{\frac{1}{\sigma'}}\right]^{\frac{N+p}{N}},
\]
where $c$ now also depends on $\delta_{\max}$. For the first term
inside the square brackets, we use the following estimate
\begin{align*}
Z_{j}\,=\,Z_{j}^{\frac{1}{\sigma}}\,Z_{j}^{\frac{1}{\sigma'}}\,\leq\,\Vert u_{+}\Vert_{L^{m}(Q_{0})}^{\frac{m}{\sigma}}\,Z_{j}^{\frac{1}{\sigma'}}\,,
\end{align*}
which leads to
\[
Z_{j+1}\,\leq\,C_{u,f}\,b^{j}\,Z_{j}^{1+\beta}\,,
\]
where 
\[
C_{u,f}\,=\,c\,\Vert u_{+}\Vert_{L^{\infty}(Q_{0})}^{m\,-\,p_{\alpha+1}}\,k^{-\,\frac{q(N+p)}{N}}\left[\xi(p,r,\nu)\,\Vert u_{+}\Vert_{L^{m}(Q_{0})}^{\frac{m}{\sigma}}\,+\,\Vert f\Vert_{L^{\sigma}(Q_{0})}\right]^{\frac{N+p}{N}}
\]
and $\beta$ is defined as in \eqref{eq:def:beta}. If $C_{u,f}=0$,
then $u_{+}=0$ almost everywhere in $Q_{r}(z_{o})$ and estimate
$\eqref{est:essup_small-p}$ is trivially satisfied. If instead $C_{u,f}>0$,
then, by Lemma \ref{lem:fastconvg}, the sequence $\{Z_{j}\}$ converges
to zero provided that\\
\begin{align*}
\Vert u_{+}\Vert_{L^{m}(Q_{0})}^{m}\,=\,Z_{0}\,\leq\,C_{u,f}^{-\,\frac{1}{\beta}}\,b^{-\,\frac{1}{\beta^{2}}}\,,
\end{align*}
which, in view of the positivity of $q$, can be reformulated as
\begin{equation}
k\,\geq\,c\,\Vert u_{+}\Vert_{L^{\infty}(Q_{0})}^{\theta}\,\Vert u_{+}\Vert_{L^{m}(Q_{0})}^{\frac{Nm\beta}{q(N+p)}}\left[\xi(p,r,\nu)\,\Vert u_{+}\Vert_{L^{m}(Q_{0})}^{\frac{m}{\sigma}}\,+\,\Vert f\Vert_{L^{\sigma}(Q_{0})}\right]^{\frac{1}{q}},\label{eq:T1}
\end{equation}
where we have introduced the parameter 
\[
\theta\,=\,\frac{N(m-p_{\alpha+1})}{q(N+p)}\,>\,0\,.
\]
Estimating the right-hand side of \eqref{eq:T1} from above by means
of inequality \eqref{eq:elem_ineq} with $\tau=\frac{1}{q}$, and
using the definition of $\beta$, we see that \eqref{eq:T1} holds,
for instance, if 
\[
k\geq\,c\,\Vert u_{+}\Vert_{L^{\infty}(Q_{0})}^{\theta}\left[\xi(p,r,\nu)^{\frac{1}{q}}\,\Vert u_{+}\Vert_{L^{m}(Q_{0})}^{\frac{mp}{q(N+p)}}\,+\,\Vert u_{+}\Vert_{L^{m}(Q_{0})}^{\frac{m}{q}\left(\frac{p}{N+p}\,-\,\frac{1}{\sigma}\right)}\,\Vert f\Vert_{L^{\sigma}(Q_{0})}^{\frac{1}{q}}\right].
\]
Applying Young's inequality to the last term inside the square brackets
with exponents
\[
Q:=\,\frac{\frac{p}{N+p}}{\frac{p}{N+p}-\frac{1}{\sigma}}\,\,\,\,\,\,\,\,\mathrm{and}\,\,\,\,\,\,\,\,Q'\,=\,\frac{p\sigma}{N+p}\,,
\]
we further see that \eqref{eq:T1} is satisfied if
\begin{equation}
k\,=\,c\,\Vert u_{+}\Vert_{L^{\infty}(Q_{0})}^{\theta}\left[\xi(p,r,\nu)^{\frac{1}{q}}\,\Vert u_{+}\Vert_{L^{m}(Q_{0})}^{\frac{mp}{q(N+p)}}\,+\,\Vert f\Vert_{L^{\sigma}(Q_{0})}^{\frac{p\sigma}{q(N+p)}}\right]+1\,,\label{eq:T2}
\end{equation}
where we have taken into account the fact that $k\geq1$. Thus, if
\eqref{eq:T2} holds, then
\begin{align*}
\iint_{Q_{\nu r}(z_{o})}(u-k)_{+}^{m}\,dx\,dt\,\leq\,Z_{j}\to0\,\,\,\,\,\,\,\,\mathrm{as}\,\,j\to\infty\,,
\end{align*}
and hence $u\leq k\,$ a.e. in $Q_{\nu r}(z_{o})$. We have thus proved
that
\begin{equation}
\underset{Q_{\nu r}(z_{o})}{\mathrm{ess}\,\sup}\,\,u_{+}\,\leq\,c\,\Big(\underset{Q_{r}(z_{o})}{\mathrm{ess}\,\sup}\,\,u_{+}\Big)^{\theta}\left[\xi(p,r,\nu)^{\frac{1}{q}}\,\Vert u_{+}\Vert_{L^{m}(Q_{r}(z_{o}))}^{\frac{mp}{q(N+p)}}\,+\,\Vert f\Vert_{L^{\sigma}(Q_{r}(z_{o}))}^{\frac{p\sigma}{q(N+p)}}\right]+1\,.\label{eq:T3}
\end{equation}
We now show that $\theta\in(0,1)$. If $q=m-(\alpha+1)$, we have
\begin{align*}
\theta\,=\,\frac{N(m-p_{\alpha+1})}{(m-\alpha-1)(N+p)}\,=\,\frac{mN-p(N+\alpha+1)}{mN-p(N+\alpha+1)+p\,[m-\frac{N}{p}(\alpha+1-p)]}\,<\,1\,,
\end{align*}
since the expression inside the square brackets is positive due to
\eqref{extra_integrability}. On the other hand, if $q=\tfrac{m}{\sigma'}-1$,
we have 
\begin{align*}
\theta\,=\,\frac{m-p_{\alpha+1}}{m-\sigma'}\,\cdot\,\frac{N\sigma'}{N+p}\,<\,1\,,
\end{align*}
where the last inequality follows from the fact that both fractions
above take values in $(0,1)$, in view of \eqref{assumpt:f}, \eqref{sigma-prime_ineq}
and \eqref{m-ineq}. At this point, we may apply Young's inequality
with exponents $1/\theta$ and $1/(1-\theta)$ to the first term on
the right-hand side of (\ref{eq:T3}), thus obtaining\begin{align}\label{eq:snelhest}
\underset{Q_{\nu r}(z_{o})}{\mathrm{ess}\,\sup}\,\,u_{+}\,&\leq\,\varepsilon\,\,\,\underset{Q_{r}(z_{o})}{\mathrm{ess}\,\sup}\,\,u_{+}\,+\,\tilde{c}(\varepsilon)\left[\xi(p,r,\nu)^{\frac{N+p}{\mu}}\,\Vert u_{+}\Vert_{L^{m}(Q_{r}(z_{o}))}^{\frac{mp}{\mu}}\,+\,\Vert f\Vert_{L^{\sigma}(Q_{r}(z_{o}))}^{\frac{p\sigma}{\mu}}\right]+1\nonumber\\
&\leq\,\varepsilon\,\,\,\underset{Q_{r}(z_{o})}{\mathrm{ess}\,\sup}\,\,u_{+}\,+\,\tilde{c}(\varepsilon)\left([1+(r(1-\nu))^{-(N+p)}]\iint_{Q_{r}(z_{o})}u_{+}^{m}\,dx\,dt\right)^{\frac{p}{\mu}}\nonumber\\
&\,\,\,\,\,\,\,+\,\tilde{c}(\varepsilon)\,\left(\iint_{Q_{r}(z_{o})}|f|^{\sigma}\,dx\,dt\right)^{\frac{p}{\mu}}+1\,,
\end{align}where $\varepsilon>0$ is to be chosen appropriately later, and
\begin{align*}
\mu:=\,q(1-\theta)(N+p)\,=\,q(N+p)-N(m-p_{\alpha+1})\,>\,0\,.
\end{align*}
Note that in \foreignlanguage{british}{\eqref{eq:snelhest}} we have
also used the definition of $\xi(p,r,\nu)$ in \eqref{eq:def:xi},
and that the positive constant $\tilde{c}(\varepsilon)$ depends on
$\varepsilon$, in addition to $N$, $\alpha$, $p$, $\Lambda$,
$\sigma$, $m$ and $\delta_{\max}$. In the case $q=m-(\alpha+1)$,
a direct calculation shows that 
\begin{align}
\mu\,=\,mp-N(\alpha+1-p)\,.\label{q_alt1}
\end{align}
If instead $q=\frac{m}{\sigma'}-1$, we have 
\begin{align}
\mu\,=\,mp-N(\alpha+1-p)+(N+p)\left(\alpha-\frac{m}{\sigma}\right).\label{q_alt2}
\end{align}
Comparing \eqref{q_alt1} and \eqref{q_alt2} with the definition
of $q$ in \eqref{expr:q}, we see that in any case 
\begin{align*}
\mu\,=\,mp-N(\alpha+1-p)+(N+p)\,\min\left\{ 0,\alpha-\frac{m}{\sigma}\right\} .
\end{align*}
At this stage, we can argue exactly as in the final part of the proof
of Theorem \ref{thm:loc_bdd_p_large_LP}, using \foreignlanguage{british}{\eqref{eq:snelhest}}
to start an iteration procedure which, upon a suitable choice of $\varepsilon$,
eventually leads to estimate (\ref{est:essup_small-p}).\end{proof}

\section{Properties of solutions to the Cauchy problem\label{sec:Cauchy} }

\selectlanguage{british}%
\noindent $\hspace*{1em}$\foreignlanguage{american}{In this section,
we establish contractive bounds that will later be used to derive
a global boundedness result for weak solutions to a Cauchy problem
associated with equation \eqref{eq:diffusion}. Our approach is inspired
by that of \cite{CiaVeVe}, where analogous results were proved for
doubly nonlinear anisotropic evolution equations. Here, the main novelty
with respect to \cite{CiaVeVe} is that we also allow for a nontrivial
source term $f$ and investigate its effect on the resulting estimates.
More specifically, we consider the following Cauchy problem
\begin{align}
\left\{ \begin{array}{ll}
\partial_{t}(\vert u\vert^{\alpha-1}u)-\sum_{i=1}^{N}\partial_{i}\left[a_{i}(x,t)\,(|\partial_{i}u|-\delta_{i})_{+}^{p-1}\frac{\partial_{i}u}{\vert\partial_{i}u\vert}\right]=f, & \quad\text{in }S_{T}:=\mathbb{R}^{N}\times(0,T),\\[5pt]
u(x,0)=u_{0}(x), & \quad x\in\mathbb{R}^{N},
\end{array}\right.\label{prob:cauchy}
\end{align}
where, throughout this section, we shall assume that $\alpha\in(0,\infty),$
$p\in(1,\infty)$, and that the measurable coefficients $a_{i}$ satisfy
\eqref{eq:coeff_limit} with $S_{T}$ in place of $\Omega_{T}$. To
make the whole setting more precise, we introduce the following definition.}
\selectlanguage{american}%
\begin{defn}
\noindent \label{def:Lp-integrable-sol} Let $f\in L_{\mathrm{loc}}^{1}(S_{T})$
and $u_{0}\in L_{\mathrm{loc}}^{\alpha+1}(\mathbb{R}^{N})$. We say
that a function 
\[
u\,\in\,L^{p}(0,T;W_{\mathrm{loc}}^{1,p}(\mathbb{R}^{N}))\cap C^{0}([0,T];L_{\mathrm{loc}}^{\alpha+1}(\mathbb{R}^{N}))
\]
is an $L^{p}$-integrable weak solution to problem \eqref{prob:cauchy}
if the following conditions hold:
\begin{enumerate}
\item[\foreignlanguage{american}{$\mathrm{(1)}$}] $u\in L^{p}(S_{T})$;\vspace{-2mm}
\item[\foreignlanguage{american}{$\mathrm{(2)}$}] $u$ satisfies the weak formulation \eqref{eq:weak_form}, with $S_{T}$
replacing $\Omega_{T}$, for every $\varphi\in C_{0}^{\infty}(S_{T})$;\vspace{-2mm}
\item[\foreignlanguage{american}{$\mathrm{(3)}$}] $u(\cdot,t)\rightarrow u_{0}$ in $L_{\mathrm{loc}}^{\alpha+1}(\mathbb{R}^{N})$
as $t\to0^{+}$.
\end{enumerate}
\end{defn}

\noindent \begin{brem}Since in the previous definition $u$ is assumed
to belong to $C^{0}([0,T];L_{\mathrm{loc}}^{\alpha+1}(\mathbb{R}^{N}))$,
the function $u(\cdot,t)\in L_{\mathrm{loc}}^{\alpha+1}(\mathbb{R}^{N})$
is well-defined for every $t\in[0,T]$, and thus condition $(3)$
makes sense.\end{brem}

\selectlanguage{british}%
\noindent $\hspace*{1em}$\foreignlanguage{american}{The following
result is the counterpart of Lemma 8.3 in the aforementioned work
\cite{CiaVeVe}. Here, we additionally allow for a right-hand side
$f\not\equiv0$, and establish separate estimates for the positive
and negative parts of the solution.}
\selectlanguage{american}%
\begin{lem}
\label{lem:Lalphaplusone-contractive} Let $u$ be an $L^{p}$-integrable
solution to the Cauchy problem \eqref{prob:cauchy} in the sense of
Definition \ref{def:Lp-integrable-sol}, with $u_{0}\in L^{\alpha+1}(\mathbb{R}^{N})$
and $f\in L^{\frac{1}{\alpha}+1}(S_{T})$. Then, for every $\delta>0$,
\begin{equation}
\sup_{\tau\,\in\,[0,T]}\int_{\mathbb{R}^{N}\times\{\tau\}}u_{+}^{\alpha+1}\,dx\,\leq\,(1+\delta)\int_{\mathbb{R}^{N}}(u_{0})_{+}^{\alpha+1}\,dx\,+\,(1+\delta)^{\frac{\alpha+1}{\alpha}}\delta^{-\frac{1}{\alpha}}\left(\frac{T}{\alpha}\right)^{\frac{1}{\alpha}}\iint_{S_{T}}f_{+}^{\frac{\alpha+1}{\alpha}}\,dx\,dt\,.\label{eq:est_lem5_2}
\end{equation}
An analogous estimate holds for $u_{-}$, with $(u_{0})_{-}$ and
$f_{-}$ replacing $(u_{0})_{+}$ and $f_{+}$, respectively. Moreover,
if $f\equiv0$ we have 
\begin{align}
\Vert u_{+}(\cdot,t)\Vert_{L^{\alpha+1}(\mathbb{R}^{N})}\,\leq\,\Vert(u_{0})_{+}\Vert_{L^{\alpha+1}(\mathbb{R}^{N})}\,,\label{est:contractive_Lalphaplusone_fzero}
\end{align}
for all $t\in[0,T)$, with analogous estimates holding for both $u_{-}$
and $u$.
\end{lem}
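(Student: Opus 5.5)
We plan to apply the general energy estimate of Lemma \ref{lem:energy-general} on $S_{T}$ with $F(s)=s_{+}$. This $F$ is non-decreasing, Lipschitz and piecewise $C^{1}$, and it satisfies \eqref{Assunzione}. Then $g(s)=(|s|^{\frac1\alpha-1}s)_{+}$, and with $G(\tau)=\int_{0}^{\tau}g$ we get $G(|u|^{\alpha-1}u)=\frac{\alpha}{\alpha+1}u_{+}^{\alpha+1}$, i.e. $\mathfrak{b}_{\alpha}[u,0]\chi_{\{u>0\}}$.

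The integrability hypothesis \eqref{eq:esponenti_coniugati} holds with $\lambda=\alpha+1$ and $\lambda'=\frac{\alpha+1}{\alpha}$. Indeed, $u\in C^{0}([0,T];L^{\alpha+1}_{\rm loc})$ gives $u_{+}\in L^{\alpha+1}(0,T;L^{\alpha+1}_{\rm loc})$, and $f\in L^{\frac1\alpha+1}(S_{T})$ by assumption. We take $\varphi\equiv1$ and $\tau_{1}=\epsilon>0$, and use a cut-off $\eta_{R}\in C_{0}^{\infty}(B_{2R};[0,1])$ with $\eta_{R}=1$ on $B_{R}$ and $|\nabla\eta_{R}|\le c/R$. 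Dropping the non-negative gradient term yields
\[
\tfrac{\alpha}{\alpha+1}\int\eta_{R}^{p}u_{+}^{\alpha+1}(\tau)\,dx\le\tfrac{\alpha}{\alpha+1}\int\eta_{R}^{p}u_{+}^{\alpha+1}(\epsilon)\,dx+\iint f\eta_{R}^{p}u_{+}\,dx\,dt+\gamma N\iint u_{+}^{p}|\nabla\eta_{R}|^{p}\,dx\,dt .
\]

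Next we pass to the limits $\epsilon\to0$ and $R\to\infty$, in that order. The limit $\epsilon\to0$ uses condition (3) of Definition \ref{def:Lp-integrable-sol}, since the integrand is compactly supported. For $R\to\infty$, the last term is bounded by $cR^{-p}\|u\|_{L^{p}(S_{T})}^{p}\to0$ because $u\in L^{p}(S_{T})$, and monotone convergence applies on the left. To handle the $f$-term globally, we would first derive the estimate with $\eta_{R}$ still present and set $y_{R}(\tau):=\sup_{s\le\tau}\int\eta_{R}^{p}u_{+}^{\alpha+1}(s)\,dx$, which is finite since the support is compact. We also bound $f\,u_{+}\le f_{+}u_{+}$.

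The main step is the $f$-term, which needs Hölder followed by Young with the precise constants. Write $f_{+}\eta_{R}^{p}u_{+}\le f_{+}\eta_{R}^{p/(\alpha+1)}u_{+}$ using $\eta_{R}\le1$. Hölder in space and time then gives
\[
\iint f_{+}\eta_{R}^{p}u_{+}\le\|f_{+}\|_{L^{\frac{\alpha+1}{\alpha}}(S_{T})}\,T^{\frac1{\alpha+1}}\,y_{R}(T)^{\frac1{\alpha+1}} .
\]
Young's inequality should then be applied in the form $ab\le\frac{\delta'}{\alpha+1}a^{\alpha+1}+\frac{\alpha}{\alpha+1}\delta'^{-1/\alpha}b^{\frac{\alpha+1}{\alpha}}$ and multiplied by $\frac{\alpha+1}{\alpha}$. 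Taking the supremum over $\tau$ gives $(1-\frac{\delta'}{\alpha})y_{R}\le\int(u_{0})_{+}^{\alpha+1}+\delta'^{-1/\alpha}T^{1/\alpha}\iint f_{+}^{\frac{\alpha+1}{\alpha}}+o(1)$. Choosing $\delta'$ with $(1-\delta'/\alpha)^{-1}=1+\delta$, i.e. $\delta'=\alpha\delta/(1+\delta)$, reproduces exactly the constants $(1+\delta)$ and $(1+\delta)^{\frac{\alpha+1}{\alpha}}\delta^{-1/\alpha}(T/\alpha)^{1/\alpha}$ in \eqref{eq:est_lem5_2}. The absorption is legitimate because $y_{R}$ is finite; we then let $R\to\infty$.

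The negative part is handled with $F(s)=-s_{-}$ in the same way, with $f\,F(u)\le f_{-}u_{-}$. When $f\equiv0$, the $f$-term is absent altogether, so no Young step is needed and $\delta=0$ is allowed, which gives \eqref{est:contractive_Lalphaplusone_fzero}. The estimate for $u$ itself follows by adding the bounds for $u_{+}$ and $u_{-}$ at a fixed $t$, since $|u|^{\alpha+1}=u_{+}^{\alpha+1}+u_{-}^{\alpha+1}$ pointwise.
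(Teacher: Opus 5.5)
Your proposal is correct and follows the paper's proof essentially step by step. Both apply Lemma \ref{lem:energy-general} with $F(s)=s_+$, $\varphi\equiv1$ and a cut-off $\eta_r$, then use H\"older in space and then in time, then Young's inequality with a parameter tuned to give exactly the constants in \eqref{eq:est_lem5_2}, then absorb using finiteness of the supremum, and finally let $r\to\infty$ using $u\in L^p(S_T)$. The only differences are cosmetic: you take $\tau_1=\epsilon\to0$ via condition (3) instead of $\tau_1=0$ directly, and you parametrize Young by $\delta'=\alpha\delta/(1+\delta)$ instead of $\varepsilon^{\alpha+1}=(\alpha+1)\delta/(1+\delta)$; also, in the last limit you should bound the left side from below by $\int_{B_R}u_+^{\alpha+1}\,dx$, as the paper does, rather than relying on $\eta_R^p$ being monotone in $R$.
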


\noindent \begin{proof}[\bfseries{Proof}]Let $\eta\in C_{0}^{\infty}(B_{2}(0);[0,1])$
be such that $\eta=1$ on $B_{1}(0)$, and set
\[
\eta_{r}(x):=\,\eta\Big(\frac{x}{r}\Big)\,,\,\,\,\,\,\,\,\,r>0\,.
\]
Using Lemma \ref{lem:energy-general} with $\Omega=\mathbb{R}^{N}$,
$\tau_{1}=0$, $\tau_{2}=\tau$, $\varphi\equiv1$, $F(s)=s_{+}$
and
\[
G(s)=\int_{0}^{s}g(\omega)\,d\omega\,=\int_{0}^{s}F(\vert\omega\vert^{\frac{1}{\alpha}-1}\omega)\,d\omega\,,
\]

\noindent we then obtain
\begin{align*}
\frac{\alpha}{\alpha+1}\int_{\mathbb{R}^{N}}\eta_{r}^{p}\,u_{+}^{\alpha+1}(x,\tau)\,dx\, & \leq\,\frac{\alpha}{\alpha+1}\int_{\mathbb{R}^{N}}\eta_{r}^{p}\,(u_{0})_{+}^{\alpha+1}\,dx\,+\iint_{\mathbb{R}^{N}\times[0,\tau]}f\,\eta_{r}^{p}\,u_{+}\,dx\,dt\\
 & \quad+\,C(N,p,\Lambda)\iint_{\mathbb{R}^{N}\times[0,\tau]}u_{+}^{p}\,|\nabla\eta_{r}|^{p}\,dx\,dt\,,
\end{align*}
for all $\tau\in[0,T]$. Note that we have omitted the second term
on the left-hand side of \foreignlanguage{british}{\eqref{eq:energy-general}},
since it is non-negative. Now, on the right-hand side, we use the
bounds $0\leq\eta_{r}\leq1$ and $|\nabla\eta_{r}|\leq c/r$, and
replace $f$ by its positive part. This yields
\begin{align*}
\int_{\mathbb{R}^{N}}\eta_{r}^{p}\,u_{+}^{\alpha+1}(x,\tau)\,dx\,\leq\int_{\mathbb{R}^{N}}(u_{0})_{+}^{\alpha+1}\,dx\,+\,\frac{\alpha+1}{\alpha}\iint_{\mathbb{R}^{N}\times[0,\tau]}f_{+}\,\eta_{r}^{p}\,u_{+}\,dx\,dt\,+\,\frac{C}{r^{p}}\iint_{S_{T}}u_{+}^{p}\,dx\,dt\,,
\end{align*}
for all $\tau\in[0,T]$, where $C$ now also depends on $\alpha$.
Since $\eta_{r}$ is compactly supported in space, we have that $\eta_{r}^{p}\,u_{+}\in L^{\alpha+1}(S_{T})$.
Together with the integrability properties of $u$, $u_{0}$ and $f$,
this shows that the right-hand side of the previous estimate is finite,
and hence
\begin{align}
\sup_{\tau\,\in\,[0,T]}\int_{\mathbb{R}^{N}}\eta_{r}^{p}\,u_{+}^{\alpha+1}(x,\tau)\,dx\, & \leq\int_{\mathbb{R}^{N}}(u_{0})_{+}^{\alpha+1}\,dx\,+\,\frac{\alpha+1}{\alpha}\iint_{S_{T}}f_{+}\,\eta_{r}^{p}\,u_{+}\,dx\,dt\,+\,\frac{C}{r^{p}}\iint_{S_{T}}u_{+}^{p}\,dx\,dt\nonumber \\
 & <+\infty\,.\label{eq:towardsLalphaplusone}
\end{align}
Applying Hölder's inequality, first in space and then in time, and
subsequently Young's inequality with $\varepsilon>0$, we estimate
the second term on the right-hand side of (\ref{eq:towardsLalphaplusone})
as follows:\begin{align*}
&\frac{\alpha+1}{\alpha}\iint_{S_{T}}f_{+}\,\eta_{r}^{p}\,u_{+}\,dx\,dt\\
&\,\,\,\,\,\,\,\leq\,\frac{\alpha+1}{\alpha}\int_{0}^{T}\Big(\int_{\mathbb{R}^{N}}\eta_{r}^{p(\alpha+1)}u_{+}^{\alpha+1}\,dx\Big)^{\frac{1}{\alpha+1}}\Big(\int_{\mathbb{R}^{N}}f_{+}^{\frac{\alpha+1}{\alpha}}\,dx\Big)^{\frac{\alpha}{\alpha+1}}dt\\
&\,\,\,\,\,\,\,\leq\,\frac{\alpha+1}{\alpha}\Big(\sup_{\tau\,\in\,[0,T]}\int_{\mathbb{R}^{N}}\eta_{r}^{p}\,u_{+}^{\alpha+1}(x,\tau)\,dx\Big)^{\frac{1}{\alpha+1}}\int_{0}^{T}\Big(\int_{\mathbb{R}^{N}}f_{+}^{\frac{\alpha+1}{\alpha}}\,dx\Big)^{\frac{\alpha}{\alpha+1}}dt\\
&\,\,\,\,\,\,\,\leq\,\frac{\alpha+1}{\alpha}\Big(\sup_{\tau\,\in\,[0,T]}\int_{\mathbb{R}^{N}}\eta_{r}^{p}\,u_{+}^{\alpha+1}(x,\tau)\,dx\Big)^{\frac{1}{\alpha+1}}\,T^{\frac{1}{\alpha+1}}\Big(\iint_{S_{T}}f_{+}^{\frac{\alpha+1}{\alpha}}\,dx\,dt\Big)^{\frac{\alpha}{\alpha+1}}\\
&\,\,\,\,\,\,\,\leq\,\frac{\varepsilon^{\alpha+1}}{\alpha+1}\,\sup_{\tau\,\in\,[0,T]}\int_{\mathbb{R}^{N}}\eta_{r}^{p}\,u_{+}^{\alpha+1}(x,\tau)\,dx\,+\left(\frac{\alpha+1}{\alpha}\right)^{\frac{1}{\alpha}}\varepsilon^{-\,\frac{\alpha+1}{\alpha}}\,T^{\frac{1}{\alpha}}\iint_{S_{T}}f_{+}^{\frac{\alpha+1}{\alpha}}\,dx\,dt\,,
\end{align*}where we have also exploited the fact that $\eta_{r}^{p(\alpha+1)}\leq\eta_{r}^{p}$.
Choosing $\varepsilon=[(\alpha+1)\delta(1+\delta)^{-1}]^{\frac{1}{\alpha+1}}$,
combining the previous estimate with (\ref{eq:towardsLalphaplusone})
and observing that $\eta_{r}=1$ on $B_{r}(0)$, we obtain\begin{align*}
&\sup_{\tau\,\in\,[0,T]}\int_{B_{r}(0)}u_{+}^{\alpha+1}(x,\tau)\,dx\,\leq\sup_{\tau\,\in\,[0,T]}\int_{\mathbb{R}^{N}}\eta_{r}^{p}\,u_{+}^{\alpha+1}(x,\tau)\,dx\\
&\,\,\,\,\,\,\,\leq\,(1+\delta)\int_{\mathbb{R}^{N}}(u_{0})_{+}^{\alpha+1}\,dx\,+\,(1+\delta)^{\frac{\alpha+1}{\alpha}}\delta^{-\frac{1}{\alpha}}\left(\frac{T}{\alpha}\right)^{\frac{1}{\alpha}}\iint_{S_{T}}f_{+}^{\frac{\alpha+1}{\alpha}}\,dx\,dt\,+\,\frac{C}{r^{p}}\iint_{S_{T}}u_{+}^{p}\,dx\,dt\,,
\end{align*}where we have also used the finiteness of the supremum over $\tau\in[0,T]$.
Passing to the limit as $r\to\infty$, the last term vanishes since
$u\in L^{p}(S_{T})$, and we obtain estimate (\ref{eq:est_lem5_2}).
If $f\equiv0$, we may further let $\delta\to0$ to obtain \eqref{est:contractive_Lalphaplusone_fzero},
or alternatively pass to the limit as $r\to\infty$ directly in (\ref{eq:towardsLalphaplusone}).
The bounds for $u_{-}$ are obtained in the same way, by choosing
$F(s)=-s_{-}\,$. Finally, the bound for $u$ itself follows by observing
that $\vert u\vert^{\alpha+1}=u_{+}^{\alpha+1}+u_{-}^{\alpha+1}$
and adding the corresponding integral estimates for $u_{+}$ and $u_{-}\,$.\end{proof}

\selectlanguage{british}%
\noindent $\hspace*{1em}$\foreignlanguage{american}{In the range
$\alpha\in(0,1)$, we also obtain a contractive estimate concerning
the spatial $L^{1}$-norm, under integrability assumptions on $u_{0}$
and $f$ different from those in Lemma \ref{lem:Lalphaplusone-contractive}.
For a similar result for doubly nonlinear anisotropic equations, see
\cite[Lemma 8.4]{CiaVeVe}, where, however, $f\equiv0$.}
\selectlanguage{american}%
\begin{lem}
Let $\alpha\in(0,1)$. Let $u$ be an $L^{p}$-integrable solution
to the Cauchy problem \eqref{prob:cauchy} in the sense of Definition
\ref{def:Lp-integrable-sol}, with $u_{0}\in L^{1}(\mathbb{R}^{N})\cap L_{\mathrm{loc}}^{\alpha+1}(\mathbb{R}^{N})$
and $f\in L^{\frac{1}{\alpha}}(S_{T})$. Then, for every $\delta>0$,
\begin{equation}
\sup_{\tau\,\in\,[0,T]}\Vert u(\cdot,\tau)\Vert_{L^{1}(\mathbb{R}^{N})}\,\leq\,(1+\delta)\,\Vert u_{0}\Vert_{L^{1}(\mathbb{R}^{N})}\,+\left(\frac{1-\alpha^{2}}{\alpha\delta}\right)^{\frac{1}{\alpha}-1}(1+\delta)^{\frac{1}{\alpha}}\,T^{\frac{1}{\alpha}-1}\iint_{S_{T}}|f|^{\frac{1}{\alpha}}\,dx\,dt\,.\label{eq:Lemma5_3est1}
\end{equation}
If in addition $f\equiv0$, then we have 
\[
\sup_{\tau\,\in\,[0,T]}\Vert u(\cdot,\tau)\Vert_{L^{1}(\mathbb{R}^{N})}\,\leq\,\Vert u_{0}\Vert_{L^{1}(\mathbb{R}^{N})}\,.
\]
\end{lem}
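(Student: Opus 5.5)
The natural route is to apply Lemma \ref{lem:energy-general} with a function $F$ chosen so that $G(|u|^{\alpha-1}u)$ becomes a multiple of $u_{+}$ (and, symmetrically, of $u_{-}$). Requiring $G(w)=w_{+}^{1/\alpha}$ gives $g(w)=\frac1\alpha w_{+}^{\frac1\alpha-1}$, and hence $F(s)=\frac1\alpha s_{+}^{1-\alpha}$. Since $1-\alpha\in(0,1)$, this $F$ is not Lipschitz at $0$, so I would use the regularisation
\[
F_{\varepsilon}(s):=\frac{1}{\alpha}\big[(s_{+}+\varepsilon)^{1-\alpha}-\varepsilon^{1-\alpha}\big],\qquad\varepsilon>0 .
\]
This function is non-decreasing, Lipschitz and piecewise $C^{1}$. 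It satisfies \eqref{Assunzione}, because $F_{\varepsilon}'=0$ only on $\{s<0\}$, where $F_{\varepsilon}=0$. Moreover $0\le F_{\varepsilon}(s)\le\frac1\alpha s_{+}^{1-\alpha}$ by concavity. The integrability hypothesis \eqref{eq:esponenti_coniugati} holds with $\lambda'=1/\alpha$ and $\lambda=1/(1-\alpha)$, because $F_{\varepsilon}(u)^{1/(1-\alpha)}\le c\,u_{+}$ and $u$ is locally integrable. I take $\tau_{1}=0$, $\varphi\equiv1$ and $\eta=\eta_{r}$ as in the proof of Lemma \ref{lem:Lalphaplusone-contractive}, and normalise $G_{\varepsilon}(0)=0$. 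As $\varepsilon\to0$, monotone convergence turns $G_{\varepsilon}$ into $u_{+}$, with $(u_{0})_{+}$ at time $0$ by condition (3) of Definition \ref{def:Lp-integrable-sol}. Dropping the non-negative gradient term, this should give
\[
\int\eta_{r}^{p}u_{+}(\tau)\,dx\le\int(u_{0})_{+}\,dx+\frac1\alpha\iint_{S_\tau} f_{+}\,\eta_{r}^{p}\,u_{+}^{1-\alpha}\,dx\,dt+\gamma\,\mathcal{R}_{r},
\]
where $\mathcal{R}_{r}$ is the limit of the cut-off term $\iint|F_\varepsilon(u)|^{p}F_\varepsilon'(u)^{1-p}|\nabla\eta_{r}|^{p}$.

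For the source term I would apply H\"older in space with exponents $1/\alpha$ and $1/(1-\alpha)$, using $\eta_{r}^{p}\le\eta_{r}^{p(1-\alpha)}$ to obtain
\[
\Big(\int\eta_r^{p}u_+(t)\,dx\Big)^{1-\alpha}\|f_+(t)\|_{L^{1/\alpha}}.
\]
I then bound the first factor by its supremum in time, apply H\"older in time to get $T^{1-\alpha}\big(\iint f_{+}^{1/\alpha}\big)^{\alpha}$, and finish with Young's inequality with exponents $\frac{1}{1-\alpha}$ and $\frac1\alpha$ and a parameter chosen so that the coefficient of the supremum is $\frac{\delta}{1+\delta}$. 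Absorbing this term, which is finite on $\operatorname{supp}\eta_r$, and multiplying by $1+\delta$ should produce exactly the constant $\big(\frac{1-\alpha^{2}}{\alpha\delta}\big)^{\frac1\alpha-1}(1+\delta)^{\frac1\alpha}T^{\frac1\alpha-1}$ in \eqref{eq:Lemma5_3est1}; I would check the Young constants at the end. The same argument with $F_{\varepsilon}(s)=-\frac1\alpha[(s_{-}+\varepsilon)^{1-\alpha}-\varepsilon^{1-\alpha}]$ treats $u_{-}$, and adding the two gives the bound for $\|u(\cdot,\tau)\|_{L^1}$. For $f\equiv0$ one lets $\delta\to0$.

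The main obstacle is showing that $\mathcal{R}_{r}\to0$ as $r\to\infty$. Pointwise the integrand is bounded by $c\,u_{+}^{p(1-\alpha)}(u_{+}+\varepsilon)^{\alpha(p-1)}$, and in the limit by $c\,u_{+}^{p-\alpha}r^{-p}$ on $(B_{2r}\setminus B_{r})\times(0,T)$. Only $u\in L^{p}(S_{T})$ is available, so a direct H\"older estimate gives merely $r^{-p+N\alpha/p}\|u\|_{L^p}^{p-\alpha}$, which need not vanish. My first attempt would be to split $u^{p-\alpha}=u^{s}u^{p(1-s)}$ with $s=\alpha/(p-1)$, which is admissible when $p\ge1+\alpha$, and use Young:
\[
r^{-p}u^{p-\alpha}\le\kappa\,u+C(\kappa)\,r^{-p/(1-s)}u^{p}.
\]
The $L^{p}$ part vanishes as $r\to\infty$. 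The $\kappa T\sup_t\int_{B_{2r}}u_{+}$ part I would control by an iteration over dyadic radii, comparing $\Phi(r):=\sup_\tau\int\eta_r^pu_+(\tau)$ with $\Phi(2r)$, after first proving a crude a priori bound $\Phi(r)\lesssim r^{a}$ from the estimate itself so that the iteration closes. If this fails for small $p$, I would follow the truncation device of \cite[Lemma 8.4]{CiaVeVe}: first test with $F$ cut off at a level $M$ so that the cut-off integrand is $\lesssim u_{+}^{p}$, then let $M\to\infty$ after $r\to\infty$.
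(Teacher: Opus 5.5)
There is a genuine gap, and it comes from the order in which you take the limits. Letting $\varepsilon\to0$ at fixed $r$ turns the cut-off term into $c\,r^{-p}\iint_{(B_{2r}\setminus B_{r})\times(0,T)}u_{+}^{p-\alpha}\,dx\,dt$. This quantity is not controlled by $\Vert u\Vert_{L^{p}(S_{T})}$, and for a general element of $L^{p}(S_{T})$ it need not even stay bounded. The trouble comes from small values of $u$ spread over large annuli, where $u^{p-\alpha}\gg u^{p}$.

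Neither of your remedies closes the argument. The splitting $u^{p-\alpha}=u^{s}u^{p(1-s)}$ needs $s=\alpha/(p-1)<1$, so the whole range $1<p\le1+\alpha$ is left open. For $p>1+\alpha$ the dyadic absorption is only sketched. Cutting $F$ off at a level $M$ acts on large values of $u$, where $u^{p-\alpha}\le u^{p}$ is already harmless; a flat piece of $F$ where $F\neq0$ would also violate \eqref{Assunzione}. What matters is the behaviour of $F$ near $0$.

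The missing idea is to keep $\varepsilon>0$ fixed while $r\to\infty$. This is exactly what the paper does with the odd function $F(s)=(s^{2}+\varepsilon)^{-\alpha/2}s$, for which the cut-off integrand is at most $(1-\alpha)^{1-p}\varepsilon^{-\alpha/2}|u|^{p}$.

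Your own $F_{\varepsilon}$ would serve equally well. By concavity, $F_{\varepsilon}(s)\le\frac{1-\alpha}{\alpha}\varepsilon^{-\alpha}s_{+}$. A direct computation gives $F_{\varepsilon}/F_{\varepsilon}'\le s/(1-\alpha)$ for $s>0$. Hence $|F_{\varepsilon}(u)|^{p}(F_{\varepsilon}'(u))^{1-p}=F_{\varepsilon}(u)\,(F_{\varepsilon}(u)/F_{\varepsilon}'(u))^{p-1}\le C(\alpha,p)\,\varepsilon^{-\alpha}u_{+}^{p}$. So the cut-off term is at most $C(\alpha,p)\,\varepsilon^{-\alpha}r^{-p}\Vert u\Vert_{L^{p}(S_{T})}^{p}$, which tends to $0$ as $r\to\infty$.

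The price is that the source term must be absorbed at fixed $\varepsilon$ and $r$. It goes into $\sup_{\tau}\int\eta_{r}^{p}G_{\varepsilon}(|u|^{\alpha-1}u)\,dx$ rather than into $\sup_{\tau}\int\eta_{r}^{p}u_{+}\,dx$. For this you need a pointwise bound $F_{\varepsilon}(u)^{\frac{1}{1-\alpha}}\le K\,G_{\varepsilon}(|u|^{\alpha-1}u)$ with $K$ independent of $\varepsilon$, the analogue of \eqref{eq:Gestimate}. For your $F_{\varepsilon}$ and $u>0$, concavity gives $F_{\varepsilon}(s)\ge\frac{s}{u}F_{\varepsilon}(u)$ on $[0,u]$. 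Hence $G_{\varepsilon}(u^{\alpha})=\alpha\int_{0}^{u}F_{\varepsilon}(s)s^{\alpha-1}\,ds\ge\frac{\alpha}{\alpha+1}u^{\alpha}F_{\varepsilon}(u)$. Combined with $F_{\varepsilon}(u)\le\frac{1}{\alpha}u^{1-\alpha}$, this yields $K=\frac{\alpha+1}{\alpha}\,\alpha^{-\frac{\alpha}{1-\alpha}}$.

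The factor $\frac{\alpha+1}{\alpha}$ is exactly where $1-\alpha^{2}$ in \eqref{eq:Lemma5_3est1} comes from. With your choice of Young parameter the stated constant is then recovered. Only after $r\to\infty$ (monotone convergence on the left) should you let $\varepsilon\to0$, using $G_{\varepsilon}(u_{+}^{\alpha})\uparrow u_{+}$. Treating $u_{\pm}$ and $f_{\pm}$ separately and adding, instead of using an odd $F$ as the paper does, is harmless.
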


\noindent \begin{proof}[\bfseries{Proof}]We take $\eta$ and $\eta_{r}$
as in the proof of Lemma \ref{lem:Lalphaplusone-contractive}. We
will use Lemma \ref{lem:energy-general} with $\Omega=\mathbb{R}^{N}$
and 
\begin{align*}
F(s)=(s^{2}+\varepsilon)^{-\frac{\alpha}{2}}\,s\,,\,\,\,\,\,\,\,\,\,\,g(s)=(\vert s\vert^{\frac{2}{\alpha}}+\varepsilon)^{-\frac{\alpha}{2}}\,\vert s\vert^{\frac{1}{\alpha}-1}\,s\,,
\end{align*}
where $\varepsilon>0$. Define 
\[
G(s)=\int_{0}^{s}g(\tau)\,d\tau\,.
\]
Since $\alpha\in(0,1)$, the function $F$ satisfies the assumptions
of Lemma \ref{lem:energy-general} for any fixed $\varepsilon>0$.
By a direct calculation, we see that
\[
F'(s)\,=\,(s^{2}+\varepsilon)^{-\frac{\alpha}{2}-1}\,[(1-\alpha)s^{2}+\varepsilon]\,\geq\,(1-\alpha)(s^{2}+\varepsilon)^{-\frac{\alpha}{2}}\,,
\]
and hence\begin{align*}
|F(u)|^{p}\,(F'(u))^{1-p}\,&\le\,(1-\alpha)^{1-p}\,(u^{2}+\varepsilon)^{-\frac{\alpha}{2}}\,\vert u\vert^{p}\\
&\leq\,C(\alpha,p)\,\varepsilon^{-\frac{\alpha}{2}}\,|u|^{p}\,.
\end{align*}

\noindent Therefore, using \foreignlanguage{british}{\eqref{eq:energy-general}}
with $\tau_{1}=0$, $\tau_{2}=\tau$ and $\varphi\equiv1$, we have
\begin{align}
\int_{\mathbb{R}^{N}}\eta_{r}^{p}\,G(\vert u\vert^{\alpha-1}u)(x,\tau)\,dx\, & \leq\int_{\mathbb{R}^{N}}\eta_{r}^{p}\,G(\vert u_{0}\vert^{\alpha-1}u_{0})\,dx\,+\int_{0}^{\tau}\int_{\mathbb{R}^{N}}\eta_{r}^{p}\,(u^{2}+\varepsilon)^{-\frac{\alpha}{2}}\,uf\,dx\,dt\nonumber \\
 & \quad+\,C\varepsilon^{-\frac{\alpha}{2}}\int_{0}^{\tau}\int_{\mathbb{R}^{N}}|u|^{p}\,|\nabla\eta_{r}|^{p}\,dx\,dt\,,\label{eq:est:towardsL1}
\end{align}
for all $\tau\in[0,T],$ where $C$ now also depends on $N$. Note
that 
\[
G(\vert u\vert^{\alpha-1}u)=\int_{0}^{\vert u\vert^{\alpha-1}u}(\vert s\vert^{\frac{2}{\alpha}}+\varepsilon)^{-\frac{\alpha}{2}}\,\vert s\vert^{\frac{1}{\alpha}-1}\,s\,ds\,\leq\int_{0}^{\vert u\vert^{\alpha-1}u}\vert s\vert^{\frac{1}{\alpha}-2}\,s\,ds\,=\,\alpha|u|\,,
\]
which, combined with the $L^{1}$-integrability of $u_{0}$, implies
that the first term on the right-hand side of (\ref{eq:est:towardsL1})
is finite. Moreover, by Hölder's inequality, we have 
\begin{align*}
\int_{0}^{\tau}\int_{\mathbb{R}^{N}}\eta_{r}^{p}\,(u^{2}+\varepsilon)^{-\frac{\alpha}{2}}\,uf\,dx\,dt\, & \leq\int_{0}^{T}\int_{B_{2r}(0)}|u|^{1-\alpha}\,|f|\,dx\,dt\\
 & \leq\Big[\int_{0}^{T}\int_{B_{2r}(0)}|u|\,dx\,dt\Big]^{1-\alpha}\Big[\int_{0}^{T}\int_{B_{2r}(0)}|f|^{\frac{1}{\alpha}}\,dx\,dt\Big]^{\alpha}<+\infty\,.
\end{align*}
The last term on the right-hand side of (\ref{eq:est:towardsL1})
is also finite, since $u\in L^{p}(S_{T})$. Therefore, we see that
\begin{align}
\sup_{\tau\,\in\,[0,T]}\int_{\mathbb{R}^{N}}\eta_{r}^{p}\,G(\vert u\vert^{\alpha-1}u)(x,\tau)\,dx\, & \leq\int_{\mathbb{R}^{N}}\eta_{r}^{p}\,G(\vert u_{0}\vert^{\alpha-1}u_{0})\,dx\,+\iint_{S_{T}}\eta_{r}^{p}\,(u^{2}+\varepsilon)^{-\frac{\alpha}{2}}\,uf\,dx\,dt\nonumber \\
 & \quad+\,C\varepsilon^{-\frac{\alpha}{2}}\,r^{-p}\iint_{S_{T}}|u|^{p}\,dx\,dt\,<+\infty\,,\label{eq:timesup-fininte}
\end{align}
where we have also used the fact that $\vert\nabla\eta_{r}\vert\leq c/r$.
To estimate the second term on the right-hand side of \eqref{eq:timesup-fininte}
in a more convenient way, we first apply Hölder's inequality in space:\begin{align}\label{eq:2ndtermbetterest}
&\int_{0}^{T}\int_{\mathbb{R}^{N}}\eta_{r}^{p}\,(u^{2}+\varepsilon)^{-\frac{\alpha}{2}}\,uf\,dx\,dt\nonumber\\
&\,\,\,\,\,\,\,\leq\int_{0}^{T}\Big(\int_{\mathbb{R}^{N}}\eta_{r}^{\frac{p}{1-\alpha}}\,(u^{2}+\varepsilon)^{-\,\frac{\alpha}{2(1-\alpha)}}\,|u|^{\frac{1}{1-\alpha}}\,dx\Big)^{1-\alpha}\Big(\int_{\mathbb{R}^{N}}\vert f|^{\frac{1}{\alpha}}\,dx\Big)^{\alpha}dt\,.
\end{align}In order to control the first integrand in the last line, we observe
that\begin{align*}
G(\vert u\vert^{\alpha-1}u)\,&=\int_{0}^{\vert u\vert^{\alpha-1}u}(|s|^{\frac{2}{\alpha}}+\varepsilon)^{-\frac{\alpha}{2}}\,|s|^{\frac{1}{\alpha}-1}\,s\,ds\\
&\geq\int_{0}^{\vert u\vert^{\alpha-1}u}(u^{2}+\varepsilon)^{-\frac{\alpha}{2}}\,|s|^{\frac{1}{\alpha}-1}\,s\,ds\,=\,\frac{\alpha}{\alpha+1}\,(u^{2}+\varepsilon)^{-\frac{\alpha}{2}}\,|u|^{\alpha+1},
\end{align*}which allows us to conclude that\begin{align}\label{eq:Gestimate}
(u^{2}+\varepsilon)^{-\,\frac{\alpha}{2(1-\alpha)}}\,|u|^{\frac{1}{1-\alpha}}\,&=\,\frac{(u^{2}+\varepsilon)^{-\,\frac{\alpha}{2(1-\alpha)}}\,|u|^{\frac{1}{1-\alpha}}}{G(\vert u\vert^{\alpha-1}u)}\,G(\vert u\vert^{\alpha-1}u)\nonumber\\
&\leq\,\frac{\alpha+1}{\alpha}\,(u^{2}+\varepsilon)^{-\,\frac{\alpha^{2}}{2(1-\alpha)}}\,|u|^{\frac{\alpha^{2}}{1-\alpha}}\,G(\vert u\vert^{\alpha-1}u)\nonumber\\
&\leq\,\frac{\alpha+1}{\alpha}\,G(\vert u\vert^{\alpha-1}u)\,,
\end{align}whenever $u\neq0$. Combining \eqref{eq:2ndtermbetterest} with \eqref{eq:Gestimate},
applying Hölder's inequality in time, and then Young's inequality
with $\theta>0$, we obtain\begin{align}\label{eq:trippa_bolognese}
&\int_{0}^{T}\int_{\mathbb{R}^{N}}\eta_{r}^{p}\,(u^{2}+\varepsilon)^{-\frac{\alpha}{2}}\,uf\,dx\,dt\\
&\,\,\,\,\leq\left(\frac{\alpha+1}{\alpha}\right)^{1-\alpha}\int_{0}^{T}\Big(\int_{\mathbb{R}^{N}}\eta_{r}^{p}\,G(\vert u\vert^{\alpha-1}u)\,dx\Big)^{1-\alpha}\Big(\int_{\mathbb{R}^{N}}|f|^{\frac{1}{\alpha}}\,dx\Big)^{\alpha}dt\nonumber\\
&\,\,\,\,\leq\left(\frac{\alpha+1}{\alpha}\right)^{1-\alpha}\Big(\sup_{\tau\,\in\,[0,T]}\int_{\mathbb{R}^{N}}\eta_{r}^{p}\,G(\vert u\vert^{\alpha-1}u)\,(x,\tau)\,dx\Big)^{1-\alpha}\int_{0}^{T}\Big(\int_{\mathbb{R}^{N}}|f|^{\frac{1}{\alpha}}\,dx\Big)^{\alpha}dt\nonumber\\
&\,\,\,\,\leq\left(\frac{\alpha+1}{\alpha}\right)^{1-\alpha}\Big(\sup_{\tau\,\in\,[0,T]}\int_{\mathbb{R}^{N}}\eta_{r}^{p}\,G(\vert u\vert^{\alpha-1}u)\,(x,\tau)\,dx\Big)^{1-\alpha}\,T^{1-\alpha}\Big(\iint_{S_{T}}|f|^{\frac{1}{\alpha}}\,dx\,dt\Big)^{\alpha}\nonumber\\
&\,\,\,\,\leq\,(1-\alpha)\,\theta\sup_{\tau\,\in\,[0,T]}\int_{\mathbb{R}^{N}}\eta_{r}^{p}\,G(\vert u\vert^{\alpha-1}u)\,(x,\tau)\,dx\,+\,\alpha\left(\frac{\alpha+1}{\alpha\theta}\right)^{\frac{1}{\alpha}-1}T^{\frac{1}{\alpha}-1}\iint_{S_{T}}|f|^{\frac{1}{\alpha}}\,dx\,dt\,.\nonumber
\end{align} Joining \eqref{eq:timesup-fininte} and \eqref{eq:trippa_bolognese},
and recalling that the supremum over $\tau\in[0,T]$ is finite, we
end up with\begin{align*}
&[1-(1-\alpha)\,\theta]\sup_{\tau\,\in\,[0,T]}\int_{\mathbb{R}^{N}}\eta_{r}^{p}\,G(\vert u\vert^{\alpha-1}u)(x,\tau)\,dx\\
&\,\,\,\,\,\,\,\leq\int_{\mathbb{R}^{N}}G(\vert u_{0}\vert^{\alpha-1}u_{0})\,dx\,+\,C\varepsilon^{-\frac{\alpha}{2}}\,r^{-p}\iint_{S_{T}}|u|^{p}\,dx\,dt\,+\,\alpha\left(\frac{\alpha+1}{\alpha\theta}\right)^{\frac{1}{\alpha}-1}T^{\frac{1}{\alpha}-1}\iint_{S_{T}}|f|^{\frac{1}{\alpha}}\,dx\,dt\,.
\end{align*}Passing to the limit as $r\to\infty$ and arguing as in the final
part of the proof of Lemma \ref{lem:Lalphaplusone-contractive}, we
obtain from the previous estimate that\begin{align}\label{eq:after_rlimit}
&[1-(1-\alpha)\,\theta]\sup_{\tau\,\in\,[0,T]}\int_{\mathbb{R}^{N}}G(\vert u\vert^{\alpha-1}u)(x,\tau)\,dx\nonumber\\
&\,\,\,\,\,\,\,\leq\int_{\mathbb{R}^{N}}G(\vert u_{0}\vert^{\alpha-1}u_{0})\,dx\,+\,\alpha\left(\frac{(\alpha+1)T}{\alpha\theta}\right)^{\frac{1}{\alpha}-1}\iint_{S_{T}}|f|^{\frac{1}{\alpha}}\,dx\,dt\,.
\end{align}Exploiting the definition of $G$ we see that, for any $\tau\in[0,T]$,\begin{align*}
\int_{\mathbb{R}^{N}}G(\vert u\vert^{\alpha-1}u)(x,\tau)\,dx\,&=\int_{\mathbb{R}^{N}}\int_{0}^{\vert u\vert^{\alpha-1}u(x,\tau)}(|s|^{\frac{2}{\alpha}}+\varepsilon)^{-\frac{\alpha}{2}}\,|s|^{\frac{1}{\alpha}-1}\,s\,ds\,dx\\
&=\int_{\mathbb{R}^{N}}\int_{0}^{\vert u(x,\tau)\vert^{\alpha}}(s^{\frac{2}{\alpha}}+\varepsilon)^{-\frac{\alpha}{2}}\,s^{\frac{1}{\alpha}}\,ds\,dx\\
&\xrightarrow[\varepsilon\to0]{}\int_{\mathbb{R}^{N}}\int_{0}^{\vert u(x,\tau)\vert^{\alpha}}s^{\frac{1}{\alpha}-1}\,ds\,dx\,=\,\alpha\int_{\mathbb{R}^{N}}\vert u(x,\tau)\vert\,dx\,,
\end{align*}where we have used the Monotone Convergence Theorem in order to pass
to the limit as $\varepsilon\rightarrow0$. Thus, letting $\varepsilon\rightarrow0$
in \eqref{eq:after_rlimit}, we obtain
\begin{align}
[1-(1-\alpha)\,\theta]\sup_{\tau\,\in\,[0,T]}\Vert u(\cdot,\tau)\Vert_{L^{1}(\mathbb{R}^{N})}\,\leq\,\Vert u_{0}\Vert_{L^{1}(\mathbb{R}^{N})}\,+\left(\frac{(\alpha+1)T}{\alpha\theta}\right)^{\frac{1}{\alpha}-1}\iint_{S_{T}}|f|^{\frac{1}{\alpha}}\,dx\,dt\,.\label{eq:presque-fin}
\end{align}
Finally, taking $\theta=\frac{\delta}{(1-\alpha)(1+\delta)}$ with
$\delta>0$ in (\ref{eq:presque-fin}), we arrive at estimate (\ref{eq:Lemma5_3est1}).
In the case $f\equiv0$, one can furthermore let $\delta\to0$ in
(\ref{eq:Lemma5_3est1}), thereby obtaining the second estimate in
the statement of the lemma.\end{proof}

\selectlanguage{british}%
\noindent $\hspace*{1em}$\foreignlanguage{american}{The next lemma
will be instrumental in the proof of the global boundedness result
stated in Theorem \ref{thm:limitatezza-globale} below.}
\selectlanguage{american}%
\begin{lem}
\label{lem:global_Lalphaplusone} Let $f\in L^{\frac{1}{\alpha}+1}(S_{T})$
and $u_{0}\in L^{\alpha+1}(\mathbb{R}^{N})$, and assume that $u$
is an $L^{p}$-integrable solution to problem \eqref{prob:cauchy}
in the sense of Definition \ref{def:Lp-integrable-sol}. Then $u\in L^{p_{\alpha+1}}(S_{T})$.
\end{lem}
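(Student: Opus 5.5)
We want $u\in L^{p_{\alpha+1}}(S_T)$. The plan is to apply the parabolic Sobolev inequality of Lemma \ref{lem:parabolic-sobolev} to the localized functions $u\,\eta_r$, with $\eta_r$ the cut-off used in the proof of Lemma \ref{lem:Lalphaplusone-contractive}. This requires two ingredients with bounds independent of $r$:
\begin{itemize}
\item a uniform-in-time bound for $\int |u|^{\alpha+1}\eta_r\,dx$;
\item a bound for $\iint |\nabla(u\eta_r)|^p$.
\end{itemize}
We then let $r\to\infty$ by monotone convergence.

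\emph{First ingredient.} It is supplied by Lemma \ref{lem:Lalphaplusone-contractive}, applied to $u_+$ and $u_-$ (say with $\delta=1$). This gives
\[
\sup_{\tau\in[0,T]}\int_{\mathbb{R}^N}|u|^{\alpha+1}(x,\tau)\,dx\le M:=c\int_{\mathbb{R}^N}|u_0|^{\alpha+1}dx+c(\alpha,T)\iint_{S_T}|f|^{\frac{\alpha+1}{\alpha}}dx\,dt<\infty .
\]

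\emph{Second ingredient.} We need a global gradient bound. By Lemma \ref{lem:basic_delta},
\[
\iint |\nabla u|^p\eta_r^p\le c\sum_i\iint(|\partial_iu|-\delta_i)_+^p\eta_r^p+c\,\delta_{\max}^p\,\big|\{\nabla u\ne 0\}\cap\operatorname{supp}\eta_r\times(0,T)\big| .
\]
The $\delta_{\max}^p$ term is the main obstacle: it grows like $r^N$, so one cannot simply use $|\nabla u|^p$. The remedy is to bound only the truncated quantity $(u-k)_+$, or rather to split $u$ into large and small parts.

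\emph{Splitting at level $k$.} Fix $k>0$. Where $|u|\le k$, we have $|u|^{p_{\alpha+1}}\le k^{p_{\alpha+1}-p}|u|^p$, and $u\in L^p(S_T)$ controls this part. It therefore suffices to show $(u-k)_\pm\in L^{p_{\alpha+1}}(S_T)$.

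\emph{Energy estimate for the truncation.} Apply Lemma \ref{lem:energy-general} with $F(s)=(s-k)_+$, $\varphi\equiv1$, $\tau_1=0$, $\eta=\eta_r$. The integrability hypothesis \eqref{eq:esponenti_coniugati} holds with $\lambda=\alpha+1$, since $u\in L^\infty(0,T;L^{\alpha+1}_{\rm loc})$ and $f\in L^{\frac{\alpha+1}{\alpha}}$. Using also Lemma \ref{lem:bdry_term_estimates_all_alpha}, this gives
\[
\sum_i\iint(|\partial_iu|-\delta_i)_+^p\chi_{\{u>k\}}\eta_r^p\le c\int|u_0|^{\alpha+1}+\iint|f|\,u_+ +c\,r^{-p}\iint u_+^p ,
\]
which is bounded uniformly in $r$. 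The $f$ term is handled by Hölder in space and time together with the first ingredient. The constant term is now harmless, because
\[
|\{u>k\}|\le k^{-p}\iint u_+^p<\infty .
\]
Hence Lemma \ref{lem:basic_delta} yields $\iint|\nabla (u-k)_+|^p\eta_r^p\le C$, independent of $r$. Adding the cut-off term $\iint (u-k)_+^p|\nabla\eta_r|^p\le c\,r^{-p}\|u\|_p^p$, we get $\iint|\nabla((u-k)_+\eta_r)|^p\le C$.

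\emph{Conclusion.} Lemma \ref{lem:parabolic-sobolev}, applied on the bounded set $B_{2r}$ to $v=(u-k)_+\eta_r\in L^p(0,T;W^{1,p}_0(B_{2r}))$, gives
\[
\iint (u-k)_+^{p_{\alpha+1}}\eta_r^{p_{\alpha+1}}\le c\,M^{p/N}C .
\]
The constant of Lemma \ref{lem:parabolic-sobolev} depends only on $N,\alpha,p$, not on the domain, so the bound is uniform in $r$. Letting $r\to\infty$, and repeating the argument with $F(s)=-(s+k)_-$ for the negative part, gives $u\in L^{p_{\alpha+1}}(S_T)$.
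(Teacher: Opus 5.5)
Your proposal is correct and follows essentially the same route as the paper. The paper likewise applies Lemma \ref{lem:parabolic-sobolev} to $(u-k)_+\eta_r$ (it takes $k=1$), controls the gradient via Lemma \ref{lem:basic_delta} together with Lemma \ref{lem:energy-general} for $\lambda=\alpha+1$, absorbs the $\delta_{\max}^p$ term through $|\{u>k\}|\le k^{-p}\iint u_+^p$, bounds the time supremum by Lemma \ref{lem:Lalphaplusone-contractive}, and concludes by letting $r\to\infty$ and splitting $\{u>k\}$ from $\{u\le k\}$. The only blemish is notational: for the negative part, the truncation you need is $(u+k)_-$ rather than $(u-k)_-$, and this is in fact what you use at the end via $F(s)=-(s+k)_-$.
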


\noindent \begin{proof}[\bfseries{Proof}]Let $r>0$ and let $\eta_{r}$
be as in the proof of Lemma \ref{lem:Lalphaplusone-contractive}.
Using the fact that $\eta_{r}=1$ on $B_{r}(0)$ and applying the
Sobolev embedding of Lemma \ref{lem:parabolic-sobolev} to the function
$(u-1)_{+}\,\eta_{r}$, we obtain\begin{align}\label{eq:est:uminusone}
&\int_{0}^{T}\int_{B_{r}(0)}(u-1)_{+}^{p_{\alpha+1}}\,dx\,dt\,\leq\iint_{S_{T}}[(u-1)_{+}\,\eta_{r}]^{p_{\alpha+1}}\,dx\,dt\nonumber\\
&\,\,\,\,\,\,\,\leq\, C\Big[\underset{\tau\,\in\,[0,T]}{\mathrm{ess}\,\sup}\int_{\mathbb{R}^{N}\times\{\tau\}}|(u-1)_{+}\,\eta_{r}|^{\alpha+1}\,dx\Big]^{\frac{p}{N}}\iint_{S_{T}}|\nabla[(u-1)_{+}\,\eta_{r}]|^{p}\,dx\,dt\nonumber\\
&\,\,\,\,\,\,\,\leq\, C\Big[\underset{\tau\,\in\,[0,T]}{\mathrm{ess}\,\sup}\int_{\mathbb{R}^{N}\times\{\tau\}}u_{+}^{\alpha+1}\,dx\Big]^{\frac{p}{N}}\iint_{S_{T}}|\nabla[(u-1)_{+}\,\eta_{r}]|^{p}\,dx\,dt\,,
\end{align}where $C$ is a positive constant depending only on $N$, $\alpha$
and $p$. We now estimate the last integral using (\ref{est:basic_delta})
with $\xi=\nabla u$. This gives \begin{align}\label{eq:aggiunta1}
&\iint_{S_{T}}|\nabla[(u-1)_{+}\,\eta_{r}]|^{p}\,dx\,dt\nonumber\\
&\,\,\,\,\,\,\,\leq C\iint_{S_{T}}|\nabla u\vert^{p}\,\chi_{\{u\,>\,1\}}\,\eta_{r}^{p}\,dx\,dt\,+\,C\iint_{S_{T}}(u-1)_{+}^{p}\,|\nabla\eta_{r}|^{p}\,dx\,dt\nonumber\\
&\,\,\,\,\,\,\,\leq C\iint_{S_{T}}\left[\sum_{i=1}^{N}(\vert\partial_{i}u\vert-\delta_{i})_{+}^{p}\,\chi_{\{u\,>\,1\}}\,\eta_{r}^{p}\,+\,\delta_{\mathrm{max}}^{p}\,\chi_{\{u\,>\,1\}}\,\eta_{r}^{p}\right]dx\,dt\,+\,C\iint_{S_{T}}(u-1)_{+}^{p}\,|\nabla\eta_{r}|^{p}\,dx\,dt\nonumber\\
&\,\,\,\,\,\,\,\leq C\iint_{S_{T}}\sum_{i=1}^{N}(\vert\partial_{i}u\vert-\delta_{i})_{+}^{p}\,\chi_{\{u\,>\,1\}}\,\eta_{r}^{p}\,dx\,dt\,+\,C\,\delta_{\mathrm{max}}^{p}\,\vert S_{T}\cap\{u>1\}\vert\nonumber\\
&\,\,\,\,\,\,\,\,\,\,\,\,\,\,+\,C\iint_{S_{T}}(u-1)_{+}^{p}\,|\nabla\eta_{r}|^{p}\,dx\,dt\,,
\end{align}where $\delta_{\mathrm{max}}:=\max\,\{\delta_{1},...,\delta_{N}\}$.
Using Lemma \ref{lem:energy-general} with $\Omega=B_{2r}(0)$, $F(s)=(s-1)_{+}$,
$\lambda=\alpha+1$, $\tau_{1}=0$, $\tau_{2}=\tau$ and $\varphi\equiv1$,
and proceeding as in the proof of Lemma \ref{lem:energy-classical},
we obtain, in place of estimate \eqref{eq:ultima-est-energy},\begin{align}\label{eq:aggiunta2}
&\int_{\mathbb{R}^{N}\times\{\tau\}}\mathfrak{b}_{\alpha}[u,1]\,\chi_{\{u\,>\,1\}}\,\eta_{r}^{p}\,dx\,+\,\sum_{i=1}^{N}\int_{0}^{\tau}\int_{\mathbb{R}^{N}}(\vert\partial_{i}u\vert-\delta_{i})_{+}^{p}\,\eta_{r}^{p}\,\chi_{\{u\,>\,1\}}\,dx\,dt\nonumber\\
&\,\,\,\,\,\,\,\leq\,C\int_{\mathbb{R}^{N}\cap\,\{u_{0}\,>\,1\}}\mathfrak{b}_{\alpha}[u_{0},1]\,\eta_{r}^{p}\,dx\,+\,C\int_{0}^{\tau}\int_{\mathbb{R}^{N}}(u-1)_{+}^{p}\,\vert\nabla\eta_{r}\vert^{p}\,dx\,dt\nonumber\\
&\,\,\,\,\,\,\,\,\,\,\,\,\,\,+\,C\int_{0}^{\tau}\int_{\mathbb{R}^{N}}(u-1)_{+}\,\vert f\vert\,\eta_{r}\,dx\,dt
\end{align}for all $\tau\in(0,T]$, where the constant $C$ now also depends
on $\Lambda$. Discarding the first term on the left-hand side of
\eqref{eq:aggiunta2}, using the right-hand inequality in (\ref{est:b-all-alpha})
with $v=u$ and $w=1$, and taking the supremum over $\tau\in(0,T]$,
we infer from the previous estimate that\begin{align}\label{eq:aggiunta3}
&\iint_{S_{T}}\sum_{i=1}^{N}(\vert\partial_{i}u\vert-\delta_{i})_{+}^{p}\,\eta_{r}^{p}\,\chi_{\{u\,>\,1\}}\,dx\,dt\nonumber\\
&\,\,\,\,\,\,\,\leq\,C\int_{\mathbb{R}^{N}}(u_{0}^{\frac{\alpha+1}{2}}-1)_{+}^{2}\,\eta_{r}^{p}\,dx\,+\,C\iint_{S_{T}}(u-1)_{+}^{p}\,\vert\nabla\eta_{r}\vert^{p}\,dx\,dt\,+\,C\iint_{S_{T}}(u-1)_{+}\,\vert f\vert\,\eta_{r}\,dx\,dt\,.
\end{align}Combining \eqref{eq:aggiunta1} with \eqref{eq:aggiunta3}, we then
have 
\begin{align*}
\iint_{S_{T}}|\nabla[(u-1)_{+}\,\eta_{r}]|^{p}\,dx\,dt\, & \leq\,C\int_{\mathbb{R}^{N}}(u_{0}^{\frac{\alpha+1}{2}}-1)_{+}^{2}\,\eta_{r}^{p}\,dx\,+\,C\iint_{S_{T}}(u-1)_{+}^{p}\,|\nabla\eta_{r}|^{p}\,dx\,dt\\
 & \quad+\,C\iint_{S_{T}}(u-1)_{+}\,\vert f\vert\,\eta_{r}\,dx\,dt\,+\,C\,\delta_{\mathrm{max}}^{p}\,|S_{T}\cap\{u>1\}|\,.
\end{align*}
We next show that each term on the right-hand side is bounded uniformly
for all $r\geq r_{0}$, where $r_{0}>0$ is sufficiently large. First,
we have
\[
\int_{\mathbb{R}^{N}}(u_{0}^{\frac{\alpha+1}{2}}-1)_{+}^{2}\,\eta_{r}^{p}\,dx\,\leq\int_{\mathbb{R}^{N}}(u_{0})_{+}^{\alpha+1}\,dx\,<+\infty\,.
\]
Moreover, recalling that $\vert\nabla\eta_{r}\vert\leq c/r$, we obtain\\
\begin{align*}
\iint_{S_{T}}(u-1)_{+}^{p}\,|\nabla\eta_{r}|^{p}\,dx\,dt\,\leq\,c^{p}r^{-p}\iint_{S_{T}}(u-1)_{+}^{p}\,dx\,dt\longrightarrow0\,\,\,\,\,\,\,\,\,\,\mathrm{as}\,\,\,r\to\infty\,,
\end{align*}
since $u\in L^{p}(S_{T}).$ Similarly,
\begin{align}
|S_{T}\cap\{u>1\}|\,\leq\iint_{S_{T}\,\cap\,\{u\,>\,1\}}u^{p}\,dx\,dt\,\leq\iint_{S_{T}}u_{+}^{p}\,dx\,dt\,<+\infty\,.\label{est:measure-u-largerthanone}
\end{align}
Finally, Young's inequality yields 
\begin{align*}
\iint_{S_{T}}(u-1)_{+}\,\vert f\vert\,\eta_{r}\,dx\,dt\,\leq\iint_{S_{T}}u_{+}^{\alpha+1}\,dx\,dt\,+\iint_{S_{T}}|f|^{\frac{1}{\alpha}+1}\,dx\,dt\,,
\end{align*}
where the first term on the right-hand side is finite by Lemma \ref{lem:Lalphaplusone-contractive},
while the second one is finite since $f\in L^{\frac{1}{\alpha}+1}(S_{T})$.
Combining \eqref{eq:est:uminusone} with the previous estimates, we
infer that the right-hand side of \eqref{eq:est:uminusone} is bounded
uniformly for all $r\geq r_{0}$, where $r_{0}>0$ is sufficiently
large. Hence, letting $r\rightarrow\infty$ and applying the Monotone
Convergence Theorem to the left-hand side of \eqref{eq:est:uminusone},
we conclude that
\begin{align}
\iint_{S_{T}}(u-1)_{+}^{p_{\alpha+1}}\,dx\,dt\,<+\infty\,.\label{est:uminusone-finite}
\end{align}
Now, recalling that $p_{\alpha+1}>p$, we may estimate 
\begin{align}
\iint_{S_{T}}u_{+}^{p_{\alpha+1}}\,dx\,dt & \,=\iint_{S_{T}\,\cap\,\{u\,>\,1\}}(u-1+1)^{p_{\alpha+1}}\,dx\,dt\,+\iint_{S_{T}\,\cap\,\{u\,\leq\,1\}}u_{+}^{p_{\alpha+1}}\,dx\,dt\nonumber \\
 & \,\leq\,C\iint_{S_{T}\,\cap\,\{u\,>\,1\}}[(u-1)_{+}^{p_{\alpha+1}}+1]\,dx\,dt\,+\iint_{S_{T}\,\cap\,\{u\,\leq\,1\}}u_{+}^{p}\,dx\,dt\nonumber \\
 & \,\leq\,C\iint_{S_{T}}(u-1)_{+}^{p_{\alpha+1}}\,dx\,dt\,+\,C\,|S_{T}\cap\{u>1\}|\,+\iint_{S_{T}}u_{+}^{p}\,dx\,dt\,.\label{eq:aggiunta_4}
\end{align}
The first term on the right-hand side of (\ref{eq:aggiunta_4}) is
finite by \eqref{est:uminusone-finite}, while the last two terms
are finite by \eqref{est:measure-u-largerthanone}. We have thus proved
that $u_{+}\in L^{p_{\alpha+1}}(S_{T})$. Replacing $(u-1)_{+}$ with
$-(u+1)_{-}$ and arguing as above, we also obtain that $u_{-}\in L^{p_{\alpha+1}}(S_{T})$.
This concludes the proof.\end{proof}

\noindent We are now in a position to prove the following global boundedness
result.
\begin{thm}
\label{thm:limitatezza-globale}Let $f\in L^{\frac{1}{\alpha}+1}(S_{T})\cap L^{\widehat{\sigma}}(S_{T})$
for some $\widehat{\sigma}>\frac{N+p}{p}$, let $u_{0}\in L^{\alpha+1}(\mathbb{R}^{N})$
and assume that $p_{\alpha+1}>\alpha+1$.\footnote{We recall that the lower bound for $p_{\alpha+1}$ in the statement
of Theorem \ref{thm:limitatezza-globale} is in fact equivalent to
the condition \eqref{p_lower_bnd}.} Suppose that $u$ is an $L^{p}$-integrable solution to problem \eqref{prob:cauchy}
in the sense of Definition \ref{def:Lp-integrable-sol}. Then, for
every $\theta\in(0,T)$ and every $q\in[\alpha+1,p_{\alpha+1}]$,
we have the explicit upper bound
\begin{equation}
\underset{\mathbb{\mathbb{R}}^{N}\times(\theta,T)}{\mathrm{ess}\,\sup}\,\,u\,\leq\,c\left[(\theta^{-1}+1)^{\frac{N+p}{p}}\int_{\theta/2}^{T}\int_{\mathbb{R}^{N}}u_{+}^{q}\,dx\,dt\,+\int_{\theta/2}^{T}\int_{\mathbb{R}^{N}}|f|^{\sigma}\,dx\,dt\right]^{\mu}+1\,,\label{eq:est:global_sup}
\end{equation}
where
\[
\sigma:=\begin{cases}
\begin{array}{cc}
\frac{\alpha+1}{\alpha} & \mathit{if}\,\,\frac{\alpha+1}{\alpha}>\frac{N+p}{p}\vspace{1mm}\\
\widehat{\sigma} & \mathit{otherwise\,\,\,\,\,\,\,\,\,\,}
\end{array}\end{cases},\,\,\,\,\,\,\,\,\,\,\,\,\,\mu:=\,\frac{p}{qp+N(p-\alpha-1)-\frac{(N+p)}{\sigma}(q-\alpha-1)}\,>0\,,
\]
and $c$ is a positive constant depending only on $N$, $\alpha$,
$p$, $\Lambda$, $\sigma$ and $\max\,\{\delta_{1},\ldots,\delta_{N}\}$.
An analogous lower bound holds for the essential infimum, with $u_{-}$
replacing $u_{+}$ on the right-hand side of $(\ref{eq:est:global_sup})$.
In particular, if $\frac{\alpha+1}{\alpha}>\frac{N+p}{p}$, the assumption
on $f$ reduces to $f\in L^{\frac{1}{\alpha}+1}(S_{T})$.
\end{thm}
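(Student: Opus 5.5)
Our plan is to rerun the De Giorgi iteration of Theorem \ref{thm:loc_bdd_p_large_LP} globally in space, i.e.\ with no spatial cut-off (or with $\eta_r\equiv1$ on $B_r(0)$ and $r\to\infty$), so that only the time cut-off produces a geometric factor. This gives the factor $(\theta^{-1}+1)^{\frac{N+p}{p}}$ in place of $[1+(r(1-s))^{-(N+p)}]$.

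\textbf{Setup and energy inequality.} Fix $\theta\in(0,T)$. We take the time intervals $T_j:=(\theta_j,T)$ with $\theta_j:=\theta-\frac{\theta}{2}2^{-j}$, so that $T_0=(\theta/2,T)$ and $T_j\downarrow(\theta,T)$, and time cut-offs $\varphi_j$ with $\varphi_j=1$ on $T_{j+1}$ and $|\varphi_j'|\le c\,2^j\theta^{-1}$. We set
\[
X_j:=\int_{T_j}\int_{\mathbb{R}^N}(u-k_j)_+^q\,dx\,dt,\qquad k_j:=k(1-2^{-j}),
\]
with $q\in[\alpha+1,p_{\alpha+1}]$. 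These quantities are finite because $u\in L^{p_{\alpha+1}}(S_T)$ by Lemma \ref{lem:global_Lalphaplusone}, and $u_+\in L^\infty(0,T;L^{\alpha+1})$ by Lemma \ref{lem:Lalphaplusone-contractive}. We then apply Lemma \ref{lem:energy-general} with $F(s)=(s-\widehat k_j)_+$, $\eta=\eta_r$ and $\varphi=\varphi_j$, exactly as in Lemma \ref{lem:energy-classical}. Letting $r\to\infty$, the term $r^{-p}\iint(u-\widehat k_j)_+^p$ vanishes since $u\in L^p(S_T)$. This yields a global energy inequality with no $|\nabla\eta|$ term, which is legitimate because $f\in L^{\frac{\alpha+1}{\alpha}}$ pairs with $u\in L^{\alpha+1}$.

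\textbf{Recursive inequality.} Following \eqref{eq:notsolongcalc}--\eqref{eq:integ_after_energy}, we use Hölder with $\gamma=p_{\alpha+1}/q\ge1$ as in \eqref{X-jplusone_estim1}, together with Lemma \ref{lem:parabolic-sobolev} on $\mathbb{R}^N$, applied via truncation/cut-off and a limit. The remaining terms are handled as follows.
\begin{itemize}
\item The $\delta_{\max}^p|E_j|$ term and the time-derivative term are bounded by $c\,\theta^{-1}\big[(u-\widehat k_j)_+^q+k^{\alpha+1}\chi_{\{u>\widehat k_j\}}\big]$. This uses $q\ge\alpha+1$, the mean value bound \eqref{eq:Lagrange}, and $k\ge1$.
\item The level-set measure is controlled by $|E_j|\le c2^{jq}k^{-q}X_j$.
\item For the source term we take $\sigma$ as in the statement; in either case $\sigma>\frac{N+p}{p}$. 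When $\sigma'\le q$, Hölder and Young give $\|f\|_{L^\sigma}(X_j+|E_j|)^{1/\sigma'}$, which by the measure bound is $\lesssim k^{-q/\sigma'}X_j^{1/\sigma'}$.
\end{itemize}

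\textbf{Main obstacle: the case $\sigma'>q$.} Here the proof of Theorem \ref{thm:loc_bdd_p_large_LP} introduced $\|u_+\|_\infty^{\theta}$. We first need qualitative global boundedness of $u$ on $\mathbb{R}^N\times(\theta/2,T)$. To get it, we run the argument of Theorem \ref{theo:local_bdd_large_bar_p} globally with $Y_j$ built from the $p_{\alpha+1}$-norm. Its constant $C_{u,f}$ involves only $\iint u_+^{p_{\alpha+1}}+|f|^\sigma$ over $\mathbb{R}^N\times(\theta/4,T)$, which is finite, so a large $k$ works.

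\textbf{Absorption and final bound.} After that, the Young absorption and the $\varepsilon$-iteration over shrinking time intervals proceed as in \eqref{eq:est:recursive2}--\eqref{est:iterated}, now with boundedness of the sequence $M_n$ guaranteed. Collecting exponents with $\gamma=p_{\alpha+1}/q$ and $\beta=\frac{1}{\gamma N}\big(p-\frac{N+p}{\sigma}\big)$, the smallness condition of Lemma \ref{lem:fastconvg} becomes a lower bound on $k$ of the form $c[\,\cdot\,]^{\mu}+1$. The exponent bookkeeping, namely checking that the $f$ and $u$ terms combine into the single exponent $\mu$ and that $\mu>0$ (using $p_{\alpha+1}>\alpha+1$ and $q\le p_{\alpha+1}$), is routine but lengthy. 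The lower bound for the essential infimum follows by the symmetric argument with $F(s)=-(s+k)_-$.
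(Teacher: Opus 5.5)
There is a genuine gap, and it is not the one you identify. The case $\sigma'>q$ never arises. The definition of $\sigma$ forces $\sigma\ge\frac{\alpha+1}{\alpha}$: either $\sigma=\frac{\alpha+1}{\alpha}$, or $\sigma=\widehat{\sigma}>\frac{N+p}{p}\ge\frac{\alpha+1}{\alpha}$. Hence $\sigma'\le\alpha+1\le q$, and the qualitative global boundedness and the $\varepsilon$-absorption are superfluous.

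The actual obstruction is the decay in $k$. You raise the parabolic term to the $q$-th power via Young, $(u-\widehat{k}_j)_+^{\alpha+1}\le(u-\widehat{k}_j)_+^{q}+\chi_{\{u>\widehat{k}_j\}}$. This leaves a term $(\theta^{-1}+1)X_j$ in the bracket that carries no power of $k$. Your claimed factor $k^{-q/\sigma'}$ for the source term is also not what H\"older plus Young give, since the $X_j$ part of $(X_j+|E_j|)^{1/\sigma'}$ has no $k$ either. So the only decay is the factor $k^{-q(1-1/\gamma)}=k^{-q(p_{\alpha+1}-q)/p_{\alpha+1}}$ from the H\"older step.

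At the admissible endpoint $q=p_{\alpha+1}$ this factor is $k^{0}$. Since $k_0=0$, $X_0=\iint u_+^q$ does not depend on $k$, so the smallness condition of Lemma \ref{lem:fastconvg} cannot be enforced and you get no bound at all. For $q\in(\alpha+1,p_{\alpha+1})$ your bookkeeping yields the exponent $p/[N(p_{\alpha+1}-q)]$. Its denominator is smaller than that of $\mu$ by $\frac{N+p}{\sigma'}(q-\alpha-1)$, so for large data it is strictly weaker than \eqref{eq:est:global_sup}. Only for $q=\alpha+1$ does your scheme reproduce the stated exponent.

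The missing idea is to keep all energy terms at the natural exponent $\alpha+1$ of the parabolic term, and to gain the extra decay when passing from $L^{\alpha+1}$ to $L^q$. Set $S^j:=\mathbb{R}^N\times T_j$, introduce a third level $\tilde{k}_j=\frac12(k_j+\widehat{k}_j)$, and let $I_j:=\iint_{S^j}(u-\tilde{k}_j)_+^{\alpha+1}\,dx\,dt$.

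First, the time term is $\le c\,b^j\theta^{-1}I_j$ and $\delta_{\max}^p|S^j\cap\{u>\widehat{k}_j\}|\le c\,b^jI_j$, because $k^{\alpha+1}|S^j\cap\{u>\widehat{k}_j\}|\le c\,b^jI_j$. Second, $\sigma'\le\alpha+1$ gives $(u-\widehat{k}_j)_+^{\sigma'}\le(\widehat{k}_j-\tilde{k}_j)^{\sigma'-\alpha-1}(u-\tilde{k}_j)_+^{\alpha+1}$, so the source term is $\le c\,b^j\Vert f\Vert_{L^\sigma}I_j^{1/\sigma'}$.

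Now write $I_j=I_j^{1/\sigma}I_j^{1/\sigma'}$. Bound $I_j^{1/\sigma}\le c\Vert u_+\Vert_{L^q(S^0)}^{q/\sigma}$ and $I_j\le X_j^{\frac{\alpha+1}{q}}|S^j\cap\{u>\tilde{k}_j\}|^{1-\frac{\alpha+1}{q}}$. Then apply H\"older from $p_{\alpha+1}$ down to $q$, together with $|S^j\cap\{u>\tilde{k}_j\}|\le c\,2^{jq}k^{-q}X_j$. This gives $X_{j+1}\le C\,b^jX_j^{1+\beta}$ with $C\propto k^{-H}$, where $H=q\big[\frac{N+p}{p_{\alpha+1}N\sigma'}(q-\alpha-1)+1-\frac{q}{p_{\alpha+1}}\big]$. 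This $H$ is positive on the whole range $[\alpha+1,p_{\alpha+1}]$ precisely because $p_{\alpha+1}>\alpha+1$. Choosing $k$ then yields $\mu=\frac{qp}{NHp_{\alpha+1}}$ directly, with no $L^\infty$ norm and no iteration.
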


\noindent \begin{proof}[\bfseries{Proof}]Let $\theta\in(0,T)$ and
$q\in[\alpha+1,p_{\alpha+1}]$. For $j\in\mathbb{N}_{0}$ we define
\begin{align*}
\theta_{j} & :=\,\theta(1-2^{-(j+1)})\,,\hspace{7mm}\varphi_{j}(t):=\,\min\,\{1,2^{j+2}\,\theta^{-1}(t-\theta_{j})_{+}\}\,,\\
S^{j} & :=\,\mathbb{R}^{N}\times(\theta_{j},T)\,.
\end{align*}
Furthermore, we define the sequences 
\begin{align*}
Y_{j}:=\iint_{S^{j}}(u-k_{j})_{+}^{q}\,dx\,dt\,,\,\,\,\,\,\,\,\,k_{j}:=\,k(1-2^{-j})\,,\,\,\,\,\,\,\,\,\widehat{k}_{j}:=\,\frac{1}{2}(k_{j}+k_{j+1})\,,\,\,\,\,\,\,\,\,\tilde{k}_{j}:=\,\frac{1}{2}(k_{j}+\widehat{k}_{j})\,,
\end{align*}
where $k\geq1$ is a number to be chosen later. From Lemma \ref{lem:Lalphaplusone-contractive}
it follows that $u\in L^{\alpha+1}(S_{T})$. Moreover, Lemma \ref{lem:global_Lalphaplusone}
ensures that $u\in L^{p_{\alpha+1}}(S_{T})$. Hence, by interpolation
in Lebesgue spaces, $u\in L^{q}(S_{T})$, and therefore $Y_{j}$ is
finite for every $j\in\mathbb{N}_{0}$. Letting $r>0$ and taking
$\eta_{r}$ as in the proof of Lemma \ref{lem:Lalphaplusone-contractive},
we may argue as in \eqref{eq:notsolongcalc}$-$\eqref{eq:integ_after_energy}
to obtain

\begin{align*}
 & \int_{\theta_{j+1}}^{T}\int_{B_{r}(0)}(u-k_{j+1})_{+}^{p_{\alpha+1}}\,dx\,dt\\
 & \quad\leq\,c\,b^{j}\Big[\int_{\theta_{j}}^{T}\int_{B_{2r}(0)}(u-\widehat{k}_{j})_{+}^{p}\,|\nabla\eta_{r}|^{p}\,dx\,dt\,+\int_{\theta_{j}}^{T}\int_{B_{2r}(0)}(u^{\frac{\alpha+1}{2}}-\widehat{k}_{j}^{\frac{\alpha+1}{2}})_{+}^{2}\,(\partial_{t}\varphi_{j})_{+}\,dx\,dt\\
 & \quad\qquad\,\,\,\,\,\,+\int_{\theta_{j}}^{T}\int_{B_{2r}(0)}|f|\,(u-\widehat{k}_{j})_{+}\,\eta_{r}^{p}\,\varphi_{j}\,dx\,dt\,+\,\delta_{\mathrm{max}}^{p}\,|S^{j}\cap\{u>\widehat{k}_{j}\}|\Big]^{\frac{N+p}{N}}\\
 & \quad\leq\,c\,b^{j}\Big[r^{-p}\int_{\theta_{j}}^{T}\int_{B_{2r}(0)}(u-\widehat{k}_{j})_{+}^{p}\,dx\,dt\,+\,2^{j+2}\,\theta^{-1}\int_{\theta_{j}}^{T}\int_{B_{2r}(0)}(u^{\frac{\alpha+1}{2}}-\widehat{k}_{j}^{\frac{\alpha+1}{2}})_{+}^{2}\,dx\,dt\\
 & \quad\qquad\,\,\,\,\,\,+\int_{\theta_{j}}^{T}\int_{B_{2r}(0)}|f|\,(u-\widehat{k}_{j})_{+}\,dx\,dt\,+\,\delta_{\mathrm{max}}^{p}\,|S^{j}\cap\{u>\widehat{k}_{j}\}|\Big]^{\frac{N+p}{N}},
\end{align*}
where $c=c(N,\alpha,p,\Lambda)>0$, $b=b(N,p)>1$ and $\delta_{\mathrm{max}}:=\max\,\{\delta_{1},...,\delta_{N}\}$.
Passing to the limit as $r\to\infty$ and recalling that $u\in L^{p}(S_{T})$,
we end up with 
\begin{align}
 & \iint_{S^{j+1}}(u-k_{j+1})_{+}^{p_{\alpha+1}}\,dx\,dt\label{est:Sjplusone-itegral}\\
 & \quad\leq\,c\,b^{j}\Big[\theta^{-1}\iint_{S^{j}}(u^{\frac{\alpha+1}{2}}-\widehat{k}_{j}^{\frac{\alpha+1}{2}})_{+}^{2}\,dx\,dt\,+\iint_{S^{j}}|f|\,(u-\widehat{k}_{j})_{+}\,dx\,dt\,+\,\delta_{\mathrm{max}}^{p}\,|S^{j}\cap\{u>\widehat{k}_{j}\}|\Big]^{\frac{N+p}{N}}.\nonumber 
\end{align}
If $\frac{\alpha+1}{\alpha}>\frac{N+p}{p}$, we take $\sigma:=\frac{\alpha+1}{\alpha}$.
Otherwise, we define $\sigma:=\widehat{\sigma}>\frac{N+p}{p}\geq\frac{\alpha+1}{\alpha}$,
and in both cases we have $1<\sigma'\leq\alpha+1$. We now turn to
the estimation of the right-hand side of (\ref{est:Sjplusone-itegral}).
First, note that 
\begin{align*}
(u-\widehat{k}_{j})_{+}^{\sigma'} & \,=\,(\widehat{k}_{j}-\tilde{k}_{j})^{\sigma'-(\alpha+1)}\,(\widehat{k}_{j}-\tilde{k}_{j})^{\alpha+1-\sigma'}\,(u-\widehat{k}_{j})_{+}^{\sigma'}\\
 & \,\leq\,(\widehat{k}_{j}-\tilde{k}_{j})^{\sigma'-(\alpha+1)}\,(u-\tilde{k}_{j})_{+}^{\alpha+1}\\
 & \,=\,k^{\sigma'-(\alpha+1)}\,2^{(j+3)(\alpha+1-\sigma')}\,(u-\tilde{k}_{j})_{+}^{\alpha+1}\,.
\end{align*}
Combining this estimate with Hölder's inequality, we get 
\begin{align}
\iint_{S^{j}}|f|\,(u-\widehat{k}_{j})_{+}\,dx\,dt & \,\leq\,\Vert f\Vert_{L^{\sigma}(S^{0})}\Big(\iint_{S^{j}}(u-\widehat{k}_{j})_{+}^{\sigma'}\,dx\,dt\Big)^{\frac{1}{\sigma'}}\nonumber \\
 & \,\leq\,8^{\alpha}\,\Vert f\Vert_{L^{\sigma}(S^{0})}\,k^{1\,-\,\frac{\alpha+1}{\sigma'}}\,2^{j\alpha}\Big(\iint_{S^{j}}(u-\tilde{k}_{j})_{+}^{\alpha+1}\,dx\,dt\Big)^{\frac{1}{\sigma'}},\label{eq:est:f-term}
\end{align}
where, in the last line, we have also used that $\sigma'>1$. Furthermore,
as already seen in \eqref{eq:est:term2}, we have 
\begin{align}
(u^{\frac{\alpha+1}{2}}-\widehat{k}_{j}^{\frac{\alpha+1}{2}})_{+}^{2}\,\leq\,c\,(u-\widehat{k}_{j})_{+}^{\alpha+1}\,+\,c\,\widehat{k}_{j}^{\alpha+1}\,\chi_{\{u\,>\,\widehat{k}_{j}\}}\,.\label{aaterigen}
\end{align}
In addition, proceeding as in (\ref{eq:superlevel}), we find that
\begin{align}
|S^{j}\cap\{u>\widehat{k}_{j}\}|\,\leq\iint_{S^{j}\,\cap\,\{u\,>\,\widehat{k}_{j}\}}\frac{(u-\tilde{k}_{j})_{+}^{\alpha+1}}{(\widehat{k}_{j}-\tilde{k}_{j})_{+}^{\alpha+1}}\,dx\,dt\,\leq\,\frac{2^{(j+3)(\alpha+1)}}{k^{\alpha+1}}\iint_{S^{j}}(u-\tilde{k}_{j})_{+}^{\alpha+1}\,dx\,dt\,.\label{est:measure-global}
\end{align}
Inserting \eqref{eq:est:f-term}, \eqref{aaterigen} and \eqref{est:measure-global}
into \eqref{est:Sjplusone-itegral}, we obtain 
\begin{align}
 & \iint_{S^{j+1}}(u-k_{j+1})_{+}^{p_{\alpha+1}}\,dx\,dt\label{eq:aather_btut}\\
 & \,\,\leq\,c\,b^{j}\Big[(\theta^{-1}+1+\delta_{\mathrm{max}}^{p})\iint_{S^{j}}(u-\tilde{k}_{j})_{+}^{\alpha+1}\,dx\,dt\,+\,\Vert f\Vert_{L^{\sigma}(S^{0})}\Big(\iint_{S^{j}}(u-\tilde{k}_{j})_{+}^{\alpha+1}\,dx\,dt\Big)^{\frac{1}{\sigma'}}\Big]^{\frac{N+p}{N}}\nonumber \\
 & \,\,\leq\,c\,b^{j}\Big[(\theta^{-1}+1+\delta_{\mathrm{max}}^{p})\Big(\iint_{S^{j}}(u-\tilde{k}_{j})_{+}^{\alpha+1}\,dx\,dt\Big)^{\frac{1}{\sigma}}+\,\Vert f\Vert_{L^{\sigma}(S^{0})}\Big]^{\frac{N+p}{N}}\Big(\iint_{S^{j}}(u-\tilde{k}_{j})_{+}^{\alpha+1}\,dx\,dt\Big)^{\frac{N+p}{N\sigma'}},\nonumber 
\end{align}
where $b>1$ now also depends on $\alpha$. Note that, in the previous
estimate, we have also used the fact that $k\geq1$ to bound from
above all negative powers of $k$ by $1$, and the inequality $\tilde{k}_{j}<\widehat{k}_{j}$
to replace $\widehat{k}_{j}$ with $\tilde{k}_{j}$. Exploiting the
fact that $\tilde{k}_{j}\geq\tilde{k}_{0}=\tfrac{k}{8}\geq\tfrac{1}{8}$
and that $\alpha+1\leq q\leq p_{\alpha+1}$, we may write\begin{align}\label{eq:seafoid1}
\iint_{S^{j}}(u-\tilde{k}_{j})_{+}^{\alpha+1}\,dx\,dt\,&\leq\iint_{S^{0}\,\cap\,\{u\,>\,\tilde{k}_{0}\}}u^{\alpha+1}\,dx\,dt\,\leq\,\tilde{k}_{0}^{\alpha+1-q}\iint_{S^{0}\,\cap\,\{u\,>\,\tilde{k}_{0}\}}u^{q}\,dx\,dt\nonumber\\
&\leq\,8^{p_{\alpha+1}-\alpha-1}\iint_{S^{0}}u_{+}^{q}\,dx\,dt\,.
\end{align}By Hölder's inequality, we can estimate the same integral also as
follows: 
\begin{align}
\iint_{S^{j}}(u-\tilde{k}_{j})_{+}^{\alpha+1}\,dx\,dt\, & \leq\Big(\iint_{S^{j}}(u-\tilde{k}_{j})_{+}^{q}\,dx\,dt\Big)^{\frac{\alpha+1}{q}}\,|S^{j}\cap\{u>\tilde{k}_{j}\}|^{1\,-\,\frac{\alpha+1}{q}}\nonumber \\
 & \leq\,Y_{j}^{\frac{\alpha+1}{q}}\,|S^{j}\cap\{u>\tilde{k}_{j}\}|^{1\,-\,\frac{\alpha+1}{q}}\,.\label{eq:seafoid2}
\end{align}
Using \eqref{eq:seafoid1} and \eqref{eq:seafoid2} to estimate the
two occurrences of the integral on the right-hand side of \eqref{eq:aather_btut},
we obtain

\begin{align*}
\iint_{S^{j+1}}(u-k_{j+1})_{+}^{p_{\alpha+1}}\,dx\,dt\, & \leq\,c\,b^{j}\Big[(\theta^{-1}+1)\,\Vert u_{+}\Vert_{L^{q}(S^{0})}^{\frac{q}{\sigma}}\,+\,\Vert f\Vert_{L^{\sigma}(S^{0})}\Big]^{\frac{N+p}{N}}\\
 & \quad\,\times Y_{j}^{\frac{(N+p)(\alpha+1)}{qN\sigma'}}\,|S^{j}\cap\{u>\tilde{k}_{j}\}|^{\frac{N+p}{N\sigma'}(1\,-\,\frac{\alpha+1}{q})}\,,
\end{align*}
where the constant $c$ now also depends on $\delta_{\mathrm{max}}$.
Applying Hölder's inequality together with the previous estimate,
we get 
\begin{align}
Y_{j+1} & \,\leq\Big[\iint_{S^{j+1}}(u-k_{j+1})_{+}^{p_{\alpha+1}}\,dx\,dt\Big]^{\frac{q}{p_{\alpha+1}}}\,|S^{j+1}\cap\{u>k_{j+1}\}|^{1\,-\,\frac{q}{p_{\alpha+1}}}\nonumber \\
 & \,\leq\,c\,b^{j}\Big[(\theta^{-1}+1)\,\Vert u_{+}\Vert_{L^{q}(S^{0})}^{\frac{q}{\sigma}}\,+\,\Vert f\Vert_{L^{\sigma}(S^{0})}\Big]^{\frac{q(N+p)}{Np_{\alpha+1}}}\,Y_{j}^{\frac{(N+p)(\alpha+1)}{p_{\alpha+1}N\sigma'}}\nonumber \\
 & \,\quad\,\times|S^{j}\cap\{u>\tilde{k}_{j}\}|^{\frac{N+p}{p_{\alpha+1}N\sigma'}\,(q-\alpha-1)\,+1\,-\,\frac{q}{p_{\alpha+1}}}\,,\label{eq:almostRecursive}
\end{align}
where the constants $c>0$ and $b>1$ now also depend on $q$. Furthermore,
arguing as in (\ref{eq:superlevel}), we have
\begin{align*}
|S^{j}\cap\{u>\tilde{k}_{j}\}|\,\leq\,8^{q}\,2^{jq}\,k^{-q}\,Y_{j}\,.
\end{align*}
Combining this estimate with \eqref{eq:almostRecursive}, we obtain
\begin{equation}
Y_{j+1}\,\leq\,C_{u,f}\,b^{j}\,Y_{j}^{1+\beta}\,,\label{eq:passaggio-chiave}
\end{equation}
where $b>1$ now also depends on $\sigma$ and
\begin{align}
C_{u,f}\, & =\,c\Big[(\theta^{-1}+1)\,\Vert u_{+}\Vert_{L^{q}(S^{0})}^{\frac{q}{\sigma}}\,+\,\Vert f\Vert_{L^{\sigma}(S^{0})}\Big]^{\frac{q(N+p)}{Np_{\alpha+1}}}\,k^{-H}\,,\label{eq:key-constant}\\
H & \,=\,q\Big[\,\frac{N+p}{p_{\alpha+1}N\sigma'}\,(q-\alpha-1)+1-\frac{q}{p_{\alpha+1}}\,\Big]\,,\nonumber \\
\beta & \,=\,\frac{q}{Np_{\alpha+1}}\Big(p-\frac{N+p}{\sigma}\Big)\,.\nonumber 
\end{align}
Although the constants $b$ and $c$ appearing in \eqref{eq:passaggio-chiave},
\eqref{eq:key-constant} and in the sequel depend on $q$, the admissible
range of $q$ enables us to choose them so that they depend only on
the data. Moreover, this range also implies that $H$ is positive,
while the range of $\sigma$ guarantees that $\beta>0$. If $C_{u,f}=0$,
we immediately deduce that $u\leq0$ almost everywhere in $S^{0}=\mathbb{R}^{N}\times\left(\frac{\theta}{2},T\right)$.
If instead $C_{u,f}>0$, then, by Lemma \ref{lem:fastconvg}, the
sequence $\{Y_{j}\}$ converges to zero provided that
\begin{align*}
\Vert u_{+}\Vert_{L^{q}(S^{0})}^{q}\,=\,Y_{0}\,\leq\,C_{u,f}^{-\,\frac{1}{\beta}}\,b^{-\,\frac{1}{\beta^{2}}}\,,
\end{align*}
which can equivalently be stated as
\begin{align*}
k\,\geq\,c\,\Vert u_{+}\Vert_{L^{q}(S^{0})}^{\frac{q\beta}{H}}\Big[(\theta^{-1}+1)\,\Vert u_{+}\Vert_{L^{q}(S^{0})}^{\frac{q}{\sigma}}\,+\,\Vert f\Vert_{L^{\sigma}(S^{0})}\Big]^{\frac{q(N+p)}{NH\,p_{\alpha+1}}}\,=:\,\tilde{k}\,.
\end{align*}
Estimating $\tilde{k}$ from above by means of \eqref{eq:elem_ineq}
with $\tau=\frac{q(N+p)}{NH\,p_{\alpha+1}}$, and applying Young's
inequality, we obtain 
\begin{align*}
\tilde{k} & \,\leq\,c\,(\theta^{-1}+1)^{\frac{q(N+p)}{NH\,p_{\alpha+1}}}\,\Vert u_{+}\Vert_{L^{q}(S^{0})}^{\frac{q\beta}{H}\,+\,\frac{q^{2}(N+p)}{\sigma NH\,p_{\alpha+1}}}\,+\,c\,\Vert u_{+}\Vert_{L^{q}(S^{0})}^{\frac{q\beta}{H}}\,\Vert f\Vert_{L^{\sigma}(S^{0})}^{\frac{q(N+p)}{NH\,p_{\alpha+1}}}\\
 & \,\leq\,c\,(\theta^{-1}+1)^{\frac{q(N+p)}{NH\,p_{\alpha+1}}}\,\Vert u_{+}\Vert_{L^{q}(S^{0})}^{\frac{q\beta}{H}\,+\,\frac{q^{2}(N+p)}{\sigma NH\,p_{\alpha+1}}}\,+\,c\,\Vert f\Vert_{L^{\sigma}(S^{0})}^{\frac{\sigma\beta}{H}\,+\,\frac{q(N+p)}{NH\,p_{\alpha+1}}}\\
 & \,=\,c\,(\theta^{-1}+1)^{\sigma\mu\,-\,\frac{\sigma\beta}{H}}\Big(\iint_{S^{0}}u_{+}^{q}\,dx\,dt\Big)^{\mu}+\,c\,\Big(\iint_{S^{0}}|f|^{\sigma}\,dx\,dt\Big)^{\mu}\\
 & \,\leq\,c\Big[(\theta^{-1}+1)^{\sigma\,-\,\frac{\sigma\beta}{\mu H}}\iint_{S^{0}}u_{+}^{q}\,dx\,dt\,+\,\iint_{S^{0}}|f|^{\sigma}\,dx\,dt\Big]^{\mu}+1\\
 & \,=:\,\ell\,,
\end{align*}
where we denote
\[
\mu\,:=\,\frac{\beta}{H}\,+\,\frac{q(N+p)}{\sigma NH\,p_{\alpha+1}}\,=\,\frac{qp}{NH\,p_{\alpha+1}}\,.
\]
Therefore, if $k=\ell$, we have 
\[
\int_{\theta}^{T}\int_{\mathbb{R}^{N}}(u-k)_{+}^{q}\,dx\,dt\,\leq\,Y_{j}\to0\,\,\,\,\,\,\,\,\mathrm{as}\,\,j\to\infty\,,
\]
and hence $u\leq\ell\,$ a.e. in $\mathbb{R}^{N}\times(\theta,T)$.
A direct calculation shows that
\begin{align*}
\sigma-\frac{\sigma\beta}{\mu H}\,=\,\frac{N+p}{p}\,\,\,\,\,\,\,\,\,\,\,\,\,\,\,\,\mathrm{and}\,\,\,\,\,\,\,\,\,\,\,\,\,\,\,\,\mu\,=\,\frac{p}{qp+N(p-\alpha-1)-\frac{(N+p)}{\sigma}(q-\alpha-1)}\,,
\end{align*}
thus proving that the upper bound in \eqref{eq:est:global_sup} holds.\\
\foreignlanguage{british}{$\hspace*{1em}$}Finally, noting that $-u$
solves the Cauchy problem obtained from \eqref{prob:cauchy} by replacing
$f$ and $u_{0}$ with $-f$ and $-u_{0}$, respectively, we also
obtain the analogous lower bound for the essential infimum of $u$
stated in the theorem.\end{proof}

\selectlanguage{british}%
\noindent \appendix
\titleformat{\section}   
{\normalfont\Large\bfseries}   
{Appendix.}  
{0.3em}
{}
\section{Continuity in time}\label{sec:app:time-cont}$\hspace*{1em}$\foreignlanguage{american}{In this appendix, we show
that time-continuity does not have to be assumed explicitly in the
definition of a weak solution to (\ref{eq:diffusion}), provided that
both the solution itself and the source term $f$ on the right-hand
side of the equation satisfy some other integrability conditions.
In fact, in many cases the continuity in time of a solution $u$ as
a map into $L_{\mathrm{loc}}^{\alpha+1}(\Omega)$ can be deduced from
weaker assumptions. As a consequence, the local boundedness results
established in Section \ref{sec:Local-boundedness} remain valid for
a somewhat larger class of solutions. To make this precise, we begin
with the following definition.}
\selectlanguage{american}%
\begin{defn}
\label{def:somewhat-weaker} Let $f\in L_{\mathrm{loc}}^{1}(\Omega_{T})$.
We say that a function $u:\Omega_{T}\to\mathbb{R}$ is an \textit{$L^{\alpha+1}$-integrable
solution} to \eqref{eq:diffusion} if $u\in L^{p}(0,T;W^{1,p}(\Omega))\cap L^{\alpha+1}(\Omega_{T})$
and 
\begin{align}
 & \iint_{\Omega_{T}}\left(\langle A(x,t,\nabla u),\nabla\varphi\rangle-\vert u\vert^{\alpha-1}u\,\partial_{t}\varphi\right)dx\,dt\,=\,\iint_{\Omega_{T}}f\varphi\,dx\,dt\label{eq:weak_form-again}
\end{align}
for all $\varphi\in C_{\textnormal{0}}^{\infty}(\Omega_{T})$, where
the vector field $A:\Omega_{T}\times\mathbb{R}^{N}\to\mathbb{R}^{N}$
is defined by (\ref{eq:def:F}) and (\ref{eq:vector_field}).
\end{defn}

\selectlanguage{british}%
\noindent $\hspace*{1em}$\foreignlanguage{american}{Note that the
only difference between this notion of solutions and the weak solutions
introduced in Definition \ref{def:weaksol} is that the time-continuity
condition $u\in C^{0}([0,T];L^{\alpha+1}(\Omega))$ has now been replaced
by the weaker requirement that $u\in L^{\alpha+1}(\Omega_{T})$. Below,
we shall prove that every $L^{\alpha+1}$-integrable solution $u$
of (\ref{eq:diffusion}) belongs to $C^{0}([0,T];L_{\mathrm{loc}}^{\alpha+1}(\Omega))$,
provided that the following additional assumption holds:
\begin{align}
u\in L^{\mathfrak{q}}(\Omega_{T})\,\,\,\textnormal{ and \,\,\,}f\in L^{\mathfrak{q}'}(\Omega_{T})\,\,\,\textnormal{ for some }\mathfrak{q}>1\,.\label{assumpt:extra}
\end{align}
This condition shows that there is an interplay between the regularity
assumptions on $u$ and $f$. More precisely, the main result of this
appendix is the following.}
\selectlanguage{american}%
\begin{thm}
\noindent \label{thm:cont_into_Lbetaplusone} Assume that $(\ref{eq:coeff_limit})$
holds, and let $u$ be an $L^{\alpha+1}$-integrable solution to \eqref{eq:diffusion}
in the sense of Definition \ref{def:somewhat-weaker}, under the additional
assumption $(\mathrm{\ref{assumpt:extra}})$. Then 
\[
\vert u\vert^{\alpha-1}u\,\in\,C^{0}([0,T];L_{\mathrm{loc}}^{\frac{1}{\alpha}+1}(\Omega))\,\,\,\,\,\,\,\,\,\,\,\mathrm{\mathit{and}}\,\,\,\,\,\,\,\,\,\,\,u\,\in\,C^{0}([0,T];L_{\mathrm{loc}}^{\alpha+1}(\Omega))\,.
\]
\end{thm}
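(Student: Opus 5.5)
The plan is to prove the second inclusion $u\in C^{0}([0,T];L_{\mathrm{loc}}^{\alpha+1}(\Omega))$. The first inclusion then follows from Lemma \ref{lem:equivalent_time_cont}. Write $w:=|u|^{\alpha-1}u\in L^{\frac{\alpha+1}{\alpha}}(\Omega_{T})$ and fix a cut-off $\eta\in C_{0}^{\infty}(\Omega;[0,1])$. The argument has three steps.

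\emph{Step 1: weak continuity of $w$.} From \eqref{eq:weak_form-again} and the bound $|A(x,t,\nabla u)|\le\Lambda|\nabla u|^{p-1}$, the distributional time derivative of $\eta w$ satisfies
\[
\partial_{t}(\eta w)\in L^{p'}(0,T;W^{-1,p'}(\Omega))+L^{\mathfrak{q}'}(\Omega_{T})+L^{\frac{\alpha+1}{\alpha}}(\Omega_{T}),
\]
where the last summand comes from the term $w\,\partial_t\eta$-type contributions after localization. Hence, after modification on a null set of times, $\eta w$ is continuous on the closed interval $[0,T]$ with values in a negative Sobolev space of sufficiently high order. Next I would show that $\operatorname{ess\,sup}_{t}\int\eta^{p}|u|^{\alpha+1}(\cdot,t)\,dx<\infty$ (this follows from the inequality half of Step 2). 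Together with the continuity above, this gives that $t\mapsto\eta w(\cdot,t)$ is weakly continuous into $L^{\frac{\alpha+1}{\alpha}}$ on all of $[0,T]$, including $t=0$.

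\emph{Step 2: energy identity.} I would aim for the following: for all $0\le t_{1}<t_{2}\le T$,
\[
\frac{\alpha}{\alpha+1}\int\eta^{p}|u|^{\alpha+1}\,dx\Big|_{t_{1}}^{t_{2}}=-\int_{t_{1}}^{t_{2}}\!\!\int\langle A(x,t,\nabla u),\nabla(\eta^{p}u)\rangle\,dx\,dt+\int_{t_{1}}^{t_{2}}\!\!\int f\,\eta^{p}u\,dx\,dt .
\]
The right-hand side is continuous in $t_{1},t_{2}$. This is exactly where \eqref{assumpt:extra} enters: $fu\in L^{1}(\Omega_{T})$ because $u\in L^{\mathfrak{q}}$ and $f\in L^{\mathfrak{q}'}$. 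The test function $\eta^{p}u\,\chi_{(t_{1},t_{2})}$ is not admissible, so I would use the exponential mollification of Subsection \ref{subsec:auxilia} in the form used by Kinnunen--Lindqvist and in \cite{Vestb}. Set $w_{h}$ as in \eqref{def:moll}, started from a time $t_{0}$ slightly below $t_{1}$, and put $v_{h}:=|w_{h}|^{\frac{1}{\alpha}-1}w_{h}$, so that $w_h=|v_h|^{\alpha-1}v_h$. By Lemma \ref{lem:expmolproperties}(ii), $\partial_{t}w_{h}=(w-w_{h})/h$. Since $u-v_{h}$ has the same sign as $w-w_{h}$, we get
\[
\partial_{t}w_{h}\,(u-v_{h})\ge0 .
\]
Testing the mollified equation with $\eta^{p}v_{h}\xi$ ($\xi$ a trapezoidal time cut-off) therefore produces the time term
\[
\partial_{t}w_{h}\,v_{h}=\partial_{t}\Big(\tfrac{\alpha}{\alpha+1}|v_{h}|^{\alpha+1}\Big),
\]
up to a nonnegative error of the form $\partial_{t}w_{h}(u-v_{h})$, which has a favourable sign and vanishes in the limit. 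Passing $h\to0$ (the spatial terms converge by Lemma \ref{lem:expmolproperties}) yields one inequality in the identity. The reverse inequality comes from the reversed mollification $w_{\bar h}$, for which $\partial_t w_{\bar h}=(w_{\bar h}-w)/h$ gives the opposite sign. At first the identity holds for a.e.\ $t_{1},t_{2}$; the continuity of its right-hand side then upgrades it to a continuous representative of $t\mapsto\int\eta^{p}|u|^{\alpha+1}(\cdot,t)\,dx$ on the whole of $[0,T]$. The weak continuity from Step 1 identifies this representative with $t\mapsto\int\eta^{p}|u(\cdot,t)|^{\alpha+1}\,dx$ for every $t$.

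\emph{Step 3: strong continuity.} Take $t_{n}\to t$ in $[0,T]$. Then $\eta^{p/(\alpha+1)}u(\cdot,t_{n})$ converges weakly in $L^{\alpha+1}$, by Step 1 and the strict monotonicity of $s\mapsto|s|^{\alpha-1}s$ (via Minty's trick). Its norms also converge, by Step 2. By uniform convexity (Radon--Riesz), the convergence is strong, so $u\in C^{0}([0,T];L_{\mathrm{loc}}^{\alpha+1}(\Omega))$. Lemma \ref{lem:equivalent_time_cont} then gives the statement for $|u|^{\alpha-1}u$.

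I expect the main obstacle to be Step 2, in particular the endpoint $t_{1}=0$. There the mollification cannot be started below the initial time, so the initial value of $w_{h}$ is $0$ rather than $w(0)$. My first attempt would be to add $e^{-(t-t_0)/h}$ times the Step 1 value $w(t_{0})$ to the mollification; with this correction the mollified function converges at the initial time. The backward mollification should then supply the reverse inequality, with the Step 1 bound on $\operatorname{ess\,sup}_{t}\int\eta^{p}|u|^{\alpha+1}\,dx$ giving the domination needed to let $t_{0}\downarrow0$.
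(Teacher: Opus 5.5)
Your route (weak continuity of $w=|u|^{\alpha-1}u$, an energy identity for $\int\eta^p|u|^{\alpha+1}$, then Radon--Riesz) differs from the paper's. As written, however, it has a gap at the decisive step.

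\textbf{The gap.} Let $E(t):=\tfrac{\alpha}{\alpha+1}\int\eta^{p}|u(\cdot,t)|^{\alpha+1}\,dx$, computed with the weakly continuous representative from Step 1. Let $\tilde E$ be the continuous function produced in Step 2, and $R(t_1,t_2)$ the right-hand side of your identity. Weak continuity gives only $E(t)\le\liminf_{s\to t}E(s)=\tilde E(t)$. The reverse inequality $\tilde E(t)\le E(t)$ says there is no energy drop at time $t$; it is exactly what Radon--Riesz in Step 3 needs, and it does not follow from weak continuity plus an a.e.\ identity. For instance, set $v(t):=\sin(x/|t-t_*|)$ and $v(t_*):=0$. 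Then $v$ is weakly continuous into $L^{2}(0,2\pi)$, and $t\mapsto\|v(t)\|_2^2$ on $t\neq t_*$ extends continuously to $t_*$ with value $\pi$, yet $\|v(t_*)\|_2^2=0$. So the claim that weak continuity identifies $\tilde E$ with $E$ ``for every $t$'' fails at interior times, not only at $t=0$.

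\textbf{How to repair it.} Use the corrected mollification from your last paragraph at \emph{every} $t_1\in[0,T)$, in the forward direction. Start $w_h$ at $t_1$ and set $\tilde w_h:=w_h+e^{-(t-t_1)/h}w(t_1)$, with $w(t_1)$ the Step~1 value. The mollified formulation on $(t_1,T)$ then reads $\iint(\partial_t\tilde w_h\,\phi+\langle[A]_h,\nabla\phi\rangle)=\iint f_h\phi$. Deriving it as in Lemma~\ref{lem:mollified} needs only weak continuity of $w$ at $t_1$, paired with strong continuity of $\phi_{\bar h}$ and your ess sup bound. Test with $\eta^pu\,\xi$, where $\xi(t_1)=1$, and use $\tilde v_h(t_1)=u(t_1)$. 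This gives $\tilde E(t_2)\le E(t_1)+R(t_1,t_2)$, hence $\tilde E(t_1)\le E(t_1)$ as $t_2\downarrow t_1$; at $t_1=T$ use the backward analogue.

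At $t=0$, start directly at $t_1=0$. Letting $t_0\downarrow0$ would presuppose the continuity at $0$ you are trying to prove. The backward reverse inequality is not needed there, since lower semicontinuity already supplies that direction.

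\textbf{A smaller correction.} In Step~2 the test function must be $\eta^{p}u\,\xi$, not $\eta^{p}v_h\,\xi$. The latter is not admissible, because $\nabla v_h=\tfrac{1}{\alpha}|w_h|^{\frac{1}{\alpha}-1}\nabla w_h$ has no $L^{p}$ control. For $\alpha<1$, $|u|^{\alpha-1}\nabla u$ need not be integrable; for $\alpha>1$, $|w_h|^{\frac1\alpha-1}$ blows up on $\{w_h=0\}$. With $v_h$ as test function no error term would arise at all. With $\eta^pu\,\xi$, your splitting $\partial_tw_h\,u=\partial_t(\tfrac{\alpha}{\alpha+1}|v_h|^{\alpha+1})+\partial_tw_h\,(u-v_h)$ is exactly what is needed.

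\textbf{Comparison with the paper.} The paper bypasses Steps 1--3 entirely. It mollifies $u$ itself, so $\nabla u_h=(\nabla u)_h$ causes no trouble, and tests with $\zeta(w-u_h)$. This yields the identity \eqref{eq:time_1} of Lemma~\ref{lem:time-cont} for $\iint\partial_t\zeta\,\mathfrak{b}_{\alpha}[u,w]$. With $w=u_{\bar h}$ and $\zeta=\eta\,\chi_{\varepsilon}^{\tau}\,\psi$ it gives $\sup_{\tau\in[0,T/2]\setminus N}\int_K\mathfrak{b}_\alpha[u,u_{\bar h}](x,\tau)\,dx\to0$ along $h_j\to0$. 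Each $u_{\bar h}$ is continuous into $L^{\alpha+1}_{\mathrm{loc}}$ and, being built from future values, needs no initial datum. Completeness then yields a continuous representative on $[0,T/2]$ directly, and symmetrically on $[T/2,T]$ with $u_h$. No weak continuity, pointwise energy identity or Radon--Riesz step is needed. Your repaired argument is longer, but it gives the local energy identity at every time as a by-product.
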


\selectlanguage{british}%
\noindent \begin{brem}[\textbf{Applicability of Theorem~\ref{thm:cont_into_Lbetaplusone}}]\foreignlanguage{american}{Since
any solution in the sense of Definition \ref{def:somewhat-weaker}
belongs to $L^{P}(\Omega_{T})$, with $P:=\max\,\{\alpha+1,p\}$,
condition (\ref{assumpt:extra}) is guaranteed, for instance, by assuming
that $f\in L^{P'}(\Omega_{T})$. However, for solutions with higher
integrability, we see that it is possible to weaken the assumption
on $f$. For source terms satisfying \eqref{assumpt:f}, the condition
\eqref{assumpt:extra} holds, for example, if
\begin{align}
u\in L^{\frac{N+p}{N}}(\Omega_{T})\,.\label{bokachuta2}
\end{align}
In the so-called \textit{fast diffusion regime} $p_{\alpha+1}\leq\alpha+1$,
all solutions in the sense of Definition \ref{def:somewhat-weaker}
satisfy \eqref{bokachuta2}, since 
\begin{align}
\frac{N+p}{N}\,<\,\frac{pN+p(\alpha+1)}{N}\,=\,p_{\alpha+1}\,\leq\,\alpha+1\,.\label{purearumenica}
\end{align}
Therefore, in the fast diffusion regime, every $L^{\alpha+1}$-integrable
solution to \eqref{eq:diffusion} is continuous in time as a map $[0,T]\to L_{\mathrm{loc}}^{\alpha+1}(\Omega)$,
provided, for instance, that $f$ satisfies (\ref{assumpt:f}), by
Theorem \ref{thm:cont_into_Lbetaplusone} applied with $\mathfrak{q}=\sigma'$.
Consequently, the local boundedness results of Theorems \ref{thm:p_small1}
and \ref{thm:p_small2} also apply to $L^{\alpha+1}$-integrable solutions.}\\
$\hspace*{1em}$\foreignlanguage{american}{In the \textit{slow diffusion
regime} $p_{\alpha+1}>\alpha+1$, by Lemma \ref{lem:parabolic-sobolev}
and the first inequality in \eqref{purearumenica}, we have the inclusions
\begin{align*}
L^{p}(0,T;W^{1,p}(\Omega))\cap L^{\infty}(0,T;L^{\alpha+1}(\Omega))\,\subset\,L_{\mathrm{loc}}^{p_{\alpha+1}}(\Omega_{T})\,\subset L_{\mathrm{loc}}^{\frac{N+p}{N}}(\Omega_{T})\,.
\end{align*}
Hence, in the slow diffusion regime, a local application of Theorem
\ref{thm:cont_into_Lbetaplusone} shows that every solution $u$ in
the sense of Definition \ref{def:somewhat-weaker} is actually in
$C_{\mathrm{loc}}^{0}((0,T);L_{\mathrm{loc}}^{\alpha+1}(\Omega))$,
provided that $u$ also belongs to $L^{\infty}(0,T;L^{\alpha+1}(\Omega))$
and that $f$ satisfies (\ref{assumpt:f}). Consequently, the local
boundedness results of Theorems \ref{theo:local_bdd_large_bar_p},
\ref{thm:loc_bdd_p_large_LP} and \ref{thm:better_est} remain valid
for all \textit{$L^{\alpha+1}$-}integrable solutions to \eqref{eq:diffusion}
that are also in $L^{\infty}(0,T;L^{\alpha+1}(\Omega))$.\end{brem}}

\noindent \begin{brem}\foreignlanguage{american}{For convenience,
in Definition \ref{def:somewhat-weaker} and assumption (\ref{assumpt:extra})
we have required global integrability properties. However, the local
boundedness results in Section \ref{sec:Local-boundedness} clearly
remain valid also for solutions $u$ and source terms $f$ satisfying
only analogous \textit{local} integrability properties, as revealed
by a careful inspection of the arguments that follow in this appendix,
as well as by the fact that the proofs in Section \ref{sec:Local-boundedness}
use the energy estimate }\eqref{eq:energy-classical}\foreignlanguage{american}{
only on cylinders compactly contained in $\Omega_{T}$.}\\
$\hspace*{1em}$\foreignlanguage{american}{In what follows, in order
to prove the desired time-continuity on the whole interval $[0,T]$,
i.e. including time zero, it seems necessary to have global integrability
properties in time. The arguments are also clearer when this type
of integrability is assumed. The local boundedness results obtained
under our assumptions, on the other hand, also hold for solutions
satisfying only the corresponding local integrability properties in
time, since a simple translation in time leads to the setting considered
in this appendix.\end{brem}\medskip{}
}

\noindent $\hspace*{1em}$\foreignlanguage{american}{Below we present
the arguments leading to the result of Theorem \ref{thm:cont_into_Lbetaplusone}.
The strategy of the proof is adapted from \cite{CiaVeVe} and \cite{St}.
We start with the following lemma.}
\selectlanguage{american}%
\begin{lem}
\label{lem:time-cont} Assume that $(\ref{eq:coeff_limit})$ holds,
and let $u$ be an $L^{\alpha+1}$-integrable solution to \eqref{eq:diffusion}
in the sense of Definition \ref{def:somewhat-weaker}, under the additional
assumption $(\mathrm{\ref{assumpt:extra}})$. Define 
\begin{align*}
\mathcal{V}:=\big\{ w\in L^{\max\,\{\alpha\,+\,1,\,\mathfrak{q}\}}(\Omega_{T})\,|\,w\in L^{p}(0,T;W^{1,p}(\Omega)),\,\,\partial_{t}w\in L^{\alpha+1}(0,T;L_{\mathrm{loc}}^{\alpha+1}(\Omega))\big\}\,.
\end{align*}
Then, for every $\zeta\in C_{0}^{\infty}(\Omega_{T};[0,\infty))$
and every $w\in\mathcal{V}$, we have 
\begin{align}
\iint_{\Omega_{T}}\partial_{t}\zeta\,\mathfrak{b}_{\alpha}[u,w]\,dx\,dt\, & =\iint_{\Omega_{T}}\left[\langle A(x,t,\nabla u),\nabla[\zeta(u-w)]\rangle+\zeta(\vert u\vert^{\alpha-1}u-\vert w\vert^{\alpha-1}w)\,\partial_{t}w\right]dx\,dt\nonumber \\
 & \quad-\iint_{\Omega_{T}}f\zeta(u-w)\,dx\,dt\,.\label{eq:time_1}
\end{align}
\end{lem}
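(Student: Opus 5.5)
We test the weak formulation with a time-mollified version of $\zeta(u-w)$, pass to the limit, and recognise the time-derivative term through the algebraic identity for $\mathfrak{b}_\alpha$. Formally, with $\varphi=\zeta(u-w)$, one has
\[
\partial_t(|u|^{\alpha-1}u)(u-w)=\partial_t\mathfrak{b}_\alpha[u,w]+(|u|^{\alpha-1}u-|w|^{\alpha-1}w)\,\partial_t w .
\]
This follows by differentiating the definition of $\mathfrak{b}_\alpha[u,w]$, since
\[
\partial_t\Big(\tfrac{\alpha}{\alpha+1}|u|^{\alpha+1}\Big)=u\,\partial_t(|u|^{\alpha-1}u),
\qquad
\partial_t\Big(\tfrac{\alpha}{\alpha+1}|w|^{\alpha+1}\Big)=w\,\partial_t(|w|^{\alpha-1}w).
\]
Integrating by parts against $\zeta$ then gives exactly \eqref{eq:time_1}. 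The work is in justifying this when $u$ has no time derivative.

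\textbf{Step 1 (test function).} Since $u$ lacks a time derivative, we do not test with $\zeta(u-w)$ directly. Instead we test \eqref{eq:weak_form-again} with $\varphi=\zeta\,(u_{h}-w)$, or more symmetrically with the reversed mollification as in Lemma \ref{lem:mollified}. Because $\zeta$ has compact support in $\Omega_T$, no initial term appears and a Steklov/exponential-mollification argument is admissible after an approximation in space. The cleanest route is to write the formulation with $[|u|^{\alpha-1}u]_h$, namely
\[
\iint \partial_t[|u|^{\alpha-1}u]_h\,\varphi+\langle [A]_h,\nabla\varphi\rangle=\iint f_h\varphi,
\]
valid for compactly supported $\varphi$ with the integrability available. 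We then choose $\varphi=\zeta(u-w)$; this is allowed since it lies in $L^p(W^{1,p})\cap L^{\max\{\alpha+1,\mathfrak q\}}$. The pairing with $f_h$ converges to $\iint f\zeta(u-w)$ by \eqref{assumpt:extra} and Lemma \ref{lem:expmolproperties}(i), and the elliptic term converges because $A(x,t,\nabla u)\in L^{p'}$.

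\textbf{Step 2 (time term, main obstacle).} Set $v:=|u|^{\alpha-1}u$, write $v_h$ for its mollification, and define $u^{(h)}$ by $|u^{(h)}|^{\alpha-1}u^{(h)}=v_h$. We split
\[
\partial_t v_h\,(u-w)=\partial_t v_h\,(u^{(h)}-w)+\partial_t v_h\,(u-u^{(h)}).
\]
The second piece is $\ge 0$, because $\partial_t v_h=(v-v_h)/h$ and $u-u^{(h)}$ has the same sign as $v-v_h$. The first piece equals, pointwise,
\[
\partial_t\mathfrak{b}_\alpha[u^{(h)},w]+(v_h-|w|^{\alpha-1}w)\,\partial_t w .
\]
After integrating against $\zeta$ and letting $h\to0$, this yields the inequality "$\le$" in \eqref{eq:time_1}. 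Here we use $v_h\to v$ in $L^{\frac{\alpha+1}{\alpha}}_{\rm loc}$ and $\mathfrak{b}_\alpha[u^{(h)},w]\to\mathfrak{b}_\alpha[u,w]$ in $L^1_{\rm loc}$, via Lemma \ref{lem:bdry_term_estimates_all_alpha} and $\partial_t w\in L^{\alpha+1}_{\rm loc}$.

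For the reverse inequality we repeat the argument with the reversed mollification $v_{\bar h}$, for which $\partial_t v_{\bar h}=(v_{\bar h}-v)/h$. The analogous remainder then has the opposite sign, which gives "$\ge$". Making this sign bookkeeping rigorous, and checking every convergence under only $u\in L^{\alpha+1}$ and \eqref{assumpt:extra}, is the delicate part.

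\textbf{Step 3.} Combining the two one-sided inequalities yields the identity \eqref{eq:time_1}.
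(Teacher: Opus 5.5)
\textbf{Gap in Step 1.} The mollified identity $\iint\partial_t v_h\,\phi+\iint\langle[A]_h,\nabla\phi\rangle=\iint f_h\phi$ that you start from is not available here. In the paper it is Lemma \ref{lem:mollified}, proved for solutions in the sense of Definition \ref{def:weaksol}. Its proof uses $u\in C^0([0,T];L^{\alpha+1}(\Omega))$ to pass to the limit in $\varepsilon^{-1}\int_0^\varepsilon\int_\Omega|u|^{\alpha-1}u\,\phi_{\bar h}\,dx\,dt$. In the appendix, time continuity is exactly what is being proved, so quoting that lemma is circular.

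Your reason why no initial term appears is also incorrect. Since $\iint\partial_t v_h\,\phi=-\iint v\,\partial_t(\phi_{\bar h})$, you must test with $\phi_{\bar h}$. But $\phi_{\bar h}(x,t)=e^{(t-t_1)/h}\phi_{\bar h}(x,t_1)$ for $t<t_1:=\inf\{t:\phi(\cdot,t)\neq0\}$, which in general does not vanish near $t=0$, however compact the support of $\phi$. For $v_{\bar h}$ the same happens with $\phi_h$ near $t=T$. Steklov averages would avoid boundary terms, but they lack the relation $\partial_t v_h=(v-v_h)/h$ on which your sign argument rests.

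\textbf{Repair.} Take $0<t_0<t_1$ and $\chi\in C^\infty([0,T])$ with $\chi=0$ near $0$ and $\chi=1$ on $[t_0,T]$, and test \eqref{eq:weak_form-again} with $\chi\,\phi_{\bar h}$. This gives the mollified identity for $[\chi v]_h$, $[\chi A]_h$, $[\chi f]_h$, plus the single extra term $\iint v\,\chi'\,\phi_{\bar h}\,dx\,dt$. That term is bounded by $C\,h^{-1}e^{-(t_1-t_0)/h}\Vert v\Vert_{L^{\frac{\alpha+1}{\alpha}}(\Omega_T)}\Vert\phi\Vert_{L^{\alpha+1}(\Omega_T)}$. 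Hence it is stable under the density step to $\phi=\zeta(u-w)$ and vanishes as $h\to0$. On $\mathrm{supp}\,\zeta$ you still have $\partial_t[\chi v]_h=(v-[\chi v]_h)/h$, so your remainder keeps its sign. Mirror the construction near $t=T$ for the reversed mollifier. This is the analogue of how the initial term is killed in the proof of Lemma \ref{lem:energy-general}, except that there the trace at $t=0$ exists by assumption.

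\textbf{Comparison with the paper.} With this repair your route works, and it differs genuinely from the paper's. Your first suggestion, testing with $\zeta(u_h-w)$, is what the paper actually does (with $\varphi=\zeta(w-u_h)$). This $\varphi$ is compactly supported in $\Omega_T$ and has $\partial_t\varphi\in L^{\alpha+1}$, because $\partial_t u_h=(u-u_h)/h$. So it is admissible in the unmollified formulation, and no boundary-in-time term ever arises. The time term is then handled via $|u|^{\alpha-1}u\,\partial_t u_h=\partial_t\big(\tfrac{1}{\alpha+1}|u_h|^{\alpha+1}\big)-(|u_h|^{\alpha-1}u_h-|u|^{\alpha-1}u)\,\partial_t u_h$, where the last product is $\le0$. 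Your version is the dual form of the same sign trick (mollify $v$, invert to $u^{(h)}$), and your Step 2 algebra is correct; it simply costs the extra cut-off above.

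\textbf{Two smaller points.}
\begin{itemize}
\item Justify the Step 2 identity through $\mathfrak{b}_\alpha[a,b]=\tfrac{\alpha}{\alpha+1}|a|^{\alpha+1}+\tfrac{1}{\alpha+1}|b|^{\alpha+1}-b\,|a|^{\alpha-1}a$, so that only $\partial_t|w|^{\alpha+1}$ and $\partial_t(w\,v_h)$ are used. For $\alpha<1$, $|w|^{\alpha-1}w$ need not be weakly differentiable in time, so your introductory formal identity cannot be used literally.
\item With your conventions, the forward mollifier actually yields ``$\ge$'' in \eqref{eq:time_1} and $v_{\bar h}$ yields ``$\le$''. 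This is harmless, since you prove both.
\end{itemize}
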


\noindent \begin{proof}[\bfseries{Proof}]Let $w\in\mathcal{V}$,
$\zeta\in C_{0}^{\infty}(\Omega_{T};[0,\infty))$, $h\in(0,T)$ and
choose 
\[
\varphi\,=\,\zeta\,(w-u_{h})
\]
as a test function in \eqref{eq:weak_form-again}. The integrability
properties of $u$ and $w$, together with the $L^{p}$-integrability
of each $\partial_{i}u$ and $\partial_{i}w$ and the structure conditions
in (\ref{eq:structure_conditions}) for the vector field $A$ ensure
that the test function can be justified by approximation with smooth
compactly supported test functions. Our goal is to pass to the limit
as $h\rightarrow0$. To this end, we observe that $f$ and $\varphi$
have Hölder-conjugate integrability exponents. It follows from Lemma
\ref{lem:expmolproperties}, Remark \ref{enu:expmol_local_integ}
and the aforementioned integrability properties that
\begin{align*}
\iint_{\Omega_{T}}\langle A(x,t,\nabla u),\nabla\varphi\rangle\,dx\,dt\, & \xrightarrow[h\to0]{}\iint_{\Omega_{T}}\langle A(x,t,\nabla u),\nabla[\zeta(w-u)]\rangle\,dx\,dt\,,\\
\iint_{\Omega_{T}}f\varphi\,dx\,dt\, & \xrightarrow[h\to0]{}\iint_{\Omega_{T}}f\zeta(w-u)\,dx\,dt\,.
\end{align*}
Note that Lemma \ref{lem:expmolproperties} (ii) implies 
\[
(\vert u_{h}\vert^{\alpha-1}u_{h}-\vert u\vert^{\alpha-1}u)\,\partial_{t}u_{h}\,\leq\,0\,,
\]
and hence the integral involving the time derivative can be treated
as follows:\begin{align*}
&\iint_{\Omega_{T}}\vert u\vert^{\alpha-1}u\,\partial_{t}\varphi\,dx\,dt\,=\iint_{\Omega_{T}}\zeta\,\vert u\vert^{\alpha-1}u\,\partial_{t}w\,dx\,dt\,-\iint_{\Omega_{T}}\zeta\,\vert u_{h}\vert^{\alpha-1}u_{h}\,\partial_{t}u_{h}\,dx\,dt\\
&\,\,\,\,\,\,\,+\iint_{\Omega_{T}}\zeta(\vert u_{h}\vert^{\alpha-1}u_{h}-\vert u\vert^{\alpha-1}u)\,\partial_{t}u_{h}\,dx\,dt\,+\iint_{\Omega_{T}}\partial_{t}\zeta\,\vert u\vert^{\alpha-1}u\,(w-u_{h})\,dx\,dt\\
&\leq\iint_{\Omega_{T}}\zeta\,\vert u\vert^{\alpha-1}u\,\partial_{t}w\,dx\,dt\,+\iint_{\Omega_{T}}\tfrac{1}{\alpha+1}\,\partial_{t}\zeta\,\vert u_{h}\vert^{\alpha+1}\,dx\,dt\,+\iint_{\Omega_{T}}\partial_{t}\zeta\,\vert u\vert^{\alpha-1}u\,(w-u_{h})\,dx\,dt\\
&\xrightarrow[h\to0]{}\iint_{\Omega_{T}}\zeta\,\vert u\vert^{\alpha-1}u\,\partial_{t}w\,dx\,dt\,+\iint_{\Omega_{T}}\partial_{t}\zeta\left(\tfrac{1}{\alpha+1}\,\vert u\vert^{\alpha+1}\,+\,\vert u\vert^{\alpha-1}u\,(w-u)\right)dx\,dt\\
&\,\,\,\,\,\,\,\,\,\,\,\,\,\,=\iint_{\Omega_{T}}\zeta(\vert u\vert^{\alpha-1}u-\vert w\vert^{\alpha-1}w)\,\partial_{t}w\,dx\,dt\,-\iint_{\Omega_{T}}\partial_{t}\zeta\,\mathfrak{b}_{\alpha}[u,w]\,dx\,dt\,.
\end{align*}This proves the inequality with ``$\leq$'' in \eqref{eq:time_1}.
The reverse inequality is obtained in the same way by taking $\varphi=\zeta(w-u_{\bar{h}})$
as the test function.\end{proof}

\noindent Now we can prove the main result of this appendix.

\noindent \begin{proof}[\bfseries{Proof of Theorem~\ref{thm:cont_into_Lbetaplusone}}]By
Lemma \ref{lem:equivalent_time_cont} it is sufficient to prove the
time-continuity for either $\vert u\vert^{\alpha-1}u$ or $u$. We
prove continuity on the interval $[0,\tfrac{1}{2}T]$ and later explain
how the argument can be modified to show continuity also on $[\tfrac{1}{2}T,T]$,
thus completing the proof. We first note that due to Lemma \ref{lem:expmolproperties},
$w:=u_{\bar{h}}$ belongs to the set of admissible comparison functions
$\mathcal{V}$ of Lemma \ref{lem:time-cont}. Furthermore, Lemma \ref{lem:expmolproperties}
(iv) and Remark \ref{enu:expmol_local_integ} guarantee that $w$
belongs to $C^{0}([0,T];L_{\mathrm{loc}}^{\alpha+1}(\Omega))$. Thus,
by Lemma \ref{lem:equivalent_time_cont} we have that $\vert w\vert^{\alpha-1}w$
belongs to $C^{0}([0,T];L_{\mathrm{loc}}^{1+1/\alpha}(\Omega))$.
For a compact set $K\subset\Omega$ we take $\eta\in C_{0}^{\infty}(\Omega;[0,1])$
such that $\eta=1$ on $K$ and $|\nabla\eta|\leq C_{K}$. Moreover,
we take $\psi\in C^{\infty}([0,T];[0,1])$ with $\psi=1$ on $[0,\tfrac{1}{2}T]$,
$\psi=0$ on $[\tfrac{3}{4}T,T]$ and $|\psi'|\leq\tfrac{8}{T}$.
For $\tau\in(0,\tfrac{1}{2}T)$ and $\varepsilon>0$ so small that
$\tau+\varepsilon<\tfrac{T}{2}$, we define 
\begin{align*}
\chi_{\varepsilon}^{\tau}(t)=\begin{cases}
\,0 & \mathrm{if}\,\,t<\tau\,,\\
\,\varepsilon^{-1}(t-\tau) & \mathrm{if}\,\,t\in[\tau,\tau+\varepsilon]\,,\\
\,1 & \mathrm{if}\,\,t>\tau+\varepsilon\,.
\end{cases}
\end{align*}
We use \eqref{eq:time_1} with $\zeta=\eta\,\chi_{\varepsilon}^{\tau}\,\psi$
and $w=u_{\bar{h}}$ to obtain\begin{align*}
&\varepsilon^{-1}\int_{\tau}^{\tau+\varepsilon}\int_{\Omega}\mathfrak{b}_{\alpha}[u,u_{\bar{h}}]\,\eta\,dx\,dt\,=\iint_{\Omega_{T}}\langle A(x,t,\nabla u),\nabla[\eta(u-u_{\bar{h}})]\rangle\,\chi_{\varepsilon}^{\tau}\,\psi\,dx\,dt\\
&\,\,\,\,\,\,\,+\iint_{\Omega_{T}}\eta\,\chi_{\varepsilon}^{\tau}\,\psi\,(\vert u\vert^{\alpha-1}u-\vert u_{\bar{h}}\vert^{\alpha-1}u_{\bar{h}})\,\partial_{t}u_{\bar{h}}\,dx\,dt\,-\iint_{\Omega_{T}}\mathfrak{b}_{\alpha}[u,u_{\bar{h}}]\,\eta\,\chi_{\varepsilon}^{\tau}\,\psi'\,dx\,dt\\
&\,\,\,\,\,\,\,-\iint_{\Omega_{T}}f\,\eta\,\chi_{\varepsilon}^{\tau}\,\psi\,(u-u_{\bar{h}})\,dx\,dt\\
&\leq\,\sum_{i=1}^{N}\iint_{\mathrm{supp}\,\eta\,\times\,(0,T)}a_{i}\,(|\partial_{i}u|-\delta_{i})_{+}^{p-1}\,(|\partial_{i}u-(\partial_{i}u)_{\bar{h}}|\,+\,|\partial_{i}\eta|\,|u-u_{\bar{h}}|)\,dx\,dt\\
&\,\,\,\,\,\,\,+\,\frac{8}{T}\iint_{\mathrm{supp}\,\eta\,\times\,(\frac{1}{2}T,\frac{3}{4}T)}\mathfrak{b}_{\alpha}[u,u_{\bar{h}}]\,dx\,dt\,+\iint_{\mathrm{supp}\,\eta\,\times\,(0,T)}|f|\,|u-u_{\bar{h}}|\,dx\,dt\,.
\end{align*}Here we were able to drop the term involving $\partial_{t}u_{\bar{h}}$
since Lemma \ref{lem:expmolproperties} (ii) shows that the factors
$\partial_{t}u_{\bar{h}}$ and $(\vert u\vert^{\alpha-1}u-\vert u_{\bar{h}}\vert^{\alpha-1}u_{\bar{h}})$
have opposite signs, and hence their product is non-positive. Passing
to the limit as $\varepsilon\to0$, we see that\begin{align}\label{eq:gs}
&\int_{K}\mathfrak{b}_{\alpha}[u,u_{\bar{h}}](x,\tau)\,dx\nonumber\\
&\,\,\,\,\,\,\,\leq\,C_{K}\,\sum_{i=1}^{N}\iint_{\mathrm{supp}\,\eta\,\times\,(0,T)}a_{i}\,(|\partial_{i}u|-\delta_{i})_{+}^{p-1}\,(|\partial_{i}u-(\partial_{i}u)_{\bar{h}}|\,+\,|u-u_{\bar{h}}|)\,dx\,dt\nonumber\\
&\,\,\,\,\,\,\,\,\,\,\,\,\,\,+\,\frac{8}{T}\iint_{\mathrm{supp}\,\eta\,\times\,(\frac{1}{2}T,\frac{3}{4}T)}\mathfrak{b}_{\alpha}[u,u_{\bar{h}}]\,dx\,dt\,+\iint_{\mathrm{supp}\,\eta\,\times\,(0,T)}|f|\,|u-u_{\bar{h}}|\,dx\,dt
\end{align} for all $\tau\in[0,\tfrac{1}{2}T]\setminus N_{h}$, where $N_{h}$
is a set of measure zero. Our goal is to investigate the limit as
$h\to0$. Assumption (\ref{eq:coeff_limit}) on the coefficients $a_{i}$,
the integrability properties of $u$ and $\partial_{i}u$, together
with Lemma \ref{lem:expmolproperties} (i) and (iii), imply that the
first integral on the right-hand side of \foreignlanguage{british}{\eqref{eq:gs}}
converges to zero as $h\to0$. The integral involving $f$ also vanishes
in the limit by Lemma \ref{lem:expmolproperties} and assumption \eqref{assumpt:extra}.
In order to treat the terms involving $\mathfrak{b}_{\alpha}$, we
need to distinguish between the cases $\alpha\in(0,1)$ and $\alpha\geq1$.\\
\foreignlanguage{british}{$\hspace*{1em}$}If $\alpha\in(0,1)$, the
integrand on the left-hand side of \foreignlanguage{british}{\eqref{eq:gs}}
can be estimated using the left inequality of \eqref{est:exponent_inside}
with $\gamma=\frac{\alpha+1}{2\alpha}>1$ and \eqref{est:b-all-alpha}
as follows:\begin{align*}
&|\vert u\vert^{\alpha-1}u-\vert u_{\bar{h}}\vert^{\alpha-1}u_{\bar{h}}|^{\frac{\alpha+1}{\alpha}}\\
&\,\,\,\,\,\,\,=\,|\vert u\vert^{\alpha-1}u-\vert u_{\bar{h}}\vert^{\alpha-1}u_{\bar{h}}|^{2\,\frac{\alpha+1}{2\alpha}}\,\leq\,c(\alpha)\,\vert\vert u\vert^{\frac{\alpha-1}{2}}u-\vert u_{\bar{h}}\vert^{\frac{\alpha-1}{2}}u_{\bar{h}}\vert^{2}\,\leq\,c(\alpha)\,\mathfrak{b}_{\alpha}[u,u_{\bar{h}}]\,.
\end{align*}For the term in the last line of \foreignlanguage{british}{\eqref{eq:gs}}
we can use Lemma \ref{lem:bdry_term_estimates_alpha_small} (ii) to
obtain the estimate
\begin{align*}
\mathfrak{b}_{\alpha}[u,u_{\bar{h}}]\, & \leq\,c(\alpha)\,\vert u-u_{\bar{h}}\vert^{1+\alpha}\,=\,c(\alpha)\,\vert u-u_{\bar{h}}\vert^{\alpha}\,\vert u-u_{\bar{h}}\vert\,\leq\,c(\alpha)\,(\vert u\vert^{\alpha}+\vert u_{\bar{h}}\vert^{\alpha})\,|u-u_{\bar{h}}|\,.
\end{align*}
The factor $(\vert u\vert^{\alpha}+\vert u_{\bar{h}}\vert^{\alpha})$
remains bounded in $L^{\frac{\alpha+1}{\alpha}}$ as $h\to0$, while
the term $|u-u_{\bar{h}}|$ converges to zero in $L^{\alpha+1}$ as
$h\to0$. Choosing now a sequence $h_{j}\to0$, setting $w_{j}=u_{\overline{h_{j}}}\,$,
and defining $N:=\cup N_{h_{j}}$ (which has measure zero), we see
that \foreignlanguage{british}{\eqref{eq:gs}} combined with the previous
observations implies
\begin{align}
\lim_{j\to\infty}\,\sup_{\tau\,\in\,[0,\frac{1}{2}T]\setminus N}\int_{K}|\vert u\vert^{\alpha-1}u-\vert w_{j}\vert^{\alpha-1}w_{j}|^{\frac{\alpha+1}{\alpha}}(x,\tau)\,dx\,=\,0\,.\label{unif_limit}
\end{align}
As noted earlier, each $\vert w_{j}\vert^{\alpha-1}w_{j}$ is continuous
as a map $[0,T]\to L^{\frac{1}{\alpha}+1}(K)$. This fact, together
with the uniform limit \eqref{unif_limit} on the dense set $[0,\frac{1}{2}T]\setminus N$
and the completeness of $L^{\frac{1}{\alpha}+1}(K)$, shows that $\vert w_{j}\vert^{\alpha-1}w_{j}$
converges uniformly on $[0,\frac{1}{2}T]$ to a limit function which
is continuous into $L^{\frac{1}{\alpha}+1}(K)$. Due to \eqref{unif_limit},
this limit is a representative of $\vert u\vert^{\alpha-1}u$.

\selectlanguage{british}%
\noindent $\hspace*{1em}$\foreignlanguage{american}{In the case $\alpha\geq1$,
we instead estimate the left-hand side of }\eqref{eq:gs}\foreignlanguage{american}{
using Lemma \ref{lem:bdry_term_estimates_alpha_large} (ii): 
\begin{align*}
|u-u_{\bar{h}}|^{\alpha+1}\,\leq\,c(\alpha)\,\mathfrak{b}_{\alpha}[u,u_{\bar{h}}]\,.
\end{align*}
On the right-hand side of }\eqref{eq:gs}\foreignlanguage{american}{,
we denote $L=\mathrm{supp}\,\eta\times(\frac{1}{2}T,\frac{3}{4}T)$
and estimate the term involving $\mathfrak{b}_{\alpha}$ using Lemma
\ref{lem:bdry_term_estimates_alpha_large} (i), the right-hand inequality
in \eqref{est:exponent_inside} with $\gamma=\alpha$ and, when $\alpha>1$,
Hölder's inequality, thus obtaining:
\begin{align*}
\iint_{L}\mathfrak{b}_{\alpha}[u,u_{\bar{h}}]\,dx\,dt\, & \leq\,c(\alpha)\iint_{L}|\vert u\vert^{\alpha-1}u-\vert u_{\bar{h}}\vert^{\alpha-1}u_{\bar{h}}|^{\frac{\alpha+1}{\alpha}}\,dx\,dt\\
 & \leq\,c(\alpha)\iint_{L}|\vert u\vert^{\alpha-1}+\vert u_{\bar{h}}\vert^{\alpha-1}|^{\frac{\alpha+1}{\alpha}}\,|u-u_{\bar{h}}|^{\frac{\alpha+1}{\alpha}}\,dx\,dt\\
 & \leq\,c(\alpha)\,\Big[\iint_{L}(\vert u\vert^{\alpha+1}+\vert u_{\bar{h}}\vert^{\alpha+1})\,dx\,dt\Big]^{\frac{\alpha-1}{\alpha}}\Big[\iint_{L}|u-u_{\bar{h}}|^{\alpha+1}\,\,dx\,dt\Big]^{\frac{1}{\alpha}}.
\end{align*}
The first integral in the last line remains bounded as $h\to0$, while
the second integral vanishes in the limit by Lemma \ref{lem:expmolproperties}
(i). Therefore, we can argue as in the case $\alpha\in(0,1)$ and
obtain a sequence $h_{j}\to0$ and a set $N\subset[0,\frac{1}{2}T]$
of measure zero such that 
\begin{align*}
\lim_{j\to\infty}\,\sup_{\tau\,\in\,[0,\frac{1}{2}T]\setminus N}\int_{K}|u-w_{j}|^{\alpha+1}(x,\tau)\,dx\,=\,0\,,
\end{align*}
which implies the time-continuity of $u$ on the interval $[0,\frac{1}{2}T]$.}

\noindent $\hspace*{1em}$\foreignlanguage{american}{The continuity
on $[\tfrac{1}{2}T,T]$ follows in both ranges of $\alpha$ from similar
arguments with $w=u_{h}$ and with $\psi$ and $\chi_{\varepsilon}^{\tau}$
mirrored on the interval $[0,T]$ under the map $t\mapsto T-t$.\end{proof}}

\selectlanguage{american}%
\noindent \begin{brem}We observe that the arguments presented in
this appendix extend, more generally, to doubly nonlinear diffusion
equations in divergence form whose diffusion flux is given by a measurable
vector field $\mathbf{A}(x,t,u,\nabla u)$ satisfying a standard $p$-growth
condition with respect to the gradient variable. In fact, these arguments
remain valid even if the vector field in the weak formulation (\ref{eq:weak_form-again})
is replaced by any function belonging to $L^{p'}(\Omega_{T};\mathbb{R}^{N})$.
This observation is useful in the existence theory for doubly nonlinear
evolution equations; see, for instance, \cite{Vestb}, where analogous
arguments are employed in the fully anisotropic setting.\end{brem}

\selectlanguage{british}%
\medskip{}

\begin{singlespace}
\noindent \textbf{Acknowledgments. }This work has been partially supported
by the Wallenberg AI, Autonomous Systems and Software Program (WASP)
funded by the Knut and Alice Wallenberg Foundation. Pasquale Ambrosio
is a member of the GNAMPA group of INdAM, which partially supported
his research through the INdAM--GNAMPA 2026 Project ``Esistenza
e regolarità per soluzioni di equazioni ellittiche e paraboliche anisotrope''
(CUP E53C25002010001).\bigskip{}

\noindent \textbf{Declarations.} On behalf of all authors, the corresponding
author states that there is no conflict of interest.\bigskip{}

\noindent \textbf{Data availability.} This manuscript has no associated
data.\addcontentsline{toc}{section}{References}
\end{singlespace}

\noindent \textbf{$\quad$}
\end{document}